\pgfplotsset{width=7cm,compat=1.10}
\pgfplotsset{
  /pgfplots/colormap={pink}{%
    color(0cm) = (blue);
    color(1cm) = (cyan!50!blue);
    color(2cm) = (cyan!50);
    color(3cm) = (cyan) }
%    color(0cm) = (purple);
%    color(1cm) = (pink!80!purple);
%    color(2cm) = (pink!90);
%    color(3cm) = (pink) }
}
\title{Geometry of Diophantine exponents
       \thanks{This research has been financed by the Russian Science Foundation grant 22-21-00079, https://rscf.ru/project/22-21-00079/}}
\author{Oleg\,N.\,German}
\date{}
\theoremstyle{definition}
\newtheorem{definition}{Definition}
\newtheorem*{notation*}{Notation}
\theoremstyle{remark}
\newtheorem*{remark*}{Remark}
\theoremstyle{plain}
\newtheorem{theorem}{Theorem}
\newtheorem{lemma}{Lemma}
\newtheorem{proposition}{Proposition}
\newtheorem{corollary}{Corollary}
\newtheorem*{statement*}{Statement}
\newtheorem*{corollary*}{Corollary}
\newtheorem*{littlewood}{Littlewood's Conjecture}
\newtheorem*{cassels}{Cassels--Swinnerton-Dyer's Conjecture}
\DeclareMathOperator{\vol}{vol}
\DeclareMathOperator{\conv}{conv}
\renewcommand{\phi}{\varphi}
\renewcommand{\vec}[1]{\mathbf{#1}}
\renewcommand{\geq}{\geqslant}
\renewcommand{\leq}{\leqslant}
\newcommand{\e}{\varepsilon}
\newcommand{\R}{\mathbb{R}}
\newcommand{\Z}{\mathbb{Z}}
\newcommand{\Q}{\mathbb{Q}}
\newcommand{\N}{\mathbb{N}}
\newcommand{\La}{\Lambda}
\newcommand{\bpsi}{\underline{\phi}}
\newcommand{\apsi}{\overline{\phi}}
\newcommand{\bPsi}{\underline{\Phi}}
\newcommand{\aPsi}{\overline{\Phi}}
\newcommand{\cB}{\mathcal{B}}
\newcommand{\cC}{\mathcal{C}}
\newcommand{\cF}{\mathcal{F}}
\newcommand{\cG}{\mathcal{G}}
\newcommand{\cH}{\mathcal{H}}
\newcommand{\cK}{\mathcal{K}}
\newcommand{\cL}{\mathcal{L}}
\newcommand{\cM}{\mathcal{M}}
\newcommand{\cO}{\mathcal{O}}
\newcommand{\cP}{\mathcal{P}}
\newcommand{\cQ}{\mathcal{Q}}
\newcommand{\cS}{\mathcal{S}}
\newcommand{\cT}{\mathcal{T}}
\newcommand{\cV}{\mathcal{V}}
\newcommand{\cW}{\mathcal{W}}
\newcommand{\gT}{\mathfrak{T}}
\newcommand{\tr}[1]{{#1}^\intercal}
\newcommand{\Glin}{G_{\textup{lin}}}
\newcommand{\Gsim}{G_{\textup{sim}}}
\newcommand{\starv}{\textup{St}_{\vec v}}
\newcommand{\starw}{\textup{St}_{\vec w}}
\begin{document}

\maketitle

%\hskip 139mm УДК: 511.4

\begin{abstract}
  Diophantine exponents are ones of the simplest quantitative characteristics responsible for the approximation properties of linear subspaces of a Euclidean space. This survey is aimed at describing the current state of the area of Diophantine approximation which studies Diophantine exponents and relations they satisfy. We discuss classical Diophantine exponents arising in the problem of approximating zero with the set of the values of several linear forms at integer points, their analogues in Diophantine approximation with weights, multiplicative Diophantine exponents, and Diophantine exponents of lattices. We pay special attention to the transference principle.
\end{abstract}

\noindent
\textbf{Key words:} Diophantine approximation, geometry of numbers, Diophantine exponents, transference principle.

\tableofcontents

\section{Introduction}

%Равна ли диагональ квадрата со стороной 1 отношению каких-нибудь двух целых чисел? Можно ли построить циркулем и линейкой круг, равный по площади этому квадрату? Эти вопросы возникли ещё у древних греков --- около двух с половиной тысяч лет назад. И если на первый они мгновенно дали ответ, то на второй вопрос человечеству удалось ответить лишь чуть более века назад.

Given a square with side length 1, is the length of its diagonal equal to a ratio of two integers? Can one construct a circle with the same area as that square using straightedge and compass? These questions date back to the times of Ancient Greece -- about two and a half thousand years ago. The Greeks answered the first question immediately, whereas the second one remained open until only little more than a century ago.

%Ответ на первый вопрос обосновал существование иррациональных чисел. Однако явные примеры таких чисел как правило были алгебраическими, то есть являлись корнями многочленов с рациональными коэффициентами. И до 19-го века было непонятно, существуют ли числа не алгебраические, то есть трансцендентные. Ответить на этот вопрос удалось Лиувиллю\footnote{Конечно, сегодня нам очевидно, что трансцендентные числа существуют, ведь множество алгебраических чисел счётно, и потому имеет меру нуль. Но дело в том, что теория мощностей была разработана Кантором лишь в 70--80-х годах 19-го века, а до него такого рода рассуждения были недоступны. Поэтому результат Лиувилля, полученный им в 1844 году, был поистине прорывным.}, который доказал, что алгебраические числа не могут быть <<слишком хорошо>> приближены рациональными, после чего с лёгкостью построил пример трансцендентного числа. Так родилась теория \emph{диофантовых приближений}, при помощи которой удалось доказать трансцендентность числа $\pi$ и таким образом ответить на второй из упомянутых выше вопросов --- доказать неразрешимость задачи построения циркулем и линейкой круга, равновеликого квадрату.

The answer to the first question proved the existence of irrational numbers. However, explicit examples of such numbers, as a rule, were algebraic, i.e. they were roots of polynomials with rational coefficients. And until the 19-th century, it had been unclear whether there exist non-algebraic, i.e. transcendental, numbers. It was Liouville who managed to answer this question\footnote{Of course, it is obvious nowadays that transcendental numbers do exist, as the set of algebraic numbers is countable and therefore it is a set of zero measure. But the thing is that cardinality theory was developed by Cantor only in the seventies of the 19-th century. Before him, such concepts had been unavailable. Thus, the result by Liouville, which he obtained in 1844, was truly outstanding.} by showing that algebraic numbers cannot be approximated by rationals ``too well'', after which he easily constructed an example of a transcendental number. This is how the theory of \emph{Diophantine approximation} was born. It was this theory that provided a proof of the transcendence of $\pi$ and, therefore, an answer to the second of the two questions mentioned above -- that it is impossible to construct a circle with the same area as a given square using straightedge and compass.

%Отсюда возникло понимание, что вещественные числа можно ранжировать по <<степени их иррациональности>> --- иррациональное число считается тем более иррациональным, чем лучше его можно приблизить рациональными. Самой простой количественной характеристикой того, насколько хорошо иррациональное число $\theta$ приближается рациональными, является его \emph{диофантова экспонента} --- супремум таких вещественных $\gamma$, что неравенство
%\[ \bigg|\theta-\frac pq\bigg|\leqslant\frac{1}{q^{1+\gamma}} \]
%имеет бесконечно много решений в целых $p$, $q$.

Hence came the understanding that real numbers can be ranked by their ``irrationality measure'' -- the better an irrational number can be approximated by rationals, the more irrational it is considered. The simplest quantitative characteristic of how well an irrational number $\theta$ can be approximated by rationals, is its \emph{Diophantine exponent} -- the supremum of real $\gamma$ such that the inequality
\[ 
  \bigg|\theta-\frac pq\bigg|\leqslant\frac{1}{q^{1+\gamma}}
\]
admits infinitely many solutions in integer $p$, $q$.

%Задача приближения числа рациональными имеет очень естественную геометрическую интерпретацию, которая позволяет работать и с более общими задачами --- когда нужно искать так называемые \emph{совместные приближения}, то есть когда несколько чисел нужно приблизить рациональными с \emph{одним и тем же} знаменателем. В этом контексте тоже возникают \emph{диофантовы экспоненты} как самый простой показатель отклонения от <<рациональности>>.

The problem of approximation of a real number with rationals has a very natural geometric interpretation, which helps working with more general problems -- when we need to find so called \emph{simultaneous approximations}, i.e. when several numbers are to be approximated by rationals with \emph{the same} denominator. In this context there also appear \emph{Diophantine exponents} as the simplest measure of deviation from ``rationality''.

%Стоит отметить, что если в задаче о совместных приближениях взять степени заданного числа $\theta$, то есть числа $1,\,\theta,\,\theta^2,\,\theta^3,\ldots,\,\theta^n$, то получится один из важнейших инструментов исследования алгебраических чисел. К примеру, Эрмит в своём доказательстве трансцендентности числа $e$ по сути построил <<хорошие>> совместные приближения к степеням числа $e$.

It is worth mentioning that the problem of simultaneous approximations to the powers of a single number $\theta$, i.e. to the numbers $1,\,\theta,\,\theta^2,\,\theta^3,\ldots,\,\theta^n$, provides one of the most important tools for studying algebraic numbers. For instance, in his proof of transcendence of $e$, Hermite actually constructed ``good'' simultaneous approximations to the powers of $e$.

%За последние несколько лет в теории диофантовых экспонент произошёл существенный скачок: отечественными и зарубежными математиками было доказано больше теорем, чем за все предыдущие годы. В данном обзоре мы постарались собрать как можно больше результатов о самых разных диофантовых экспонентах --- а их типов на данный момент существует больше десятка. Мы не претендуем на полноту изложения и приводим только те доказательства, которые достаточно коротки и при этом необходимы для понимания методов работы с изучаемыми объектами. Подробные доказательства можно найти в оригинальных работах. Больше всего нас будут интересовать многомерные задачи, для которых именно благодаря наличию нескольких измерений возникает разнообразие естественных способов определять диофантовы экспоненты. Особое внимание мы уделяем так называемому \emph{принципу переноса}, который связывает <<двойственные>> задачи. Отметим также, что мы почти не затрагиваем \emph{промежуточные} диофантовы экспоненты, ибо полноценное изложение всех известных на данный момент фактов о них удвоило бы объём статьи.

For the past several years, the theory of Diophantine approximation experienced a significant progress: the number of new theorems exceeded the number of those proved before. In this survey we tried to gather as many results as possible concerning various Diophantine exponents -- and there are more than a dozen types of them nowadays. We have no claim for exhaustiveness and we provide only the proofs that are concise enough and essential for understanding the methods of working with the objects under study. Detailed proofs can be found in original papers. The multidimensional setting will be of most interest to us, as multidimensionality delivers a variety of natural ways to define Diophantine exponents. We pay special attention to the so called \emph{transference principle}, which connects ``dual'' problems. We note also that we almost do not mention \emph{intermediate} Diophantine exponents, since a proper account of what is currently known about them would double the size of the paper.

\section{Approximation of a real number by rationals}

%\paragraph{Диофантова экспонента и мера иррациональности.}
\subsection{Diophantine exponent and irrationality measure}

In Introduction we actually gave the following

\begin{definition} \label{def:beta_1_dim}
  Let $\theta$ be a real number. Its \emph{Diophantine exponent} $\omega(\theta)$ is defined as the supremum of real $\gamma$ such that the inequality
  \begin{equation} \label{eq:beta_1_dim}
    \bigg|\theta-\frac pq\bigg|\leqslant\frac{1}{q^{1+\gamma}}
  \end{equation}
  admits infinitely many solutions in integer $p$, $q$.
\end{definition}

In the case of a single number, it is more traditional, however, to talk about the \emph{irrationality measure} of $\theta$.

\begin{definition} \label{def:irrationality_measure}
  Let $\theta$ be a real number. Its \emph{irrationality measure} $\mu(\theta)$ is defined as the supremum of real $\gamma$ such that the inequality
  \begin{equation} \label{eq:irrationality_measure}
    \bigg|\theta-\frac pq\bigg|\leqslant\frac{1}{q^\gamma}
  \end{equation}
  admits infinitely many solutions in integer coprime $p$, $q$.
\end{definition}

Definition \ref{def:irrationality_measure} differs from \ref{def:beta_1_dim} only by the absence of 1 in the exponent in \eqref{eq:irrationality_measure} and by the condition that $p$ and $q$ should be coprime. Thus, for an irrational $\theta$ we have
\[ 
  \mu(\theta)=\omega(\theta)+1.
\]
Whereas, if $\theta\in\Q$, then $\omega(\theta)=\infty$, $\mu(\theta)=1$ (see Proposition \ref{prop:beta_for_rationals} below).

Most of results describing the measure of deviation of a number from rationality are usually stated in terms of $\mu(\theta)$. However, the most reasonable way to define Diophantine exponents in the multidimensional setting gives exactly $\omega(\theta)$ in the particular one-dimensional case -- not $\mu(\theta)$. Therefore, we prefer to formulate all our statements in terms of $\omega(\theta)$.

%\paragraph{Теорема Дирихле.}
\subsection{Dirichlet's theorem}

As a rule, talks on the elements of Diophantine approximation start with mentioning the respective Dirichlet theorem. Let us keep to this tradition. Being rather elementary, Dirichlet's theorem on the approximation of real numbers by rationals is as well a most fundamental statement contained, in this form or another, in most of the existing theorems on Diophantine approximation.

\begin{theorem}[G. Lejeune Dirichlet, 1842] \label{t:dirichlet}
  Let $\theta$ and $t$ be real numbers, $t>1$. Then there are integers $p$ and $q$ such that $0<q<t$ and
  \[ 
    \bigg|\theta-\frac pq\bigg|\leq\frac{1}{qt}\,.
  \]
\end{theorem}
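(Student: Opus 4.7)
The plan is to apply the Dirichlet box (pigeonhole) principle to the fractional parts of the multiples $q\theta$. Set $N=\lceil t\rceil$, an integer with $N\geq t>1$ and hence $N\geq 2$, and partition $[0,1]$ into $N$ subintervals of equal length $1/N$: the half-open pieces $[j/N,(j+1)/N)$ for $j=0,\ldots,N-2$, together with the closed final piece $[(N-1)/N,1]$. Then consider the $N+1$ points
\[
  0,\ \{\theta\},\ \{2\theta\},\ \ldots,\ \{(N-1)\theta\},\ 1
\]
of $[0,1]$, where $\{x\}$ denotes the fractional part of $x$. Since there are more points than subintervals, pigeonhole forces two of them into a common subinterval, and hence within distance $1/N$ of each other.

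The rest is a short case analysis. If the coincident pair consists of two fractional parts $\{q_1\theta\}$ and $\{q_2\theta\}$ with $0\leq q_1<q_2\leq N-1$, then $q:=q_2-q_1\in\{1,\ldots,N-1\}$ has distance at most $1/N$ from some integer $p$, so $|q\theta-p|\leq 1/N$. If instead the pair consists of the endpoint $1$ and some $\{q_0\theta\}$, then — observing that $0=\{0\cdot\theta\}$ and $1$ sit in different subintervals whenever $N\geq 2$ — we have $q_0\in\{1,\ldots,N-1\}$ with $1-\{q_0\theta\}\leq 1/N$, so taking $q=q_0$ and $p=\lceil q\theta\rceil$ again yields $|q\theta-p|\leq 1/N$. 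In either case we obtain integers $p,q$ with $1\leq q\leq N-1$ and $|q\theta-p|\leq 1/N$; dividing by $q$ and using $N\geq t$ gives $|\theta-p/q|\leq 1/(qN)\leq 1/(qt)$, which is the desired inequality.

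The main delicate point is not the pigeonhole itself but ensuring the \emph{strict} inequality $q<t$. The construction forces $q\leq N-1=\lceil t\rceil-1$, which equals $t-1$ when $t\in\Z$ and equals $\lfloor t\rfloor$ otherwise, and is in both cases strictly less than $t$. The reason for inserting the extra endpoint $1$ among the pigeons is precisely to replace the a priori bound $q\leq N$ by $q\leq N-1$; without this trick one would run into the marginal situation $q_1=0$, $q_2=N$, in which $q=N=\lceil t\rceil$ could fail to be strictly less than $t$.
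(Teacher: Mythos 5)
Your proof is correct, and it takes the arithmetic (pigeonhole) route that the paper only names in passing after the statement of the theorem; the paper instead elaborates the geometric proof in Section~\ref{subsec:geometry_of_dirichlet}, where the theorem is re-cast as the assertion that the parallelogram $\cP(\theta,t)$ contains a nonzero lattice point and is deduced from Minkowski's convex body theorem via Corollary~\ref{cor:minkowski_linear_forms}. Your twist of inserting the extra pigeon $1$ and dropping $\{N\theta\}$, where $N=\lceil t\rceil$, cleanly forces $q\leq N-1<t$ in every case, whereas the naive box argument with pigeons $\{0\cdot\theta\},\ldots,\{N\cdot\theta\}$ only gives $q\leq N$, which can equal $t$ when $t\in\Z$. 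The geometric route secures the same strictness differently, by allotting the strict inequality in Corollary~\ref{cor:minkowski_linear_forms} to the form $x$ (so $|q|<t$) and the weak one to $\theta x-y$. What the geometric approach buys, and the reason the paper develops it, is that it generalises verbatim to the multidimensional Dirichlet theorems (Theorems~\ref{t:dirichlet_linear_form}, \ref{t:dirichlet_simultaneous}, \ref{t:dirichlet_matrix}); your pigeonhole argument is more elementary and self-contained, but does not unify those higher-dimensional cases as transparently.
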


The original statement of the theorem proved in Dirichlet's paper \cite{dirichlet} of the year 1842 is actually different. Ir much more resembles the statement of Corollary \ref{cor:dirichlet_irr} below and it was given for a linear form in arbitrarily many variables (see also Theorems \ref{t:dirichlet_linear_form}, \ref{t:dirichlet_simultaneous}). However, Theorem \ref{t:dirichlet} immediately follows from Dirichlet's original argument, and it so happened that this way of formulating Dirichlet's theorem is considered to be classical.

There are two most known proofs of this theorem -- an arithmetic one, involving the pigeon-hole principle, and a geometric one, involving Minkowski's convex body theorem. The latter approach is discussed in more detail in Section \ref{subsec:geometry_of_dirichlet}.

For irrational $\theta$ Theorem \ref{t:dirichlet} provides an estimate, to some extent trivial, for the Diophantine exponent of an irrational number.

\begin{corollary} \label{cor:dirichlet_irr}
  Let $\theta$ be an irrational real number. Then there are infinitely many pairs of coprime integers $p$, $q$ such that
  \[ 
    \bigg|\theta-\frac pq\bigg|<\frac{1}{q^2}\,.
  \]
\end{corollary}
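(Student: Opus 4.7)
The plan is to derive the corollary by iterating Dirichlet's theorem along an unbounded sequence of parameters $t$, and then passing to reduced fractions at the end.

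First, I would pick any sequence $t_1<t_2<t_3<\ldots$ of reals tending to infinity. For each $n$, Theorem \ref{t:dirichlet} applied with $t=t_n$ furnishes integers $p_n,q_n$ with $0<q_n<t_n$ and
\[
  \bigg|\theta-\frac{p_n}{q_n}\bigg|\leq\frac{1}{q_nt_n}.
\]
Because $q_n<t_n$, we have $q_nt_n>q_n^2$, hence $|\theta-p_n/q_n|<1/q_n^2$. At the same time $|\theta-p_n/q_n|\leq 1/(q_nt_n)\leq 1/t_n$, so these approximations converge to $\theta$.

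Next I would argue that infinitely many of the rationals $p_n/q_n$ are pairwise distinct. Since $\theta$ is irrational, $p_n/q_n\neq\theta$ for every $n$, so the values $|\theta-p_n/q_n|$ are strictly positive. If the set $\{p_n/q_n:n\in\N\}$ were finite, then $\inf_n|\theta-p_n/q_n|$ would be a positive real number, contradicting the fact that this infimum is $0$. Thus, after passing to a subsequence, we may assume that the rationals $p_n/q_n$ are all distinct.

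Finally, I would reduce each fraction: write $p_n/q_n=p_n'/q_n'$ with $\gcd(p_n',q_n')=1$ and $q_n'>0$; then $q_n'\leq q_n$, and so
\[
  \bigg|\theta-\frac{p_n'}{q_n'}\bigg|=\bigg|\theta-\frac{p_n}{q_n}\bigg|<\frac{1}{q_n^2}\leq\frac{1}{(q_n')^2}.
\]
Since the reduction map from rationals to coprime pairs $(p',q')$ with $q'>0$ is injective, the distinct rationals $p_n/q_n$ give rise to infinitely many distinct coprime pairs $(p_n',q_n')$, each satisfying the required inequality.

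The only genuinely delicate point is ensuring one truly gets \emph{infinitely many} pairs rather than the same pair repeated; this is handled by the irrationality of $\theta$, which forces the obtained approximations to produce infinitely many distinct rational numbers. Everything else is an immediate rewriting of Theorem \ref{t:dirichlet} combined with a reduction to coprime numerator and denominator.
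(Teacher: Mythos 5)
Your proof is correct and follows the standard argument that the paper leaves implicit (the corollary is stated without proof immediately after Theorem~\ref{t:dirichlet}). The key steps — using $q_n<t_n$ to pass from $1/(q_nt_n)$ to $1/q_n^2$, using irrationality plus $|\theta-p_n/q_n|\leq 1/t_n\to0$ to force infinitely many distinct rationals, and then reducing to lowest terms while observing $q_n'\leq q_n$ preserves the inequality — are exactly what is needed, and each is justified properly.
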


\begin{corollary} \label{cor:beta_for_irr}
  If $\theta$ is irrational, then $\omega(\theta)\geq1$, $\mu(\theta)\geq2$.
\end{corollary}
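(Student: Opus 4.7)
The plan is to derive both inequalities directly from Corollary \ref{cor:dirichlet_irr}, which supplies infinitely many coprime pairs $(p,q)$ satisfying $|\theta-p/q|<1/q^2$. The only subtlety is to note that in such a sequence of pairs the denominators $q$ must be unbounded: otherwise infinitely many of the pairs would share a common denominator, and for each fixed $q$ the inequality $|\theta-p/q|<1/q^2$ permits only finitely many integers $p$.

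Once this is in hand, the inequality $\omega(\theta)\geq 1$ is immediate. Fix any $\gamma<1$. Then $1+\gamma\leq 2$, so for every $q\geq 1$ one has $q^{1+\gamma}\leq q^2$, whence $1/q^2\leq 1/q^{1+\gamma}$. Therefore every pair $(p,q)$ furnished by Corollary \ref{cor:dirichlet_irr} satisfies
\[
  \bigg|\theta-\frac{p}{q}\bigg|<\frac{1}{q^2}\leq\frac{1}{q^{1+\gamma}}\,,
\]
so \eqref{eq:beta_1_dim} has infinitely many integer solutions. Since this holds for every $\gamma<1$, we conclude $\omega(\theta)\geq 1$.

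For the second inequality, the same pairs $(p,q)$ are already coprime by Corollary \ref{cor:dirichlet_irr}, and for any $\gamma<2$ and $q\geq 1$ we have $1/q^2\leq 1/q^\gamma$, so \eqref{eq:irrationality_measure} inherits infinitely many coprime solutions. Hence $\mu(\theta)\geq 2$. (Alternatively, one simply invokes the identity $\mu(\theta)=\omega(\theta)+1$ for irrational $\theta$ recorded just after Definition \ref{def:irrationality_measure}.) There is no genuine obstacle here; the whole argument is a monotonicity comparison of exponents, and the only item requiring a line of justification is the unboundedness of the denominators in the sequence of approximations.
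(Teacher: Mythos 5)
Your proof is correct and follows the same route the paper intends: Corollary \ref{cor:beta_for_irr} is stated without proof precisely because it is an immediate consequence of Corollary \ref{cor:dirichlet_irr}. One small point of polish: the unboundedness of the denominators, which you single out as the only step needing justification, is in fact never used in the rest of the argument — the definition of $\omega(\theta)$ asks only for infinitely many integer solutions $(p,q)$, which Corollary \ref{cor:dirichlet_irr} already supplies directly, and since the set $\{\gamma : \eqref{eq:beta_1_dim}\text{ has infinitely many solutions}\}$ is downward closed you may simply take $\gamma=1$ at once rather than passing through all $\gamma<1$.
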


The irrationality assumption cannot be omitted, as is shown by the following

\begin{proposition} \label{prop:beta_for_rationals}
  If $\theta$ is rational, then $\omega(\theta)=\infty$, $\mu(\theta)=1$.
\end{proposition}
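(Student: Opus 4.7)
The plan is to handle the two assertions $\omega(\theta)=\infty$ and $\mu(\theta)=1$ separately, throughout writing $\theta = a/b$ in lowest terms with $b \geq 1$.

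The equality $\omega(\theta) = \infty$ is essentially immediate, since Definition~\ref{def:beta_1_dim} imposes no coprimality condition on $p,q$: the infinite family $(p,q) = (ka, kb)$, $k = 1, 2, \ldots$, gives infinitely many distinct pairs with $\theta - p/q = 0$, which satisfies \eqref{eq:beta_1_dim} trivially for every $\gamma$. So $\omega(\theta)$ exceeds every real number.

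For the upper bound $\mu(\theta) \leq 1$, I would rely on the identity
\[
  \bigg|\theta - \frac{p}{q}\bigg| = \frac{|aq - bp|}{bq} \geq \frac{1}{bq},
\]
valid for any coprime $(p,q)$ with $q\geq 1$ and $p/q \neq \theta$, since $aq - bp$ is then a nonzero integer. For any fixed $\gamma > 1$ this lower bound forces $q \leq b^{1/(\gamma-1)}$ in \eqref{eq:irrationality_measure}. Because only the single coprime pair $(a,b)$ represents $\theta$ exactly, \eqref{eq:irrationality_measure} has only finitely many coprime solutions, so $\mu(\theta) \leq 1$.

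The matching bound $\mu(\theta) \geq 1$ is the delicate step, because the trivial family $(ka,kb)$ above is never coprime for $k \geq 2$. My plan is to let $q$ range over primes. For any prime $q$, the integers $p$ with $|q\theta - p| \leq 1$ form a set of at most three consecutive integers (two if $q\theta \notin \Z$, three if $q\theta \in \Z$), and among any two consecutive integers at most one is divisible by the prime $q$, so one can always select such a $p$ coprime to $q$; this yields $|\theta - p/q| \leq 1/q$. For all primes $q \neq b$ the resulting $p/q$ is distinct from $\theta$ (as $b \mid q$ would force $q=b$), hence letting $q$ range over the infinitely many such primes produces infinitely many distinct coprime solutions, proving $\mu(\theta) \geq 1$. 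The whole argument really hinges on this coprimality trick; once one passes to prime denominators, the rest is elementary.
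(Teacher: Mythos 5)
Your proof is correct, and the arguments for $\omega(\theta)=\infty$ and for the upper bound $\mu(\theta)\leq 1$ coincide with those in the paper. For the lower bound $\mu(\theta)\geq 1$, you take a genuinely different route. The paper simply notes that, with $a/b$ in lowest terms, the equation $aq-bp=1$ has infinitely many integer solutions with $q>0$; any common divisor of $p$ and $q$ divides $aq-bp=1$, so each such pair is automatically coprime, and it satisfies $\big|\theta-p/q\big|=1/(bq)\leq 1/q$. Your prime-denominator construction instead selects, for each prime $q$, an integer $p$ within distance $1$ of $q\theta$ that is coprime to $q$, using the observation that consecutive integers cannot all be multiples of a prime. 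Both approaches work, but the B\'ezout route is the more economical one: it produces coprime pairs for free, it pins down $\big|\theta-p/q\big|$ exactly rather than only bounding it above, and it avoids the case split on whether $q\theta$ is an integer (a case that, for a prime $q\neq b$, can only occur when $b=1$, i.e. $\theta\in\Z$ — a detail your write-up leaves to the reader). Your argument is sound, merely a bit heavier than necessary.
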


\begin{proof}
  Let $\theta=a/b$, $a,b\in\Z$, $b>0$.

  Then for every $k\in\N$ and every $\gamma\in\R$
  \[ 
    \bigg|\theta-\frac{ka}{kb}\bigg|=0<\frac{1}{q^\gamma}\,, 
  \]
  whence it immediately follows that $\omega(\theta)=\infty$.

  Furthermore, for every rational $p/q$ distinct from $\theta$ we have
  \[ 
    \bigg|\theta-\frac pq\bigg|=\frac{|aq-bp|}{bq}\geq\frac{1/b}{q}\,,
  \]
  whence it is clear that for every positive $\gamma$ inequality \eqref{eq:beta_1_dim} admits only a finite number of solutions. Thus, $\mu(\theta)\leq1$. Finally, we come to $\mu(\theta)=1$ by noticing that the linear Diophantine equation
  \[ 
    ax-by=1 
  \]
  with coprime $a$ and $b$ admits infinitely many solutions.
\end{proof}

%\paragraph{Диофантовы и лиувиллевы числа.}
\subsection{Diophantine and Liouville numbers}

In 1844, two years after Dirichlet's paper was published, Liouville \cite{liouville_44} constructed the first examples of transcendental numbers. The main idea of his proof led to the following statement, which was published in 1851 in \cite{liouville_51}.

\begin{theorem}[J.\,Liouville, 1844--1851] \label{t:liouville}
  Let $\theta$ be an algebraic number of degree $n$. Then there is a positive constant $c$ depending only on $\theta$ such that for every integer $p$ and every positive integer $q$ we have
  \[ 
    \bigg|\theta-\frac pq\bigg|>\frac{c}{q^n}\,. 
  \]
\end{theorem}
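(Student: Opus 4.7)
The plan is to use the minimal polynomial of $\theta$ and to combine an integrality estimate with the mean value theorem. Let $f(x)=a_nx^n+\ldots+a_1x+a_0\in\Z[x]$ be a polynomial of degree $n$ having $\theta$ as a root and with no rational roots other than possibly $\theta$ itself. For $n\geq 2$ such an $f$ exists because the minimal polynomial of $\theta$ over $\Q$, cleared of denominators, is irreducible over $\Q$ and hence has no rational roots at all. For $n=1$, $\theta$ is itself rational and the statement follows at once from Proposition \ref{prop:beta_for_rationals} applied with $c=1/b$ where $\theta=a/b$, so from now on I focus on $n\geq 2$.

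Given an arbitrary rational $p/q$ with $q>0$, I would first observe the integrality bound
\[
  q^nf(p/q)=a_np^n+a_{n-1}p^{n-1}q+\ldots+a_0q^n\in\Z,
\]
and that this integer is nonzero because $f$ has no rational roots. Hence $|f(p/q)|\geq 1/q^n$. Next, I would invoke the mean value theorem on the interval with endpoints $\theta$ and $p/q$: there is some $\xi$ in that interval with
\[
  f(p/q)=f(p/q)-f(\theta)=(p/q-\theta)\,f'(\xi).
\]
Setting $M:=\max_{|x-\theta|\leq 1}|f'(x)|$, which is a finite quantity depending only on $\theta$, one immediately gets
\[
  \bigg|\theta-\frac pq\bigg|\geq\frac{1}{Mq^n}
  \quad\text{whenever}\quad |p/q-\theta|\leq 1.
\]

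Finally, the case $|p/q-\theta|>1$ is trivial: since $q\geq 1$, we have $|\theta-p/q|>1\geq 1/q^n$. Combining both cases and setting, for instance, $c=\min(1,1/M)/2$, we obtain the strict inequality $|\theta-p/q|>c/q^n$ claimed in the theorem.

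I expect no substantial obstacle. The only subtle point is the choice of the polynomial $f$: one must ensure $f(p/q)\neq 0$ for every rational $p/q$ (when $n\geq 2$), and this is why one passes to the minimal polynomial of $\theta$, whose irreducibility over $\Q$ prevents any rational roots. The rest of the argument is just the integrality of $q^n f(p/q)$ together with Lagrange's mean value theorem, and the adjustment of the constant $c$ to absorb the boundary case when $p/q$ is far from $\theta$.
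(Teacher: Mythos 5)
The paper does not give a proof of Theorem \ref{t:liouville}; it is stated as a classical result with references to Liouville's original papers. Your argument is the standard proof and is correct for the substantive case $n\geq 2$: passing to the minimal polynomial $f$ (irreducible of degree $\geq 2$, hence with no rational roots), the integrality of $q^nf(p/q)$ gives $|f(p/q)|\geq q^{-n}$, and the mean value theorem with $M=\max_{|x-\theta|\leq1}|f'(x)|>0$ turns this into $|\theta-p/q|\geq 1/(Mq^n)$ in the near range, while the far range $|\theta-p/q|>1\geq q^{-n}$ is trivial; shrinking $c$ makes the inequality strict.

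One small caveat about your treatment of $n=1$: the theorem as stated asserts the bound ``for every integer $p$ and every positive integer $q$,'' and for a rational $\theta=a/b$ the choice $p/q=\theta$ gives left-hand side $0$, so no positive $c$ works. Proposition \ref{prop:beta_for_rationals}'s proof only yields the bound $|\theta-p/q|\geq(1/b)/q$ when $p/q\neq\theta$, so ``follows at once'' is slightly overstated. This is really a defect of the theorem's wording (Liouville's theorem is meaningful only for $n\geq2$, where $\theta$ is irrational and $p/q\neq\theta$ is automatic), and your argument in the $n\geq2$ case is complete and correct.
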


Theorem \ref{t:liouville} immediately implies an estimate for the irrationality measure (as well as for the Diophantine exponent) of an algebraic number, which generalises the estimate provided by Proposition \ref{prop:beta_for_rationals}.

\begin{corollary} \label{cor:beta_for_alg_by_liouville}
  If $\theta$ is an algebraic number of degree $n$, then $\mu(\theta)\leq n$. Respectively, for $n\geq 2$, we have $\omega(\theta)\leq n-1$.
\end{corollary}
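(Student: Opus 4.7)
The plan is to derive the corollary directly from Liouville's inequality (Theorem \ref{t:liouville}), splitting into the trivial case $n=1$ and the main case $n\geq 2$.

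First I would dispose of the case $n=1$: here $\theta\in\Q$, so by Proposition \ref{prop:beta_for_rationals} we already have $\mu(\theta)=1\leq n$, and the statement about $\omega$ is vacuous.

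For $n\geq 2$, the number $\theta$ is irrational, so any rational $p/q$ is distinct from $\theta$ and Theorem \ref{t:liouville} applies with some constant $c=c(\theta)>0$. I would pick an arbitrary $\gamma>n$ and show that inequality \eqref{eq:irrationality_measure} has only finitely many coprime solutions $(p,q)$. Suppose $|\theta-p/q|\leq q^{-\gamma}$; combining with the Liouville bound $|\theta-p/q|>c\,q^{-n}$ yields $c\,q^{-n}<q^{-\gamma}$, i.e.\ $q^{\gamma-n}<c^{-1}$. Since $\gamma-n>0$, this bounds $q$ from above; and for each admissible $q$, the numerator $p$ is constrained to a bounded interval of length at most $2q^{1-\gamma}$, so only finitely many pairs $(p,q)$ can occur. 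Hence every $\gamma>n$ fails the condition in Definition \ref{def:irrationality_measure}, giving $\mu(\theta)\leq n$.

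The bound on the Diophantine exponent then follows from the general identity $\mu(\theta)=\omega(\theta)+1$ valid for irrational $\theta$ (stated right after Definition \ref{def:irrationality_measure}): since $n\geq 2$ ensures $\theta\notin\Q$, we obtain $\omega(\theta)=\mu(\theta)-1\leq n-1$.

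There is essentially no obstacle here; the only mild subtlety is that Definition \ref{def:irrationality_measure} requires coprimality of $p$ and $q$, but the argument above works for all integer pairs with $q>0$, and restricting to coprime pairs can only decrease the number of solutions, so the conclusion is unaffected.
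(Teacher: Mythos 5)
Your proof is correct and is precisely the standard deduction the paper has in mind when it says Theorem~\ref{t:liouville} ``immediately implies'' the corollary (the paper itself gives no written proof). The case split on $n=1$ versus $n\geq 2$, the contradiction between the Liouville lower bound $c\,q^{-n}$ and the assumed upper bound $q^{-\gamma}$ for $\gamma>n$, the resulting bound on $q$ and hence on the number of solutions, and the final passage from $\mu$ to $\omega$ via $\mu(\theta)=\omega(\theta)+1$ are exactly the intended steps. Your remark about coprimality is also the right observation to make and is handled correctly.
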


Hence we conclude that if $\mu(\theta)=\infty$ (which, for irrational $\theta$, is equivalent to $\omega(\theta)=\infty$), then $\theta$ cannot be algebraic, therefore, it is transcendental.

\begin{definition}
   Let $\theta$ be an irrational number. If $\omega(\theta)=\infty$, then $\theta$ is called a \emph{Liouville number}. If $\omega(\theta)<\infty$, then $\theta$ is called a \emph{Diophantine number}.
\end{definition}

Thus, algebraic numbers are Diophantine. But what are the possible values of their Diophantine exponents? Corollaries \ref{cor:beta_for_irr} and \ref{cor:beta_for_alg_by_liouville} give us an analogue of Proposition \ref{prop:beta_for_rationals} for quadratic irrationalities.

\begin{proposition} \label{prop:beta_for_quadratic_irr}
  If $\theta$ is a quadratic irrationality, i.e. an algebraic number of degree $2$, then $\omega(\theta)=1$, $\mu(\theta)=2$.
\end{proposition}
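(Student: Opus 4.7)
The plan is to obtain the claimed equalities by a sandwich argument, combining the lower bounds for the Diophantine exponent/irrationality measure of any irrational number with the upper bounds supplied by Liouville's theorem for algebraic numbers of given degree.

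First I would note that, by hypothesis, $\theta$ is irrational. Hence Corollary \ref{cor:beta_for_irr} applies and yields $\omega(\theta)\geq 1$ and $\mu(\theta)\geq 2$. These are the lower bounds.

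For the matching upper bounds, I would appeal to Corollary \ref{cor:beta_for_alg_by_liouville} with $n=2$: since $\theta$ is algebraic of degree exactly $2$, we obtain $\mu(\theta)\leq 2$ and, as $n\geq 2$, also $\omega(\theta)\leq n-1=1$. Combined with the previous step this forces $\omega(\theta)=1$ and $\mu(\theta)=2$, as required.

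The only subtlety -- which is not really an obstacle but worth mentioning -- is that the relation $\mu(\theta)=\omega(\theta)+1$ stated in the text already makes one of the two equalities follow from the other, so strictly speaking one of the two bounding arguments is redundant; still, carrying out both independently is the cleanest way to present the proof. No further computation is needed, as all the work has been done in Dirichlet's and Liouville's theorems.
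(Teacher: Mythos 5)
Your proof is correct and matches the paper's own (very brief) justification exactly: the paper states that Corollaries \ref{cor:beta_for_irr} and \ref{cor:beta_for_alg_by_liouville} together yield this result, which is precisely the sandwich argument you carry out. Your remark about the redundancy coming from $\mu(\theta)=\omega(\theta)+1$ is a fair observation and does not detract from the argument.
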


%Зная лишь предложения \ref{prop:beta_for_rationals}, \ref{prop:beta_for_quadratic_irr} и теорему Лиувилля (теорему \ref{t:liouville}), естественно задаться вопросом, чему равны

For algebraic numbers of higher degrees Liouville's theorem had been improved upon subsequently by Thue \cite{thue}, Siegel \cite{siegel}, Dyson \cite{dyson_roth}, Gelfond \cite{gelfond}, until Roth \cite{roth} obtained in 1955 a result, for which he was awarded the Fields Medal in 1958.

\begin{theorem}[K.\,Roth, 1955] \label{t:roth}
  If $\theta$ is an irrational algebraic number, then for every positive $\e$ the inequality
  \[ 
    \bigg|\theta-\frac pq\bigg|<\frac{1}{q^{2+\e}} 
  \]
  has no more than a finite number of solutions in integer $p$, $q$.
\end{theorem}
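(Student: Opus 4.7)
The plan is to proceed by contradiction, following the Thue--Siegel--Roth method. Suppose, for some fixed $\varepsilon>0$, there are infinitely many rationals $p/q$ satisfying $|\theta-p/q|<q^{-2-\varepsilon}$, and let $n=\deg\theta$. From the infinite family I would extract an $m$-tuple of solutions $p_1/q_1,\ldots,p_m/q_m$ with $q_1<\cdots<q_m$ growing so rapidly that $\log q_{i+1}/\log q_i$ exceeds a threshold depending on $m$, $\varepsilon$ and $n$; the number $m$ of approximations needed grows as $\varepsilon\to 0$.

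The next step is to construct an auxiliary polynomial $P\in\Z[x_1,\ldots,x_m]$ of prescribed multidegree $(r_1,\ldots,r_m)$, with the $r_i$ chosen so that $r_1\log q_1\approx\cdots\approx r_m\log q_m$. Using Siegel's lemma --- a pigeonhole argument producing small integer solutions of underdetermined linear systems --- together with the algebraicity of $\theta$ (which controls the coefficients of the linear system via the minimal polynomial of $\theta$), I would arrange for $P$ to have small integer coefficients and large \emph{index} at $(\theta,\ldots,\theta)$ with weights $1/r_i$. The index measures the weighted order of vanishing, and the target is an index at least $m/2-\delta$ for a small $\delta>0$ depending on $\varepsilon$.

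The contradiction then arises from two opposing estimates of $P$, and of its partial derivatives, at the rational point $(p_1/q_1,\ldots,p_m/q_m)$. The upper bound follows by Taylor-expanding around $(\theta,\ldots,\theta)$: the smallness of each $|\theta-p_i/q_i|$, combined with the high index just built in, forces $P$ together with many of its partial derivatives to take extremely small values there. The lower bound is the hard part: Roth's lemma asserts that a non-zero polynomial in $\Z[x_1,\ldots,x_m]$ whose coefficients are not too large cannot have too large an index at a point whose rational coordinates have sufficiently well-separated heights. Comparing the two bounds gives the desired contradiction.

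The main obstacle is Roth's lemma itself. The earlier stages of the Thue--Siegel--Dyson--Gelfond hierarchy --- which give only the weaker exponents $n/2+1$, $2\sqrt{n}$ and $\sqrt{2n}$ --- work with polynomials in one or two variables, where Wronskian techniques are transparent. Roth's breakthrough is to induct on the number of variables and control the index simultaneously in all of them; the proof relies on generalised Wronskians and a delicate induction whose combinatorial bookkeeping is what makes the full theorem substantially deeper than its predecessors. All the other ingredients --- the choice of approximations, Siegel's lemma, and the Taylor-expansion estimate --- are essentially routine once Roth's lemma is available.
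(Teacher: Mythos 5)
The paper does not prove Roth's theorem --- being a survey, it cites Roth's 1955 paper and moves on --- so there is no proof in the text to compare against. On its own terms, your outline is a faithful high-level account of the standard Thue--Siegel--Roth argument: you correctly identify the gap principle (choosing approximations with $\log q_{i+1}/\log q_i$ large), the construction of the auxiliary polynomial via Siegel's lemma with the multidegree calibrated so that $r_i\log q_i$ are nearly equal, the weighted index at $(\theta,\ldots,\theta)$ with target roughly $m/2$, the Taylor-expansion upper bound at the rational point, and Roth's lemma as the non-vanishing input whose proof by generalised Wronskians and induction on the number of variables is the genuine novelty over the Thue--Siegel--Dyson--Gelfond stages. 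The historical exponents you quote ($n/2+1$, $2\sqrt n$, $\sqrt{2n}$) are also correct. What you have is an accurate roadmap rather than a proof --- the bookkeeping in Siegel's lemma (bounding coefficients of the linear system through the minimal polynomial of $\theta$, counting conditions versus unknowns so that the index $m/2-\delta$ is achievable), the precise statement and proof of Roth's lemma, and the final quantitative comparison of the two bounds are all left implicit --- but you acknowledge this, and the structure is sound. Since the paper itself offers no proof, this level of detail is arguably already beyond what the source supplies.
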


\begin{corollary} \label{cor:roth}
  If $\theta$ is an irrational algebraic number, then $\omega(\theta)=1$, $\mu(\theta)=2$.
\end{corollary}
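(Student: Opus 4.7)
The plan is to obtain Corollary \ref{cor:roth} as an essentially immediate reformulation of Theorem \ref{t:roth} in the language of Diophantine exponents. Since an irrational algebraic number is in particular irrational, Corollary \ref{cor:beta_for_irr} already supplies the lower bounds $\omega(\theta)\geq 1$ and $\mu(\theta)\geq 2$, so the only task is to produce matching upper bounds.

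First I would feed Theorem \ref{t:roth} directly into Definition \ref{def:irrationality_measure}. For any $\gamma>2$ one can write $\gamma=2+\e$ with $\e>0$, and Roth's theorem asserts that the inequality $|\theta-p/q|<q^{-(2+\e)}$ has only finitely many integer solutions; a fortiori, the subfamily of coprime pairs considered in Definition \ref{def:irrationality_measure} is finite. Thus no $\gamma>2$ can qualify in the supremum defining $\mu(\theta)$, whence $\mu(\theta)\leq 2$, and together with the lower bound this yields $\mu(\theta)=2$.

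Second I would read off $\omega(\theta)=1$ from the identity
\[
  \mu(\theta)=\omega(\theta)+1
\]
valid for irrational $\theta$, which was recorded right after Definition \ref{def:irrationality_measure}: substituting $\mu(\theta)=2$ gives $\omega(\theta)=1$. There is no genuine obstacle to foresee; the full depth of the corollary is concentrated in Theorem \ref{t:roth}, and what remains is purely bookkeeping — translating Roth's upper bound on solutions into a bound on a supremum, and passing between the two exponent conventions.
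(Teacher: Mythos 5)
Your argument is correct and is exactly the intended reading of the corollary, which the paper leaves as an immediate consequence: the lower bounds come from Corollary~\ref{cor:beta_for_irr}, the upper bound $\mu(\theta)\leq 2$ from Roth's Theorem~\ref{t:roth} (the coprime solutions of Definition~\ref{def:irrationality_measure} being a subset of the integer solutions Roth bounds), and $\omega(\theta)=1$ then follows from the identity $\mu(\theta)=\omega(\theta)+1$ for irrational $\theta$.
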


Thus, irrational algebraic numbers have the smallest possible value of the Diophantine exponent. Which means that algebraic numbers are badly approximable by rationals. However, the classical notion of \emph{badly approximable} numbers is slightly more subtle. It assumes that the strongest possible way to improve on Corollary \ref{cor:dirichlet_irr} is by decreasing the constant.

\begin{definition} \label{def:BA}
    An irrational number $\theta$ is called \emph{badly approximable (by rationals)}, if there is a constant $c>0$ such that for every rational $p/q$ we have
  \[ 
    \bigg|\theta-\frac pq\bigg|\geq\frac{c}{q^2}\,. 
  \]
\end{definition}

It follows from Liouville's theorem that algebraic numbers of degree $2$ are badly approximable. It is an open question whether algebraic numbers of higher degrees are badly approximable as well. By now, even Lang's conjecture (1945) is still open. It claims that for every irrational algebraic number $\theta$ there exists $\delta>1$ such that the inequality
\[ 
  \bigg|\theta-\frac pq\bigg|<\frac{1}{q^2(\log q)^\delta} 
\]
has no more than a finite number of solutions in integer $p$, $q$.

%\paragraph{Геометрическая интерпретация теоремы Дирихле.}
\subsection{Geometric interpretation of Dirichlet's theorem} \label{subsec:geometry_of_dirichlet}

Let us consider the line $\cL(\theta)$ in $\R^2$ passing through the origin and the point $(1,\theta)$. The set of points $(x,y)$ satisfying the inequality
\[ 
  |x(\theta x-y)|<1
\]
forms a ``hyperbolic cross'' $\cH(\theta)$ with the line $\cL(\theta)$ and the ordinate axis as asymptotes. As for the set determined by the system
\[ 
  \begin{cases}
    |x|\leq t \\
    |\theta x-y|\leq 1/t
  \end{cases}, 
\]
it is a parallelogram $\cP(\theta,t)$ with sides parallel to those two lines (see Fig.\ref{fig:dirichlet}).

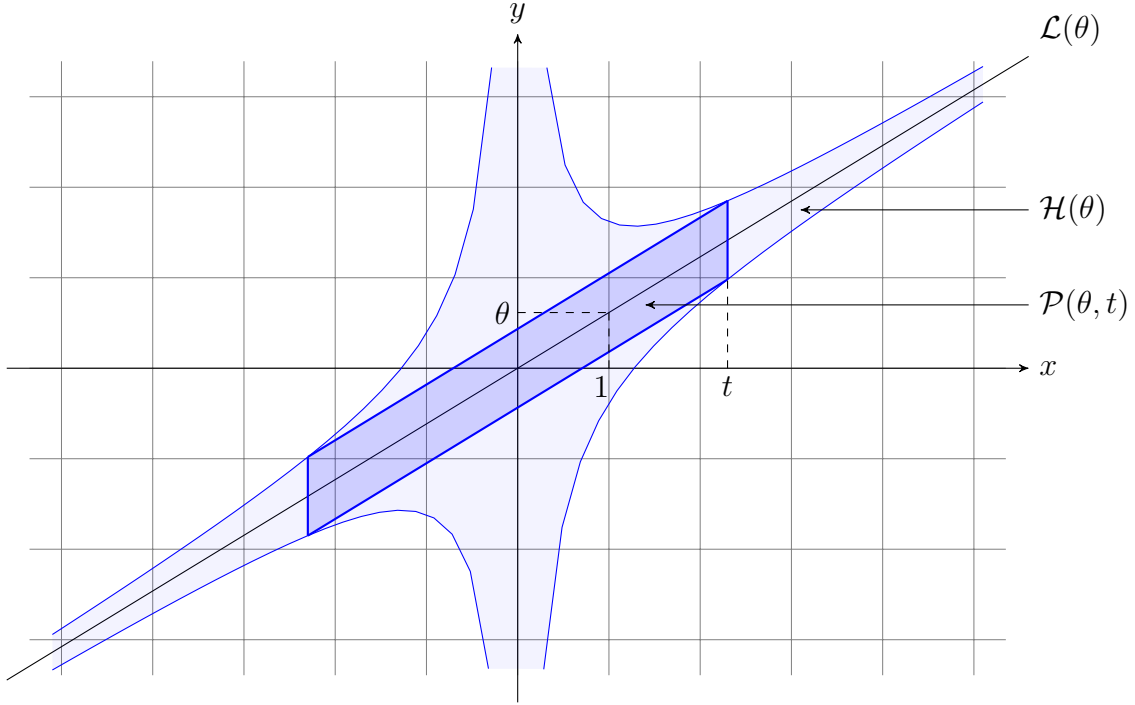
\begin{figure}[h]
\centering
\begin{tikzpicture}[domain=-5.1:5.1,scale=1.2]
    \draw[very thin,color=gray,scale=1] (-5.35,-6*8/13+0.3) grid (5.35,6*8/13-0.3);

    \draw[->,>=stealth'] (-5.6,0) -- (5.6,0) node[right] {$x$};
    \draw[->,>=stealth'] (0,-6*8/13) -- (0,6*8/13) node[above] {$y$};

    \draw[color=black] plot[domain=-5.6:5.6] (\x, {8*\x/13}) node[above right]{$\cL(\theta)$};

    \draw[color=blue] plot[domain=0.32:5.1]  (\x, {1/(\x)+8*\x/13});
    \draw[color=blue] plot[domain=0.2855:5.1]  (\x, {-1/(\x)+8*\x/13});
    \draw[color=blue] plot[domain=-5.1:-0.32]  (\x, {1/(\x)+8*\x/13});
    \draw[color=blue] plot[domain=-5.1:-0.2855]  (\x, {-1/(\x)+8*\x/13});

    \fill[fill=blue,opacity=0.05]
        plot [domain=0.32:5.1] (\x,{1/(\x)+8*\x/13}) --
        plot [domain=5.1:0.2855] (\x, {-1/(\x)+8*\x/13}) --
        plot [domain=-0.32:-5.1] (\x,{1/(\x)+8*\x/13}) --
        plot [domain=-5.1:-0.2855] (\x, {-1/(\x)+8*\x/13}) --
        cycle;

    \draw[thick,color=blue] plot[domain=-2.3:2.3] (\x, {8*\x/13+1/2.3});
    \draw[thick,color=blue] plot[domain=-2.3:2.3] (\x, {8*\x/13-1/2.3});
    \draw[thick,color=blue] plot[domain=-1/2.3+8*2.3/13:1/2.3+8*2.3/13] ({2.3}, \x);
    \draw[thick,color=blue] plot[domain=-1/2.3-8*2.3/13:1/2.3-8*2.3/13] ({-2.3}, \x);

    \fill[fill=blue,opacity=0.15]
        plot[domain=-2.3:2.3] (\x, {8*\x/13+1/2.3}) --
        plot[domain=2.3:-2.3] (\x, {8*\x/13-1/2.3}) --
        cycle;

    \draw[dashed] (2.3,0) node[below,yshift=0.03cm]{$t$} -- (2.3,-1/2.3+8*2.3/13);
    \draw[dashed] (1,0) node[below,xshift=-0.1cm,yshift=0.03cm]{$1$} -- (1,8/13);
    \draw[dashed] (0,8/13) node[left,xshift=0.05cm]{$\theta$} -- (1,8/13);

    \draw[->,>=stealth',color=black,thin] (5.6,1.75) node[right]{$\cH(\theta)$} -- (3.1,1.75);
    \draw[->,>=stealth',color=black,thin] (5.6,0.7) node[right]{$\cP(\theta,t)$} -- (1.4,0.7);

\end{tikzpicture}
\caption{For Dirichlet's theorem} \label{fig:dirichlet}
\end{figure}

Clearly, Corollary \ref{cor:dirichlet_irr} to Dirichlet's theorem states that $\cH(\theta)$ contains infinitely many points $(x,y)\in\Z^2$ with nonzero $x$, and Dirichlet's theorem itself (Theorem \ref{t:dirichlet}) states that $\cP(\theta,t)$ contains a nonzero point of $\Z^2$ for \emph{every} $t>1$. Thus formulated, Dirichlet's theorem becomes a particular case of the following classical \emph{Minkowski convex body theorem} published in \cite{minkowski} (see also \cite{cassels_GN}, \cite{schmidt_DA}).

\begin{theorem}[H.\,Minkowski, 1896] \label{t:minkowski_convex}
  Let $\cM$ be a convex centrally symmetric closed body in $\R^d$ centered at the origin. Suppose that the volume of $\cM$ is not less than $2^d$. Then $\cM$ contains a nonzero point of $\Z^d$.
\end{theorem}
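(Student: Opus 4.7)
The plan is to derive Minkowski's theorem from Blichfeldt's lemma through a symmetry-and-convexity trick, and then pass from strict volume inequality to equality by a compactness argument.

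First I would prove Blichfeldt's lemma: if $S\subset\R^d$ is Lebesgue measurable with $\vol(S)>1$, then there exist distinct $\vec x,\vec y\in S$ with $\vec x-\vec y\in\Z^d$. Partition $\R^d$ into the unit cubes $C_{\vec n}=\vec n+[0,1)^d$ indexed by $\vec n\in\Z^d$, and translate each piece $S\cap C_{\vec n}$ back to the fundamental cube via $\vec z\mapsto\vec z-\vec n$. These translates all lie in $[0,1)^d$ and their measures sum to $\vol(S)>1=\vol([0,1)^d)$, so they cannot be pairwise disjoint; any point common to the translates coming from distinct $\vec n_1,\vec n_2$ yields $\vec x\in S\cap C_{\vec n_1}$ and $\vec y\in S\cap C_{\vec n_2}$ with $\vec x-\vec y=\vec n_1-\vec n_2\in\Z^d\setminus\{\vec 0\}$.

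Next I would treat the strict case $\vol(\cM)>2^d$. Then $\vol(\tfrac12\cM)>1$, so Blichfeldt's lemma supplies distinct points $\tfrac{\vec x}{2},\tfrac{\vec y}{2}\in\tfrac12\cM$ (with $\vec x,\vec y\in\cM$) whose difference $\tfrac{\vec x-\vec y}{2}$ is a nonzero element of $\Z^d$. Central symmetry gives $-\vec y\in\cM$, and convexity then places the midpoint $\tfrac12\bigl(\vec x+(-\vec y)\bigr)=\tfrac{\vec x-\vec y}{2}$ inside $\cM$, producing the desired nonzero lattice point.

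Finally I would handle the boundary case $\vol(\cM)=2^d$ by compactness. For each $k\in\N$ the dilate $(1+1/k)\cM$ has volume exceeding $2^d$, so by the previous step it contains a nonzero lattice point $\vec v_k$. Since $\cM$ is bounded, the vectors $\vec v_k$ lie in a bounded subset of $\Z^d$ and therefore take only finitely many values; some $\vec v$ occurs as $\vec v_k$ for infinitely many $k$, and since $\cM$ is closed, $\vec v\in\bigcap_{k\in\N}(1+1/k)\cM=\cM$. The main subtlety is exactly this boundary step — it is the only place where the closedness of $\cM$ is used, and without it the statement of the theorem is false. All the real combinatorial content of the argument is concentrated in the pigeonhole principle underlying Blichfeldt's lemma.
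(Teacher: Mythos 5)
The paper states Minkowski's theorem without proof, citing \cite{minkowski}, \cite{cassels_GN}, \cite{schmidt_DA}, so there is no in-text argument to compare against. Your proof is the standard one (Blichfeldt's pigeonhole lemma for the scaled body $\tfrac12\cM$, the symmetry-plus-convexity midpoint trick, and a dilation-and-compactness limit to reach the equality case $\vol(\cM)=2^d$), and it is correct; in particular, the observation that the nested family $(1+1/k)\cM$ is decreasing with intersection $\cM$ legitimately closes the boundary step, which is indeed the only place closedness of $\cM$ is used.
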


\begin{corollary}%[Теорема Минковского для линейных форм]
\label{cor:minkowski_linear_forms}
  Let $L_1,\ldots,L_d$ be a $d$-tuple of homogeneous linear forms in $\R^d$ with determinant $D\neq0$ and let $\delta_1,\ldots,\delta_n$ be positive numbers whose product equals $|D|$. Then there is a point $\vec v\in\Z^d\backslash\{\vec 0\}$ such that
  \[ 
    |L_1(\vec v)|\leq\delta_i,\quad|L_i(\vec v)|<\delta_i,\quad i=2,\ldots,d. 
  \]
\end{corollary}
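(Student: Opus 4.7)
The plan is to apply Minkowski's convex body theorem (Theorem \ref{t:minkowski_convex}) to the parallelepiped
\[
  \cM = \{\vec v \in \R^d : |L_i(\vec v)| \leq \delta_i, \ i=1,\ldots,d\},
\]
which is visibly convex, centrally symmetric and closed. First I would compute $\vol(\cM)$ by viewing the map $\vec v \mapsto (L_1(\vec v), \ldots, L_d(\vec v))$ as a linear change of variables with Jacobian of modulus $|D|$; since the target parallelepiped $\prod_{i=1}^d [-\delta_i,\delta_i]$ has volume $2^d \prod_i \delta_i = 2^d |D|$, this yields $\vol(\cM) = 2^d$. Minkowski's theorem then supplies a nonzero integer point $\vec v \in \cM$ satisfying $|L_i(\vec v)| \leq \delta_i$ for every $i$.

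The main technical step is then to upgrade the non-strict inequalities at indices $i\geq 2$ to strict ones, and I would handle it by a perturbation argument. For each sufficiently small $\e>0$ I would set $\delta_i(\e) = \delta_i - \e$ for $i\geq 2$ and rescale $\delta_1$ to $\delta_1(\e) = \delta_1 \prod_{i\geq 2} \delta_i/(\delta_i - \e)$, so that the product $\prod_i \delta_i(\e)$ still equals $|D|$. Applying the first part of the argument to this perturbed system produces a nonzero integer point $\vec v_\e$ with $|L_1(\vec v_\e)| \leq \delta_1(\e)$ and $|L_i(\vec v_\e)| \leq \delta_i - \e$ for $i\geq 2$.

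To extract the desired $\vec v^*$, I would fix some small $\e_0>0$ and observe that the body $\{\vec v : |L_1(\vec v)|\leq \delta_1(\e_0), \ |L_i(\vec v)|\leq \delta_i \text{ for } i\geq 2\}$ is bounded, since the forms $L_1,\ldots,L_d$ are linearly independent (as $D\neq 0$). Hence it contains only finitely many points of $\Z^d$, and by the pigeonhole principle a single $\vec v^*\neq \vec 0$ occurs as $\vec v_{\e_n}$ along some sequence $\e_n\to 0^+$. For such a $\vec v^*$ we get $|L_i(\vec v^*)|\leq \delta_i - \e_1 < \delta_i$ for $i\geq 2$ (from any single index $n$) and $|L_1(\vec v^*)|\leq \delta_1(\e_n)$ for all $n$, so $|L_1(\vec v^*)|\leq \lim_{n\to\infty}\delta_1(\e_n) = \delta_1$, which is exactly the required conclusion.

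The only delicate point I anticipate is this final compactness step that converts Minkowski's closed-body conclusion into strict inequalities in $d-1$ of the $d$ coordinates; the volume computation and the direct application of Minkowski's theorem are routine once the parallelepiped $\cM$ and its perturbations have been introduced.
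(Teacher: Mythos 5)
The paper does not actually supply a proof of Corollary~\ref{cor:minkowski_linear_forms}; it states it as a classical consequence of Theorem~\ref{t:minkowski_convex} and refers the reader to the standard references. So there is no paper argument to compare against, and the question is simply whether your derivation is sound. It is: computing $\vol(\cM)=2^d$ by the linear change of variables is exactly right, and the perturbation $\delta_i\mapsto\delta_i-\e$ for $i\geq 2$ compensated by inflating $\delta_1$ (so that the product is preserved, hence the perturbed closed body still has volume exactly $2^d$) is a clean way to keep the hypotheses of the closed-body version of Minkowski's theorem intact. The finiteness-plus-pigeonhole step is also correct: since $\delta_1(\e)$ increases with $\e$, every $\vec v_\e$ for $0<\e\leq\e_0$ lies in the fixed compact parallelepiped with parameters $\delta_1(\e_0),\delta_2,\ldots,\delta_d$, so a single nonzero $\vec v^*$ recurs along a sequence $\e_{n_k}\to 0$; the strict inequalities for $i\geq 2$ hold at any fixed $k$, and passing to the limit in $k$ gives $|L_1(\vec v^*)|\leq\delta_1$. (One small slip: where you write ``$\delta_i-\e_1$'' you mean $\delta_i-\e_{n_k}$ for some index in the subsequence that produced $\vec v^*$, not necessarily the first term of the original sequence.) This is essentially the textbook derivation of Minkowski's linear forms theorem from the convex body theorem, which is presumably what the author had in mind when labelling it a corollary.
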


It is worth mentioning that often, instead of Minkowski's theorem, Corollary \ref{cor:minkowski_linear_forms} is applied. This Corollary bears the name of \emph{Minkowski's theorem for linear forms}. Clearly, Dirichlet's theorem requires only Corollary \ref{cor:minkowski_linear_forms}.

%\begin{theorem} \label{t:minkowski_linear_forms}
%  Пусть $L_1,\ldots,L_d$ --- набор однородных линейных форм в $\R^d$ с определителем $D$ и пусть $\delta_1,\ldots,\delta_n$ --- положительные числа, произведение которых равно $D$. Тогда существует точка $\vec v\in\Z^d\backslash\{\vec 0\}$, такая что
%  \[ |L_i(\vec v)|\leq\delta_i,\quad i=1,\ldots,d. \]
%\end{theorem}

%\paragraph{Связь с цепными дробями.}
\subsection{Relation to continued fractions}\label{subsec:continued_fractions}

%\subsection{Цепные дроби и полигоны Клейна.} \label{subsec:klein_polygons}

Let us remind the algorithm of continued fraction expansion of a real number $\theta$. Denoting by $[\,\cdot\,]$ the integer part, let us define the sequences $(\alpha_k)$, $(a_k)$, $(p_k)$, $(q_k)$ by
\begin{equation} \label{eq:CF_attributes}
  \begin{aligned}
    \alpha_0=\theta,\quad\alpha_{k+1}=(\alpha_k-[\alpha_k])^{-1},\quad a_k=[\alpha_k], \\ %\quad k=0,1,2,\ldots, \\
    \begin{aligned}
      p_{-2} & =0, \quad\,\ p_{-1}=1, \\
      q_{-2} & =1, \quad\,\ q_{-1}=0,
    \end{aligned}
    \quad\,\
    \begin{aligned}
      p_k & =a_kp_{k-1}+p_{k-2}, \\
      q_k & =a_kq_{k-1}+q_{k-2},
    \end{aligned}
  \end{aligned}
  \qquad k=0,1,2,\ldots
\end{equation}
These sequences are infinite, provided $\theta$ is irrational. If $\theta$ is rational, they stop as soon as $\alpha_k$ becomes integer.

Then (see \cite{khintchine_CF}, \cite{schmidt_DA}), for every $k=0,1,2,\ldots$ such that $\alpha_k$ is correctly defined, we have
\[ 
  \theta=[a_0;a_1,\ldots,a_{k-1},\alpha_k]=
  a_0+\cfrac{1}{a_1+\cfrac{1}{\stackrel{\ddots}{\phantom{|}}+\,\cfrac{1}{a_{k-1}+\cfrac{1}{\alpha_k}}}}\,,\qquad
  \frac{p_k}{q_k}=[a_0;a_1,\ldots,a_k]. 
\]
The numbers $a_k$ are called \emph{partial quotients} of $\theta$ and $p_k/q_k$ are called \emph{convergents} of $\theta$.

\subsubsection{Diophantine exponent and growth of partial quotients}\label{subsubsec:exponent_vs_growth}

The convergents of $\theta$ satisfy (see \cite{lang}, \cite{khintchine_CF}, \cite{schmidt_DA}) the relation
\[ 
  \bigg|\theta-\frac pq\bigg|\leq\frac{1}{q^2}\,. 
\]
On the other hand, if a rational $p/q$ satisfies this inequality, then by Fatou's theorem (see \cite{fatou}, \cite{grace}, \cite{lang}) $p/q$ coincides either with a convergent of $\theta$, or with a mediant neighbouring a convergent. Moreover, by Legendre's theorem (see \cite{lang}, \cite{khintchine_CF}, \cite{schmidt_DA}) any rational number $p/q$ satisfying the inequality
\[ 
  \bigg|\theta-\frac pq\bigg|\leq\frac{1}{2q^2} 
\]
coincides with a convergent of $\theta$.

Thus, the order of approximation of $\theta$ by rationals is determined by how fast the differences between $\theta$ and its convergents decrease. For these differences there exist classical estimates (see \cite{lang}, \cite{khintchine_CF}, \cite{schmidt_DA}).

\begin{proposition} \label{prop:CF_gap_estimates}
  Let $\theta=[a_0;a_1,a_2,\ldots]$ be the continued fraction expansion of $\theta$ and let $p_k/q_k=[a_0;a_1,\ldots,a_k]$ be its convergents. Then
  \[ 
    \text{a) }\ \frac{1}{q_k(q_k+q_{k+1})}<\bigg|\theta-\frac{p_k}{q_k}\bigg|\leq\frac{1}{q_kq_{k+1}}\,, 
  \]
  \[ 
    \text{\,b) }\ \ \frac{1}{q_k^2(a_{k+1}+2)}<\bigg|\theta-\frac{p_k}{q_k}\bigg|\leq\frac{1}{q_k^2a_{k+1}}\,. 
  \]
\end{proposition}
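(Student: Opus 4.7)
The plan is to reduce both estimates to a single exact formula for the difference $\theta - p_k/q_k$ and then bound the denominator of that formula using the two-sided control on $\alpha_{k+1}$.

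First I would write $\theta$ as a finite continued fraction with last partial quotient $\alpha_{k+1}$:
\[
  \theta=[a_0;a_1,\ldots,a_k,\alpha_{k+1}]=\frac{\alpha_{k+1}p_k+p_{k-1}}{\alpha_{k+1}q_k+q_{k-1}},
\]
which follows directly from the recurrences \eqref{eq:CF_attributes} by induction (the convergent formula applied with $\alpha_{k+1}$ in place of $a_{k+1}$). Next I would invoke the standard determinant identity $p_kq_{k-1}-p_{k-1}q_k=(-1)^{k-1}$, again an immediate induction using \eqref{eq:CF_attributes}. Subtracting $p_k/q_k$ and simplifying yields the key closed-form relation
\[
  \theta-\frac{p_k}{q_k}=\frac{(-1)^k}{q_k\bigl(\alpha_{k+1}q_k+q_{k-1}\bigr)}.
\]

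With this formula in hand, both parts reduce to bounding $D_k:=\alpha_{k+1}q_k+q_{k-1}$. Since $\alpha_{k+2}>1$ (as $\theta$ is either irrational, in which case the expansion continues, or the formula is being applied at an index for which $\alpha_{k+1}$ is not the last term), we have $a_{k+1}\leq\alpha_{k+1}<a_{k+1}+1$. Substituting and using $q_{k+1}=a_{k+1}q_k+q_{k-1}$ gives
\[
  q_{k+1}\leq D_k<q_{k+1}+q_k,
\]
which upon inversion is exactly part~(a). For part~(b) I would instead note that $D_k\geq a_{k+1}q_k$ (dropping the nonnegative $q_{k-1}$) and, using the elementary bound $q_{k-1}\leq q_k$ that follows from the recurrence, $D_k<(a_{k+1}+1)q_k+q_k=(a_{k+1}+2)q_k$; inverting yields the two inequalities in~(b).

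I do not expect any real obstacle here: the only mild subtlety is making sure the strict inequality $\alpha_{k+1}<a_{k+1}+1$ holds, which is handled by the case analysis above (and the proposition is trivially true in the degenerate case where $\alpha_{k+1}$ is the terminal integer, since then $\theta=p_k/q_k$ and $a_{k+1}=\infty$ is not in play). Everything else is arithmetic with the recurrence.
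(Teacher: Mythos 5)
Your proof is correct. The identity
\[
  \theta-\frac{p_k}{q_k}=\frac{(-1)^k}{q_k\bigl(\alpha_{k+1}q_k+q_{k-1}\bigr)}
\]
follows from $\theta=(\alpha_{k+1}p_k+p_{k-1})/(\alpha_{k+1}q_k+q_{k-1})$ and the determinant identity, and the two-sided bounds on $D_k=\alpha_{k+1}q_k+q_{k-1}$ deliver both (a) and (b) exactly as you describe. Two small remarks on the write-up: the estimate $a_{k+1}\leq\alpha_{k+1}<a_{k+1}+1$ is just the defining property of $a_{k+1}=[\alpha_{k+1}]$ and needs no appeal to $\alpha_{k+2}$; and your parenthetical on the degenerate case is slightly garbled --- if $\alpha_{k+1}$ is the terminal integer, then $\theta=p_{k+1}/q_{k+1}$ rather than $p_k/q_k$, but your formula still applies with $\alpha_{k+1}=a_{k+1}$, the right-hand inequalities become (or stay) nonstrict, and the strict left-hand ones follow from $q_{k-1}\leq q_k<2q_k$, so nothing is lost.

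Your route is genuinely different from what the paper presents. The paper records the proposition as classical and then, in its section on geometric proofs, deliberately re-proves part (b) by a lattice-geometric argument: it interprets $q_k|\theta q_k-p_k|$ as the area of the parallelogram $\cQ_1$, constructs a reciprocal parallelogram $\cQ_2$ with $\vol\cQ_1\cdot\vol\cQ_2=1$ (Lemma \ref{l:two_parallelograms}), and reads off $a_{k+1}\leq\vol\cQ_2<a_{k+1}+2$ from the inclusion $[\vec v_{k-1},\vec v_{k+1}]\subset[\vec e,\vec c]\subset[\vec v_{k-1}-\vec v_k,\vec v_{k+1}+\vec v_k]$. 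Your arithmetic argument is shorter and more elementary, and gives (a) and (b) simultaneously from a single closed formula; the paper's geometric argument is longer but is there precisely to illustrate how areas and determinants attached to a Klein polygon encode the approximation quality, a viewpoint the paper then develops in higher dimensions where no comparably clean closed formula exists.
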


\begin{corollary}\label{cor:BA_vs_boundedness_of_partial_quotients}
  An irrational number is badly approximable if and only if its partial quotients are bounded.
\end{corollary}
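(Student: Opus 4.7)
The plan is to derive both directions directly from Proposition~\ref{prop:CF_gap_estimates}(b), combined with Legendre's theorem to handle rationals that are not convergents.

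For the ``if'' direction, suppose every partial quotient satisfies $a_k\leq M$ for some constant $M$. I would show that the constant $c=\min(1/2,\,1/(M+2))$ works in Definition~\ref{def:BA}. Take an arbitrary rational $p/q$ with $q>0$. Either $|\theta-p/q|>1/(2q^2)$ (so the desired bound holds trivially with constant $1/2$), or $|\theta-p/q|\leq 1/(2q^2)$, in which case Legendre's theorem forces $p/q$ to be a convergent $p_k/q_k$ of $\theta$. The lower bound in Proposition~\ref{prop:CF_gap_estimates}(b) then gives
\[
  \bigg|\theta-\frac{p_k}{q_k}\bigg|>\frac{1}{q_k^2(a_{k+1}+2)}\geq\frac{1}{(M+2)q_k^2}\,,
\]
which is exactly what is required.

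For the ``only if'' direction, suppose $\theta$ is badly approximable with constant $c>0$. Apply Definition~\ref{def:BA} to each convergent $p_k/q_k$ and combine with the upper bound in Proposition~\ref{prop:CF_gap_estimates}(b):
\[
  \frac{c}{q_k^2}\leq\bigg|\theta-\frac{p_k}{q_k}\bigg|\leq\frac{1}{q_k^2 a_{k+1}}\,,
\]
which immediately yields $a_{k+1}\leq 1/c$ for every $k\geq 0$. Together with the trivial fact that $a_0=[\theta]$ is a single integer, this gives a uniform upper bound on all partial quotients.

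There is essentially no obstacle here, since both implications reduce to a single line once Proposition~\ref{prop:CF_gap_estimates}(b) is available; the only point that requires a moment's attention is the ``if'' direction, where one must remember that an inequality $|\theta-p/q|\leq c/q^2$ for small $c$ forces $p/q$ to be a convergent (by Legendre's theorem), so that the bound on partial quotients can actually be invoked against an arbitrary rational and not only against convergents.
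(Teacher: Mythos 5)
Your proof is correct, and the paper itself gives no argument for this corollary (it is stated immediately after Proposition~\ref{prop:CF_gap_estimates} without proof), so you are supplying exactly what is left implicit. Both directions follow from part (b) of that proposition as you say, with Legendre's theorem doing the work of reducing the case of an arbitrary rational to the case of a convergent in the ``if'' direction; the only slight wrinkle worth noting is that Legendre's theorem identifies $p/q$ with a convergent $p_k/q_k$ as a rational number, so $q_k\leq q$, and the inequality $|\theta-p/q|>1/\bigl((M+2)q_k^2\bigr)\geq c/q^2$ still follows because $q\geq q_k$ — your computation silently assumes $p/q$ reduced, but the conclusion survives without that assumption.
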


\begin{corollary}\label{cor:omega_vs_growth}
  Given an irrational number $\theta$, within the notation of Proposition \ref{prop:CF_gap_estimates}, we have
  \begin{equation}\label{eq:omega_vs_growth}
    \omega(\theta)=\limsup_{k\to+\infty}\frac{q_{k+1}}{q_k}=1+\limsup_{k\to+\infty}\frac{a_{k+1}}{q_k}\,.
  \end{equation}
\end{corollary}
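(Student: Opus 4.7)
The identity as displayed in \eqref{eq:omega_vs_growth} must be understood with logarithms, for otherwise it fails already as soon as $\theta$ has unbounded partial quotients: then $q_{k+1}/q_k\to\infty$ along a subsequence, while $\omega(\theta)$ may well be finite (for instance, $\omega(\theta)=1$ whenever $\theta$ is badly approximable, by Corollary~\ref{cor:BA_vs_boundedness_of_partial_quotients}). The statement I plan to prove is therefore
\[
   \omega(\theta)=\limsup_{k\to+\infty}\frac{\log q_{k+1}}{\log q_k}=1+\limsup_{k\to+\infty}\frac{\log a_{k+1}}{\log q_k}.
\]

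For the first equality, my plan is to write $|\theta-p_k/q_k|=q_k^{-1-\gamma_k}$ and squeeze $\gamma_k$ using Proposition~\ref{prop:CF_gap_estimates}(a): the bounds $q_kq_{k+1}\leq|\theta-p_k/q_k|^{-1}\leq q_k(q_k+q_{k+1})\leq 2q_kq_{k+1}$ give $\gamma_k\log q_k=\log q_{k+1}+O(1)$, so $\gamma_k=(\log q_{k+1})/(\log q_k)+o(1)$. It then remains to show $\omega(\theta)=\limsup_k\gamma_k$. The inequality $\omega(\theta)\geq\limsup_k\gamma_k$ is immediate, as each convergent $p_k/q_k$ is a solution of \eqref{eq:beta_1_dim} with $\gamma$ arbitrarily close to $\gamma_k$. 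The reverse inequality I would obtain by invoking Legendre's theorem: for any $\gamma>1$ and all sufficiently large $q$, the condition $|\theta-p/q|\leq q^{-1-\gamma}$ forces $|\theta-p/q|<(2q^2)^{-1}$, whence $p/q$ is a convergent, and the exponent it witnesses equals $\gamma_k$ up to $o(1)$.

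For the second equality, the recurrence $q_{k+1}=a_{k+1}q_k+q_{k-1}$ combined with $0<q_{k-1}\leq q_k$ sandwiches $a_{k+1}q_k\leq q_{k+1}\leq(a_{k+1}+1)q_k$, giving $\log q_{k+1}=\log a_{k+1}+\log q_k+O(1)$. Dividing by $\log q_k\to+\infty$ then yields
\[
   \frac{\log q_{k+1}}{\log q_k}=1+\frac{\log a_{k+1}}{\log q_k}+o(1),
\]
and passing to the $\limsup$ completes the proof.

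The step I expect to be the main obstacle is the reduction to convergents in the first equality. Definition~\ref{def:beta_1_dim} places no restriction on the rational $p/q$, so one must exclude the possibility that non-convergent rationals supply systematically better approximations. This is handled by Legendre's theorem for $\gamma>1$ and by Fatou's theorem for the borderline regime (every solution of $|\theta-p/q|<q^{-2}$ is either a convergent or a mediant neighbouring one, and the denominator of such a mediant is comparable to that of the adjacent convergent, so mediants cannot inflate the $\limsup$). Once these standard continued-fraction facts are in hand, both equalities follow from the clean asymptotics above.
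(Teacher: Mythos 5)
Your reading is right: as printed, \eqref{eq:omega_vs_growth} is missing logarithms (for the golden ratio one has $\omega(\theta)=1$ while $q_{k+1}/q_k\to(1+\sqrt5)/2$ and $a_{k+1}/q_k\to0$), and the intended statement is
\[
  \omega(\theta)=\limsup_{k\to+\infty}\frac{\log q_{k+1}}{\log q_k}=1+\limsup_{k\to+\infty}\frac{\log a_{k+1}}{\log q_k}\,,
\]
which is exactly what you prove. Your argument is the intended deduction from Proposition \ref{prop:CF_gap_estimates}: write $|\theta-p_k/q_k|=q_k^{-1-\gamma_k}$, use part (a) to get $\gamma_k=\log q_{k+1}/\log q_k+o(1)$, obtain the lower bound for $\omega(\theta)$ from the convergents themselves and the upper bound via Legendre's theorem, and get the second equality from the recurrence $q_{k+1}=a_{k+1}q_k+q_{k-1}$. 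Two small remarks. First, Definition \ref{def:beta_1_dim} does not require $p,q$ coprime, so in the Legendre step you should pass to lowest terms: if $|\theta-p/q|\leq q^{-1-\gamma}$ and $p/q=p_k/q_k$ with $q=dq_k$, then $q_k^{-1-\gamma_k}=|\theta-p_k/q_k|\leq(dq_k)^{-1-\gamma}\leq q_k^{-1-\gamma}$, so still $\gamma\leq\gamma_k$, and for a fixed convergent only finitely many multiples $d$ can occur because $\theta$ is irrational; this closes that loop. Second, the appeal to Fatou's theorem is unnecessary: since $q_{k+1}\geq q_k$ one has $\limsup_k\gamma_k\geq1$, so any $\gamma$ exceeding this limsup already lies in the regime $\gamma>1$ where Legendre applies, and the bound $\omega(\theta)\leq\limsup_k\gamma_k$ follows without any borderline case.
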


Thus, the order of approximation of $\theta$ by rationals is determined by the growth of the partial quotients of $\theta$.

\subsubsection{Geometric algorithm}\label{subsubsec:geometric_algorithm}

The approach described in Section \ref{subsec:geometry_of_dirichlet} enables interpreting a continued fraction as a geometric object (see also \cite{delone}, \cite{arnold_mccme}, \cite{erdos_gruber_hammer}, \cite{karpenkov_book}, \cite{german_tlyustangelov_MJCNT_2016}).
Let us define sequences of real numbers $(\beta_k)$, $(b_k)$ and a sequence of lattice points $(\vec v_k)$. Set
\[ 
  \vec v_{-2}=(1,0),\qquad\vec v_{-1}=(0,1). 
\]
Given $\vec v_{k-2}$ and $\vec v_{k-1}$, define $\beta_k$, $b_k$, and $\vec v_k$ by
\[ 
  \vec v_{k-2}+\beta_k\vec v_{k-1}\in\cL(\theta),\qquad b_k=[\beta_k],\qquad\vec v_k=\vec v_{k-2}+b_k\vec v_{k-1}. 
\]
In other words, $\vec v_k$ is the ultimate point of $\Z^2$ before crossing the line $\cL(\theta)$ on the way from $\vec v_{k-2}$ in the direction determined by\footnote{Here and henceforth we do not distinguish between the concepts of a point and of its radius vector} $\vec v_{k-1}$.
\begin{figure}[h]
\centering
\begin{tikzpicture}
%    \draw[->,>=stealth'] (-0.4,0) -- (8,0) node[right] {$x$};
%    \draw[->,>=stealth'] (0,-0.3) -- (0,5) node[above] {$y$};

    \draw[color=black] plot[domain=-0.5:8.2] (\x, {8*\x/13}) node[above right]{$\cL(\theta)$};

    \draw[color=blue,dashed] (1-0.3*2.6,3-0.3*0.6) -- (1+2.75*2.6,3+2.75*0.6);

    \draw[color=blue,thick] (0,0) -- (2.6,0.6);
    \draw[color=blue,thick] (0,0) -- (1,3);

    \node[fill=blue,circle,inner sep=1.8pt] at (0,0) {};
    \node[fill=blue,circle,inner sep=1.8pt] at (1,3) {};
    \node[fill=blue,circle,inner sep=1.8pt] at (2.6,0.6) {};
    \node[fill=blue,circle,inner sep=1.8pt] at (1+1*2.6,3+1*0.6) {};
    \node[fill=blue,circle,inner sep=1.8pt] at (1+2*2.6,3+2*0.6) {};

    \node[fill=white,draw=blue,thick,circle,inner sep=1.5pt] at (1+2.382*2.6,3+2.382*0.6) {};

    \draw (-0.1,0) node[below right]{$\vec 0$};
    \draw (0.8+2*2.6,3.04+2*0.6) node[above]{$\vec v_k$};
    \draw (2.6,0.6) node[below right] {$\vec v_{k-1}$};
    \draw (1,3) node[below right] {$\vec v_{k-2}$};
   \draw (1+2.382*2.6,3+2.382*0.6) node[below right]{$\vec v_{k-2}+\beta_k\vec v_{k-1}$};
\end{tikzpicture}
\caption{Construction of $\vec v_k$} \label{fig:next_point}
\end{figure}

It is impossible to construct the point $\vec v_k$ if and only if $\vec v_{k-1}$ is on $\cL(\theta)$ -- in this case the three sequences stop. If there are no nonzero points of $\Z^2$ on $\cL(\theta)$, the sequences are infinite.

\begin{theorem} \label{t:CF_geometry}
  For each $k$ we have
  \[ 
    \begin{aligned}
      \textup{a) } & \det(\vec v_{k-1},\vec v_k)=(-1)^{k-1} \\
      \textup{b) } & \det(\vec v_{k-2},\vec v_k)=(-1)^kb_k \\
      \textup{c) } & \beta_k=\alpha_k,\ b_k=a_k,\ \vec v_k=(q_k,p_k).
    \end{aligned} 
  \]
\end{theorem}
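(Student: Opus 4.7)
The plan is to establish all three assertions simultaneously by induction on $k$, using as base case $\vec v_{-2}=(1,0)=(q_{-2},p_{-2})$ and $\vec v_{-1}=(0,1)=(q_{-1},p_{-1})$, which already gives $\det(\vec v_{-2},\vec v_{-1})=1$, matching part (a) at index $k-1=-1$. The inductive hypothesis will be that (a), (b), (c) all hold at index $k-1$.

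Parts (a) and (b) are routine consequences of the recurrence $\vec v_k=\vec v_{k-2}+b_k\vec v_{k-1}$ combined with multilinearity of the determinant. Indeed, assuming (a) at index $k-1$,
\[
  \det(\vec v_{k-1},\vec v_k)=\det(\vec v_{k-1},\vec v_{k-2})=-\det(\vec v_{k-2},\vec v_{k-1})=-(-1)^{k-2}=(-1)^{k-1},
\]
and
\[
  \det(\vec v_{k-2},\vec v_k)=b_k\det(\vec v_{k-2},\vec v_{k-1})=b_k(-1)^{k-2}=(-1)^kb_k,
\]
yielding (a) and (b) at index $k$ without any extra work.

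The substantive step is (c). Using the inductive identifications $\vec v_{k-2}=(q_{k-2},p_{k-2})$ and $\vec v_{k-1}=(q_{k-1},p_{k-1})$, the defining condition $\vec v_{k-2}+\beta_k\vec v_{k-1}\in\cL(\theta)$ becomes the scalar equation $p_{k-2}+\beta_kp_{k-1}=\theta(q_{k-2}+\beta_kq_{k-1})$, which solves to
\[
  \beta_k=\frac{p_{k-2}-\theta q_{k-2}}{\theta q_{k-1}-p_{k-1}}\,.
\]
The denominator is nonzero because the construction is still running at step $k$, i.e.\ $\vec v_{k-1}\notin\cL(\theta)$. The classical identity $\theta=[a_0;a_1,\ldots,a_{k-1},\alpha_k]=(\alpha_kp_{k-1}+p_{k-2})/(\alpha_kq_{k-1}+q_{k-2})$ rearranges to give exactly the same rational expression for $\alpha_k$, so $\beta_k=\alpha_k$. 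Then $b_k=[\beta_k]=[\alpha_k]=a_k$ by definition, and finally $\vec v_k=\vec v_{k-2}+a_k\vec v_{k-1}=(q_{k-2}+a_kq_{k-1},\,p_{k-2}+a_kp_{k-1})=(q_k,p_k)$ by \eqref{eq:CF_attributes}, which closes the induction.

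The only genuine obstacle is the identity $\theta=(\alpha_kp_{k-1}+p_{k-2})/(\alpha_kq_{k-1}+q_{k-2})$ for the ``incomplete'' continued fraction. However, this is a standard fact proved by a short separate induction on the convergent recurrence in \eqref{eq:CF_attributes}, and can simply be quoted from \cite{khintchine_CF} or \cite{schmidt_DA}. Once this is available, every remaining manipulation is formal.
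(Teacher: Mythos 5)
Your proof is correct, but your handling of part (c) takes a genuinely different route from the paper's. The paper does not invoke the classical identity $\theta=(\alpha_kp_{k-1}+p_{k-2})/(\alpha_kq_{k-1}+q_{k-2})$ at all. Instead, it observes that $\vec v_{k-2}+\beta_k\vec v_{k-1}$ and $\vec v_{k-1}+\beta_{k+1}\vec v_k$ both lie on $\cL(\theta)$, hence are collinear, expands the determinant
\[
0=\det(\vec v_{k-2}+\beta_k\vec v_{k-1},\,\vec v_{k-1}+\beta_{k+1}\vec v_k)=(-1)^k+(-1)^{k-1}\beta_k\beta_{k+1}+(-1)^kb_k\beta_{k+1}
\]
using (a) and (b), and deduces the recursion $\beta_{k+1}=(\beta_k-[\beta_k])^{-1}$. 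Since $\alpha_k$ satisfies the same recursion and $\beta_0=\theta=\alpha_0$, the three identities in (c) follow by induction on $k$. So the paper's argument is self-contained: it builds (c) directly out of (a), (b) and the definitions, never needing a closed-form expression for $\alpha_k$. Your version instead solves the membership condition $\vec v_{k-2}+\beta_k\vec v_{k-1}\in\cL(\theta)$ explicitly for $\beta_k$ and matches it against the standard rational expression for $\alpha_k$ from the continued fraction algorithm. That is a perfectly valid alternative — the algebra checks out, the base case $\vec v_{-2}=(q_{-2},p_{-2})$, $\vec v_{-1}=(q_{-1},p_{-1})$ is right, and the non-vanishing of the denominator is correctly justified by the construction still being active at step $k$. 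What your route costs is an appeal to an external continued-fraction identity (citable, as you note, from \cite{khintchine_CF} or \cite{schmidt_DA}); what it buys is that you skip the determinant expansion entirely and never need a recursion between $\beta_k$ and $\beta_{k+1}$. The paper's route is the more ``geometric'' one, which fits the spirit of the section, but yours is shorter once the classical identity is granted.

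Your treatment of (a) and (b) is the same as the paper's, only spelled out as an explicit induction where the paper simply appeals to linearity.
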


\begin{proof}
  
  Statements (a) and (b) follow from the fact that both determinant and the relation $\vec v_k=\vec v_{k-2}+b_k\vec v_{k-1}$ are linear.
  
  In order to prove (c), let us express $\beta_{k+1}$ in terms of $\beta_k$. Vectors $\vec v_{k-2}+\beta_k\vec v_{k-1}$ and $\vec v_{k-1}+\beta_{k+1}\vec v_k$ are collinear, whence by (a) and (b) we get
  \[ 
    0=\det(\vec v_{k-2}+\beta_k\vec v_{k-1},\vec v_{k-1}+\beta_{k+1}\vec v_k)=(-1)^k+(-1)^{k-1}\beta_k\beta_{k+1}+(-1)^kb_k\beta_{k+1}. 
  \]
  Thus, $1-\beta_{k+1}(\beta_k-b_k)=0$, i.e.
  \[ 
    \beta_{k+1}=(\beta_k-[\beta_k])^{-1}, 
  \]
  which coincides with the respective formula for $\alpha_k$, $\alpha_{k+1}$ and gives us the induction step. Finally, we note that
  \[ 
    \beta_0=\theta=\alpha_0,\quad\vec v_{-2}=(1,0)=(q_{-2},p_{-2}),\quad\vec v_{-1}=(0,1)=(q_{-1},p_{-1})
  \]
  and make use of \eqref{eq:CF_attributes}. The argument is complete.
\end{proof}

Statements (a) and (b) of Theorem \ref{t:CF_geometry} provide a geometric interpretation of the well-known relations
\[
  \begin{aligned}
    p_kq_{k-1}-p_{k-1}q_k & =(-1)^{k-1}, \\
    p_kq_{k-2}-p_{k-2}q_k & =(-1)^ka_k.
  \end{aligned} 
\]
Statement (a) means that for each $k$ the vectors $\vec v_{k-1}$, $\vec v_k$ form a basis of $\Z^2$, respectively oriented. Statement (b) means that the integer length of the segment with endpoints at $\vec v_{k-2}$ and $\vec v_k$ equals $a_k$.

\begin{definition}
  The \emph{integer length} of a segment with endpoints in $\Z^2$ is defined as the number of minimal (w.r.t. inclusion) segments with endpoints in $\Z^2$ contained in it.
\end{definition}

It easily follows from the rule of constructing $\vec v_k$ from the two preceding points that for every 
%\footnote{Если $\theta$ положительно, то это верно и для $k=-1$}
$k\geq0$ the points $\vec v_{k-1}$ and $\vec v_k$ are separated by $\cL(\theta)$. Furthermore, the points with even nonnegative indices lie below $\cL(\theta)$, and the ones with odd indices lie above $\cL(\theta)$. This corresponds to the fact that all the convergents with even indices are smaller than $\theta$ and those with odd indices are greater than $\theta$.

\subsubsection{Geometric proofs}

Many statements concerning continued fractions can be proved geometrically. As an example, let us present an argument that proves inequalities (b) from Proposition \ref{prop:CF_gap_estimates}. Let us rewrite them as
\begin{equation} \label{eq:pre_two_parallelograms}
  \frac{1}{a_{k+1}+2}<q_k|\theta q_k-p_k|\leq\frac{1}{a_{k+1}}
\end{equation}
and notice that $q_k|\theta q_k-p_k|$ is nothing else but the area of the parallelogram $\cQ_1$ with vertices $\vec a$, $\vec v_k$, $\vec b$, $\vec 0$ (see Fig.\ref{fig:two_parallelograms}). Consider the parallelogram $\cQ_2$ with vertices $\vec 0$, $\vec c$, $\vec d$, $\vec e$ (see that same Fig.\ref{fig:two_parallelograms}).

\begin{figure}[h]
\centering
\begin{tikzpicture}
    \draw[->,>=stealth'] (-2.7,0) -- (10,0) node[right] {$x$};
    \draw[->,>=stealth'] (0,-1.2) -- (0,6) node[above] {$y$};

    \draw[color=black] plot[domain=-1.2:10.2] (\x, {8*\x/13}) node[above right]{$\cL(\theta)$};

    \draw[color=blue,dashed] (0,0) -- (1+2*2.6,3+2*0.6);
    \draw[color=blue,dashed] (0,0) -- (1,3);
    \draw[color=blue,dashed] (1-1.2*2.6,3-1.2*0.6) -- (1+3.2*2.6,3+3.2*0.6);
    \draw[color=blue,dashed] (1-1.2*2.6,0.6/2.6-1.2*0.6) -- (1+3.2*2.6,0.6/2.6+3.2*0.6);

    \draw[color=blue,very thick] (1,3) -- (1+2*2.6,3+2*0.6);
    \draw[decorate,decoration={brace,raise=3.5pt,amplitude=6pt}] (1,3) -- (1+2*2.6,3+2*0.6);

    \fill[blue,opacity=0.1]
        plot[domain=1-1.2*2.6:1+3.2*2.6] (\x, {0.6*\x/2.6}) --
        plot[domain=1+3.2*2.6:1-1.2*2.6] (\x, {0.6*\x/2.6+3-0.6/2.6}) --
        cycle;

%    \draw[color=blue,very thick] plot[domain=1+2*2.6:7.8] (\x, {0.5+8*\x/13.4});
%%    \draw[color=blue] (1+2*2.6,3+2*0.6) -- (3+7*2.6,9+7*0.6);
%    \draw[color=blue,very thick] (1,3) -- (1+0.8*1-0.8*1.3,3+0.8*3-0.8*0.6);

    \draw[very thick] (2.6,0.6) -- (2.6,1.6);
    \draw[very thick] (0,-1) -- (2.6,0.6);
    \draw[dashed] (2.6,0) node[below]{$q_k$} -- (2.6,0.6);

    \draw[very thick] (0,3-0.6/2.6) -- (1+2.382*2.6,3-0.6/2.6+3+2.382*0.6);
    \draw[very thick] (1+2.382*2.6,3+2.382*0.6) -- (1+2.382*2.6,3-0.6/2.6+3+2.382*0.6);

    \draw[very thick] (0,-1) -- (0,3-0.6/2.6);
    \draw[very thick] (0,0) -- (1+2.382*2.6,3+2.382*0.6);

    \draw (2.6,0.6) node[below right]{$\vec v_k$};
    \draw (1.1+2*2.6,3.04+2*0.6) node[above]{$\vec v_{k+1}$};
    \draw (1,3) node[below right]{$\vec v_{k-1}$};
    \draw (1+2.6,3.25+0.6) node[above]{$a_{k+1}$};
    \draw (0,0) node[above left]{$\vec 0$};
    \draw (0,-1.1) node[left]{$\vec a$};
    \draw (2.6,1.4) node[right]{$\vec b$};
    \draw (1+2.382*2.6,3+2.382*0.6) node[below right]{$\vec c$};
    \draw (1+2.382*2.6,3-0.6/2.6+3+2.382*0.6) node[right]{$\vec d$};
    \draw (0,3-0.6/2.6) node[above left]{$\vec e$};
    \draw (1-2.6,3-0.6) node[above left]{$\vec v_{k-1}-\vec v_k$};
    \draw (1+3*2.6,3+3*0.6) node[below right]{$\vec v_{k+1}+\vec v_k$};

    \node[fill=blue,circle,inner sep=1.8pt] at (0,0) {};
    \node[fill=blue,circle,inner sep=1.8pt] at (2.6,0.6) {};
    \node[fill=blue,circle,inner sep=1.8pt] at (2*2.6,2*0.6) {};
    \node[fill=blue,circle,inner sep=1.8pt] at (3*2.6,3*0.6) {};
    \node[fill=blue,circle,inner sep=1.8pt] at (1,3) {};
    \node[fill=blue,circle,inner sep=1.8pt] at (1+1*2.6,3+1*0.6) {};
    \node[fill=blue,circle,inner sep=1.8pt] at (1+2*2.6,3+2*0.6) {};
    \node[fill=blue,circle,inner sep=1.8pt] at (1+3*2.6,3+3*0.6) {};
    \node[fill=blue,circle,inner sep=1.8pt] at (1-2.6,3-0.6) {};

    \node[fill=white,draw=blue,thick,circle,inner sep=1.5pt] at (1+2.382*2.6,3+2.382*0.6) {};
    \node[fill=white,draw=blue,thick,circle,inner sep=1.5pt] at (0,3-0.6/2.6) {};
    \node[fill=white,draw=blue,thick,circle,inner sep=1.5pt] at (1+2.382*2.6,3-0.6/2.6+3+2.382*0.6) {};
    \node[fill=white,draw=blue,thick,circle,inner sep=1.5pt] at (0,-1) {};
    \node[fill=white,draw=blue,thick,circle,inner sep=1.5pt] at (2.6,1.6) {};

    \draw[->,>=stealth',color=black,thin] (5,-0.6) node[right]{$\cQ_1$} -- (0.25,-0.6);
    \draw[->,>=stealth',color=black,thin] (1.5,6) node[left]{$\cQ_2$} -- (6.25,6);

\end{tikzpicture}
\caption{Parallelograms $\cQ_1$, $\cQ_2$ and the empty stripe} \label{fig:two_parallelograms}
\end{figure}
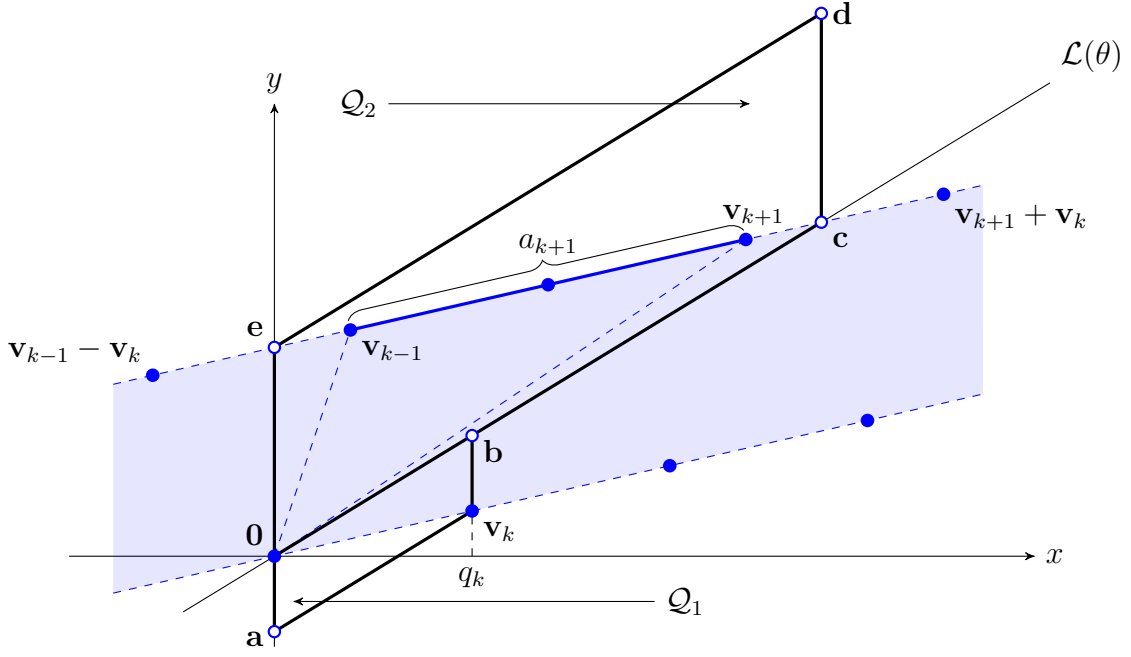

\begin{lemma} \label{l:two_parallelograms}
  The product of the areas of $\cQ_1$ and $\cQ_2$ equals $1$.
\end{lemma}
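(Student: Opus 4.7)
The plan is to pin down explicit coordinates for the four auxiliary vertices $\vec a,\vec b,\vec c,\vec e$ from the figure, express each area as the absolute value of a $2\times 2$ determinant, and then collapse the product to $1$ by two applications of Theorem \ref{t:CF_geometry}(a) — i.e. of the identity $p_kq_{k-1}-p_{k-1}q_k=(-1)^{k-1}$.

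First I would fix the vertices of $\cQ_1$. By the figure, $\vec a$ lies on the $y$-axis, $\vec b$ lies on $\cL(\theta)$, and $\vec a+\vec b=\vec v_k=(q_k,p_k)$. This forces $\vec a=(0,p_k-\theta q_k)$ and $\vec b=(q_k,\theta q_k)$, whence
\[
  \textup{area}(\cQ_1)=|\det(\vec a,\vec b)|=q_k|\theta q_k-p_k|.
\]

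Next I would locate $\vec c$ and $\vec e$. By the construction of Section \ref{subsubsec:geometric_algorithm}, $\vec c=\vec v_{k-1}+\beta_{k+1}\vec v_k$ lies on $\cL(\theta)$. The figure then specifies $\vec e$ as the point where the line through $\vec v_{k-1}$ in the direction of $\vec v_k$ meets the $y$-axis, so $\vec e=\vec v_{k-1}-(q_{k-1}/q_k)\vec v_k$; Theorem \ref{t:CF_geometry}(a) gives $\vec e=(0,(-1)^k/q_k)$. Hence
\[
  \textup{area}(\cQ_2)=|\det(\vec e,\vec c)|=|\vec e_y|\cdot|\vec c_x|=\frac{|q_{k-1}+\beta_{k+1}q_k|}{q_k}.
\]

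The last step is to eliminate $\beta_{k+1}$. The defining condition $\vec c\in\cL(\theta)$ reads $p_{k-1}+\beta_{k+1}p_k=\theta(q_{k-1}+\beta_{k+1}q_k)$, i.e. $\beta_{k+1}=(\theta q_{k-1}-p_{k-1})/(p_k-\theta q_k)$; substituting and clearing the denominator produces
\[
  q_{k-1}+\beta_{k+1}q_k=\frac{p_kq_{k-1}-p_{k-1}q_k}{p_k-\theta q_k}=\frac{(-1)^{k-1}}{p_k-\theta q_k},
\]
again by Theorem \ref{t:CF_geometry}(a). Multiplying the two areas gives $\textup{area}(\cQ_1)\cdot\textup{area}(\cQ_2)=|\theta q_k-p_k|\cdot|q_{k-1}+\beta_{k+1}q_k|=1$.

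The main obstacle is the geometric identification of $\vec c$ and $\vec e$, since their definitions are not in the prose but must be read off from the figure; after that, the computation is an algebraic identity whose two invocations of $p_kq_{k-1}-p_{k-1}q_k=(-1)^{k-1}$ — first to pin down $\vec e$, then to collapse the numerator hiding $\beta_{k+1}$ — are what make the telescoping work. Alternatively, one may perform the whole calculation in the (unit-determinant) basis $\{(0,1),(1,\theta)\}$, in which both $\cQ_1$ and $\cQ_2$ become axis-aligned rectangles whose side lengths are reciprocal pairs, so that the product of their areas manifestly equals $1$.
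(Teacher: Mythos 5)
Your proof is correct, but it takes a genuinely different route from the paper's. You compute explicit coordinates for $\vec a,\vec b,\vec c,\vec e$ and reduce everything to the determinant identity $p_kq_{k-1}-p_{k-1}q_k=(-1)^{k-1}$, invoked twice — once to evaluate $\vec e_y=(-1)^k/q_k$ and once to collapse $q_{k-1}+\beta_{k+1}q_k$. The paper avoids coordinates entirely: it observes that, because $\vec v_{k-1},\vec v_k$ form a basis of $\Z^2$, the width $H$ of the shaded stripe equals $|\vec v_k|^{-1}$, and then gets $\vol\cQ_1=|\vec v_k|\,h$, $\vol\cQ_2=|\vec c-\vec e|\,H$, together with the similarity $\vec 0\vec v_k\vec b\sim\vec c\vec e\vec 0$ (each has one side vertical, one on $\cL(\theta)$, one parallel to $\spanned\vec v_k$), which gives $|\vec v_k|/h=|\vec c-\vec e|/H$ and hence $\vol\cQ_1\cdot\vol\cQ_2=(|\vec v_k|H)^2=1$. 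So the paper uses unimodularity of $(\vec v_{k-1},\vec v_k)$ once, geometrically, whereas you use it twice, algebraically. Your closing remark about passing to the unimodular basis $\{(0,1),(1,\theta)\}$ is actually the cleanest version of your computation and is close in spirit to the paper: in those coordinates $\cQ_1$ is the box $[0,|p_k-\theta q_k|]\times[0,q_k]$ up to sign, and $\vec e$, $\vec c$ land at $(\pm 1/q_k,0)$ and $(0,\pm 1/(p_k-\theta q_k))$ respectively — but note the reciprocity of $\cQ_2$'s side lengths is still a consequence of the unimodularity of $(\vec v_{k-1},\vec v_k)$ in the new basis rather than being literally manifest, so that phrase is a slight overstatement. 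Both arguments are valid; the paper's is shorter and coordinate-free, yours is more mechanical and self-contained.
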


\begin{proof}
  Since $\vec v_k$, $\vec v_{k-1}$ form a basis of $\Z^2$, the width $H$ of the stripe in Fig.\ref{fig:two_parallelograms} equals $|\vec v_k|^{-1}$. Denote by $h$ the distance from $\vec b$ to the line spanned by $\vec v_k$. Then it follows from the similarity of the respective triangles that
  \[ 
    \frac{|\vec v_k|}{h}=\frac{|\vec c-\vec e|}{H}\,, 
  \]
  whence
  \[ 
    \vol\cQ_1\cdot\vol\cQ_2=|\vec v_k|h\cdot|\vec c-\vec e|H=(|\vec v_k|H)^2=1. 
  \]
\end{proof}

In view of Lemma \ref{l:two_parallelograms} inequalities \eqref{eq:pre_two_parallelograms} can be rewritten as
\[ 
  a_{k+1}\leq\vol Q_2<a_{k+1}+2, 
\]
i.e. as
\[ 
  |\det(\vec v_{k-1},\vec v_{k+1})|\leq|\det(\vec e,\vec c)|<|\det(\vec v_{k-1},\vec v_{k+1})|+2. 
\]
These inequalities immediately follow from the fact that the segment $[\vec e,\vec c]$ contains the segment $[\vec v_{k-1},\vec v_{k+1}]$ and is contained in the segment $[\vec v_{k-1}-\vec v_k,\vec v_{k+1}+\vec v_k]$.

This is how statement (b) of Proposition \ref{prop:CF_gap_estimates} is proved geometrically. Statement (a) can be proved similarly.

\subsubsection{Klein polygons}\label{subsubsec:klein_polygons}

The following theorem establishes the relation between continued fractions and a construction which goes back to Klein \cite{klein} and bears the name of \emph{Klein polygons}. Denote by $\cO_+$ the closure of the positive quadrant, i.e. the set of points in $\R^2$ with nonnegative coordinates.

\begin{theorem} \label{t:klein_polygons}
  Let $\theta$ be positive. Then the points $\vec v_k$ with even indices are the vertices of the convex hull of nonzero integer points lying in $\cO_+$ not above $\cL(\theta)$. Similarly, the points $\vec v_k$ with odd indices are the vertices of the convex hull of nonzero integer points lying in $\cO_+$ not below $\cL(\theta)$.
\end{theorem}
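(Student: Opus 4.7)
The plan is to prove the even-index claim, the odd case being identical after swapping the roles of the two sides of $\cL(\theta)$. Let $K^-$ denote the convex hull of all nonzero lattice points in $\cO_+$ not above $\cL(\theta)$; the task is to identify the extreme points of $K^-$ lying on the boundary facing $\cL(\theta)$ with $\vec v_{-2},\vec v_0,\vec v_2,\ldots$ in that order.

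The main input is Theorem~\ref{t:CF_geometry}(a): each pair $(\vec v_{k-1},\vec v_k)$ is a $\Z$-basis of $\Z^2$. Fix an even index $2m$ and decompose an arbitrary lattice point as $\vec w=a\vec v_{2m-1}+c\vec v_{2m}$ with $a,c\in\Z$. In these coordinates, $\vec v_{2m-2}=\vec v_{2m}-b_{2m}\vec v_{2m-1}$ has coordinates $(-b_{2m},1)$, so the cone from the origin spanned by $\vec v_{2m-2}$ and $\vec v_{2m}$ is $\{a\leq 0,\ cb_{2m}+a\geq 0\}$, and the segment $[\vec v_{2m-2},\vec v_{2m}]$ lies on the line $c=1$ inside it. The next step is to locate the ``gap'' region between this segment and $\cL(\theta)$. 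Since $\vec v_{2m-1}$ lies above while $\vec v_{2m}$ lies below $\cL(\theta)$ (as noted at the end of Section~\ref{subsubsec:geometric_algorithm}), the line $\cL(\theta)$ has the form $c=(\alpha/\beta)a$ in basis coordinates for some positive $\alpha,\beta$, and hence sits in the region $\{c\leq 0\}$ inside our cone. Consequently the open gap between segment and $\cL(\theta)$ within the cone is cut out by $c<1$ together with the cone inequalities.

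The crucial observation is that this open gap contains no nonzero lattice point: if $\vec w=(a,c)$ with $a,c\in\Z$, $a\leq 0$, $c<1$, and $cb_{2m}+a\geq 0$, then $c\geq -a/b_{2m}\geq 0$ forces $c=0$ and thus $a=0$, whence $\vec w=\vec 0$. As $m$ ranges over nonnegative integers, these cones fill out the region of $\cO_+$ strictly below $\cL(\theta)$ (their outer edges tend in direction to that of $\cL(\theta)$), so no nonzero lattice point of $\cO_+$ lies strictly between the polygonal curve $\vec v_{-2}\vec v_0\vec v_2\cdots$ and $\cL(\theta)$. Each $\vec v_{2m}$ is a genuine extreme point, since the directions of its two adjacent sail edges, $\vec v_{2m-1}$ and $\vec v_{2m+1}=\vec v_{2m-1}+b_{2m+1}\vec v_{2m}$, are non-parallel. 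This identifies the vertices of $K^-$ facing $\cL(\theta)$ with precisely the even-indexed $\vec v_k$, and the odd case follows verbatim.

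The main obstacle is the sign bookkeeping in the second paragraph: one must verify that the open gap between segment and $\cL(\theta)$ inside the cone corresponds to $c<1$ rather than $c>1$ (the latter being the portion of the cone lying inside $K^-$, on the side of the sail away from $\cL(\theta)$). Once this sign analysis is handled, the basis property $\det(\vec v_{2m-1},\vec v_{2m})=\pm 1$ reduces the emptiness of the gap to a one-line integer inequality, and everything else is a routine consequence.
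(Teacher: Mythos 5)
Your argument is correct and relies on the same key input as the paper's proof — that $(\vec v_{k-1},\vec v_k)$ is a $\Z$-basis of $\Z^2$ (statement (a) of Theorem~\ref{t:CF_geometry}) — but packages it differently. The paper works with the stripe of Fig.~\ref{fig:two_parallelograms}, bounded by the line through $\vec v_{k-1},\vec v_{k+1}$ and the parallel line through the origin: the open stripe is lattice-point-free, and the boundary lattice points adjacent to the segment $[\vec v_{k-1},\vec v_{k+1}]$ are excluded because $\vec v_{k-1}-\vec v_k\notin\cO_+$ while $\vec v_{k+1}+\vec v_k$ lies on the wrong side of $\cL(\theta)$. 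You instead pass to $(\vec v_{2m-1},\vec v_{2m})$-coordinates and verify by a one-line integer inequality that the half-open triangle $\conv\{\vec 0,\vec v_{2m-2},\vec v_{2m}\}$ with the far edge removed is lattice-point-free, then use that the union of these triangles covers the region of $\cO_+$ between $\cL(\theta)$ and the broken line. Your triangle is exactly the intersection of the paper's stripe with the cone spanned by $\vec v_{2m-2}$ and $\vec v_{2m}$, so the two arguments are substantively equivalent; yours converts the geometric containment into an explicit coordinate computation, which some readers may find more transparent, while the paper's version identifies the two ``outlier'' points directly and so generalises more readily. Two small points to tidy: the chain $c\geq-a/b_{2m}\geq0$ presupposes $b_{2m}\geq1$, which fails for $m=0$ when $a_0=[\theta]=0$ (then $\vec v_0=\vec v_{-2}$, the triangle degenerates, and this case should simply be skipped); and the remark that $\cL(\theta)$ ``sits in the region $\{c\leq 0\}$ inside our cone'' is misleading, since the cone inequalities already force $c\geq0$ — what you actually need and do establish is that $\cL(\theta)$ meets the cone only at the origin, so that the gap between the segment and $\cL(\theta)$ within the cone is indeed the set $\{c<1\}$.
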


\begin{proof}
  Since $\theta>0$, the coordinates of every $\vec v_k$ are nonnegative. Besides that, we know that if $k$ is even, then $\vec v_k$ lies below $\cL(\theta)$ and if $k$ is odd, then $\vec v_k$ lies above $\cL(\theta)$. Therefore, it suffices to show that there are no nonzero points of $\Z^2$ in the intersection of the stripe shown in Fig.\ref{fig:two_parallelograms} with the angle between $\cL(\theta)$ and the respective coordinate axis, apart from those lying on the segment $[\vec v_{k-1},\vec v_{k+1}]$.
  
  For $k\geq0$ the representation
  \[ 
    \vec v_{k-1}-\vec v_k=\vec v_{k-1}-(a_k\vec v_{k-1}+\vec v_{k-2})=(1-a_k)\vec v_{k-1}-\vec v_{k-2} 
  \]
  implies that at least one of the coordinates of the point $\vec v_{k-1}-\vec v_k$ is negative. For $k=-1$ this is even more obvious.
  
  Thus, for every $k$ the point $\vec v_{k-1}-\vec v_k$ does not belong to $\cO_+$.
  
  As for the point $\vec v_{k+1}+\vec v_k$, by construction it does not belong to the closure of the angle which contains the segment $[\vec v_{k-1},\vec v_{k+1}]$.
  
  Taking into account that there are no points of $\Z^2$ in the interior of the stripe shown in Fig.\ref{fig:two_parallelograms}, we complete the proof.
\end{proof}

\begin{definition}\label{def:klein_polygons}
  The convex hulls discussed in Theorem \ref{t:klein_polygons} are called \emph{Klein polygons}.
\end{definition}

Thus, the continued fraction of $\theta$ is ``written'' on the boundaries of Klein polygons: the vertices have coordinates equal to the denominators and numerators of the convergents and the integer lengths of the edges are equal to the partial quotients.

We note that we may confine ourselves to one of the two broken lines. The reason is that each angle formed by the segments $[\vec v_{k-2},\vec v_k]$ and $[\vec v_k,\vec v_{k+2}]$ can be equipped (see \cite{german_tlyustangelov_MJCNT_2016}, \cite{korkina_2dim}, \cite{karpenkov_trig}) with its ``integer value'' by defining the latter as the  absolute value of the determinant of the shortest integer vectors parallel to those segments. In our case those are the vectors $\vec v_{k-1}$ and $\vec v_{k+1}$. As we know (see statement (b) of Theorem \ref{t:CF_geometry}), the absolute value of their determinant is equal to the partial quotient $a_{k+1}$. Thus, \emph{each} of the two broken lines contains all the information concerning the continued fraction of $\theta$.

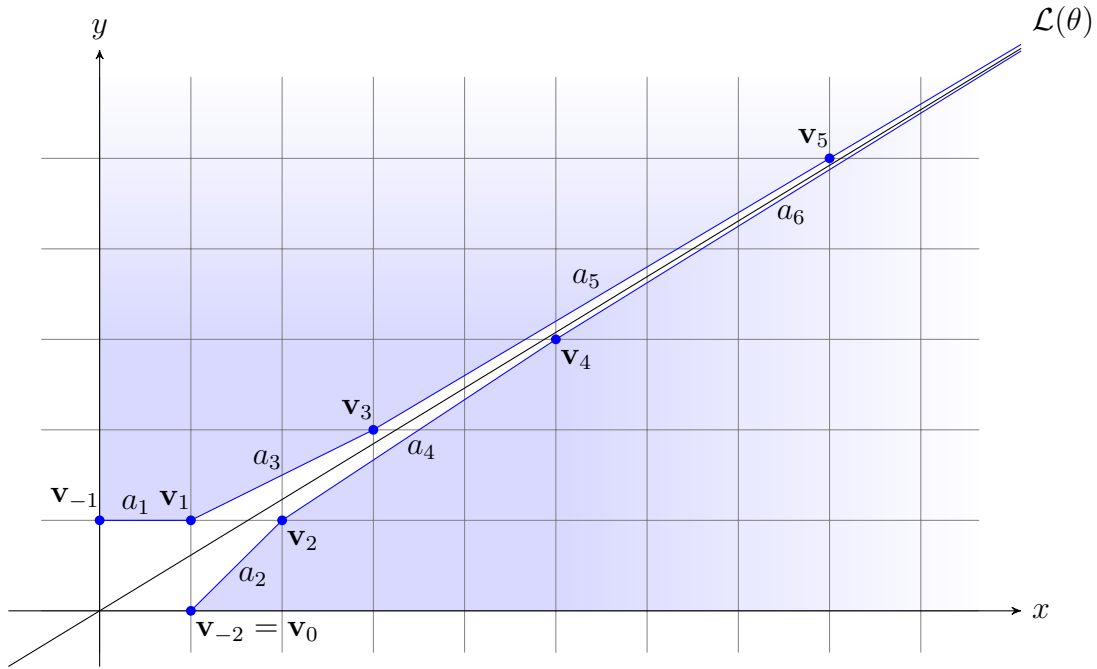
\begin{figure}[h]
\centering
\begin{tikzpicture}[scale=1.2]
%    \fill[blue!20!,path fading=north]
%        (0,5.9) -- (0,1) -- (1,1) -- (3,2) -- (8,5) -- (8+0.9/3*5,5.9) -- cycle;
%    \fill[blue!20!,path fading=east]
%        (1,0) -- (2,1) -- (5,3) -- (5+2.9/5*8,5.9) -- (5+2.9/5*8,0) -- cycle;
%
    \fill[blue!15!]
        (0,3) -- (0,1) -- (1,1) -- (3,2) -- (3+5/3,3) -- cycle;
    \fill[blue!15!]
        (1,0) -- (2,1) -- (5,3) -- (5,0) -- cycle;
    \fill[blue!15!,path fading=north]
        (0,5.9) -- (0,3) -- (3+5/3,3) -- (8+0.9/3*5,5.9) -- cycle;
    \fill[blue!15!,path fading=east]
        (5,0) -- (5,3) -- (5+2.9*8/5,5.9) -- (5+2.9*8/5,0) -- cycle;

    \draw[very thin,color=gray,scale=1] (4-2.9/5*8,-6/13) grid (5+2.9/5*8,5.9);

    \draw[->,>=stealth'] (-1,0) -- (10.1,0) node[right] {$x$};
    \draw[->,>=stealth'] (0,-8/13) -- (0,6.2) node[above] {$y$};

    \draw[color=black] plot[domain=-1:10.1] (\x, {8*\x/13}) node[above right]{$\cL(\theta)$};

    \draw[color=blue] (0,1) -- (1,1);
    \draw[color=blue] (1,1) -- (3,2);
    \draw[color=blue] (3,2) -- (8,5);
    \draw[color=blue] (8,5) -- (8+0.4199*5,5+0.4199*3);

    \draw[color=blue] (1,0) -- (2,1);
    \draw[color=blue] (2,1) -- (5,3);
    \draw[color=blue] (5,3) -- (5+0.6375*8,3+0.6375*5);

    \node[fill=blue,circle,inner sep=1.3pt] at (0,1) {};
    \node[fill=blue,circle,inner sep=1.3pt] at (1,1) {};
    \node[fill=blue,circle,inner sep=1.3pt] at (3,2) {};
    \node[fill=blue,circle,inner sep=1.3pt] at (8,5) {};

    \node[fill=blue,circle,inner sep=1.3pt] at (1,0) {};
    \node[fill=blue,circle,inner sep=1.3pt] at (2,1) {};
    \node[fill=blue,circle,inner sep=1.3pt] at (5,3) {};

    \draw (0.12,1) node[above left]{$\vec v_{-1}$};
    \draw (1.12,1) node[above left]{$\vec v_1$};
    \draw (3.12,2) node[above left]{$\vec v_3$};
    \draw (8.12,5) node[above left]{$\vec v_5$};

    \draw (1-0.07,0) node[below right]{$\vec v_{-2}=\vec v_0$};
    \draw (2-0.07,1) node[below right]{$\vec v_2$};
    \draw (5-0.07,3) node[below right]{$\vec v_4$};

    \draw (0.4,1-0.05) node[above]{$a_1$};
    \draw (2+0.12,1.5-0.05) node[above left]{$a_3$};
    \draw (5.5+0.12,3.5-0.05) node[above left]{$a_5$};

    \draw (1.5-0.1,0.5+0.1) node[below right]{$a_2$};
    \draw (3.5-0.25,2) node[below right]{$a_4$};
    \draw (5+0.3*8-0.1,3+0.3*5+0.1) node[below right]{$a_6$};

\end{tikzpicture}
\caption{Klein polygons} \label{fig:klein_polygons}
\end{figure}

%\paragraph{Наилучшие приближения.}
\subsubsection{Best approximations}

The convergents are \emph{best approximations} of $\theta$ (see \cite{cassels_DA}) in the following sense.

\begin{definition}
  A rational number $p/q$ is called \emph{best approximation} of $\theta$, if $p$ is the closest integer to $q\theta$ and for all rational numbers $p'/q'$ such that $q'<q$ we have
  \[ 
    |q\theta-p|<|q'\theta-p'|. 
  \]
\end{definition}

Geometrically, this means that there are no nonzero points of $\Z^2$ in the parallelogram centered at the origin with a vertex at the point $(q,p)$ and sides parallel to $\cL(\theta)$ and to the ordinate axis, apart from its vertices (the remaining two vertices may also belong to $\Z^2$, it happens if and only if $\theta\in\frac12\Z\backslash\Z$). Klein polygons provide a rather simple proof of the fact that if $\theta$ is not half-integer, then the set of its best approximations coincides exactly with the set of its convergents.

\begin{figure}[h]
\centering
\begin{tikzpicture}
    \draw[->,>=stealth'] (-2.3*13/8,0) -- (2*2.3*13/8,0) node[right] {$x$};
    \draw[->,>=stealth'] (0,-2.3) -- (0,2*2.3) node[above] {$y$};

    \draw plot[domain=-2.3*13/8:2*2.3*13/8] (\x, {8*\x/13}) node[above right]{$\cL(\theta)$};

    \draw[white,thick] (0,0) -- (0,2*2.3-5*8/13) -- (2*2.5,2*2.3) -- (2*2.5,5*8/13) -- cycle;
    \draw[thick,dashed] (0,0) -- (0,2*2.3-5*8/13) -- (2*2.5,2*2.3) -- (2*2.5,5*8/13) -- cycle;
    \draw[thick,dashed] (0,2*2.3-5*8/13) -- (2*2.5,5*8/13);

    \draw[thick] (-2.5,-2.3) -- (-2.5,2.3-5*8/13) -- (2.5,2.3) -- (2.5,-2.3+5*8/13) -- cycle;

%    \draw[blue] plot[domain=-3:4.5,xshift=2.5cm,yshift=2.3cm] (\x, {-\x/1.7});
%    \fill[blue,opacity=0.1] plot[domain=-2.5:2.3*1.7,xshift=2.5cm,yshift=2.3cm] (\x, {-\x/1.7}) -- (0,0) -- cycle;

    \fill[blue,opacity=0.15] (0,0) -- (0,2*2.3-5*8/13) -- (2*2.5,5*8/13);% -- cycle;

    \node[fill=blue,circle,inner sep=1.8pt] at (0,0) {};
    \node[fill=blue,circle,inner sep=1.8pt] at (2.5,2.3) {};
    \node[fill=blue,circle,inner sep=1.8pt] at (-2.5,-2.3) {};
    \node[fill=blue,circle,inner sep=1.8pt] at (2*2.5,2*2.3) {};

    \node[fill=white,draw=blue,thick,circle,inner sep=1.5pt] at (0,2*2.3-5*8/13) {};
    \node[fill=white,draw=blue,thick,circle,inner sep=1.5pt] at (2*2.5,5*8/13) {};

    \node[fill=white,draw=blue,thick,circle,inner sep=1.5pt] at (-2.5,2.3-5*8/13) {};
    \node[fill=white,draw=blue,thick,circle,inner sep=1.5pt] at (2.5,-2.3+5*8/13) {};

    \draw[xshift=-0.15cm,yshift=0.1cm] (2.5,2.3) node[above right]{$\vec v$};
    \draw ($2*(2.5,2.3)$) node[above right]{$2\vec v$};
    \draw (-2.5,-2.3) node[below left]{$-\vec v$};
    \draw (0,0) node[below right]{$\vec 0$};
%    \draw (2*2.5,5*8/13) node[below right]{$\vec a$};
%    \draw[xshift=-0.05cm] (0,2*2.3-5*8/13) node[left]{$\vec b$};

    \draw[->,>=stealth',thin] (4,1) node[right]{$\cQ_{\vec v}$ -- empty parallelogram} -- (2.2,1);
    \draw[->,>=stealth',thin] (4,1.8) node[right]{$\Delta_{\vec v}$ --- empty triangle} -- (2.2,1.8);

\end{tikzpicture}
\caption{Best approximations} \label{fig:best_approximations}
\end{figure}
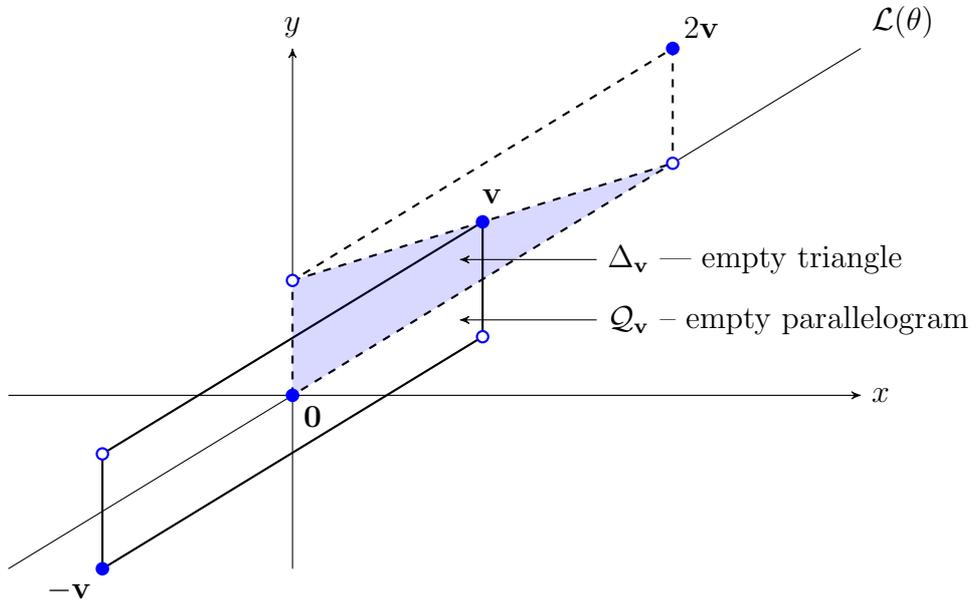

\begin{theorem} \label{t:vertices_are_best_approx}
  Let $\theta\in\R\backslash \big(\frac12\Z\big)$. Then the vertices $\vec v_k$, $k\geq0$, of the Klein polygons correspond to the best approximations of $\theta$.
\end{theorem}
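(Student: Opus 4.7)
The plan is to recast ``best approximation'' geometrically as emptiness of the parallelogram $\cQ_{\vec v}$ of Fig.~\ref{fig:best_approximations}, and then to use Theorem~\ref{t:klein_polygons} to identify the lattice points with this emptiness property as the vertices $\vec v_k$, $k\geq 0$, of the Klein polygons. The hypothesis $\theta\notin\frac12\Z$ intervenes exactly to guarantee that the two off-diagonal vertices of $\cQ_{\vec v}$ are never lattice points (they would be iff $2(\theta q-p)\in\Z$ for small $q$), so the equivalence between ``best approximation'' and ``emptiness of $\cQ_{\vec v}$'' is clean.

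For the forward direction, fix $\vec v_k=(q_k,p_k)$ with $k\geq 0$ and assume, for concreteness, that $\vec v_k$ lies below $\cL(\theta)$. Consider any $\vec w=(q',p')\in\cQ_{\vec v_k}\cap\Z^2\setminus\{\vec 0,\pm\vec v_k\}$; by central symmetry we may take $q'\geq 0$. The case $q'=0$ contradicts $|p'|\geq 1>|\theta q_k-p_k|$, the latter bound coming from Proposition~\ref{prop:CF_gap_estimates}. If $\vec w$ is on the same side of $\cL(\theta)$ as $\vec v_k$, then Theorem~\ref{t:klein_polygons} places $\vec w$ in the Klein polygon whose inner broken line passes through $\ldots,\vec v_{k-2},\vec v_k,\vec v_{k+2},\ldots$; convexity forces $\theta q'-p'$ to be at least the inner-boundary value at abscissa $q'$, which strictly exceeds $\theta q_k-p_k$ whenever $0<q'<q_k$. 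If $\vec w$ is on the opposite side of $\cL(\theta)$, the symmetric argument on the other Klein polygon gives $|\theta q'-p'|\geq|\theta q_{k-1}-p_{k-1}|>|\theta q_k-p_k|$, again contradicting $\vec w\in\cQ_{\vec v_k}$.

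Conversely, given a best approximation $\vec v=(q,p)$ with $q>0$, the emptiness of $\cQ_{\vec v}$ forces $\vec v$ to be a vertex of the Klein polygon on its own side of $\cL(\theta)$. Indeed, were $\vec v$ to lie strictly inside an edge or in the interior of that polygon, the representation of inner-boundary lattice points as $\vec v_{k-2}+j\vec v_{k-1}$ together with convexity would produce a neighbouring lattice point with strictly smaller positive $q$-coordinate and no-larger distance from $\cL(\theta)$, hence an element of $\cQ_{\vec v}\setminus\{\vec 0,\pm\vec v\}$.

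The main obstacle, I expect, is the precise handling of the edge case $k=0$: when $\{\theta\}>\tfrac12$, the vertex $\vec v_0=(1,a_0)$ is not itself a best approximation (the closest integer to $\theta$ is $a_0+1$, corresponding to $\vec v_1=(1,a_0+1)$, which appears because $a_1=1$ in this regime). The correspondence must therefore be stated as identifying best approximations with the appropriate subset of $\{\vec v_k:k\geq 0\}$, and the half-integer exclusion plays the role of keeping this boundary case unambiguous and of rendering the key inequalities above strict.
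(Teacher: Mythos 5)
Your geometric setup (best approximation $\Leftrightarrow\cQ_{\vec v}\cap\Z^2=\{\vec 0,\pm\vec v\}$) matches the paper's, but the routes diverge from there. The paper avoids all case analysis by a single translation trick: split $\cQ_{\vec v}+\vec v$ into two triangles along the diagonal not through $\vec 0$, let $\Delta_{\vec v}$ be the half containing $\vec 0$ (Fig.~\ref{fig:best_approximations}), and use the additive closure of $\Z^2$ to obtain
\[
  \cQ_{\vec v}\cap\Z^2=\{\vec 0,\pm\vec v\}
  \iff
  (\cQ_{\vec v}+\vec v)\cap\Z^2=\{\vec 0,\vec v,2\vec v\}
  \iff
  \Delta_{\vec v}\cap\Z^2=\{\vec 0,\vec v\};
\]
the last statement is exactly the lattice-free-triangle criterion for $\vec v$ to be a vertex of the Klein polygon. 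This dispenses with Proposition~\ref{prop:CF_gap_estimates}, with Theorem~\ref{t:klein_polygons} (hence with the restriction $\theta>0$), and with any forward/converse split.

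Your converse direction has a concrete gap. When $\vec v=\vec v_{k-2}+j\vec v_{k-1}$ lies strictly inside an edge, you appeal to ``a neighbouring lattice point with strictly smaller positive $q$-coordinate and no-larger distance from $\cL(\theta)$''. But the natural same-side neighbour $\vec v_{k-2}+(j-1)\vec v_{k-1}$ has \emph{larger} distance: writing $u_i=\theta q_i-p_i$, the alternating signs give $|u_{k-2}+(j-1)u_{k-1}|=|u_{k-2}|-(j-1)|u_{k-1}|>|u_{k-2}|-j|u_{k-1}|$. The lattice point that actually violates emptiness of $\cQ_{\vec v}$ is the \emph{opposite-side} vertex $\vec v_{k-1}$: indeed $q_{k-1}<q$ and $|u_{k-1}|<|u_{k-2}|-j|u_{k-1}|=|u_k|+(a_k-j)|u_{k-1}|$ for $1\leq j\leq a_k-1$. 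Side-switching is essential here and your phrasing hides it. It is also what resolves the $k=0$ issue you flagged: under the paper's criterion, $\Delta_{\vec v_0}$ contains $(0,-1)$ precisely when $\{\theta\}>\tfrac12$, so $\vec v_0$ then fails the vertex condition the proof actually uses (the half-plane convex hull, not the $\cO_+$-restricted polygon of Theorem~\ref{t:klein_polygons}), and the correspondence is exact rather than requiring the exception you anticipate.
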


\begin{proof}
  Let $\cQ_{\vec v}$ be the parallelogram centered at the origin with a vertex at a given point $\vec v$ and sides parallel to $\cL(\theta)$ and to the ordinate axis. Let $\Delta_{\vec v}$ be the half of $\cQ_{\vec v}$ translated by vector $\vec v$ (as shown in Fig.\ref{fig:best_approximations}). Then, since $\Z^2$ is closed under addition, the following equivalencies hold:
  \[ 
    \cQ_{\vec v}\cap\Z^2=\{\vec 0,\pm\vec v\}\quad\Longleftrightarrow\quad
    (\cQ_{\vec v}+\vec v)\cap\Z^2=\{\vec 0,\vec v,2\vec v\}\quad\Longleftrightarrow\quad
    \Delta_{\vec v}\cap\Z^2=\{\vec 0,\vec v\}. 
  \]

%  следующие три утверждения равносильны:
%
%  \[ \begin{aligned}
%       \textup{а) } & \cQ_{\vec v}\cap\Z^2=\{\vec 0,\pm\vec v\}; \\
%       \textup{б) } & (\cQ_{\vec v}+\vec v)\cap\Z^2=\{\vec 0,\vec v,2\vec v\}; \\
%       \textup{в) } & \Delta_{\vec v}\cap\Z^2=\{\vec 0,\vec v\}.
%     \end{aligned} \]

%  а) $\cQ_{\vec v}\cap\Z^2=\{\vec 0,\pm\vec v\}$;
%
%  б) $(\cQ_{\vec v}+\vec v)\cap\Z^2=\{\vec 0,\vec v,2\vec v\}$;
%
%  в) $\Delta_{\vec v}\cap\Z^2=\{\vec 0,\vec v\}$.

%  Утверждение (а) для $\theta$, не являющихся полуцелыми, равносильно тому, что $\vec v$ соответствует наилучшему приближению. Утверждение же (в) равносильно тому, что $\vec v$ --- вершина соответствующего полигона Клейна.

  For $\theta$ which is not half-integer, the former of these three statements is equivalent to the fact that $\vec v$ corresponds to a best approximation of $\theta$. The latter one -- to the fact that $\vec v$ is a vertex of the respective Klein polygon.

  Theorem is proved.
\end{proof}

\subsection{Triviality of the uniform analogue of the Diophantine exponent}\label{subsec:uniform_triviality}

%Здесь будет доказана тривиальность равномерного аналога диофантовой экспоненты

Inequality \eqref{def:beta_1_dim} admits infinitely many solutions if and only if there are arbitrarily large values of $t$ such that the system
\begin{equation} \label{eq:beta_1_dim_system}
\begin{cases}
  0<q\leq t \\
  |\theta q-p|\leqslant t^{-\gamma}
\end{cases}
\end{equation}
has a solution in integer $p$, $q$. This is exactly what Corollary \ref{cor:dirichlet_irr} to Dirichlet's theorem claims (provided $\theta$ is irrational) for $\gamma=1$. Which gives the respective bound for $\omega(\theta)$. But Dirichlet's theorem itself (Theorem \ref{t:dirichlet}) claims for $\gamma=1$ that \eqref{eq:beta_1_dim_system} has a solution for \emph{every} $t$ large enough. Therefore, it is natural to consider the following \emph{uniform} analogue of $\omega(\theta)$:
\[
  \hat\omega(\theta)=
  \sup\Big\{ \gamma\in\R\ \Big|\ \exists\,T\in\R:\ \forall\,t\geq T \text{\ \ \eqref{eq:beta_1_dim_system} has a solution in }(p,q)\in\Z^2 \Big\}.
\]

Dirichlet's theorem implies the inequality
\begin{equation} \label{eq:alpha_1_dim}
  \hat\omega(\theta)\geq1.
\end{equation}

It appears that in the case of a single number the uniform exponent is almost trivial: for rational $\theta$ we have
\[ 
  \omega(\theta)=\hat\omega(\theta)=\infty 
\]
(see the proof of Proposition \ref{prop:beta_for_rationals}), and for irrational $\theta$, in view of 
% следствия \ref{cor:alpha_1_dim_system}
Proposition \ref{prop:alpha_1_dim_system}, which we are about to prove, the sign of inequality in \eqref{eq:alpha_1_dim} can be replaced with the sign of equality.

\begin{proposition} \label{prop:alpha_1_dim_system}
  If $\theta$ is irrational, then there are arbitrarily large values of $t$ such that the system 
  \begin{equation} \label{eq:alpha_1_dim_system}
  \begin{cases}
    0<q<t \\
    |\theta q-p|<\dfrac{1}{2t}
  \end{cases}
  \end{equation}
  has no solutions in integer $p$, $q$.
\end{proposition}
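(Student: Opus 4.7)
The plan is to use the convergents $p_k/q_k$ of $\theta$, which form an infinite sequence since $\theta$ is irrational, with $q_{k+1}>q_k$ for every $k\geq 1$. I will show that the choice $t=q_{k+1}$ makes the system \eqref{eq:alpha_1_dim_system} unsolvable; since $q_{k+1}\to\infty$, this already delivers arbitrarily large admissible values of $t$.

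The main ingredient is the ``minimum over small denominators'' form of the best-approximation property of convergents: for every $(p,q)\in\Z^2$ with $0<q<q_{k+1}$,
\[
  |\theta q-p|\geq|\theta q_k-p_k|.
\]
This can be extracted from Theorem \ref{t:vertices_are_best_approx} together with its implicit converse, that every best approximation is a convergent, by the following short argument. Let $(p^*,q^*)$ minimise $|\theta q-p|$ over $0<q<q_{k+1}$, $p\in\Z$. Then $(p^*,q^*)$ automatically satisfies the defining inequality of a best approximation, so $q^*=q_j$ for some $j$; and $q_j<q_{k+1}$ forces $j\leq k$, whence $|\theta q^*-p^*|=\e_j\geq\e_k$ with $\e_j:=|\theta q_j-p_j|$, using that $(\e_j)$ is decreasing in $j$.

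Combining this with Proposition \ref{prop:CF_gap_estimates}(a), multiplied through by $q_k$, every $(p,q)\in\Z^2$ with $0<q<t=q_{k+1}$ then satisfies
\[
  |\theta q-p|\geq\e_k>\frac{1}{q_k+q_{k+1}}>\frac{1}{2q_{k+1}}=\frac{1}{2t},
\]
the last strict inequality using $q_k<q_{k+1}$. Hence \eqref{eq:alpha_1_dim_system} has no integer solution for this $t$, and letting $k$ vary completes the proof.

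I expect the main obstacle to lie in the minimum-denominator step: Theorem \ref{t:vertices_are_best_approx} as stated only asserts the direction ``convergents are best approximations'', while the argument above also uses the converse. A standalone proof of this converse is classical and short (cf.\ \cite{khintchine_CF}, \cite{schmidt_DA}), so this presents no real difficulty; the remaining steps are then purely arithmetic and immediate from Proposition \ref{prop:CF_gap_estimates}(a).
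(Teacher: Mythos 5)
Your proof is correct and takes a genuinely different, more arithmetic route than the paper's. The paper argues geometrically: starting from the ``empty'' parallelogram $\cQ_{\vec v}$ attached to a Klein-polygon vertex $\vec v$, it expands along $\cL(\theta)$ until it first meets another integer point $\vec w$; non-collinearity of $\vec v,\vec w$ forces the expanded parallelogram to have area $>2$, so taking $t$ equal to the abscissa of $\vec w$ makes \eqref{eq:alpha_1_dim_system} unsolvable, and the procedure repeats indefinitely because $\cL(\theta)$ contains no nonzero lattice point. You instead take $t=q_{k+1}$ outright and combine the best-approximation property of convergents with the strict lower bound $|\theta q_k-p_k|>1/(q_k+q_{k+1})$ from Proposition~\ref{prop:CF_gap_estimates}(a), closing with $q_k<q_{k+1}$ for $k\geq1$. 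Two small remarks: Theorem~\ref{t:vertices_are_best_approx}, as proved in the paper, actually does establish the full equivalence (its proof is a chain of ``if and only if''s, and the sentence preceding it says so explicitly), so the converse you flagged is not an external import; and for irrational $\theta$ the minimizer of $|\theta q-p|$ over $0<q<q_{k+1}$ is automatically unique --- distinct $q$ give distinct $\|\theta q\|$, else $\theta$ would be rational --- so the strictness required in Definition~\ref{def:BA}'s analogue is free and your ``automatically a best approximation'' step is sound without even needing to pick the smallest such $q$. What the paper's route buys is that the expanding-empty-parallelogram-plus-Minkowski mechanism is exactly the one that survives into higher dimensions later in the survey, whereas your argument is shorter and makes the explicit constant $1/2$ completely transparent once the standard continued-fraction estimates are available.
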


\begin{proof}
  Consider an arbitrary vertex $\vec v$ of one of the Klein polygons. By Theorem \ref{t:vertices_are_best_approx} it corresponds to some best approximation of $\theta$, i.e. the parallelogram $\cQ_{\vec v}$ (see Fig.\ref{fig:best_approximations},\ref{fig:big_empty_parallelogram}) contains no integer points distinct from $\vec 0$, $\pm\vec v$.
  
  Let us expand $\cQ_{\vec v}$ along $\cL(\theta)$ until we meet an integer point $\vec w$ distinct from $\vec 0$, $\pm\vec v$. Such a point does exist by Minkowski's convex body theorem (Theorem \ref{t:minkowski_convex}). We get a parallelogram $\cQ'_{\vec v}$ with vertices at points $\vec a$, $\vec b$, $\vec c$, $\vec d$ (see Fig.\ref{fig:big_empty_parallelogram}).

\begin{figure}[h]
\centering
\begin{tikzpicture}
    \draw[->,>=stealth'] (-3-2.3*13/8,0) -- (3+2.3*13/8,0) node[right] {$x$};
    \draw[->,>=stealth'] (0,-2.3-3*8/13) -- (0,2.3+3*8/13) node[above] {$y$};

    \draw plot[domain=-3-2.3*13/8:3+2.3*13/8] (\x, {8*\x/13}) node[above right]{$\cL(\theta)$};

    \draw[thick,dashed] (-5.5,-2.3-3*8/13) -- (-5.5,-2*5.5*8/13+2.3+3*8/13) -- (5.5,2.3+3*8/13) -- (5.5,2*5.5*8/13-2.3-3*8/13) -- cycle;
    \draw[thick] (-2.5,-2.3) -- (-2.5,2.3-5*8/13) -- (2.5,2.3) -- (2.5,-2.3+5*8/13) -- cycle;

    \draw[blue] (0,0) -- (2.5,2.3) -- (5.5,2.9) -- cycle;
    \fill[blue,opacity=0.15] (0,0) -- (2.5,2.3) -- (5.5,2.9);% -- cycle;
    \fill[blue,opacity=0.1] (-5.5,-2.3-3*8/13) -- (-5.5,-2*5.5*8/13+2.3+3*8/13) -- (5.5,2.3+3*8/13) -- (5.5,2*5.5*8/13-2.3-3*8/13) -- cycle;

    \node[fill=blue,circle,inner sep=1.8pt] at (0,0) {};
    \node[fill=blue,circle,inner sep=1.8pt] at (2.5,2.3) {};
    \node[fill=blue,circle,inner sep=1.8pt] at (-2.5,-2.3) {};
    \node[fill=blue,circle,inner sep=1.8pt] at (5.5,2.9) {};
    \node[fill=blue,circle,inner sep=1.8pt] at (-5.5,-2.9) {};

    \draw (0,0) node[below right]{$\vec 0$};
    \draw[xshift=0.05cm,yshift=-0.03cm] (2.5,2.3) node[above left]{$(\vec v_{k-1}=)\ \vec v$};
    \draw[xshift=-0.1cm] (-2.5,-2.3) node[below right]{$-\vec v$};
    \draw[xshift=0.05cm] (5.5,2.9) node[right]{$\vec w\ (=\vec v_k)$};
    \draw[xshift=-0.03cm] (-5.5,-2.9) node[left]{$-\vec w$};

    \node[fill=white,draw=blue,thick,circle,inner sep=1.5pt] at (-5.5,-2.3-3*8/13) {};
    \node[fill=white,draw=blue,thick,circle,inner sep=1.5pt] at (-5.5,-2*5.5*8/13+2.3+3*8/13) {};
    \node[fill=white,draw=blue,thick,circle,inner sep=1.5pt] at (5.5,2.3+3*8/13) {};
    \node[fill=white,draw=blue,thick,circle,inner sep=1.5pt] at (5.5,2*5.5*8/13-2.3-3*8/13) {};

    \draw (5.5,2*5.5*8/13-2.3-3*8/13) node[below right]{$\vec a$};
    \draw (5.5,2.3+3*8/13) node[above right]{$\vec b$};
    \draw (-5.5,-2*5.5*8/13+2.3+3*8/13) node[above left]{$\vec c$};
    \draw (-5.5,-2.3-3*8/13) node[below left]{$\vec d$};

    \draw[->,>=stealth',thin] (1,-2) node[below right,xshift=-0.35cm]{$\cQ_{\vec v}$} -- (1,0.25);
    \draw[->,>=stealth',thin] (3,-2) node[below right,xshift=-0.35cm]{$\cQ'_{\vec v}$ -- expanded $\cQ_{\vec v}$} -- (3,1.38);

\end{tikzpicture}
\caption{Big empty parallelogram} \label{fig:big_empty_parallelogram}
\end{figure}
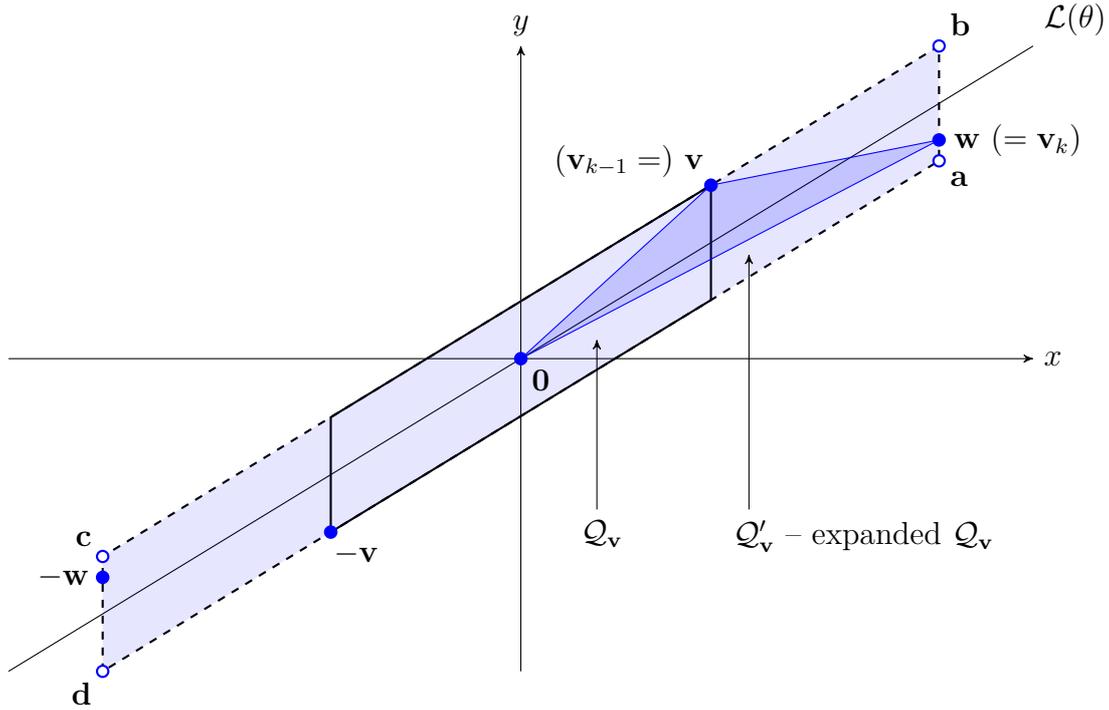

  Actually, $\vec v$ and $\vec w$ correspond to two consecutive convergents of $\theta$, but our argument does not require this fact. It suffices to observe that $\vec v$ and $\vec w$ are not collinear. Their non-collinearity implies that the area of the triangle with vertices at $\vec 0$, $\vec v$, $\vec w$ is not less than $1/2$. Hence the area of $\cQ'_{\vec v}$ is greater than $2$.
  
  Thus, for $t$ equal to the abscissa of $\vec w$ system \eqref{eq:alpha_1_dim_system} has no solution in integer $p$, $q$.
  
  We can apply the expanding procedure to $\cQ_{\vec w}$ and obtain the next ``empty'' parallelogram. The procedure can be repeated infinitely many times, as by the irrationality of $\theta$ the line $\cL(\theta)$ contains no nonzero integer points. Hence the abscissa of $\vec w$ can attain arbitrarily large values.

  Proposition is proved.
\end{proof}

\begin{corollary} \label{cor:alpha_1_dim_system}
  If $\theta$ is irrational, then $\hat\omega(\theta)=1$.
\end{corollary}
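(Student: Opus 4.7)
The plan is to combine the lower bound $\hat\omega(\theta) \geq 1$, which is \eqref{eq:alpha_1_dim} and follows directly from Dirichlet's theorem, with the matching upper bound $\hat\omega(\theta) \leq 1$, which I will deduce from Proposition \ref{prop:alpha_1_dim_system} by contradiction.

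Suppose $\hat\omega(\theta) > 1$. Then there exist $\gamma > 1$ and $T \in \R$ such that for every $t \geq T$ system \eqref{eq:beta_1_dim_system} has an integer solution $(p,q)$. Since $\gamma > 1$, the elementary inequality $t^{-\gamma} < \tfrac{1}{2(t+1)}$ (equivalent to $2(t+1) < t^{\gamma}$) holds for all $t$ large enough. Given such a $t \geq T$, let $(p,q)$ be a solution of \eqref{eq:beta_1_dim_system}. Setting $t' := t+1$, the pair $(p,q)$ satisfies
\[
  0 < q \leq t < t', \qquad |\theta q - p| \leq t^{-\gamma} < \frac{1}{2t'},
\]
so it also solves the system \eqref{eq:alpha_1_dim_system} at the parameter $t'$.

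Thus \eqref{eq:alpha_1_dim_system} is solvable for all sufficiently large $t'$, contradicting Proposition \ref{prop:alpha_1_dim_system}, which guarantees arbitrarily large $t'$ with no solution. Hence $\hat\omega(\theta) \leq 1$, and combined with \eqref{eq:alpha_1_dim} this gives the corollary. The only point that requires care is the comparison $t^{-\gamma} < \tfrac{1}{2(t+1)}$, which absorbs the slight discrepancy between the strict inequality $q < t'$ in \eqref{eq:alpha_1_dim_system} and the non-strict $q \leq t$ in \eqref{eq:beta_1_dim_system}; once this is observed, no real obstacle remains.
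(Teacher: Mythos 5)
Your proposal is correct and follows exactly the route the paper intends: the paper itself remarks, just before stating the corollary, that it follows by combining the lower bound \eqref{eq:alpha_1_dim} from Dirichlet's theorem with Proposition \ref{prop:alpha_1_dim_system}, but leaves the deduction implicit. You have simply filled in the routine details, and the shift $t' = t+1$ together with the observation $t^{-\gamma} < \tfrac{1}{2(t+1)}$ for large $t$ is the right way to reconcile the strict inequalities of \eqref{eq:alpha_1_dim_system} with the non-strict ones of \eqref{eq:beta_1_dim_system}.
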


Thus, considering $\hat\omega(\theta)$ for a real $\theta$ is quite senseless due to the degeneracy of this quantity. This is, however, not the case in the multidimensional setting, to which we now proceed.

\section{Simultaneous approximation and approximation of zero by the values of a linear form}

\subsection{Two more Dirichlet's theorems}

In the previous Section our main question was how small the quantity
\[ 
  \theta x-y 
\]
can be for integer $x$, $y$. The multidimensional case, i.e. the case of $n$ numbers $\theta_1,\ldots,\theta_n$ instead of one number $\theta$, admits two ways to generalise this problem. We can study how small the quantities
\[ 
  \theta_1x-y_1,\ldots,\theta_nx-y_n
\]
can be, or we can study how small the values of the linear form
\[ 
  \theta_1x_1+\ldots+\theta_nx_n-y
\]
can be.

The first multidimensional result in Diophantine approximation belongs to Dirichlet and refers to both of these questions. In the aforementioned paper \cite{dirichlet} of the year 1842 he proved the following statement.

%Будем обозначать через $|\,\cdot\,|$ $\sup$-норму, то есть максимум модулей координат.

\begin{theorem}[G. Lejeune Dirichlet, 1842] \label{t:dirichlet_linear_form}
  Let $\Theta=(\theta_1,\ldots,\theta_n)\in\R^n$. Then, for each $t\geq1$, there are integers $x_1,\ldots,x_n,y$ satisfying the inequalities
  \begin{equation} \label{eq:dirichlet_linear_form}
  \begin{cases}
    0<\max_{1\leq i\leq n}|x_i|\leq t \\
    |\theta_1x_1+\ldots+\theta_nx_n-y|\leq t^{-n}
  \end{cases}.
  \end{equation}
\end{theorem}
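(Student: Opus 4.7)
The plan is to deduce this from Minkowski's convex body theorem (Theorem \ref{t:minkowski_convex}) applied in $\R^{n+1}$, in direct parallel with the geometric interpretation of one-dimensional Dirichlet sketched in Section \ref{subsec:geometry_of_dirichlet}. The natural candidate for the test set is the closed parallelepiped
\[
  \cM = \Big\{(x_1,\ldots,x_n,y)\in\R^{n+1} : |x_i|\leq t \text{ for } 1\leq i\leq n,\ |\theta_1 x_1+\cdots+\theta_n x_n - y|\leq t^{-n}\Big\}.
\]
Convexity, central symmetry, and closedness of $\cM$ are immediate from its definition as an intersection of symmetric slabs, so only the volume condition of Minkowski's theorem needs checking.

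The crucial step will be the volume computation. I would perform the unimodular change of variables $z = y - \theta_1 x_1 - \cdots - \theta_n x_n$, whose Jacobian matrix is unit triangular; this transforms $\cM$ into the standard box $[-t,t]^n\times[-t^{-n},t^{-n}]$, whence
\[
  \vol\cM = (2t)^n\cdot 2t^{-n} = 2^{n+1}.
\]
Minkowski's theorem then yields a nonzero integer point $(x_1,\ldots,x_n,y)\in\cM\cap\Z^{n+1}$ satisfying the bound in \eqref{eq:dirichlet_linear_form}. Observe that the balancing $t^n\cdot t^{-n}=1$ in the exponents is exactly what makes the volume hit the Minkowski threshold, which explains why $t^{-n}$ is the correct approximation rate for this problem.

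The one genuine subtlety, which I expect to be the main obstacle, is verifying that the integer point produced has $\max_i|x_i|>0$, rather than being of the form $(0,\ldots,0,y)$ with $y\neq 0$. If the latter occurred, then $|y|\leq t^{-n}$ together with $y\in\Z\setminus\{0\}$ would force $t^{-n}\geq 1$, that is $t=1$ and $y=\pm1$. This degenerate case at the boundary $t=1$ I would handle directly: taking $x_1=1$, $x_2=\cdots=x_n=0$, and letting $y$ be an integer nearest to $\theta_1$ yields $|\theta_1-y|\leq 1/2\leq 1=t^{-n}$ with $\max_i|x_i|=1=t$, completing the proof. (Alternatively one could apply the one-dimensional Theorem \ref{t:dirichlet} to $\theta_1$ in this edge case.) All other values $t>1$ cause no issue, since then $t^{-n}<1$ rules out the all-$x$-zero scenario automatically.
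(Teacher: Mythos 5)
Your proof is correct and follows essentially the same route as the paper, which derives the result from Minkowski's convex body theorem via Corollary \ref{cor:minkowski_linear_forms} applied directly to the system \eqref{eq:dirichlet_linear_form}. The one small stylistic difference is that the corollary, with its $d-1$ strict inequalities, automatically rules out the degenerate solution $(0,\ldots,0,\pm1)$ even at $t=1$ if one places the strict inequality on the form $\theta_1x_1+\cdots+\theta_nx_n-y$, whereas your direct appeal to Theorem \ref{t:minkowski_convex} (which gives only non-strict bounds at the exact volume threshold $2^{n+1}$) forced you to patch the boundary case $t=1$ by hand --- a correct but avoidable extra step.
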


It is worth making a remark similar to the one we made right after Theorem \ref{t:dirichlet}: the original Dirichlet's statement is weaker than the statement of Theorem \ref{t:dirichlet_linear_form}, but his argument proves Theorem \ref{t:dirichlet_linear_form} as well. In that same paper, Dirichlet generalises his result to the case of several linear forms. For the forms $\theta_1x-y_1,\ldots,\theta_nx-y_n$, this generalisation reads as follows.

\begin{theorem}[G. Lejeune Dirichlet, 1842] \label{t:dirichlet_simultaneous}
  Let $\Theta=(\theta_1,\ldots,\theta_n)\in\R^n$. Then, for each $t\geq1$, there are integers $x,y_1,\ldots,y_n$ satisfying the inequalities
  \begin{equation} \label{eq:dirichlet_simultaneous}
  \begin{cases}
    0<|x|\leq t \\
    \max_{1\leq i\leq n}|\theta_ix-y_i|\leq t^{-1/n}
  \end{cases}.
  \end{equation}
\end{theorem}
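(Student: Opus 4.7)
\smallskip

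The plan is to deduce this from Minkowski's theorem for linear forms (Corollary \ref{cor:minkowski_linear_forms}) applied in dimension $d=n+1$, in essentially the same way that Theorem \ref{t:dirichlet} was deduced from its two-dimensional version.

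First, I would work in $\R^{n+1}$ with coordinates $(x,y_1,\ldots,y_n)$ and introduce the $(n+1)$-tuple of linear forms
\[
  L_0(x,y_1,\ldots,y_n)=x,\qquad L_i(x,y_1,\ldots,y_n)=\theta_ix-y_i,\quad i=1,\ldots,n.
\]
Since the matrix of this system is lower triangular with ones on the diagonal, its determinant equals $\pm 1$. I then choose the weights $\delta_0=t$ and $\delta_1=\ldots=\delta_n=t^{-1/n}$, so that $\prod_{i=0}^n\delta_i=t\cdot t^{-1}=1=|D|$, as required by Corollary \ref{cor:minkowski_linear_forms}.

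By that corollary, there exists a nonzero integer vector $\vec v=(x,y_1,\ldots,y_n)\in\Z^{n+1}$ satisfying $|L_0(\vec v)|\leq\delta_0$ and $|L_i(\vec v)|<\delta_i$ for $i=1,\ldots,n$, i.e.
\[
  |x|\leq t,\qquad |\theta_ix-y_i|<t^{-1/n},\quad i=1,\ldots,n,
\]
which gives all the inequalities of \eqref{eq:dirichlet_simultaneous} except the strict positivity $|x|>0$. To conclude, I would rule out $x=0$: if $x=0$ then $|y_i|<t^{-1/n}\leq 1$ for every $i$ (using $t\geq 1$), forcing $y_i=0$ for all $i$ and contradicting $\vec v\neq\vec 0$.

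There is no real obstacle here: the only subtle point is the bookkeeping of which inequality is strict versus non-strict, which must be arranged so that the bound on $|x|$ (needed to enforce the size constraint $|x|\leq t$) is the non-strict one, while the bounds on $|\theta_ix-y_i|$ (used both to approximate and, via $t\geq 1$, to exclude the degenerate case $x=0$) are strict.
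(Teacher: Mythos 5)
Your proof is correct and follows precisely the route the paper indicates: apply Minkowski's theorem for linear forms (Corollary~\ref{cor:minkowski_linear_forms}) in dimension $n+1$ to the system \eqref{eq:dirichlet_simultaneous}, with the non-strict bound placed on the form $L_0=x$ and the strict bounds on the forms $\theta_i x - y_i$, and rule out $x=0$ using $t\geq 1$. The paper does not spell out these details (it only remarks that the theorem ``follows immediately'' from Corollary~\ref{cor:minkowski_linear_forms} applied to that system), so your write-up is a faithful implementation of the intended argument.
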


Thus, Theorem \ref{t:dirichlet_linear_form} provides an estimate for the order of approximation of zero by the values of the linear form
\[ 
  L_\Theta(x_1,\ldots,x_n,y)=\theta_1x_1+\ldots+\theta_nx_n-y
\]
at integer $x_1,\ldots,x_n,y$, and Theorem \ref{t:dirichlet_simultaneous} provides an estimate for the order of simultaneous approximation of the numbers $\theta_1,\ldots,\theta_n$ by rationals with the same denominator. This is how we get the first estimates for the Diophantine exponents of the $n$-tuple $\Theta$ and of the linear form $L_\Theta$, whose definitions are given in Section \ref{subsec:regular_and_uniform_exponents}.

It is worth noting that, same as Theorem \ref{t:dirichlet}, Theorems \ref{t:dirichlet_linear_form} and \ref{t:dirichlet_simultaneous} follow immediately from Minkowski's convex body theorem in the form of Corollary \ref{cor:minkowski_linear_forms} applied to the systems \eqref{eq:dirichlet_linear_form} and \eqref{eq:dirichlet_simultaneous}.

%\begin{definition}
%  Набор $\Theta$ называется \emph{плохо приближаемым}, если существует константа $c>0$, такая что $\max_{1\leq i\leq n}|\theta_ix-y_i|^n|x|\geqslant c$ для всех $(x,y_1,\ldots,y_n)\in\Z^{n+1}\backslash\{\vec 0\}$.
%
%  Аналогично, линейная форма $L_\Theta$ называется \emph{плохо приближаемой}, если существует константа $c>0$, такая что $L_\Theta(x_1,\ldots,x_n,y)|x|^n\geqslant c$ для всех $(x_1,\ldots,x_n,y)\in\Z^{n+1}\backslash\{\vec 0\}$.
%\end{definition}

\subsection{Regular and uniform exponents}\label{subsec:regular_and_uniform_exponents}

\begin{definition} \label{def:beta_simultaneous}
  The supremum of real $\gamma$ satisfying the condition that there exist arbitrarily large $t$ such that the system
  \begin{equation} \label{eq:belpha_simultaneous}
  \begin{cases}
    0<|x|\leq t \\
    \max_{1\leq i\leq n}|\theta_ix-y_i|\leq t^{-\gamma}
  \end{cases}
  \end{equation}
  admits a solution in integer $x,y_1,\ldots,y_n$ is called the \emph{regular Diophantine exponent} of $\Theta$ and is denoted by $\omega(\Theta)$.
\end{definition}

\begin{definition} \label{def:beta_linear_form}
  The supremum of real $\gamma$ satisfying the condition that there exist arbitrarily large $t$ such that the system
  \begin{equation} \label{eq:belpha_linear_form}
  \begin{cases}
    0<\max_{1\leq i\leq n}|x_i|\leq t \\
    |L_\Theta(x_1,\ldots,x_n,y)|\leq t^{-\gamma}
  \end{cases}
  \end{equation}
  admits a solution in integer $x_1,\ldots,x_n,y$ is called the \emph{regular Diophantine exponent} of $L_\Theta$ and is denoted by $\omega(L_\Theta)$.
\end{definition}

Substituting the words ``there exist arbitrarily large $t$ such that'' with ``for every $t$ large enough'', we get \emph{uniform} analogues of regular exponents.

\begin{definition} \label{def:alpha_simultaneous}
  The supremum of real $\gamma$ satisfying the condition that for every $t$ large enough the system \eqref{eq:belpha_simultaneous}
  admits a solution in integer $x,y_1,\ldots,y_n$ is called the \emph{uniform Diophantine exponent} of $\Theta$ and is denoted by $\hat\omega(\Theta)$.
\end{definition}

\begin{definition} \label{def:alpha_linear_form}
  The supremum of real $\gamma$ satisfying the condition that for every $t$ large enough the system \eqref{eq:belpha_linear_form}
  admits a solution in integer $x_1,\ldots,x_n,y$ is called the \emph{uniform Diophantine exponent} of $L_\Theta$ and is denoted by $\hat\omega(L_\Theta)$.
\end{definition}

As we said before, Theorems \ref{t:dirichlet_linear_form} and \ref{t:dirichlet_simultaneous} provide ``trivial'' bounds for these four exponents:

\begin{equation} \label{eq:belpha_trivial_inequalities}
  \omega(\Theta)\geq\hat\omega(\Theta)\geq1/n,\qquad
  \omega(L_\Theta)\geq\hat\omega(L_\Theta)\geq n.
\end{equation}

These inequalities are sharp in the sense that there exist $n$-tuples $\Theta$ for which each of them turns into an equality.
For instance, Perron \cite{perron} proved the following statement.

%Например, как показал Перрон \cite{perron}, $\omega(\Theta)=1/n$, если $\theta_i=\rho^i$, $i=1,\ldots,n$, где $\rho$ --- вещественное алгебраическое число степени $n+1$. Точнее, он доказал следующее утверждение, являющееся по сути обобщением теоремы Лиувилля (см. теорему \ref{t:liouville}).

\begin{theorem}[O.\,Perron, 1921] \label{t:perron}
  Let $\theta_1,\ldots,\theta_n$ be elements of a real algebraic number field of degree $n+1$, linearly independent with the unity over $\Q$. Then there is a constant $c>0$ depending only on $\theta_1,\ldots,\theta_n$ such that the inequality
  \[ 
    \max_{1\leq i\leq n}|\theta_ix-y_i|<cx^{-1/n} 
  \]
  admits only finitely many solutions in integer $x,y_1,\ldots,y_n$.
\end{theorem}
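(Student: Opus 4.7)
The plan is to combine a Liouville-type lower bound for the dual linear form $L_\Theta$ with Dirichlet's theorem for linear forms (Theorem \ref{t:dirichlet_linear_form}); morally, this is a transference argument carried out by hand in the algebraic setting.

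First I would establish a Liouville bound for $L_\Theta$: for every nonzero integer tuple $(a_0,a_1,\ldots,a_n)$,
\[
  |\eta|:=|a_0+a_1\theta_1+\ldots+a_n\theta_n|\geq c_1\bigl(\max_i|a_i|\bigr)^{-n},
\]
with $c_1>0$ depending only on $\theta$. Since $1,\theta_1,\ldots,\theta_n$ form a $\Q$-basis of the degree-$(n{+}1)$ field $K$, the norm form $F=\prod_{j=1}^{n+1}\sigma_j(\eta)$, after clearing denominators of the $\theta_i$, is a nonzero rational integer (nonzero by $\Q$-linear independence), while each of the $n$ non-identity factors $\sigma_j(\eta)$ is $O(\max_i|a_i|)$; the identity factor $\eta$ must then obey the stated bound. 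This is the direct analogue of Liouville's theorem (Theorem \ref{t:liouville}) for the linear form.

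Now, given a would-be solution $(x,y_1,\ldots,y_n)$ with $\gamma:=\max_i|\theta_ix-y_i|$ small and a parameter $t$ to be chosen, I would apply Theorem \ref{t:dirichlet_linear_form} to produce a nonzero integer tuple $(a_1,\ldots,a_n,a_0)$ with $\max_{i\geq1}|a_i|\leq t$ and $|\eta|\leq t^{-n}$ (and $|a_0|\leq C_0 t$ automatically, with $C_0$ depending only on $\theta$). The crucial identity
\[
  N:=a_0x+\sum_{i=1}^n a_iy_i\;=\;\eta\,x-\sum_{i=1}^n a_i(\theta_ix-y_i),
\]
whose left side is an integer, is analyzed by dichotomy. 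Choosing $t$ just below the threshold $\bigl(c_1x/(nC_0^n\gamma)\bigr)^{1/(n+1)}$—say $t=\tfrac12\bigl(c_1x/(nC_0^n\gamma)\bigr)^{1/(n+1)}$—the alternative $N=0$ would force $|\eta|\leq n\gamma t/x$, which contradicts the Liouville bound $|\eta|\geq c_1(C_0t)^{-n}$. Hence $N\neq0$, so $|N|\geq 1$; but $|N|\leq t^{-n}x+nt\gamma$, and with this $t$ both summands are of order $(\gamma^nx)^{1/(n+1)}$. The resulting inequality $1\leq C\,(\gamma^n x)^{1/(n+1)}$, with $C$ depending only on $\theta$, rearranges to $\gamma\geq c\,x^{-1/n}$ for $c=C^{-(n+1)/n}>0$; hence $\gamma<cx^{-1/n}$ admits only finitely many integer solutions.

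The main obstacle is the bookkeeping of the constants $c_1$ (Liouville), $C_0$ (the implicit bound on $|a_0|$), and the final $C$, so that the critical choice of $t$ simultaneously destroys the alternative $N=0$ and still yields the sharp exponent $1/n$. Conceptually the argument is a transference: simultaneous approximation must be bad for algebraic $\theta$ of the correct degree precisely because the dual linear-form problem enjoys a Liouville-type lower bound.
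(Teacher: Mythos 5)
The paper does not reproduce a proof of Perron's theorem (it only cites the original 1921 paper), but the surrounding discussion explicitly characterizes Perron's argument as a duality/transference step that derives the simultaneous-approximation bound from the Liouville-type lower bound on the dual linear form $L_\Theta$. Your proof implements precisely that strategy---a norm-form Liouville bound for $L_\Theta$, Dirichlet's Theorem \ref{t:dirichlet_linear_form} to produce a small value $\eta$ of $L_\Theta$, and the integer $N=a_0x+\sum_{i=1}^{n}a_iy_i=\eta x-\sum_{i=1}^{n}a_i(\theta_ix-y_i)$ analyzed by the dichotomy $N=0$ versus $|N|\geq1$---and the computations, including the balance of the two cases at $t\asymp\bigl(x/\gamma\bigr)^{1/(n+1)}$ yielding $\gamma\gg x^{-1/n}$, check out.
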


\begin{corollary} \label{cor:perron}
  If $\Theta$ is as in Theorem \ref{t:perron}, then $\omega(\Theta)=\hat\omega(\Theta)=1/n$.
\end{corollary}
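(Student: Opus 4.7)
The plan is to derive the corollary essentially as a direct translation of Theorem \ref{t:perron} into the language of exponents, combined with the lower bound already supplied by \eqref{eq:belpha_trivial_inequalities}. Since that inequality gives us $\omega(\Theta)\geq\hat\omega(\Theta)\geq 1/n$ for free, the only substantive task is to prove $\omega(\Theta)\leq 1/n$; the matching bound on $\hat\omega(\Theta)$ will then follow from the general inequality $\hat\omega(\Theta)\leq\omega(\Theta)$ built into Definitions \ref{def:beta_simultaneous} and \ref{def:alpha_simultaneous}.

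To bound $\omega(\Theta)$ from above, I would first rephrase it in the more convenient form: $\omega(\Theta)$ equals the supremum of those $\gamma$ for which the inequality
\[
  \max_{1\leq i\leq n}|\theta_i x-y_i|\leq |x|^{-\gamma}
\]
admits infinitely many solutions in integers $x\neq 0,\,y_1,\ldots,y_n$. The equivalence with Definition \ref{def:beta_simultaneous} is immediate in one direction (put $t=|x|$); in the other direction, one uses that for any solution of \eqref{eq:belpha_simultaneous} with $0<|x|\leq t$ and $\gamma>0$ one has $t^{-\gamma}\leq |x|^{-\gamma}$, and that arbitrarily large admissible $t$ force arbitrarily large $|x|$ (as for each fixed $x$ only finitely many $y_i$ qualify).

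Now fix any $\gamma>1/n$. For $|x|$ large enough, depending on the constant $c$ from Theorem \ref{t:perron}, we have $|x|^{-\gamma}<c|x|^{-1/n}$, so every integer solution of $\max_i|\theta_i x-y_i|\leq |x|^{-\gamma}$ with $|x|$ sufficiently large is also a solution of $\max_i|\theta_i x-y_i|<c|x|^{-1/n}$. By Perron's theorem the latter inequality has only finitely many integer solutions, hence so does the former. This shows $\omega(\Theta)\leq 1/n$, and combined with \eqref{eq:belpha_trivial_inequalities} and the sandwich $1/n\leq\hat\omega(\Theta)\leq\omega(\Theta)\leq 1/n$, it yields the desired equality $\omega(\Theta)=\hat\omega(\Theta)=1/n$.

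There is essentially no obstacle here beyond getting the quantifiers right in the passage between the ``$t$--formulation'' of Definition \ref{def:beta_simultaneous} and the ``$|x|$--formulation'' used to invoke Theorem \ref{t:perron}; the rest is a direct comparison of the exponents $\gamma$ and $1/n$. The deep content is entirely concentrated in Perron's theorem itself, whose proof is the genuinely difficult ingredient and which we are here taking for granted.
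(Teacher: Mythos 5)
Your proof is correct and is essentially the canonical derivation of the corollary; the paper itself states the corollary without an explicit proof, so there is nothing more elaborate in the source to compare against. The two ingredients you identify — the lower bound from Dirichlet's theorem in the form of \eqref{eq:belpha_trivial_inequalities}, and the finiteness statement of Theorem \ref{t:perron} yielding the upper bound on $\omega(\Theta)$ via the comparison $|x|^{-\gamma}<c|x|^{-1/n}$ for large $|x|$ — are precisely what is needed, and your handling of the passage between the ``$t$--formulation'' of Definition \ref{def:beta_simultaneous} and the ``$|x|$--formulation'' is careful and correct (including the observation that for a fixed $x$ only finitely many $(y_1,\ldots,y_n)$ can satisfy the approximation inequality when $\gamma>0$, so infinitely many solutions force $|x|\to\infty$; and conversely that irrationality of the $\theta_i$ guarantees a positive lower bound on $\max_i|\theta_ix-y_i|$ over any finite set of solutions, so solvability for arbitrarily large $t$ yields infinitely many solutions). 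The chain $1/n\leq\hat\omega(\Theta)\leq\omega(\Theta)\leq 1/n$ then closes the argument.
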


%\begin{remark*}
%  Для $\theta_i=\theta^i$, $i=1,\ldots,n$, где $\theta$ --- вещественное алгебраическое число степени $n+1$, из теоремы \ref{t:perron} следует, что
%\end{remark*}

Particularly, for $\Theta=(\theta,\theta^2,\ldots,\theta^n)$, where $\theta$ is a real algebraic number of degree $n+1$, we have $\omega(\Theta)=1/n$.

In his proof of Theorem \ref{t:perron} Perron uses an argument which also proves the implication
\begin{equation} \label{eq:n_implies_1/n}
  \omega(L_\Theta)=n\ \ \Longrightarrow\ \ \omega(\Theta)=1/n\,.
\end{equation}
The inverse implication
\begin{equation} \label{eq:1/n_implies_n}
  \omega(\Theta)=1/n\ \ \Longrightarrow\ \ \omega(L_\Theta)=n
\end{equation}
follows from the proof of the main result of Khintchine's paper \cite{khintchine_perron} (see also the collection of selected papers by Khintchine \cite{khintchine_selected_works}). This gives us an analogue of Corollary \ref{cor:perron}.

\begin{corollary} \label{cor:perrontchine}
  If $\Theta$ ia as in Theorem \ref{t:perron}, then $\omega(L_\Theta)=\hat\omega(L_\Theta)=n$.
\end{corollary}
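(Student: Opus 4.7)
The plan is to combine the implication \eqref{eq:1/n_implies_n} with Corollary \ref{cor:perron} and the trivial bounds \eqref{eq:belpha_trivial_inequalities}. The argument will be short because nearly all of the real work has been done by Perron's theorem (Theorem \ref{t:perron}) and by the Perron/Khintchine equivalence between the two equality cases, so I only need to assemble the pieces.

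First, I would invoke Corollary \ref{cor:perron}, which applies directly because $\Theta$ is exactly the $n$-tuple described in Theorem \ref{t:perron}. This gives us the equality $\omega(\Theta)=1/n$ on the simultaneous approximation side. Next, I would apply the implication \eqref{eq:1/n_implies_n}, which converts the equality $\omega(\Theta)=1/n$ into the equality $\omega(L_\Theta)=n$ on the dual (linear form) side. At this point the regular exponent is pinned down.

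To promote this to the uniform exponent, I would use the sandwich coming from the trivial chain of inequalities \eqref{eq:belpha_trivial_inequalities}, namely $n\leq\hat\omega(L_\Theta)\leq\omega(L_\Theta)$. Combined with $\omega(L_\Theta)=n$ from the previous step, this forces $\hat\omega(L_\Theta)=n$ as well, completing the proof.

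There is essentially no obstacle hidden inside the corollary itself; every nontrivial step is imported. The only minor thing to keep straight is the direction of the implication: one must be careful to use \eqref{eq:1/n_implies_n} (from simultaneous to linear form) rather than \eqref{eq:n_implies_1/n}, since Perron's theorem delivers the simultaneous equality and we need to transfer it to the linear form side. The genuine difficulty, which we are allowed to quote, lies in Perron's original argument (producing the lower bound on $\max_i|\theta_ix-y_i|$ from the algebraicity of $\theta_1,\ldots,\theta_n$) and in Khintchine's transference argument underlying \eqref{eq:1/n_implies_n}.
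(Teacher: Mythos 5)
Your proposal is correct and follows exactly the route the paper intends: quote Corollary~\ref{cor:perron} for $\omega(\Theta)=1/n$, transfer via~\eqref{eq:1/n_implies_n} to obtain $\omega(L_\Theta)=n$, and then squeeze $\hat\omega(L_\Theta)$ between $n$ and $\omega(L_\Theta)=n$ using~\eqref{eq:belpha_trivial_inequalities}. You also correctly flag the only real pitfall, namely choosing~\eqref{eq:1/n_implies_n} rather than~\eqref{eq:n_implies_1/n}, so there is nothing to add.
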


%\begin{remark*}
%  Теорема \ref{t:perron}, как видно из её формулировки, утверждает, что величина
%  \[ \max_{1\leq i\leq n}|\theta_ix-y_i|^nx \]
%  отделена от нуля на целых $x,y_1,\ldots,y_n$, не равных одновременно нулю. Такие наборы $\Theta$ называются \emph{плохо приближаемыми}. Аналогично, линейная форма $L_\Theta$ называется \emph{плохо приближаемой}, если величина
%  \[ |L_\Theta(x_1,\ldots,x_n,y)|\max_{1\leq i\leq n}|x_i|^n \]
%  отделена от нуля на целых $x_1,\ldots,x_n,y$, не равных одновременно нулю. Из рассуждений Перрона и Хинчина следует, что набор $\Theta$ плохо приближаем тогда и только тогда, когда плохо приближаема линейная форма $L_\Theta$.
%\end{remark*}

%\paragraph{Область допустимых значений регулярной экспоненты.}
%\paragraph{Область допустимых значений равномерной экспоненты.}

In contrast to the case $n=1$, uniform exponents $\hat\omega(\Theta)$, $\hat\omega(L_\Theta)$ are no longer trivial for $n\geq2$. They can attain finite values strictly greater than $1/n$ and $n$ respectively. The existence of linear forms $L_\Theta$ with $\hat\omega(L_\Theta)=+\infty$ that do not vanish at nonzero integer points, as well as the existence of $\Theta$ with $\hat\omega(\Theta)=1$, was proved by Khintchine in 1926 in one of his most famous papers \cite{khintchine_palermo}. Such $n$-tuples and linear forms belong to a somewhat wider class of \emph{Khintchine's singular systems} (see Khintchine's papers \cite{khintchine_singular_1937}, \cite{khintchine_singular_1948}, and also the remarkable survey by Moshchevitin \cite{moshchevitin_UMN_2010}).

Finite values strictly greater than $1$ cannot be attained by $\hat\omega(\Theta)$. The reason is the very same effect that implies triviality of the uniform exponent in the case $n=1$. Indeed, let $\Theta=(\theta_1,\ldots,\theta_n)\in\R^n\backslash\Q^n$ and let $\e>0$ be given. Then, for irrational $\theta_i$, Proposition \ref{prop:alpha_1_dim_system} guarantees the existence of arbitrarily large $t$ such that the system
\[ 
  \begin{cases}
    0<x<t \\
    |\theta_ix-y_i|<\dfrac{1}{2t}
  \end{cases} 
\]
has no solution in integer $x$, $y_i$. If $t$ is large enough, the system
\[ 
  \begin{cases}
    0<x\leq t \\
    \max_{1\leq i\leq n}|\theta_ix-y_i|\leq t^{-1-\e}
  \end{cases} 
\]
neither has any solution in integer $x$, $y_1,\ldots,y_n$.

Thus, for every $\Theta=(\theta_1,\ldots,\theta_n)\in\R^n\backslash\Q^n$ we have

\begin{equation} \label{eq:alpha_leq_1}
  \hat\omega(\Theta)\leq1.
\end{equation}

\subsection{Transference principle} \label{subsec:transference_only_n}

Perron and Khintchine proved actually something stronger than \eqref{eq:n_implies_1/n} and \eqref{eq:1/n_implies_n}. Their constructions have ``local'' nature, therefore, they prove both the equivalence
\begin{equation} \label{eq:1/n_equiv_n}
  \omega(\Theta)=1/n\ \ \Longleftrightarrow\ \ \omega(L_\Theta)=n
\end{equation}
and the fact that $\Theta$ and $L_\Theta$ are simultaneously \emph{badly approximable}.

\begin{definition}
  An $n$-tuple $\Theta=(\theta_1,\ldots,\theta_n)$ is called \emph{badly approximable} if there is a constant $c>0$ depending only on $\Theta$ such that for every tuple of integers $x$, $y_1,\ldots,y_n$ with $x\neq0$ we have
  \[ 
    \max_{1\leq i\leq n}|\theta_ix-y_i|\geq c|x|^{-1/n}.
  \]
\end{definition}

\begin{definition}
   A linear form $L_\Theta(x_1,\ldots,x_n,y)$ is called \emph{badly approximable} if there is a constant $c>0$ depending only on $\Theta$ such that for every tuple of integers $x_1,\ldots,x_n$, $y$ with $x_1,\ldots,x_n$ not all zero we have
  \[ 
    |L_\Theta(x_1,\ldots,x_n,y)|\geq c\max_{1\leq i\leq n}|x_i|^{-n}. 
  \]
\end{definition}

Apparently, Perron's and Khintchine's constructions were the first examples of exploiting the ``duality'' of the problem of simultaneous approximation and the problem of approximating zero by the values of a linear form. We discuss the essence of this phenomenon in detail in Section \ref{subsec:ideas_and_methods}.

\subsubsection{Inequalities for regular exponents}\label{subsubsec:transference_only_n_regular}
%\paragraph{Неравенства для регулярных экспонент}

The first truly outstanding result relating the problem of simultaneous approximation and the problem of approximating zero by the values of a linear form is Khintchine's theorem, which was also proved in the aforementioned paper \cite{khintchine_palermo}. In that same paper, he named the phenomenon he had discovered the \emph{transference principle}.

\begin{theorem}[A.\,Ya.\,Khintchine, 1926] \label{t:khintchine_transference}
%  \begin{equation} \label{eq:khintchine_transference_old}
%    \frac{\omega(L_\Theta)}{(n-1)\omega(L_\Theta)+n}\leq\omega(\Theta)\leq\frac{\omega(L_\Theta)-n+1}{n}\,.
%  \end{equation}
  \begin{equation} \label{eq:khintchine_transference}
    \frac{1+\omega(L_\Theta)}{1+\omega(\Theta)}\geq n,\qquad
    \frac{1+\omega(L_\Theta)^{-1}}{1+\omega(\Theta)^{-1}}\geq \frac1n\,.
  \end{equation}
\end{theorem}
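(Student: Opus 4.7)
Both inequalities can be proved by parallel constructive arguments that exploit the duality between the simultaneous approximation problem for $\Theta$ and the linear-form problem for $L_\Theta$. In each direction the plan is to convert an integer vector witnessing a good approximation of one type into an integer vector witnessing a good approximation of the other, by applying Minkowski's convex body theorem on a carefully chosen sublattice.

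For the first inequality, I would fix $\gamma<\omega(\Theta)$. By definition there are arbitrarily large positive integers $q$ admitting $p_1,\ldots,p_n\in\Z$ with $\max_i|q\theta_i-p_i|\leq q^{-\gamma}$. The set
\[
  \Lambda_q:=\{\vec a\in\Z^n:a_1p_1+\cdots+a_np_n\equiv 0\pmod q\}
\]
is a sublattice of $\Z^n$ of index dividing $q$, so Minkowski's theorem produces a nonzero $\vec a\in\Lambda_q$ with $A:=\max_i|a_i|\leq q^{1/n}$. Setting $b:=q^{-1}\sum_i a_ip_i\in\Z$, a direct computation yields
\[
  L_\Theta(\vec a,b)=q^{-1}\sum_{i=1}^n a_i(q\theta_i-p_i),
\]
hence $|L_\Theta(\vec a,b)|\leq nq^{1/n-1-\gamma}\leq nA^{-(n(1+\gamma)-1)}$, using $q\geq A^n$. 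Either $A\to\infty$ along the chosen sequence of $q$'s (which gives $\omega(L_\Theta)\geq n(1+\gamma)-1$ at once), or some fixed nonzero $\vec a$ recurs with $L_\Theta(\vec a,b)\to 0$ while $b$ is integer, forcing $L_\Theta(\vec a,b)=0$ and $\omega(L_\Theta)=+\infty$ so that the inequality is trivial. Letting $\gamma\to\omega(\Theta)$ produces $1+\omega(L_\Theta)\geq n(1+\omega(\Theta))$.

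For the second inequality, I would run the dual construction. Fix $\gamma'<\omega(L_\Theta)$ and take $(\vec a,b)\in\Z^{n+1}$ with $A:=\max_i|a_i|$ arbitrarily large, $r:=L_\Theta(\vec a,b)\neq 0$, and $|r|\leq A^{-\gamma'}$; dividing out a common factor allows one to assume $\gcd(a_1,\ldots,a_n,b)=1$. The rank-$n$ sublattice
\[
  \Lambda_0:=\{(q,\vec p)\in\Z^{n+1}:a_1p_1+\cdots+a_np_n=bq\}
\]
has covolume $\sqrt{b^2+a_1^2+\cdots+a_n^2}$, of order $A$, inside its hyperplane $H\subset\R^{n+1}$, and the identity $\sum_i a_i\theta_i=b+r$ immediately yields the key relation
\[
  \sum_{i=1}^n a_i(q\theta_i-p_i)=qr\qquad\text{for every }(q,\vec p)\in\Lambda_0.
\]
I would then apply Minkowski inside $\Lambda_0$ to the symmetric body $\{(q,\vec y)\in H:|q|\leq Q,\ \max_i|q\theta_i-y_i|\leq Y\}$, whose $n$-volume is of order $QY^{n-1}$ up to a bounded Jacobian factor, and balance the Minkowski threshold $QY^{n-1}\gtrsim A$ against the lower bound $Y\gtrsim Q|r|/(nA)\geq QA^{-\gamma'-1}/n$ forced by the key relation. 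The optimal parameters turn out to be $Q\asymp A^{(n+(n-1)\gamma')/n}$ and $Y\asymp A^{-\gamma'/n}$, which yields $\omega(\Theta)\geq\gamma'/(n+(n-1)\gamma')$. Sending $\gamma'\to\omega(L_\Theta)$ and rearranging gives $1+\omega(\Theta)^{-1}\leq n(1+\omega(L_\Theta)^{-1})$, the second inequality.

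The main technical obstacle is the volume estimate in the second step: one must verify that the $n$-dimensional measure of the sliced box really is of order $QY^{n-1}$ uniformly in the relative sizes of the $a_i$'s and of $b$, and handle the transitional regime where the hyperplane $H$ exits the box (at $Q\approx nAY/|r|$). The first inequality, in contrast, is essentially elementary once Minkowski's theorem has been applied to $\Lambda_q$ and the exponent arithmetic is unwound.
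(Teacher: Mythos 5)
Your two-directional construction is correct, and it does prove both inequalities. Let me flag that it is a genuinely different packaging of the argument from the one given in the paper, which proves the theorem via Mahler's method: there one applies Minkowski's linear-forms theorem (Corollary~\ref{cor:minkowski_linear_forms}) to a full-rank parallelepiped in $\R^{n+1}$ whose first constraint is $|\langle\vec v,\vec u\rangle|<1$; since this is strict and the determinant of the form system is exactly the product of the bounds, the integer point produced automatically lands in the hyperplane $\langle\vec v,\vec u\rangle=0$, and the telescoping identity \eqref{eq:w_1_first_estimate} then supplies the exponent. The paper then derives the reverse inequality by the duality of pseudocompounds (Theorem~\ref{t:mahler_reformulated}), whose own proof is exactly a slice-through-the-orthogonal-hyperplane argument with Vaaler's theorem controlling the constant. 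What you do instead is work \emph{inside} the orthogonal sublattice from the start. For the first inequality this is a genuine simplification: the orthogonal data projects to a congruence sublattice $\Lambda_q\subset\Z^n$ of index dividing $q$, so Minkowski in a cube immediately produces $\vec a$ with $\max_i|a_i|\le q^{1/n}$ and no slicing estimate is needed at all. For the second inequality, you pay for working inside the hyperplane: you must estimate the $n$-volume of the box sliced by $H$, and you correctly identify the Brunn--Minkowski/Cavalieri lower bound $\mathrm{vol}_n(\text{slice})\gtrsim QY^nN/(Q|r|+Y\sum_i|a_i|)$ with $N\asymp A\asymp\mathrm{covol}(\Lambda_0)$. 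Your chosen balance $Q\asymp A^{(n+(n-1)\gamma')/n}$, $Y\asymp A^{-\gamma'/n}$ sits precisely on the transition line $Q|r|\asymp nAY$, where the denominator's two terms are comparable, and it does give the claimed volume $\gtrsim A$, so Minkowski applies in $\Lambda_0$. You also need $Y<1$ to rule out $q=0$ in the resulting lattice point, which holds automatically since $\gamma'\ge n>0$. One tiny bookkeeping point worth noting: Definition~\ref{def:beta_simultaneous} allows $|x|\le t$ with the error bounded by $t^{-\gamma}$, so the witness $q$ may in principle be smaller than $t$; since $t^{-\gamma}\le q^{-\gamma}$ this only strengthens your hypothesis, and $q\to\infty$ as long as $\Theta\notin\Q^n$, so the first step is fine. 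In sum, your proof is correct; it trades Mahler's clean linear-forms application (which hides the slicing inside Minkowski's determinant condition) for an explicit computation in the orthogonal sublattice, gaining transparency in the easy direction at the cost of the slice-volume estimate in the hard one.
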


As Jarn\'{\i}k showed in his papers \cite{jarnik_1936_1}, \cite{jarnik_1936_2}, inequalities \eqref{eq:khintchine_transference} are sharp in the sense that for every $\gamma\in[n,+\infty]$ there are two $n$-tuples $\Theta'$ and $\Theta''$ such that
%\[
%  \begin{aligned}
%    \omega(L_{\Theta'})=\gamma,\qquad & \omega(\Theta')=\frac{\gamma}{(n-1)\gamma+n}\,, \\
%    \omega(L_{\Theta''})=\gamma,\qquad & \omega(\Theta'')=\frac{\gamma-n+1}{n}\,.
%  \end{aligned}
%\]
\[
  \omega(L_{\Theta'})=\omega(L_{\Theta''})=\gamma,\qquad
  \frac{1+\gamma}{1+\omega(\Theta')}=n,\qquad
  \frac{1+\gamma^{-1}}{1+\omega(\Theta'')^{-1}}=\frac1n\,.
\]

\subsubsection{Inequalities for uniform exponents}

The method used to prove inequalities \eqref{eq:khintchine_transference} enables proving the same inequalities for the uniform exponents $\hat\omega(\Theta)$ and $\hat\omega(L_\Theta)$:
%\begin{equation} \label{eq:khintchine_transference_alpha_old}
%  \frac{\hat\omega(L_\Theta)}{(n-1)\hat\omega(L_\Theta)+n}\leq\hat\omega(\Theta)\leq\frac{\hat\omega(L_\Theta)-n+1}{n}\,.
%\end{equation}
\begin{equation} \label{eq:khintchine_transference_alpha}
  \frac{1+\hat\omega(L_\Theta)}{1+\hat\omega(\Theta)}\geq n,\qquad
  \frac{1+\hat\omega(L_\Theta)^{-1}}{1+\hat\omega(\Theta)^{-1}}\geq \frac1n\,.
\end{equation}
However, these exponents satisfy stronger inequalities. For $n=2$ Jarn\'{\i}k presented in his paper \cite{jarnik_tiflis} a surprising fact: in this case the uniform exponents are connected by an equality.

\begin{theorem}[V.\,Jarn\'{\i}k, 1938] \label{t:jarnik_identity}
  If $n=2$ and the components of $\Theta$ are linearly independent with the unity over $\Q$, then
  \begin{equation} \label{eq:jarnik_identity}
    \hat\omega(L_\Theta)^{-1}+\hat\omega(\Theta)=1.
  \end{equation}
\end{theorem}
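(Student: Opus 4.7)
My plan is to prove the identity by establishing the two reverse inequalities separately, both obtained by a cross-product construction in $\R^3$ that exploits the dimensional coincidence $d=n+1=3$. In this dimension, the cross product of two integer vectors lies in $\Z^3$, giving an explicit geometric duality between the line $\cL(\Theta)=\R(1,\theta_1,\theta_2)$ (close to which best simultaneous approximations cluster) and the plane $\{L_\Theta=0\}$ (close to which best linear form approximations cluster).

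For $\hat\omega(L_\Theta)\ge 1/(1-\hat\omega(\Theta))$, I would let $\vec v_k=(q_k,p_{1,k},p_{2,k})\in\Z^3$ be the sequence of best simultaneous approximation vectors, with $q_k\to\infty$ and errors $\eta_{i,k}:=\theta_iq_k-p_{i,k}$ of size $M_k:=\max_i|\eta_{i,k}|\to 0$. The $\Q$-linear independence of $1,\theta_1,\theta_2$ forces $\cL(\Theta)$ to avoid $\Z^3\setminus\{\vec 0\}$, and hence consecutive $\vec v_k,\vec v_{k+1}$ are linearly independent and $\vec w_k:=\vec v_k\times\vec v_{k+1}\in\Z^3$ is nonzero. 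A direct substitution $p_{i,k}=\theta_iq_k-\eta_{i,k}$ into the cross-product formulas yields, after cancellation of the $\theta_i\theta_jq_kq_{k+1}$ terms, the two key estimates
\[
\max(|w_{k,2}|,|w_{k,3}|)\le q_{k+1}M_k+q_kM_{k+1}\quad\text{and}\quad w_{k,1}+\theta_1w_{k,2}+\theta_2w_{k,3}=\eta_{1,k}\eta_{2,k+1}-\eta_{2,k}\eta_{1,k+1}.
\]
Reinterpreting $(w_{k,2},w_{k,3},-w_{k,1})$ as a point $(x_1,x_2,y)\in\Z^3$ for the linear form problem, the left-hand quantity bounds $\max(|x_1|,|x_2|)$, while the right-hand one equals exactly $L_\Theta(x_1,x_2,y)$. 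Substituting $M_k\le q_{k+1}^{-\hat\omega(\Theta)+o(1)}$ and interpolating across the discrete scales $T_k\asymp q_{k+1}M_k$ gives the desired lower bound on $\hat\omega(L_\Theta)$.

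The reverse inequality $\hat\omega(L_\Theta)\le 1/(1-\hat\omega(\Theta))$, equivalent to $\hat\omega(\Theta)\ge 1-1/\hat\omega(L_\Theta)$, would follow by applying the symmetric construction to the best linear form approximation vectors $\vec z_l\in\Z^3$. Their pairwise cross products $\vec z_l\times\vec z_{l+1}$, reinterpreted in the dual manner, yield a sequence of simultaneous approximation vectors for $\Theta$ with entirely analogous estimates, producing the required bound; combining the two inequalities delivers the identity.

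The main obstacle is to rigorously interpolate between the discrete sequence of cross-product scales $T_k$ and the continuous parameter $t$ in the definition of the uniform exponents, so that $\vec w_k$ actually witnesses $\hat\omega(L_\Theta)$ at every $t\in[T_k,T_{k+1})$. The sharpness of the identity, as opposed to a mere inequality, hinges on the precise growth rate $T_{k+1}/T_k$ being dictated by $\hat\omega(\Theta)$ alone, and on the interplay between the trivial bound $|\eta_{1,k}\eta_{2,k+1}-\eta_{2,k}\eta_{1,k+1}|\le 2M_kM_{k+1}$ and the actual size of this determinant, which can be significantly smaller for extremal $\Theta$ where the error vectors $(\eta_{1,k},\eta_{2,k})\in\R^2$ approach alignment. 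The $\Q$-linear independence hypothesis ensures this determinant never vanishes, and matching the two chains of estimates precisely is what converts the two opposite inequalities into the equality.
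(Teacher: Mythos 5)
The paper does not actually give a proof of Theorem \ref{t:jarnik_identity}: it states it and refers to Jarn\'{\i}k's 1938 paper and to Khintchine's simpler 1948 argument, and later shows that it follows from the more general Theorem \ref{t:my_simultaneous_vs_linear_form} specialised to $n=2$. So I will assess your sketch on its own terms.

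Your cross-product algebra is correct: identifying $\bigwedge^2\R^3\cong\R^3$, the wedge $\vec v_k\wedge\vec v_{k+1}$ \emph{is} the cross product, and a direct computation does give
$L_\Theta(w_{k,2},w_{k,3},-w_{k,1})=\eta_{1,k}\eta_{2,k+1}-\eta_{2,k}\eta_{1,k+1}$ together with
$\max(|w_{k,2}|,|w_{k,3}|)\le q_{k+1}M_k+q_kM_{k+1}$. This is exactly the $d=3$ instance of the second-pseudocompound mechanism that drives Theorem \ref{t:second_pseudo_compound}, so the underlying idea is sound.

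The gap, however, is precisely the one you flag as ``the main obstacle,'' and it is not a technicality. Crossing \emph{consecutive} best approximations produces witnesses only at the discrete scales $T_k\asymp q_{k+1}M_k$, and these do not tile the $s$-axis densely enough to certify the full uniform exponent. If you carry out the interpolation honestly, writing $a_k=\log q_k$, $b_k=-\log M_k$, $\omega_k=b_k/a_{k+1}$, and $\omega=\hat\omega(\Theta)=\liminf_k\omega_k$, the worst $s$ in $[T_k,T_{k+1})$ gives
\[
\frac{-\log(2M_kM_{k+1})}{\log T_{k+1}}\ \ge\ \frac{b_k+b_{k+1}}{a_{k+2}-b_{k+1}}+o(1)
\ =\ \frac{\omega_{k+1}}{1-\omega_{k+1}}\ +\ \frac{b_k}{(1-\omega_{k+1})\,a_{k+2}}+o(1),
\]
and when the denominators jump ($a_{k+1}/a_{k+2}\to 0$) along a subsequence with $\omega_{k+1}\to\omega$, the second term vanishes. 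You thus prove only
\[
\hat\omega(L_\Theta)\ \ge\ \frac{\hat\omega(\Theta)}{1-\hat\omega(\Theta)}\,,
\]
which is strictly weaker than the required $\hat\omega(L_\Theta)\ge\frac1{1-\hat\omega(\Theta)}$ whenever $\hat\omega(\Theta)<1$. The symmetric argument on the linear-form side has the same defect, and combining the two sub-optimal inequalities does not recover the identity. Nothing in the uniform hypothesis $\omega_k\ge\omega-\e$ controls the gaps $a_{k+2}/a_{k+1}$, so the loss is genuine, not an artefact of sloppy constants.

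What is missing is a better choice of which pair of lattice vectors to wedge. The proof of the $n$-dimensional generalisation in Section \ref{subsubsec:proof_of_uniform_transference} handles exactly this point with the ``nodes and leaves'' construction: one does not cross two consecutive best approximations; instead one assumes the root ``node'' $\cQ(s,\delta)$ is empty of $\La^\ast$-points while every ``leaf'' $\cQ(r,\alpha)$ for $r\in[h,H]$ contains one, and then property (v) extracts two \emph{non-collinear} points, one in a node $\cQ_{r_0}$ and one in its anti-node $\cQ_{hH/r_0}$, calibrated so that their wedge lands in the right dilated pseudocompound $2\cP(t,\gamma)^{\circledast}$. The dichotomy ``root node non-empty / root node empty but all leaves non-empty'' is what guarantees the construction works for every scale, not just along the lacunary sequence $T_k$. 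Equivalently, the ``empty cylinder'' phenomenon of Lemma \ref{l:empty_cylinder} is being exploited: the emptiness of the root node forces the two chosen points to be well-separated in the dual parametrisation. Your sketch has the right geometric mechanism but the wrong selection principle; as written, it proves a weaker transference inequality, not Jarn\'{\i}k's identity.
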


Later on, in 1948, Khintchine published in \cite{khintchine_dobav} a rather simple proof of Theorem \ref{t:jarnik_identity}.

For $n\geq3$, in that same paper \cite{jarnik_tiflis}, Jarn\'{\i}k proved that, if $\theta_1,\ldots,\theta_n$ are linearly independent with the unity over $\Q$, then, along with \eqref{eq:khintchine_transference_alpha}, the following inequalities hold:
\begin{equation} \label{eq:jarnik_inequalities_cases}
  \begin{aligned}
    \displaystyle &
    \hat\omega(\Theta)\geq\frac{1}{n-1}\left(1-\frac{1}{\hat\omega(L_\Theta)-2n+4}\right),\ \ \text{ if }\ \hat\omega(L_\Theta)>n(2n-3), \\
%    \displaystyle
%    \hat\omega(L_\Theta)\geq n-2+\frac{1^{\vphantom{\big|}}}{1-\hat\omega(\Theta)}\,,\qquad\qquad\quad\,\ \text{ при }\ \hat\omega(\Theta)>(n-1)/n. \\
    \displaystyle &
    \hat\omega(\Theta)\leq1-\frac{1^{\vphantom{|}}}{\hat\omega(L_\Theta)-n+2}\,,\qquad\qquad\quad\,\ \text{ if }\ \hat\omega(\Theta)>(n-1)/n.
  \end{aligned}
\end{equation}
In 2012, inequalities \eqref{eq:khintchine_transference_alpha}, \eqref{eq:jarnik_inequalities_cases} were improved by the author in \cite{german_MJCNT_2012}, \cite{german_AA_2012}.

\begin{theorem}[O.\,G., 2012] \label{t:my_simultaneous_vs_linear_form}
  For every $\Theta=(\theta_1,\ldots,\theta_n)\in\R^n\backslash\Q^n$ we have
%  \begin{equation} \label{eq:my_inequalities_simultaneous_old}
%    \frac{\hat\omega(L_\Theta)-1}{(n-1)\hat\omega(L_\Theta)}\leq\hat\omega(\Theta)\leq\frac{\hat\omega(L_\Theta)-n+1}{\hat\omega(L_\Theta)}\,.
%  \end{equation}
  \begin{equation}\label{eq:my_simultaneous_vs_linear_form}
    \hat\omega(L_\Theta)\geq\frac{n-1}{1-\hat\omega(\Theta)}\,,\qquad
    \hat\omega(\Theta)\geq \frac{1-\hat\omega(L_\Theta)^{-1}}{n-1}\,.
  \end{equation}
%  \begin{align}
%    \hat\omega(\Theta)  \geq\frac{1-\hat\omega(L_\Theta)^{-1}}{n-1}\,,
%    \label{eq:my_inequalities_linear_form_1} \\
%    \hat\omega(L_\Theta)^{-1}  \leq\frac{1-\hat\omega(\Theta)}{n-1}\,.
%    \label{eq:my_inequalities_linear_form_2}
%  \end{align}
\end{theorem}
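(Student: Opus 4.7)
The plan is to derive both inequalities by constructing, from $n$ linearly independent good approximations of one type, a single good approximation of the dual type, via the exterior (cross) product in $\Z^{n+1}$.

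I would focus first on the inequality $\hat\omega(L_\Theta)\geq(n-1)/(1-\hat\omega(\Theta))$. Put $\alpha=\hat\omega(\Theta)$ (we may assume $\alpha<1$; the case $\alpha=1$ follows from a limiting argument), fix $\e>0$, and set $\alpha'=\alpha-\e$. By the definition of $\hat\omega(\Theta)$, for every $t\geq t_0(\e)$ there is a nonzero integer point $(x,y_1,\ldots,y_n)\in\Z^{n+1}$ with $0<|x|\leq t$ and $\max_j|\theta_jx-y_j|\leq t^{-\alpha'}$. Given a large target $T$, I would choose scales $t_1\leq\ldots\leq t_n\leq T$ and corresponding approximation vectors $\vec a_i=(x_i,y_{i,1},\ldots,y_{i,n})\in\Z^{n+1}$ with $\vec a_1,\ldots,\vec a_n$ linearly independent, and take $\vec u\in\Z^{n+1}$ to be the primitive integer vector orthogonal (in the standard inner product) to $\vec a_1,\ldots,\vec a_n$. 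The coordinates of $\vec u$ are, up to sign, the $n\times n$ minors of $[\vec a_1|\cdots|\vec a_n]$, and the orthogonality relations $\vec u\cdot\vec a_i=0$ rewrite as the key identity $x_iL_\Theta(\vec u)=\sum_ku_k\delta_{i,k}$, where $\delta_{i,k}=\theta_kx_i-y_{i,k}$.

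Reducing each minor by subtracting $\theta_k$ times the $x$-row from the $k$-th $y$-row replaces $y_{i,k}$ by $-\delta_{i,k}$; cofactor expansion together with the bounds $|x_i|\leq t_i$ and $|\delta_{i,k}|\leq t_i^{-\alpha'}$ then yield
\[
  \max_j|u_j|\lesssim t_n\prod_{i=1}^{n-1}t_i^{-\alpha'},
  \qquad
  |L_\Theta(\vec u)|\lesssim\prod_{i=1}^{n}t_i^{-\alpha'}.
\]
Writing $t_i=T^{\rho_i}$ with $\rho_n=1$ and $R=\sum_{i=1}^{n-1}\rho_i$ turns these into $\max_j|u_j|\lesssim T^{1-\alpha'R}$ and $|L_\Theta(\vec u)|\lesssim T^{-\alpha'(R+1)}$. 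Optimizing $R$ subject to the requirement that the chosen scales genuinely admit $n$ linearly independent approximation vectors, and letting $\e\to 0$, produces the claimed exponent $(n-1)/(1-\alpha)$. The second inequality follows from the symmetric construction applied to the dual lattice $\Lambda_\Theta^\star$: one starts with $n$ linearly independent integer vectors $\vec u_1,\ldots,\vec u_n$ delivering small values of $L_\Theta$ at appropriate scales, and builds their orthogonal in $\Z^{n+1}$, which is then an integer point giving the desired simultaneous approximation of $\Theta$.

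The hard part will be controlling the linear independence of $\vec a_1,\ldots,\vec a_n$ in the regime $\alpha\geq 1/(n-1)$, where the inequality is sharpest. There the uniform choice $t_1=\ldots=t_n=T$ would drive the upper bound on $\max_j|u_j|$ below $1$, incompatible with $\vec u$ being a nonzero integer vector, reflecting the actual non-existence of $n$ linearly independent good approximations at a single scale (Khintchine's singular phenomenon). The scales $t_i$ must then genuinely spread out, and the sharp exponent $(n-1)/(1-\alpha)$ emerges from balancing this spread against the availability of independent approximations. A subsidiary technical point is that the identity $x_iL_\Theta(\vec u)=\sum_ku_k\delta_{i,k}$ requires a lower bound on $|x_i|$ comparable to $t_i$, which one secures by restricting to primitive best-approximation vectors rather than arbitrary solutions.
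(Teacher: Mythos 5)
Your approach differs fundamentally from the paper's, and it contains a gap that you acknowledge but do not close: the existence of $n$ linearly independent approximation vectors $\vec a_1,\ldots,\vec a_n$ with the required bounds at your chosen scales. The uniform exponent only guarantees, for each large $t$, \emph{one} nonzero integer point in $\cP(t,\alpha')$; it controls the first successive minimum of that parallelepiped and says nothing about the higher ones. For a strongly singular $\Theta$ --- precisely the regime $\hat\omega(\Theta)$ close to $1$ where the inequality is sharpest --- a single best approximation can dominate across a long range of scales, so the integer points in your boxes can remain confined to a low-dimensional subspace. Asserting that the sharp exponent ``emerges from balancing this spread against the availability of independent approximations'' is exactly asking for a quantitative lower bound on those higher successive minima, which is the crux of the difficulty, not a routine optimization; you never supply it. Without it, the determinant estimates apply to a vector $\vec u$ that you cannot prove exists.

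The paper sidesteps this entirely by never assembling $n$ independent points. Instead it works with the $(d-2)$-th pseudocompound (Definition \ref{def:second_pseudo_compound}) and with \emph{pairs} of non-collinear dual-lattice points produced by the parametric ``nodes and leaves'' construction of Section \ref{subsubsec:nodes_and_leaves}: if the root node $\cQ(s,\delta)$ contains no nonzero point of $\La^\ast$ while every leaf $\cQ(r,\alpha)$ does, property (v) extracts two non-collinear $\vec v_1,\vec v_2\in\La^\ast$ whose wedge $\vec v_1\wedge\vec v_2$ lies in $2\cP(t,\gamma)^{\circledast}$, and Theorem \ref{t:second_pseudo_compound} --- the rank-two analogue of Mahler's transference theorem, resting on Proposition \ref{prop:orthogonal_sublattices} with $k=2$ and Vaaler's theorem for $(d-2)$-dimensional central sections of the cube --- converts this into a nonzero point of $\La$ in $c\cP(t,\gamma)$. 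Only two points are needed because the uniform exponent records information about a \emph{gap} between consecutive best approximations, and a gap is delimited by exactly two lattice points, not $n$; that is why the argument closes and yields the sharp constant, whereas your $(d-1)$-th-exterior-power strategy would require, as a separate and genuinely hard lemma, control of the full flag of successive minima.
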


It was shown by Marnat \cite{marnat_sharpness} and (independently) by Schmidt and Summerer \cite{schmidt_summerer_AA_2016} that both inequalities \eqref{eq:my_simultaneous_vs_linear_form} are sharp. More specifically, they showed that for every $\gamma\in[n,+\infty]$ and every
% $\delta\in\big[\tfrac{\gamma-1}{(n-1)\gamma}\,,\,\tfrac{\gamma-n+1}{\gamma}\big]$
\begin{equation}\label{eq:delta_for_marnat}
  \delta\in\bigg[\frac{1-\gamma^{-1}}{n-1}\,,\,1-\frac{n-1}{\gamma}\bigg]
\end{equation}
there are continuously many tuples $\Theta$ with components linearly independent with the unity over $\Q$ such that $\hat\omega(L_{\Theta})=\gamma$, $\hat\omega(\Theta)=\delta$ (we note that, if $\gamma\geq n$, then the segment in \eqref{eq:delta_for_marnat} is correctly defined and non-empty).

\subsubsection{``Mixed'' inequalities}\label{subsubsec:transference_only_n_mixed}

Despite the fact that inequalities \eqref{eq:khintchine_transference} for the regular exponents are sharp, they can be improved if the uniform exponents are taken into account. For the first time, it was made by Laurent and Bugeaud in papers \cite{laurent_up_down}, \cite{bugeaud_laurent_up_down}. They proved the following.

\begin{theorem}[M.\,Laurent, Y.\,Bugeaud, 2009] \label{t:bugeaud_laurent}
  If the components of $\Theta$ are linearly independent with the unity over $\Q$, then
%  \begin{multline} \label{eq:bugeaud_laurent_old}
%    \frac{(\hat\omega(L_\Theta)-1)\omega(L_\Theta)}{((n-2)\hat\omega(L_\Theta)+1)\omega(L_\Theta)+(n-1)\hat\omega(L_\Theta)}\leq
%    \omega(\Theta), \\
%    \omega(\Theta)\leq
%    \frac{(1-\hat\omega(\Theta))\omega(L_\Theta)-n+2-\hat\omega(\Theta)}{n-1}\,.
%  \end{multline}
  \begin{equation}\label{eq:bugeaud_laurent}
    \frac{1+\omega(L_\Theta)}{1+\omega(\Theta)}\geq\frac{n-1}{1-\hat\omega(\Theta)}\,,\qquad
    \frac{1+\omega(L_\Theta)^{-1}}{1+\omega(\Theta)^{-1}}\geq \frac{1-\hat\omega(L_\Theta)^{-1}}{n-1}\,.
  \end{equation}
%  \begin{equation} \label{eq:bugeaud_laurent}
%  \begin{aligned}
%    \omega(\Theta) & \geq
%    \frac{(\hat\omega(L_\Theta)-1)\omega(L_\Theta)}{((n-2)\hat\omega(L_\Theta)+1)\omega(L_\Theta)+(n-1)\hat\omega(L_\Theta)}\,, \\ \vphantom{\frac{\big|}{}}
%    \omega(\Theta) & \leq
%    \frac{(1-\hat\omega(\Theta))\omega(L_\Theta)-n+2-\hat\omega(\Theta)}{n-1}\,.
%  \end{aligned}
%  \end{equation}
\end{theorem}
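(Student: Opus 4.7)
The plan is to refine Khintchine's classical transference argument (Theorem \ref{t:khintchine_transference}) by incorporating the uniform exponent $\hat\omega(\Theta)$ as an additional geometric constraint on the arrangement of minimal vectors. I focus on the first inequality; the second follows from the symmetric argument applied to the polar body, with the roles of $\Theta$ and $L_\Theta$ interchanged.

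For parameters $X, M > 0$ introduce the parallelepiped
\[
  \cB(X, M) = \Big\{(x, y_1, \ldots, y_n) \in \R^{n+1} : |x| \leq X,\ \max_{1\leq i \leq n} |\theta_i x - y_i| \leq M\Big\},
\]
and let $\lambda_1 \leq \cdots \leq \lambda_{n+1}$ denote its successive minima with respect to $\Z^{n+1}$. The exponent $\omega(\Theta)$ is controlled by the minimal values of $M$ (at fixed $X$) for which $\lambda_1\leq 1$ is attained for arbitrarily large $X$, while $\hat\omega(\Theta)$ is controlled by the analogous minimal $M$ attainable for \emph{all} large $X$. The dual quantity $\omega(L_\Theta)$ is governed by the polar body $\cB^*(X,M)$, whose successive minima $\lambda_i^*$ satisfy Mahler's duality $\lambda_i\lambda_{n+2-i}^*\asymp 1$ (with constants depending on $n$), so a good $L_\Theta$-approximation is equivalent to a small $\lambda_{n+1}$.

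Pick $X$ arbitrarily large realizing the regular exponent, and let $\vec v = (v_0,\ldots,v_n)\in\Z^{n+1}$ be a minimal vector with $|v_0|\leq X$ and $\max_i|\theta_i v_0 - v_i|\leq X^{-\omega(\Theta)+\e}$. The uniform exponent delivers an integer point in $\cB(t,\,t^{-\hat\omega(\Theta)+\e})$ for every large $t$; an argument in the style of Proposition \ref{prop:alpha_1_dim_system} shows that any such vector linearly independent from $\vec v$ must have first coordinate of absolute value at least $X^{1/(1-\hat\omega(\Theta))+o(1)}$. Propagating this constraint to the intermediate minima $\lambda_2,\ldots,\lambda_n$ via Mahler's theorem on compound convex bodies applied to the $(n-1)$-st exterior power of a suitably scaled body $\cB(X',M')$, and combining with Minkowski's second theorem $\lambda_1\cdots\lambda_{n+1}\asymp \vol(\cB(X',M'))^{-1}$, one obtains an explicit upper bound on $\lambda_{n+1}$. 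Mahler duality then produces an integer vector of height $H$ yielding an $L_\Theta$-approximation of quality $H^{-\omega(L_\Theta)+o(1)}$ for which $(1+\omega(L_\Theta))/(1+\omega(\Theta)) \geq (n-1)/(1-\hat\omega(\Theta))$.

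The main technical difficulty lies in the propagation step: the uniform exponent directly bounds $\lambda_1$, but producing the factor $n-1$ (rather than the weaker $n$ of Khintchine's original bound) requires corresponding lower bounds on $\lambda_2,\ldots,\lambda_n$. The route through the $(n-1)$-st compound body, together with the $\Q$-linear independence of $1,\theta_1,\ldots,\theta_n$ to exclude the collapse of minimal vectors into a proper rational subspace, is precisely what forces the factor $n-1$. The second inequality is proved by applying the same pipeline to $\cB^*$, exchanging $\omega(\Theta)\leftrightarrow \omega(L_\Theta)$ and $\hat\omega(\Theta)\leftrightarrow\hat\omega(L_\Theta)$.
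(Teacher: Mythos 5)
The paper does not reprove Bugeaud--Laurent directly: right after stating Theorem~\ref{t:schmidt_summerer_2013} it observes that inequalities \eqref{eq:bugeaud_laurent} follow \emph{immediately} by chaining \eqref{eq:schmidt_summerer_2013} with \eqref{eq:my_simultaneous_vs_linear_form}, namely
\[
  \frac{1+\omega(L_\Theta)}{1+\omega(\Theta)}\ \geq\ \hat\omega(L_\Theta)\ \geq\ \frac{n-1}{1-\hat\omega(\Theta)}\,,
  \qquad
  \frac{1+\omega(L_\Theta)^{-1}}{1+\omega(\Theta)^{-1}}\ \geq\ \hat\omega(\Theta)\ \geq\ \frac{1-\hat\omega(L_\Theta)^{-1}}{n-1}\,,
\]
so its proof is two lines long. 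Your plan is a genuinely different route -- a direct successive-minima argument much closer to the original Laurent--Bugeaud paper and to the proof of the more general Theorem~\ref{t:loranoyadenie}. That route is legitimate in principle, but it is strictly harder, since you are in effect re-deriving the content of \emph{both} \eqref{eq:schmidt_summerer_2013} and \eqref{eq:my_simultaneous_vs_linear_form} at once.

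As written, however, the argument has a real gap, and one asserted step looks wrong. The claim that a vector $\vec w$ linearly independent from $\vec v$ lying in $\cB(t,t^{-\hat\omega(\Theta)+\e})$ must have $|w_0|\geq X^{1/(1-\hat\omega(\Theta))+o(1)}$ does not follow from a $2\times 2$ determinant argument \`a~la Proposition~\ref{prop:alpha_1_dim_system}. That argument gives $|v_0|\,r(\vec w)+|w_0|\,r(\vec v)\gtrsim 1$, which (with $|v_0|\asymp X$, $r(\vec v)\asymp X^{-\omega(\Theta)+\e}$ and $r(\vec w)$ small) forces $|w_0|\gtrsim X^{\omega(\Theta)-\e}$, not $X^{1/(1-\hat\omega(\Theta))}$. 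The exponent $1/(1-\hat\omega(\Theta))$ is what one gets for $\hat\omega(L_\Theta)$ from Jarn\'{\i}k's identity \eqref{eq:jarnik_identity} when $n=2$ and from \eqref{eq:my_simultaneous_vs_linear_form} up to a factor $n-1$ in general; importing it here as a lower bound on the first coordinate needs a separate argument, not an appeal to the one-dimensional proposition.

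More importantly, the ``propagation step'' -- bounding $\lambda_2,\ldots,\lambda_n$ via the $(n-1)$-th compound body and then using Minkowski's second theorem to control $\lambda_{n+1}$ -- is exactly where the factor $n-1$ must emerge, and you state it as ``one obtains an explicit upper bound'' without carrying out the computation. A plain application of the pseudocompound construction (Definition~\ref{def:pseudo_compound}, Theorem~\ref{t:mahler_reformulated}) to a \emph{single} parallelepiped only reproduces Khintchine's factor $n$; extracting $n-1$ requires tracking a rank-two sublattice (this is what the $(d-2)$-th pseudocompound and Proposition~\ref{prop:orthogonal_sublattices} with $k=2$ accomplish in the paper's proof of Theorem~\ref{t:my_simultaneous_vs_linear_form}), or an equivalent device. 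Until that step is actually carried through, the proof is incomplete. A cleaner direct route, if you want to avoid invoking the later theorems, is the one the paper itself uses for the general $n\times m$ case: prove \eqref{eq:my_simultaneous_vs_linear_form} via the ``nodes and leaves'' construction together with Theorem~\ref{t:second_pseudo_compound}, and prove \eqref{eq:schmidt_summerer_2013} via the empty cylinder lemma (Lemma~\ref{l:empty_cylinder}); Bugeaud--Laurent then drops out as in the display above.
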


Laurent \cite{laurent_2dim} proved that for $n=2$ these inequalities are sharp.
%
%
%(см. также пункт \ref{sec:spectrum} (???)).
%
%
However, for $n\geq3$, it was shown by Schleischitz \cite{schleischitz_laureaud_nonsharpness_2021} that inequalities \eqref{eq:bugeaud_laurent} are no longer sharp. This is not surprising in the light of the following inequalities obtained in 2013 by Schmidt and Summerer \cite{schmidt_summerer_2013} (see also papers \cite{german_moshchevitin_2013}, \cite{german_moshchevitin_2022}, where shorter proofs of their theorem are proposed).

\begin{theorem}[W.\,M.\,Schmidt, L.\,Summerer, 2013] \label{t:schmidt_summerer_2013}
  If the components of $\Theta$ are linearly independent with the unity over $\Q$, then
  \begin{equation}\label{eq:schmidt_summerer_2013}
    \hat\omega(L_\Theta)\leq\frac{1+\omega(L_\Theta)}{1+\omega(\Theta)}\,,\qquad
    \hat\omega(\Theta)\leq\frac{1+\omega(L_\Theta)^{-1}}{1+\omega(\Theta)^{-1}}\,.
  \end{equation}
\end{theorem}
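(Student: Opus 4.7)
The plan is to work in the parametric geometry of numbers framework of Schmidt and Summerer. For $q \geq 0$, introduce the convex body
\[
K(q) = \big\{(x_0, x_1, \ldots, x_n) \in \R^{n+1} :\ |x_0| \leq e^{q},\ \max_{1 \leq i \leq n}|\theta_i x_0 - x_i| \leq e^{-q/n}\big\},
\]
of volume $2^{n+1}$, with successive minima $\lambda_1(q) \leq \cdots \leq \lambda_{n+1}(q)$ relative to $\Z^{n+1}$; put $L_i(q) = \log \lambda_i(q)$. Minkowski's second theorem gives $\sum_i L_i(q) = O(1)$, and Mahler's duality applied to the polar of $K(q)$ (which is the natural body for the linear form problem) gives $L_1^{*}(q) = -L_{n+1}(q) + O(1)$, where $L_j^{*}$ denotes the logarithmic $j$th minimum of the polar. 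A direct unwinding of Definitions \ref{def:beta_simultaneous}--\ref{def:alpha_linear_form} then expresses all four exponents in closed form: with $\underline\mu = \liminf_{q\to\infty} L_1(q)/q$, $\overline\mu = \limsup_{q\to\infty} L_1(q)/q$, $\underline\nu = \liminf_{q\to\infty} L_{n+1}(q)/q$, $\overline\nu = \limsup_{q\to\infty} L_{n+1}(q)/q$, one obtains
\begin{gather*}
1+\omega(\Theta) = \frac{n+1}{n(1+\underline\mu)}, \quad 1+\hat\omega(\Theta) = \frac{n+1}{n(1+\overline\mu)}, \\
1+\omega(L_\Theta) = \frac{n+1}{1-n\overline\nu}, \quad 1+\hat\omega(L_\Theta) = \frac{n+1}{1-n\underline\nu}.
\end{gather*}

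Substituting these into \eqref{eq:schmidt_summerer_2013}, both inequalities reduce to purely geometric statements about the joint evolution of $L_1(q)$ and $L_{n+1}(q)$: the first becomes $(1+\underline\nu)(1-n\overline\nu) \leq (1+\underline\mu)(1-n\underline\nu)$, and the second becomes $(1-n\overline\mu)(1+\overline\nu) \leq (1-n\underline\mu)(1+\overline\mu)$. The structural inputs to establish these are that each $L_i(q)$ is continuous and piecewise linear with slopes lying in $\{1, -1/n\}$ (reflecting the expansion of $K(q)$ in the $x_0$-direction at rate $1$ and the contraction in each other coordinate at rate $1/n$), together with the pointwise ordering $L_1 \leq \cdots \leq L_{n+1}$ and the Minkowski conservation. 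Over an interval on which $L_1(q)/q$ travels from near $\underline\mu$ up to near $\overline\mu$, the $L_{n+1}$-trajectory is forced to trace a compatible excursion, and careful bookkeeping of the time spent in each regime and of which minima ``swap'' at breakpoints yields the required bounds on $\underline\nu, \overline\nu$ in terms of $\underline\mu, \overline\mu$.

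The main obstacle is precisely this combinatorial slope analysis: the raw tools $\sum L_i = O(1)$, $L_1 \leq \cdots \leq L_{n+1}$, and slopes in $\{1,-1/n\}$ admit many hypothetical trajectories, and one must rule out those violating the algebraic inequalities above by tracking interchanges of neighbouring minima across every breakpoint. Schmidt and Summerer execute this through a detailed case-by-case study; the shorter proofs in \cite{german_moshchevitin_2013, german_moshchevitin_2022} bypass that machinery by constructing, for a near-optimal simultaneous integer point and a near-optimal uniform linear-form integer point at matched scales, a third integer point obtained as an $(n+1)\times(n+1)$ integer determinant built from these two together with $n-1$ further lattice points of controlled size supplied by Minkowski's theorem, whose nonvanishing as an integer forces the claimed relationship among the exponents directly. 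The second inequality of \eqref{eq:schmidt_summerer_2013} is handled by the dual argument, interchanging the roles of $K(q)$ and its polar.
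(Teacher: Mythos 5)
Your translation into the parametric-geometry-of-numbers framework is sound, and the formulas you write for the four exponents in terms of $\underline\mu,\overline\mu,\underline\nu,\overline\nu$ are correct; I also verified that both target inequalities in \eqref{eq:schmidt_summerer_2013} reduce exactly to $(1+\underline\nu)(1-n\overline\nu)\leq(1+\underline\mu)(1-n\underline\nu)$ and $(1-n\overline\mu)(1+\overline\nu)\leq(1-n\underline\mu)(1+\overline\mu)$ as you claim. But this is only bookkeeping. At the point where the actual mathematics must happen --- deriving these two inequalities from the structural properties of the $L_i(q)$ --- the proposal stops. You describe what Schmidt and Summerer do (``a detailed case-by-case study'') and what is needed (``careful bookkeeping of the time spent in each regime and of which minima swap at breakpoints''), but you do not carry out either argument. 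This is not a small omission: the raw facts you list ($\sum L_i=O(1)$, monotone ordering, slopes in $\{1,-1/n\}$) do \emph{not} by themselves force the desired inequalities --- one still has to rule out a large family of conceivable trajectories --- and the entire content of the theorem lives in the step you defer to the literature.

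Your characterization of the shorter route in \cite{german_moshchevitin_2013,german_moshchevitin_2022} is also inaccurate relative to what the paper presents. The mechanism is not an $(n+1)\times(n+1)$ integer determinant built from a good simultaneous point, a good linear-form point, and $n-1$ auxiliary Minkowski points. It is the \emph{empty cylinder lemma} (Lemma \ref{l:empty_cylinder}): an integer point $\vec v$ with prescribed distances $r(\vec v)$, $h(\vec v)$ from $\cL(\Theta)$ and $\cL^\perp$ forces a specific cylinder $\cC$ to avoid $\Z^{n+1}$, simply because $\cC$ lies entirely in the open slab $0<\langle\vec v,\vec u\rangle<1$. One then takes the minimal nonempty cylinder $\cC_{\vec v}$ of \eqref{eq:C_v_definition}, attaches the empty cylinders $\pm\cC$, introduces the parameters $\gamma,\alpha,\beta$ of \eqref{eq:gamma_alpha_beta_definition} satisfying $\alpha=(1+\beta)/(1+\gamma)$, and passes to a sequence $\vec v_k$ with $\gamma_k\to\omega(\Theta)$ (resp.\ $\gamma_k\to\omega(L_\Theta)^{-1}$) to obtain the two inequalities from $\hat\omega(L_\Theta)\leq\liminf\alpha_k$ and $\omega(L_\Theta)\geq\limsup\beta_k$. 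Minkowski's theorem enters only once, to certify that the condition $t^{\beta-\alpha}\geq 2$ holds so that $\cC_{\vec v}$ does not reach the hyperplane $\langle\vec v,\vec u\rangle=1$ --- not to supply $n-1$ extra lattice points. If you want to close the gap, this empty-cylinder route is the cleanest path and it would replace the sentence in your proposal about bookkeeping trajectories with a concrete geometric argument.
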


It is easy to see that inequalities \eqref{eq:schmidt_summerer_2013} and \eqref{eq:my_simultaneous_vs_linear_form} immediately imply inequalities \eqref{eq:bugeaud_laurent}.

Thus, since Khintchine's inequalities follow from those of Laurent and Bugeaud, all the currently known transference inequalities that connect $\omega(\Theta)$, $\hat\omega(\Theta)$ with $\omega(L_\Theta)$, $\hat\omega(L_\Theta)$ are implied by inequalities \eqref{eq:my_simultaneous_vs_linear_form} and \eqref{eq:schmidt_summerer_2013} (and, of course, by ``trivial'' inequalities \eqref{eq:belpha_trivial_inequalities} and \eqref{eq:alpha_leq_1}).

\subsubsection{Inequalities between regular and uniform exponents}

There is another series of inequalities which, technically, cannot be classified as transference inequalities, as they connect regular and uniform exponents within the frames of one of the two problems under consideration -- the problem of simultaneous approximation and the problem of approximating zero by the values of a linear form. Nevertheless, it seems quite reasonable to place them alongside with transference inequalities, for transference inequalities themselves provide rather nontrivial relations of this kind (see Theorem \ref{t:german_moshchevitin_2022} below).

In 1950-s Jarn\'{\i}k discovered that if $\hat\omega(\Theta)$ is large, then $\omega(\Theta)$ cannot be too small. In papers \cite{jarnik_szeged_1950, jarnik_czech_1954}, he published the following estimates for $n=2$.

\begin{theorem}[V.\,Jarn\'{\i}k, 1954] \label{t:jarnik_inequalities}
  If $n=2$ and the components of $\Theta$ are linearly independent with the unity over $\Q$, then
  \begin{equation}\label{eq:jarnik_inequalities}
    \frac{\omega(L_\Theta)}{\hat\omega(L_\Theta)}\geq\hat\omega(L_\Theta)-1,\qquad
    \frac{\omega(\Theta)}{\hat\omega(\Theta)}\geq\frac{\hat\omega(\Theta)}{1-\hat\omega(\Theta)}\,.
  \end{equation}
\end{theorem}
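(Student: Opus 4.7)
I would prove the two inequalities in parallel by analysing three consecutive best approximations in $\Z^3$, using a unimodular change of coordinates to bound a determinant by a product of the ``small'' quantities ($L_k$ or $M_k$) and the ``large'' quantities ($X_k$ or $q_k$), and then combining this with the uniform bound to produce a quantitative improvement at the level of the regular exponent.

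For the first inequality, let $(\vec z_k)$ be the sequence of best approximations of $L_\Theta$ in $\Z^3$, set $X_k:=\max(|x_{k,1}|,|x_{k,2}|)$ and $L_k:=|L_\Theta(\vec z_k)|$, and recall that
\[
  \omega(L_\Theta)=\limsup_{k\to\infty}\frac{-\log L_k}{\log X_k}\,,\qquad \hat\omega(L_\Theta)=\liminf_{k\to\infty}\frac{-\log L_k}{\log X_{k+1}}\,.
\]
Abbreviate $\hat\omega:=\hat\omega(L_\Theta)$. Since the map $(x_1,x_2,y)\mapsto(x_1,x_2,L_\Theta(x_1,x_2,y))$ is unimodular, $|\det(\vec z_{k-1},\vec z_k,\vec z_{k+1})|$ equals the determinant of the image; expanding along the last column and using the monotonicities $X_{k-1}<X_k<X_{k+1}$ and $L_{k-1}>L_k>L_{k+1}$ yields $|\det|\leq 6L_{k-1}X_kX_{k+1}$. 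When the triple is linearly independent the left side is a nonzero integer, hence $\geq 1$; combined with the uniform bound $L_{k-1}\leq X_k^{-\hat\omega+o(1)}$ this gives $X_{k+1}\geq X_k^{\hat\omega-1-o(1)}$, and a second application of the uniform bound produces $L_k\leq X_{k+1}^{-\hat\omega+o(1)}\leq X_k^{-\hat\omega(\hat\omega-1)+o(1)}$. Passing to $\limsup$ over such indices gives $\omega(L_\Theta)\geq\hat\omega(\hat\omega-1)$, which is the first inequality.

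For the second inequality I would run the dual argument on the best approximations $\vec v_k=(q_k,p_{k,1},p_{k,2})$ of $\Theta$, with $M_k:=\max_i|\theta_iq_k-p_{k,i}|$. The unimodular map $(q,p_1,p_2)\mapsto(q,\theta_1q-p_1,\theta_2q-p_2)$ gives, by the same expansion, $|\det(\vec v_{k-1},\vec v_k,\vec v_{k+1})|\leq 6q_{k+1}M_{k-1}M_k$. Writing $\hat\omega:=\hat\omega(\Theta)<1$ and inserting $M_{k-1}\leq q_k^{-\hat\omega+o(1)}$ together with $M_k\leq q_{k+1}^{-\hat\omega+o(1)}$, linear independence of the triple forces $q_{k+1}^{1-\hat\omega-o(1)}\geq q_k^{\hat\omega-o(1)}/6$, hence $q_{k+1}\geq q_k^{\hat\omega/(1-\hat\omega)-o(1)}$ and $M_k\leq q_k^{-\hat\omega^2/(1-\hat\omega)+o(1)}$. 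Passing to $\limsup$ yields $\omega(\Theta)\geq\hat\omega^2/(1-\hat\omega)$, which is the second inequality.

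The main obstacle is the degenerate case when the triple $\vec z_{k-1},\vec z_k,\vec z_{k+1}$ (respectively $\vec v_{k-1},\vec v_k,\vec v_{k+1}$) is linearly dependent and the above determinant vanishes. I would handle this as follows: if, from some index on, every consecutive triple were dependent, then the recursion $\vec z_{k+1}\in\spanned(\vec z_{k-1},\vec z_k)$ would force all $\vec z_k$ beyond that index to lie in a common rational $2$-plane of $\R^3$. For the simultaneous problem, substituting the plane equation $\alpha_0q+\alpha_1p_1+\alpha_2p_2=0$ into the expression for $M_k$ and invoking the $\Q$-linear independence of $1,\theta_1,\theta_2$ produces a nonzero lower bound $M_k\gg q_k$, contradicting $M_k\to 0$. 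For the linear form problem, the restriction of $L_\Theta$ to such a plane reduces the setup to the one-variable problem, in which $\hat\omega=1$ and the inequality to be proved is vacuous. Hence linearly independent triples occur along a cofinal subsequence of indices, legitimising the $\limsup$ arguments above.
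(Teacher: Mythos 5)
The survey states Theorem~\ref{t:jarnik_inequalities} with no proof (it only cites Jarn\'{\i}k's papers), so there is no in-paper argument to compare against; your plan is, however, correct and reproduces Jarn\'{\i}k's original best-approximation argument. The formulae $\omega(L_\Theta)=\limsup_k(-\log L_k/\log X_k)$ and $\hat\omega(L_\Theta)=\liminf_k(-\log L_k/\log X_{k+1})$, the three-point determinant bound $1\leq|\det(\vec z_{k-1},\vec z_k,\vec z_{k+1})|\leq 6L_{k-1}X_kX_{k+1}$ (and $\leq 6\,q_{k+1}M_{k-1}M_k$ in the dual problem) for linearly independent consecutive triples, and the double insertion of the uniform bound into the resulting recursion are exactly the correct mechanism; your degeneracy analysis is also sound. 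In the simultaneous case the identity $(\alpha_0+\alpha_1\theta_1+\alpha_2\theta_2)q_k=\alpha_1(\theta_1q_k-p_{k,1})+\alpha_2(\theta_2q_k-p_{k,2})$ together with $\Q$-linear independence forces $M_k\gg q_k$, contradicting $M_k\to0$; in the linear-form case it is worth saying one sentence more: the putative reduction to one variable would give $\hat\omega(L_\Theta)=1$, which is not only a case where the inequality is vacuous but is outright impossible, since Dirichlet's theorem gives $\hat\omega(L_\Theta)\geq n=2$, so the degenerate tail simply cannot occur. One point you should state explicitly: the second inequality raises to the power $(1-\hat\omega(\Theta))^{-1}$, hence presumes $\hat\omega(\Theta)<1$; in the singular edge case $\hat\omega(\Theta)=1$ (which can occur) the intermediate inequality $q_{k+1}^{1-\hat\omega-o(1)}\geq q_k^{\hat\omega-o(1)}/6$ still forces $q_{k+1}\gg q_k^{C}$ for every $C$, hence $\omega(\Theta)=\infty$, and both sides of Jarn\'{\i}k's inequality are $+\infty$. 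For context, the paper treats the higher-dimensional generalizations (Theorems~\ref{t:marnat_moshchevitin_2020} and~\ref{t:german_moshchevitin_2022}) via parametric geometry of numbers; for $n=2$ the Marnat--Moshchevitin polynomial $(1-\hat\omega)x^2-x+\hat\omega$ factors as $(x-1)\big((1-\hat\omega)x-\hat\omega\big)$, whose larger root $\hat\omega/(1-\hat\omega)$ recovers your bound. Your elementary determinant route buys self-containment; the parametric route is what yields the sharp form in all dimensions.
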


These inequalities are sharp. Their sharpness was proved by Laurent \cite{laurent_2dim}. Jarn\'{\i}k \cite{jarnik_czech_1954} also obtained inequalities in the higher dimensions. He showed that the second inequality in \eqref{eq:jarnik_inequalities} holds for every $n\geq2$ and that
\[
  \frac{\omega(L_\Theta)}{\hat\omega(L_\Theta)}\geq\hat\omega(L_\Theta)^{1/(n-1)}-3,
\]
provided $\omega(\Theta)>(5n^2)^{n-1}$.

In 2012, Moshchevitin \cite{moshchevitin_czech_2012} obtained an optimal result for $n=3$ in the problem of simultaneous approximation.

\begin{theorem}[N.\,G.\,Moshchevitin, 2012] \label{t:moshchevitin_czech_2012}
  If $n=3$ and the components of $\Theta$ are linearly independent with the unity over $\Q$, then
  \begin{equation}\label{eq:moshchevitin_czech_2012}
    \frac{\omega(\Theta)}{\hat\omega(\Theta)}\geq\Gsim(\Theta),
  \end{equation}
  where $\Gsim(\Theta)$ is the greatest root of the polynomial
  \begin{equation}\label{eq:moshchevitin_polynomial}
    (1-\hat\omega(\Theta))x^2-\hat\omega(\Theta)x-\hat\omega(\Theta).
  \end{equation}
\end{theorem}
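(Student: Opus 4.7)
The plan is to work with the sequence of best approximation vectors $\vec v_k=(x_k,y_{1,k},y_{2,k},y_{3,k})\in\Z^4$ to $\Theta$, ordered so that $0<x_k<x_{k+1}$ and the residuals $\zeta_k=\max_i|\theta_ix_k-y_{i,k}|$ strictly decrease. In these terms $\hat\omega(\Theta)$ is the supremum of $\gamma$ for which $\zeta_{k-1}\leq x_k^{-\gamma}$ holds at every sufficiently large $k$, while $\omega(\Theta)$ is realised along a subsequence of ``record'' indices $k^\ast$ on which $\zeta_{k^\ast}\leq x_{k^\ast}^{-\omega(\Theta)+o(1)}$. Introducing the logarithmic parameters $r_k=\log x_{k+1}/\log x_k$ and $u_k=-\log\zeta_k/\log x_{k+1}$, the target bound $\omega(\Theta)/\hat\omega(\Theta)\geq\Gsim(\Theta)$ becomes a lower bound on $u_{k^\ast}r_{k^\ast}/\hat\omega(\Theta)$, with the uniform estimate $u_i\geq\hat\omega(\Theta)-o(1)$ available at every index.

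The key geometric input is a lattice-volume comparison in $\Z^4$. Because $1,\theta_1,\theta_2,\theta_3$ are $\Q$-linearly independent, one may restrict to a cofinite set of indices along which three consecutive best approximations $\vec v_{j-1},\vec v_j,\vec v_{j+1}$ are $\R$-linearly independent. The sublattice $\Z^4\cap\spanned(\vec v_{j-1},\vec v_j,\vec v_{j+1})$ has covolume at least $1$ in its $\R$-span, so the same lower bound holds for the $3$-volume of the parallelepiped they spawn. Changing basis from the standard one to $\{(1,\theta_1,\theta_2,\theta_3),\vec e_2,\vec e_3,\vec e_4\}$ turns each $\vec v_i$ into a vector with one coordinate of size $x_i$ and three coordinates of size $O(\zeta_i)$; expanding the resulting $3\times3$ minors by cofactors yields
\[
  1\lesssim\max\bigl(x_{j+1}\zeta_{j-1}\zeta_j,\ x_j\zeta_{j-1}\zeta_{j+1},\ x_{j-1}\zeta_j\zeta_{j+1}\bigr).
\]
Taking logarithms and substituting the $(r,u)$-parameters produces, at each admissible $j$, a linear inequality relating $r_{j-1},r_j,u_{j-1},u_j,u_{j+1}$.

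Combining this family of inequalities over a short window of indices straddling the record $k^\ast$, and feeding in the uniform bound $u_i\geq\hat\omega(\Theta)-o(1)$ everywhere else, reduces the theorem to a finite-dimensional extremisation of $u_{k^\ast}r_{k^\ast}$. A case analysis of which of the three terms in the bound above is the largest identifies the worst-case configuration of ratios, and the extremal balance produces exactly the quadratic $(1-\hat\omega(\Theta))x^2-\hat\omega(\Theta)x-\hat\omega(\Theta)$ with $x=\omega(\Theta)/\hat\omega(\Theta)$; letting $o(1)\to0$ concludes the argument.

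The principal obstacle is twofold. First, one must handle degenerate stretches of best approximations, namely those along which three consecutive $\vec v_j$'s span a proper subspace of $\R^4$ or drift very close to the line $\R(1,\theta_1,\theta_2,\theta_3)$; this is where the $\Q$-linear independence assumption enters, by forcing a cofinite subsequence on which the generic position prevails. Second, and more delicate, is the optimisation step: one must confirm that no admissible configuration of ratios beats the greatest root of $(1-\hat\omega)x^2-\hat\omega x-\hat\omega$. This parallels Jarn\'ik's two-dimensional identity \eqref{eq:jarnik_identity}, but the extra layer in the flag of rational subspaces of $\R^4$ makes the extremal relation quadratic rather than linear, which is precisely what produces the root $\Gsim(\Theta)$ of a degree-two polynomial in the statement.
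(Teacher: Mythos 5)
The paper does not itself prove Theorem \ref{t:moshchevitin_czech_2012} (it is a survey statement, citing \cite{moshchevitin_czech_2012}), so the comparison must be against what your argument can actually deliver.

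There are two genuine gaps. First, your restriction ``to a cofinite set of indices along which three consecutive best approximations $\vec v_{j-1},\vec v_j,\vec v_{j+1}$ are $\R$-linearly independent'' is not available: $\Q$-linear independence of $1,\theta_1,\theta_2,\theta_3$ guarantees only that the triples are independent at an \emph{infinite} set of indices (the ``jump'' indices, where the $2$-plane $\spanned(\vec v_j,\vec v_{j+1})$ changes), and between consecutive jump indices the best approximations can stay in a fixed rational $2$-plane for arbitrarily long stretches. Your wedge--volume inequality applies only at the jump indices, so the ``window of indices straddling the record'' need not contain any index to which it applies, and the dependent stretches are precisely where the real work lies.

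Second, and more decisively, even if one grants the inequality at the jump indices, feeding it through your scheme does not produce the quadratic. Since $x_{j+1}\zeta_j>x_j\zeta_{j+1}$ and $x_{j+1}\zeta_{j-1}>x_{j-1}\zeta_{j+1}$, the maximum in your bound is always $x_{j+1}\zeta_{j-1}\zeta_j\gtrsim1$, so your ``case analysis'' is vacuous and the only information is $X_{j+1}\geq Z_{j-1}+Z_j-O(1)$ (with $X_k=\log x_k$, $Z_k=-\log\zeta_k$). Inserting $Z_{j}\geq\hat\omega X_{j+1}-o(X_{j+1})$ and $Z_{j-1}\geq\hat\omega X_j-o(X_j)$ gives $r_j=X_{j+1}/X_j\geq\hat\omega/(1-\hat\omega)-o(1)$, hence $\omega=\limsup u_kr_k\geq\hat\omega^2/(1-\hat\omega)$, i.e.\ $\omega/\hat\omega\geq\hat\omega/(1-\hat\omega)$. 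This is exactly Jarn\'{\i}k's bound \eqref{eq:jarnik_inequalities}, which is already known to hold for every $n\geq2$ and is \emph{strictly weaker} than $\Gsim(\Theta)$ for $n=3$ when $\hat\omega(\Theta)\in(1/3,1)$: for example at $\hat\omega=1/2$ one has $\hat\omega/(1-\hat\omega)=1$ while $\Gsim=(1+\sqrt5)/2$. To reach $\Gsim$ one must also extract quantitative information from the intermediate, $2$-plane level of the flag (growth of $x_k$ and $\zeta_k$ while the plane $\spanned(\vec v_j,\vec v_{j+1})$ stays fixed) and combine it with the jump-index relation; that two-level interplay is what makes the extremal relation quadratic. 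Your writeup gestures at this (``the extra layer in the flag'') but the mechanism you actually set up never uses it, so the argument as proposed cannot reach the stated bound.
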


A year later, Schmidt and Summerer \cite{schmidt_summerer_MJCNT_2013} proved an analogue of Theorem \ref{t:moshchevitin_czech_2012} for the linear form problem.

\begin{theorem}[W.\,M.\,Schmidt, L.\,Summerer, 2013] \label{t:schmidt_summerer_MJCNT_2013}
  If $n=3$ and the components of $\Theta$ are linearly independent with the unity over $\Q$, then
  \begin{equation}\label{eq:schmidt_summerer_MJCNT_2013}
    \frac{\omega(L_\Theta)}{\hat\omega(L_\Theta)}\geq\Glin(L_\Theta),
  \end{equation}
  where $\Glin(\Theta)$ is the greatest root of the polynomial
  \begin{equation}\label{eq:schmidt_summerer_polynomial}
    x^2+x+(1-\hat\omega(L_\Theta)).
  \end{equation}
\end{theorem}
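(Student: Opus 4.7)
The plan is to adapt Moshchevitin's approach to Theorem~\ref{t:moshchevitin_czech_2012} to the dual problem of approximating zero by the values of $L_\Theta$. Let $\vec v_1,\vec v_2,\ldots\in\Z^4$ be the sequence of best approximations to $L_\Theta$, and write $\vec v_i=(x_1^{(i)},x_2^{(i)},x_3^{(i)},y^{(i)})$. Set
\[
  X_i=\max_{1\leq j\leq 3}|x_j^{(i)}|,\qquad M_i=|L_\Theta(\vec v_i)|;
\]
the sequence $(X_i)$ is strictly increasing and $(M_i)$ is strictly decreasing, and a standard calculation identifies
\[
  \omega(L_\Theta)=\limsup_{i\to\infty}\frac{-\log M_i}{\log X_i},\qquad
  \hat\omega(L_\Theta)=\liminf_{i\to\infty}\frac{-\log M_i}{\log X_{i+1}}.
\]

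Fix a subsequence $(i_s)$ realizing the limsup for $\omega(L_\Theta)$, and invoke the uniform bound $M_j\leq X_{j+1}^{-\hat\omega(L_\Theta)+o(1)}$ valid for every large $j$. The assumption that $1,\theta_1,\theta_2,\theta_3$ are linearly independent over $\Q$ rules out any rational three-dimensional subspace of $\R^4$ containing infinitely many $\vec v_i$; consequently, for each large $i=i_s$ one can select indices $k_1<k_2<k_3<k_4=i$ for which $\vec v_{k_1},\vec v_{k_2},\vec v_{k_3},\vec v_{k_4}$ are linearly independent, so their determinant is a nonzero integer,
\[
  1\leq|\det(\vec v_{k_1},\vec v_{k_2},\vec v_{k_3},\vec v_{k_4})|.
\]
A determinant-preserving row operation turns the bottom row into $(-L_\Theta(\vec v_{k_1}),\ldots,-L_\Theta(\vec v_{k_4}))$, and expansion along this row bounds the determinant above by a sum of four terms of the form $M_{k_j}\cdot|D_j|$, where each $3\times 3$ minor $|D_j|$ of the $(x_1,x_2,x_3)$-coordinates of the remaining three vectors is controlled by the product of the three corresponding $X$'s.

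Taking logarithms and dividing by $\log X_i$ turns the resulting chain of inequalities into an affine system in the variables $\rho_j:=\log X_{k_j}/\log X_i\in[0,1]$ (with $\rho_4=1$), together with the ratio $r:=\omega(L_\Theta)/\hat\omega(L_\Theta)$. Optimizing — that is, locating the geometric configuration of best approximations that brings $\omega(L_\Theta)$ as close to $\hat\omega(L_\Theta)$ as possible while preserving the determinant lower bound — reduces to a quadratic extremal problem whose boundary condition is exactly $r^2+r+(1-\hat\omega(L_\Theta))=0$. Selecting the greatest root of this polynomial yields the required inequality $\omega(L_\Theta)/\hat\omega(L_\Theta)\geq\Glin(L_\Theta)$.

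The main obstacle is the extremal analysis just sketched: one must show that an optimal placement of the intermediate indices $k_2,k_3$ forces all four terms in the determinant expansion to be of comparable magnitude, for otherwise the resulting constraint on $r$ would be strictly weaker than the desired quadratic. Matching the four bounds pins down the ratios $\log X_{k_j}/\log X_i$ and delivers the precise coefficients in $x^2+x+(1-\hat\omega(L_\Theta))$. The constant term $1-\hat\omega(L_\Theta)$ reflects the presence of exactly $n-1=2$ intermediate best approximations between $\vec v_{k_1}$ and $\vec v_{k_4}$, a feature tied directly to the ambient dimension $n+1=4$.
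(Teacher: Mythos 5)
The paper states Theorem~\ref{t:schmidt_summerer_MJCNT_2013} only as a survey entry, referring to the original Schmidt--Summerer paper for the proof, so there is no in-paper argument to compare your proposal against. I will therefore assess the proposal on its own.

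Your strategy --- transplanting Moshchevitin's best-approximation determinant argument from the dual (simultaneous) problem to the linear-form problem --- is a sensible route, and the heuristic does produce the stated polynomial. Concretely, if one takes four \emph{consecutive} best approximations $\vec v_{i-3},\ldots,\vec v_i$, assumes they are linearly independent, row-reduces to place $\big(L_\Theta(\vec v_{i-3}),\ldots,L_\Theta(\vec v_i)\big)$ into one row, and expands, then combining the integrality bound $|\det|\geq 1$ with the uniform estimate $M_j\leq X_{j+1}^{-\hat\omega(L_\Theta)+o(1)}$ and the working hypothesis $\omega(L_\Theta)<G\,\hat\omega(L_\Theta)$ (which forces $X_{j+1}<X_j^{G+o(1)}$, hence $\log X_{i-k}/\log X_i > G^{-k}+o(1)$) shows that all four expansion terms become subcritical precisely when $G^2+G<\hat\omega(L_\Theta)-1$. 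The contradiction yields $\omega(L_\Theta)/\hat\omega(L_\Theta)\geq\Glin(L_\Theta)$ with $\Glin$ the greatest root of $x^2+x+(1-\hat\omega(L_\Theta))$, exactly as claimed. So your ``quadratic extremal problem'' does have the boundary condition you assert, and the calculation is not a hand-wave once the indices are pinned to be consecutive.

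The genuine gap is the linear-independence input, which you dispose of too quickly. $\Q$-linear independence of $1,\theta_1,\theta_2,\theta_3$ guarantees the full sequence of best approximations is not \emph{eventually} confined to a rational hyperplane, but it does \emph{not} guarantee that any four \emph{consecutive} best approximations are independent --- on the contrary, large values of $\hat\omega(L_\Theta)$ are tied to ``singular'' behaviour in which long consecutive runs of best approximations sit in a two- or three-dimensional subspace. When the four consecutive vectors are dependent, the determinant vanishes and the argument gives nothing. If instead you pick non-consecutive $k_1<k_2<k_3<k_4=i$ to force independence, each skipped index degrades the lower bound $\log X_{k_j}/\log X_i>G^{-(4-j)}$ by an extra factor of $G$, and the resulting inequality is strictly weaker than the claimed quadratic. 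What is actually needed is a structured analysis of the ``dimension jumps'' of the sequence of best approximations --- identifying the last index before the rank of the span increases, and treating the degenerate stretch between jumps separately --- and this is where the bulk of Moshchevitin's and Schmidt--Summerer's technical work lies. So the core missing ingredient is not the final optimization (which, as verified above, comes out right in the generic case) but the reduction to a situation where the determinant bound can be applied with essentially consecutive indices despite possible degeneracy.
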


In 2018, Marnat and Moshchevitin generalised \eqref{eq:jarnik_inequalities}, \eqref{eq:moshchevitin_czech_2012}, and \eqref{eq:schmidt_summerer_MJCNT_2013} to the case of arbitrary $n\geq2$. Their result was published in 2020 in \cite{marnat_moshchevitin_2020}.

\begin{theorem}[A.\,Marnat, N.\,G.\,Moshchevitin, 2020] \label{t:marnat_moshchevitin_2020}
  Let $n\geq2$ and let the components of $\Theta$ be linearly independent with the unity over $\Q$. Then
  \begin{equation}\label{eq:marnat_moshchevitin_2020}
    \frac{\omega(\Theta)}{\hat\omega(\Theta)}\geq\Gsim(\Theta),
    \qquad
    \frac{\omega(L_\Theta)}{\hat\omega(L_\Theta)}\geq\Glin(\Theta),
  \end{equation}
  where $\Gsim(\Theta)$ and $\Glin(\Theta)$ are the greatest roots of the polynomials
  \begin{equation}\label{eq:marnat_moshchevitin_polynomials}
    (1-\hat\omega(\Theta))x^n-x^{n-1}+\hat\omega(\Theta),
    \qquad
    \hat\omega(L_\Theta)^{-1}x^n-x+(1-\hat\omega(L_\Theta)^{-1})
  \end{equation}
  respectively.
\end{theorem}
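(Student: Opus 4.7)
The plan is to work in the Schmidt--Summerer parametric geometry of numbers framework, since it unifies the two inequalities in \eqref{eq:marnat_moshchevitin_2020} and makes the duality between the simultaneous and linear-form settings transparent. For the simultaneous inequality, consider the family of convex bodies $K(q) \subset \R^{n+1}$ defined by $|x| \leq e^q$ and $\max_i |\theta_i x - y_i| \leq e^{-q/n}$, and let $\lambda_1(q) \leq \ldots \leq \lambda_{n+1}(q)$ denote the successive minima of $\Z^{n+1}$ with respect to $K(q)$. Set $L_i(q) = \log \lambda_i(q)$. A standard computation expresses $\omega(\Theta)$ and $\hat\omega(\Theta)$ as affine functions of $\liminf_{q \to \infty} L_1(q)/q$ and $\limsup_{q \to \infty} L_1(q)/q$ respectively, and by Minkowski's second theorem the sum $\sum_i L_i(q)$ stays bounded.

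The key combinatorial step exploits the fact that the trajectory $(L_1,\ldots,L_{n+1})$ is piecewise linear with constrained slopes: away from switching times each $L_i$ evolves with one of two admissible slopes, and at a switching time a new primitive lattice vector takes on the role of the $i$-th minimum. I would pick a sequence $q_k \to \infty$ at which $L_1(q_k)$ is close to its essential infimum (so that the ratio tends to the quantity governing $\omega(\Theta)$), and trace $L_1$ backwards through the preceding $n$ switching events $q_{k-1} > q_{k-2} > \ldots > q_{k-n}$. Because the vectors realizing $\lambda_1$ at these switchings are pairwise linearly independent, after $n$ steps they generate a sublattice of rank $n$ in $\Z^{n+1}$; combining this with the uniform lower bound coming from $\hat\omega(\Theta)$ and the Minkowski sum constraint forces a recursion relating the heights $L_1(q_{k-j})$ to the interval lengths $q_{k-j+1} - q_{k-j}$. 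Rewritten in terms of $\beta := \omega(\Theta)/\hat\omega(\Theta)$, the recursion reduces to the inequality $(1-\hat\omega(\Theta))\beta^n \leq \beta^{n-1} - \hat\omega(\Theta)$, whose greatest admissible real root is $\Gsim(\Theta)$.

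The inequality for the linear form is then obtained by repeating the same argument for the dual family of convex bodies, or equivalently via Mahler-type duality on the lattice: this exchanges $\lambda_1$ with $\lambda_{n+1}$ and converts $\hat\omega(\Theta)$ into $\hat\omega(L_\Theta)^{-1}$, producing the second polynomial in \eqref{eq:marnat_moshchevitin_polynomials}. The hardest part will be the middle step: rigorously justifying that the $n$ vectors arising from $n$ consecutive switching events really are linearly independent (otherwise the trajectory would sit inside a proper rational subspace, contradicting linear independence of $1, \theta_1, \ldots, \theta_n$ over $\Q$), and then extracting from the piecewise-linear dynamics precisely the algebraic relation defining $\Gsim$ rather than a strictly weaker bound. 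Selecting the \emph{greatest} root of the polynomial reflects the fact that the extremal configuration must saturate all admissible slopes simultaneously.
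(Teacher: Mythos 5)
The paper itself states Theorem~\ref{t:marnat_moshchevitin_2020} without proof, referring to \cite{marnat_moshchevitin_2020} and to Rivard-Cooke \cite{rivard_cooke}; your choice of the Schmidt--Summerer parametric framework is the route taken in the latter reference, so the overall setup (tracking $L_1$ along the path, using boundedness of $\sum_k L_k$ from Minkowski's second theorem, and translating $\omega,\hat\omega$ into asymptotic slopes of $L_1$) is appropriate. That said, the proposal as written has three genuine gaps.

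The final inequality has the wrong sign, which indicates that the recursion was not actually carried out. Write $P(x)=(1-\hat\omega(\Theta))x^n-x^{n-1}+\hat\omega(\Theta)$. Then $P(1)=0$, $P'(1)=1-n\hat\omega(\Theta)\leq 0$, and $P(x)\to+\infty$ when $\hat\omega(\Theta)<1$; hence $P<0$ on $(1,\Gsim(\Theta))$ and $P>0$ on $(\Gsim(\Theta),\infty)$. Your claimed conclusion $(1-\hat\omega(\Theta))\beta^n\leq\beta^{n-1}-\hat\omega(\Theta)$ is precisely $P(\beta)\leq 0$, which together with the trivial $\beta\geq 1$ forces $1\leq\beta\leq\Gsim(\Theta)$: an \emph{upper} bound, the opposite of what is wanted. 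The correct output of the argument has to be $P(\beta)\geq 0$, and one must additionally exclude the degenerate value $\beta=1$ (where $P$ also vanishes). A quick sanity check for $n=2$ with $\hat\omega(\Theta)=2/3$ gives $P(x)=\tfrac13(x-1)(x-2)$ and $\Gsim=2$, so the desired conclusion $\beta\geq 2$ corresponds to $P(\beta)\geq0$, not $P(\beta)\leq0$.

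The linear-independence step is not as automatic as you suggest, and this is exactly where the work of Moshchevitin and Marnat--Moshchevitin lies. Pairwise linear independence of $n$ consecutive minimal vectors does not yield rank $n$; one needs joint independence, and it is simply false in general that $n+1$ consecutive best approximation vectors must span $\R^{n+1}$. Long stretches of best approximations confined to a proper rational hyperplane do occur (this is the mechanism behind singular and near-singular $\Theta$). Linear independence of $1,\theta_1,\ldots,\theta_n$ over $\Q$ only forbids the line $\cL(\Theta)$ itself from lying in a rational subspace; it does not forbid arbitrarily long runs of minimal vectors from doing so. The actual proofs separate the ``nondegenerate'' case, where a determinant bound on $n+1$ jointly independent minimal vectors produces an inequality of the type $|\vec m_{\nu+n}|\, r(\vec m_\nu)\cdots r(\vec m_{\nu+n-1})\gg 1$, from the degenerate case, where a lower-dimensional reduction is needed. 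Your sketch collapses this into a one-sentence contradiction that does not hold.

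Finally, the duality step is not a substitution. The two polynomials in \eqref{eq:marnat_moshchevitin_polynomials} are structurally different --- one carries the monomial $x^{n-1}$, the other carries $x$ --- and replacing $\hat\omega(\Theta)$ by $\hat\omega(L_\Theta)^{-1}$ in the first polynomial does not produce the second. (They coincide only for $n=2$, where Jarn\'{\i}k's identity $\hat\omega(L_\Theta)^{-1}+\hat\omega(\Theta)=1$ forces it.) Passing to $\La^\ast$ and exchanging $\lambda_1$ with $\lambda_{n+1}$ reverses which end of the Minkowski chain controls the exponent, and the resulting recursion has a different shape; the linear-form inequality therefore requires its own derivation, or at least a careful reindexing, rather than a verbatim repetition for the dual body.
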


In that same paper \cite{marnat_moshchevitin_2020}, Marnat and Moshchevitin showed that their inequalities \eqref{eq:marnat_moshchevitin_2020} are sharp. A year later, a somewhat different proof of Theorem \ref{t:marnat_moshchevitin_2020} was proposed by Rivard-Cooke in his PhD thesis \cite{rivard_cooke} (see also papers \cite{nguyen_poels_roy_2020}, \cite{schleischitz_MJCNT_2022}).

%\begin{remark*}
  Note that using the notation $\Gsim$ and $\Glin$ both in Theorems \ref{t:moshchevitin_czech_2012}, \ref{t:schmidt_summerer_MJCNT_2013} and in Theorem \ref{t:marnat_moshchevitin_2020} is correct, since for $n=3$ the first (resp. second) polynomial in \eqref{eq:marnat_moshchevitin_polynomials} equals the first (resp. second) polynomial in \eqref{eq:moshchevitin_polynomial} multiplied by $x-1$ (resp. by $\hat\omega(L_\Theta)^{-1}(x-1)$).
%\end{remark*}

It is easy to verify with the help of \eqref{eq:jarnik_identity} that for $n=2$ the right-hand sides of inequalities \eqref{eq:jarnik_inequalities} coincide and equal
\[
  \frac{1-\hat\omega(L_\Theta)^{-1}}{1-\hat\omega(\Theta)}\,.
\]
It is interesting that Schmidt--Summerer's inequalities \eqref{eq:schmidt_summerer_2013} imply that this very expression bounds from below the ratios $\omega(\Theta)/\hat\omega(\Theta)$ and $\omega(L_\Theta)/\hat\omega(L_\Theta)$ for every $n\geq2$.
% Любопытно, что из неравенств Шмидта--Суммерера \eqref{eq:schmidt_summerer_2013} следует, что отношения $\omega(\Theta)/\hat\omega(\Theta)$ и $\omega(L_\Theta)/\hat\omega(L_\Theta)$ можно оценить снизу в точности таким выражением при любом $n\geq2$.
This can be observed from the following result obtained in \cite{german_moshchevitin_2022}.

\begin{theorem}[O.\,G., N.\,G.\,Moshchevitin, 2022] \label{t:german_moshchevitin_2022}
  Let $n\geq2$ and let the components of $\Theta$ be linearly independent with the unity over $\Q$. Define $\Gsim(\Theta)$ and $\Glin(\Theta)$ the same way as in Theorem \ref{t:marnat_moshchevitin_2020}. Then
  \begin{equation}\label{eq:german_moshchevitin_2022}
  \begin{split}
    \min\bigg(\frac{\omega(\Theta)}{\hat\omega(\Theta)}\,,\,\frac{\omega(L_\Theta)}{\hat\omega(L_\Theta)}\bigg)
    & \geq
    \frac{1+\omega(\Theta)}{1+\omega(L_\Theta)^{-1}}\geq \\
    & \geq
    \frac{1-\hat\omega(L_\Theta)^{-1}}{1-\hat\omega(\Theta)}\geq
    \min\big(\Gsim(\Theta),\Glin(\Theta)\big).
  \end{split}
  \end{equation}
\end{theorem}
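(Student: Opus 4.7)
The chain splits into three inequalities, and the first two follow directly from the Schmidt--Summerer bounds \eqref{eq:schmidt_summerer_2013}. For the leftmost link, rearranging $\omega(L_\Theta)/\hat\omega(L_\Theta)\geq(1+\omega(\Theta))/(1+\omega(L_\Theta)^{-1})$ yields precisely $\hat\omega(L_\Theta)(1+\omega(\Theta))\leq 1+\omega(L_\Theta)$, which is the first inequality in \eqref{eq:schmidt_summerer_2013}; the analogous manipulation for $\omega(\Theta)/\hat\omega(\Theta)$ delivers the second. For the middle link, I rewrite the two Schmidt--Summerer bounds as
\[
  1-\hat\omega(L_\Theta)^{-1}\leq\frac{\omega(L_\Theta)-\omega(\Theta)}{1+\omega(L_\Theta)}\,,\qquad
  1-\hat\omega(\Theta)\geq\frac{\omega(L_\Theta)-\omega(\Theta)}{\omega(L_\Theta)(1+\omega(\Theta))}\,,
\]
and take their ratio: the common factor $\omega(L_\Theta)-\omega(\Theta)$ cancels, and what remains equals $\omega(L_\Theta)(1+\omega(\Theta))/(1+\omega(L_\Theta))=(1+\omega(\Theta))/(1+\omega(L_\Theta)^{-1})$.

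For the third and genuinely new inequality, set $y:=(1-\hat\omega(L_\Theta)^{-1})/(1-\hat\omega(\Theta))$ and let $P_{\mathrm{sim}}(x)$, $P_{\mathrm{lin}}(x)$ denote the two polynomials from \eqref{eq:marnat_moshchevitin_polynomials}. The strategy hinges on a single algebraic identity. Using the defining relation $1-\hat\omega(L_\Theta)^{-1}=y(1-\hat\omega(\Theta))$ and substituting it into $P_{\mathrm{lin}}(y)$, a short expansion yields
\[
  P_{\mathrm{lin}}(y)=-y\cdot P_{\mathrm{sim}}(y).
\]
Since $y>0$, this forces $P_{\mathrm{sim}}(y)$ and $P_{\mathrm{lin}}(y)$ to have opposite signs (or both to vanish), so at least one of them is non-negative.

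It then remains to pass from $P_{\mathrm{sim}}(y)\geq 0$ (respectively $P_{\mathrm{lin}}(y)\geq 0$) to $y\geq\Gsim(\Theta)$ (respectively $y\geq\Glin(\Theta)$). First, note $y\geq 1$: Dirichlet's bounds \eqref{eq:belpha_trivial_inequalities} give $\hat\omega(\Theta)\hat\omega(L_\Theta)\geq 1$, whence $1-\hat\omega(\Theta)\leq 1-\hat\omega(L_\Theta)^{-1}$. Next, both polynomials vanish at $x=1$; analysing the auxiliary function $R(x)=P_{\mathrm{sim}}(x)/x^{n-1}=(1-\hat\omega(\Theta))-x^{-1}+\hat\omega(\Theta)\,x^{1-n}$, whose derivative has a unique positive zero, one sees that $R$ is non-negative on $[1,\infty)$ precisely on $\{1\}\cup[\Gsim(\Theta),\infty)$; the analogous analysis of $P_{\mathrm{lin}}(x)/x$ yields the corresponding statement for $\Glin(\Theta)$. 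Combined with the sign alternative above, this gives $y\geq\Gsim(\Theta)$ or $y\geq\Glin(\Theta)$, hence $y\geq\min(\Gsim(\Theta),\Glin(\Theta))$.

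The main obstacle I expect is this last monotonicity step: one must confirm that $P_{\mathrm{sim}}$ (respectively $P_{\mathrm{lin}}$) is strictly negative on the open interval $(1,\Gsim(\Theta))$ (respectively $(1,\Glin(\Theta))$), so that the non-negativity test unambiguously forces $y$ beyond the greatest root. The edge cases (such as $\hat\omega(\Theta)\leq 1/(n-1)$, where $\Gsim(\Theta)=1$, or $n=2$) need separate bookkeeping, but in all of them the bound $y\geq 1$ already exceeds the degenerate value.
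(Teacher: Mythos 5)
Your proof is correct, and the decomposition and the key idea appear to match what one would reconstruct from the statement of the theorem (the paper itself refers out for the proof). Let me confirm the pieces you flagged as potential obstacles and note a small typo.

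The central algebraic identity checks out cleanly. With $a=\hat\omega(\Theta)$, $b=\hat\omega(L_\Theta)^{-1}$, and $y=(1-b)/(1-a)$, so that $1-b=y(1-a)$, one computes
\[
  P_{\mathrm{lin}}(y)+y\,P_{\mathrm{sim}}(y)
  =(1-a)y^{n+1}+(b-1)y^n+(a-1)y+(1-b)
  =(1-a)\big(y^{n+1}-y^{n+1}-y+y\big)=0,
\]
so indeed $P_{\mathrm{lin}}(y)=-y\,P_{\mathrm{sim}}(y)$, and since $y>0$ at least one of $P_{\mathrm{sim}}(y)$, $P_{\mathrm{lin}}(y)$ is nonnegative. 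The sign structure you need is also correct: $P_{\mathrm{sim}}'(x)=x^{n-2}\big(n(1-a)x-(n-1)\big)$ has a unique positive zero $x^\ast=\frac{n-1}{n(1-a)}$, and since $P_{\mathrm{sim}}(1)=0$, the polynomial is negative on $(1,\Gsim)$ and nonnegative on $[\Gsim,\infty)$ whenever $x^\ast>1$, i.e.\ whenever $a>1/n$; if $a=1/n$ then $\Gsim=1$ and the bound $y\geq1$ already suffices. The analogous analysis of $P_{\mathrm{lin}}'(x)=nbx^{n-1}-1$ handles the other branch. Your Dirichlet-based observation that $\hat\omega(\Theta)\hat\omega(L_\Theta)\geq1$ (hence $y\geq1$) is the right anchoring. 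Two trivial slips worth fixing: $P_{\mathrm{sim}}(x)/x^{n-1}=(1-a)x-1+ax^{1-n}$, not $(1-a)-x^{-1}+ax^{1-n}$ (the monotonicity conclusion is unaffected); and the degenerate case $\Gsim=1$ occurs precisely at $\hat\omega(\Theta)=1/n$ (equivalently $\leq1/n$, given the trivial lower bound), not $\leq1/(n-1)$.
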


Another interesting corollary to Theorem \ref{t:german_moshchevitin_2022} is the fact that Schmidt--Summerer's inequalities \eqref{eq:schmidt_summerer_2013} imply at least one of Marnat--Moshchevitin's inequalities \eqref{eq:marnat_moshchevitin_2020} -- the one corresponding to the smallest number among $\Gsim(\Theta)$ and $\Glin(\Theta)$.

%\subsection{Спектр значений} \label{subsec:spectrum}

\subsection{Ideas and methods} \label{subsec:ideas_and_methods}

\subsubsection{Mahler's method} \label{subsubsec:mahler_method}

Ten years after Khintchine published Theorem \ref{t:khintchine_transference} describing transference principle, Mahler \cite{mahler_matsbornik_khintchine} found a very simple proof for it, which vividly demonstrated that the problem of simultaneous approximation is ``dual'' to the problem of approximating zero by the values of a linear form.

Mahler himself called his method arithmetic (see \cite{mahler_matsbornik_khintchine}), and though he exploits Minkowski's convex body theorem, he applies it in the form of Corollary \ref{cor:minkowski_linear_forms}, which allows not to go deep into geometry.

Let us explain
%на несколько более геометрическом языке
his argument from paper \cite{mahler_matsbornik_khintchine}, where he proves the right-hand inequality \eqref{eq:khintchine_transference}, i.e. the inequality
\begin{equation} \label{eq:khintchine_transference_mahler}
  \omega(\Theta)\geq\frac{\omega(L_\Theta)}{(n-1)\omega(L_\Theta)+n}\,.
\end{equation}

To this end let us ``embed'' the problem of simultaneous approximation and the problem of approximating zero by the values of a linear form into the same $(n+1)$-dimensional Euclidean space. Let $u_1,\ldots,u_{n+1}$ be the Cartesian coordinates in $\R^{n+1}$. Let us identify the aforementioned variables $x$, $y_1,\ldots,y_n$ respectively with $u_1,\ldots,u_{n+1}$, and variables $x_1,\ldots,x_n$, $y$ -- respectively with $u_2,\ldots,u_{n+1}$, $-u_1$.
By analogy with Section \ref{subsec:geometry_of_dirichlet} we denote by $\cL=\cL(\Theta)$ the one-dimensional subspace with the generating vector $(1,\theta_1,\ldots,\theta_n)$, and by $\cL^\perp$ -- the orthogonal complement to $\cL$.
Then $\cL$ and $\cL^\perp$ coincide with the spaces of solutions to
\[
  \max_{1\leq i\leq n}|\theta_ix-y_i|=0
  \qquad\text{ and }\qquad
  L_\Theta(x_1,\ldots,x_n,y)=0.
\]

Assuming that there exist $t$ large enough, positive $\gamma$, and a nonzero point $\vec v=(v_1,\ldots,v_{n+1})\in\Z^{n+1}$ such that
\begin{equation} \label{eq:mahler_inequalities_for_v}
  \begin{cases}
    \max_{1\leq i\leq n}
    |v_{i+1}|\leq t
%    ,\quad i=1,\ldots,n
    \\
    |v_1+\theta_1v_2+\ldots+\theta_nv_{n+1}|\leq t^{-\gamma}
  \end{cases},
\end{equation}
%
%необходимо показать существование ненулевой точки $\vec w=(w_1,\ldots,w_{n+1})\in\Z^{n+1}$, достаточно большого положительного $s$ и некоторого <<хорошего>> $\delta$, удовлетворяющих неравенствам
%\begin{equation} \label{eq:non_mahler_inequalities_for_w}
%  \begin{cases}
%    |w_1|\leq s \\
%%    \max_{1\leq i\leq n}
%    |\theta_iw_1-w_{i+1}|\leq s^{-\delta},\quad i=1,\ldots,n
%  \end{cases}.
%\end{equation}
%
Mahler applies Minkowski's theorem -- more specifically, Corollary \ref{cor:minkowski_linear_forms} -- to the parallelepiped consisting of the points $\vec u=(u_1,\ldots,u_{n+1})$ satisfying the inequalities
\begin{equation} \label{eq:mahler_parallelepiped_for_khintchine}
  \begin{cases}
    |v_1u_1+\ldots+v_{n+1}u_{n+1}|<1 \\
    \max_{1\leq i\leq n}
    |\theta_iu_1-u_{i+1}|\leq|v_1+\theta_1v_2+\ldots+\theta_nv_{n+1}|^{1/n}
%    ,\quad i=1,\ldots,n
    .
  \end{cases}
\end{equation}
Determinant of the set of linear forms involved in \eqref{eq:mahler_parallelepiped_for_khintchine} equals
\[ 
  \det
  \left(
  \begin{matrix}
    v_1     & \theta_1 & \theta_2 & \cdots & \theta_n \\
    v_2     & -1       &  0       & \cdots &  0       \\
    v_3     &  0       & -1       & \cdots &  0       \\
    \vdots  & \vdots   & \vdots   & \ddots & \vdots   \\
    v_{n+1} &  0       &  0       & \cdots & -1
  \end{matrix}
  \right)=
%   \det
%   \left(
%   \begin{matrix}
%     v_1+\theta_1v_2+\ldots+\theta_nv_{n+1} &  0       &  0       & \cdots &  0     \\
%     v_2                                    & -1       &  0       & \cdots &  0     \\
%     v_3                                    &  0       & -1       & \cdots &  0     \\
%     \vdots                                 & \vdots   & \vdots   & \ddots & \vdots \\
%     v_{n+1}                                &  0       &  0       & \cdots & -1
%   \end{matrix}
%   \right)=
  (-1)^n(v_1+\theta_1v_2+\ldots+\theta_nv_{n+1}),
\]
which is equal by the absolute value to the product of the right-hand sides of \eqref{eq:mahler_parallelepiped_for_khintchine}. Therefore, by Corollary \ref{cor:minkowski_linear_forms} system \eqref{eq:mahler_parallelepiped_for_khintchine} admits a nonzero integer solution $\vec w=(w_1,\ldots,w_{n+1})$.

Since $\vec v$ is integer, it follows from the first inequality \eqref{eq:mahler_parallelepiped_for_khintchine} that
\begin{equation} \label{eq:orthogonal_plane}
  v_1w_1+\ldots+v_{n+1}w_{n+1}=0,
\end{equation}
whence
\begin{equation*} %\label{eq:similar triangles}
  w_1(v_1+\theta_1v_2+\ldots+\theta_nv_{n+1})=\sum_{i=1}^nv_{i+1}(\theta_iw_1-w_{i+1}).
\end{equation*}
Hence by the second inequality \eqref{eq:mahler_parallelepiped_for_khintchine}
\begin{equation} \label{eq:w_1_first_estimate}
  |w_1|\leq n\max_{1\leq i\leq n}|v_{i+1}||v_1+\theta_1v_2+\ldots+\theta_nv_{n+1}|^{\frac1n-1}.
\end{equation}
Combining \eqref{eq:mahler_inequalities_for_v}, \eqref{eq:mahler_parallelepiped_for_khintchine}, \eqref{eq:w_1_first_estimate}, we get
\begin{equation} \label{eq:mahler_inequalities_for_w}
  \begin{cases}
    |w_1|\leq nt^{1-\gamma\big(\frac1n-1\big)}=nt^{\frac{(n-1)\gamma+n}{n}}=t^{\frac{(n-1)\gamma+n}{n}+\frac{\ln n}{\ln t}} \\
    \max_{1\leq i\leq n}|\theta_iw_1-w_{i+1}|\leq
    t^{-\gamma/n}=\Big(t^{\frac{(n-1)\gamma+n}{n}+\frac{\ln n}{\ln t}}\Big)^{-\frac{\gamma}{(n-1)\gamma+n}+o(1)}
  \end{cases}.
\end{equation}

%Полагая
%\[ s=t^{\frac{(n-1)\gamma+n}{n}},\qquad\delta=\frac{\gamma}{(n-1)\gamma+n}\,, \]
%получаем, что точка $\vec w$ удовлетворяет системе неравенств
%\begin{equation*}
%  \begin{cases}
%    |w_1|\leq ns \\
%    \max_{1\leq i\leq n}|\theta_iw_1-w_{i+1}|\leq s^{-\delta}
%  \end{cases}
%\end{equation*}
%(то есть лежит в параллелепипеде $n\cP_\delta(s)$).

This proves \eqref{eq:khintchine_transference_mahler}.

The key ingredient in Mahler's method is relation \eqref{eq:orthogonal_plane}, which says that the required point happens to be in the orthogonal component to the line spanned by $\vec v$. More specifically, in the intersection of this orthogonal complement with the cylinder determined by the second inequality of \eqref{eq:mahler_parallelepiped_for_khintchine}, whose axis is the line spanned by $(1,\theta_1,\ldots,\theta_n)$.

%
%Заметим, что в указанном ортогональном дополнении содержится подрешётка $\Z^{n+1}$ ранга $n$ с определителем равным евклидовой норме вектора $\vec v$ (разумеется, в предположении, что $v_1,\ldots,v_{n+1}$ взаимно просты), ибо векторное произведение векторов базиса этой подрешётки с точностью до знака равно $\vec v$. Таким образом, можно пытаться применить к сечению параллелепипеда $\cP_\gamma(t)$, ортогональному вектору $\vec v$, теорему Минковского о выпуклом теле. Что мы и сделаем в пункте \ref{subsec:section_dual}.
%

Mahler generalised his method by proving in \cite{mahler_casopis_linear} his famous \emph{theorem on a bilinear form}.

\begin{theorem}[K.\,Mahler, 1937] \label{t:mahler}
  Assume two sets of homogeneous linear forms in $\vec u\in\mathbb R^d$ are given:

  forms $f_1(\vec u),\ldots,f_d(\vec u)$ with matrix $F$, $\det F\neq0$, and

  forms $g_1(\vec u),\ldots,g_d(\vec u)$ with matrix $G$, $\det G=1$. \\
  Suppose the bilinear form
  \begin{equation} \label{eq:mahler_BLF}
    \Phi(\vec u',\vec u'')=\sum_{i=1}^df_i(\vec u')g_i(\vec u'')
  \end{equation}
  has integer coefficients. %\footnote{это равносильно целочисленности матрицы $\tr FG$, где $\tr F$ --- транспонированная $F$}.
  Suppose the system
  \begin{equation} \label{eq:mahler_f}
    |f_i(\vec u)|\leq\lambda_i,\ \ i=1,\ldots,d
  \end{equation}
  admits a solution in $\Z^d\backslash\{\vec 0\}$. 
  %Тогда разрешима в $\Z^d\backslash\{\vec 0\}$ и система неравенств
  Then so does the system
  \begin{equation} \label{eq:mahler_g}
    |g_i(\vec u)|\leq(d-1)\lambda/\lambda_i,\ \ i=1,\ldots,d,
  \end{equation}
  where
  \begin{equation} \label{eq:mahler_lambda}
    \lambda=\Big(\prod_{i=1}^d\lambda_i\Big)^{\frac1{d-1}}.
  \end{equation}
\end{theorem}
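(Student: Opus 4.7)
The plan is to generalise Mahler's argument from Section~\ref{subsubsec:mahler_method} to this abstract setting, with the bilinear form $\Phi$ replacing the standard dot product. Let $\vec v\in\Z^d\setminus\{\vec 0\}$ be a solution of \eqref{eq:mahler_f}, and set $a_i=f_i(\vec v)$, so $|a_i|\le\lambda_i$. Since $F$ is non-singular, not all $a_i$ vanish; I would pick an index $k$ at which the ratio $|a_i|/\lambda_i$ attains its maximum (in particular $a_k\neq0$). This pivot choice is the key technical idea.

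To produce the required $\vec w$, I would apply Corollary~\ref{cor:minkowski_linear_forms} in $\R^d$ to the $d$ linear forms in $\vec u$
\[
  \Phi(\vec v,\vec u)=\sum_{i=1}^d a_ig_i(\vec u),\qquad g_i(\vec u)\ \ (i\neq k),
\]
imposing the bound $1$ on $|\Phi(\vec v,\vec u)|$ (placing it among the strict inequalities of the Corollary) and the bound $c/\lambda_i$ on each $|g_i(\vec u)|$, $i\neq k$, where $c>0$ is a free parameter. Since $\det G=1$, the determinant of this system can be evaluated in the $g$-basis: by multilinearity along row $k$ it collapses to $\pm a_k$. The determinantal condition of Corollary~\ref{cor:minkowski_linear_forms} thus becomes $c^{d-1}=|a_k|\prod_{i\neq k}\lambda_i=(|a_k|/\lambda_k)\lambda^{d-1}$, so that
\[
  c=\lambda\,(|a_k|/\lambda_k)^{1/(d-1)}\le\lambda.
\]

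The resulting point $\vec w\in\Z^d\setminus\{\vec 0\}$ satisfies $|\Phi(\vec v,\vec w)|<1$ together with $|g_i(\vec w)|\le c/\lambda_i\le(d-1)\lambda/\lambda_i$ for $i\neq k$. Now comes the crucial step that mirrors \eqref{eq:orthogonal_plane}: the integrality of the coefficients of $\Phi$ combined with $\vec v,\vec w\in\Z^d$ gives $\Phi(\vec v,\vec w)\in\Z$, and the strict inequality therefore forces
\[
  \sum_{i=1}^d a_ig_i(\vec w)=\Phi(\vec v,\vec w)=0.
\]
Solving this relation for $g_k(\vec w)$ and invoking the maximality of $|a_k|/\lambda_k$,
\[
  |g_k(\vec w)|\le\frac{1}{|a_k|}\sum_{i\neq k}|a_i|\cdot\frac{c}{\lambda_i}
  \le\frac{1}{|a_k|}\cdot(d-1)\cdot\frac{|a_k|}{\lambda_k}\cdot c=\frac{(d-1)c}{\lambda_k}\le\frac{(d-1)\lambda}{\lambda_k},
\]
completing the verification of \eqref{eq:mahler_g}.

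The principal difficulty I anticipate is orchestrating the Minkowski step: one has to identify the correct system of $d$ linear forms (this requires dropping precisely the pivot form $g_k$), recognise that the determinant computation is trivialised by $\det G=1$, and then match the pivot choice to absorb the loss in the bound for $g_k(\vec w)$. Once these pieces line up, the integer-coefficient hypothesis on $\Phi$ produces the orthogonality relation essentially for free.
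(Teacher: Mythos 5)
Your proof is correct, and it takes a genuinely different route from the one given in the paper. You extend Mahler's arithmetic method (which the paper carries out only in the special case of Khintchine's theorem, in Section~\ref{subsubsec:mahler_method}) to the full bilinear-form setting. The paper, by contrast, proves Theorem~\ref{t:mahler} by first recasting it in terms of pseudocompounds and dual lattices (Theorem~\ref{t:mahler_reformulated}) and then arguing geometrically: one intersects $\cP$ with $(\R\vec v)^\perp$, applies Minkowski's convex body theorem to the resulting $(d-1)$-dimensional section and the lattice $\La\cap(\R\vec v)^\perp$, and invokes Vaaler's theorem on central sections of a cube to control the volume. That geometric route yields the sharper constant $(\sqrt d)^{1/(d-1)}$ in place of $d-1$, as recorded in~\eqref{eq:mahler_improved}; your route is more elementary (no Vaaler, no volume estimates) and closer to Mahler's original argument. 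The one nontrivial ingredient you need beyond the Khintchine-case template is the pivot choice: selecting $k$ to maximise $|f_i(\vec v)|/\lambda_i$ simultaneously guarantees $f_k(\vec v)\neq 0$ (so the auxiliary system of forms $\Phi(\vec v,\cdot)$, $g_i$ $(i\neq k)$ has nonzero determinant $\pm f_k(\vec v)\det G=\pm f_k(\vec v)$), gives $c\leq\lambda$, and absorbs the factor $d-1$ when you solve the orthogonality relation $\Phi(\vec v,\vec w)=0$ for $g_k(\vec w)$. Had you taken an arbitrary index with $f_k(\vec v)\neq 0$ and estimated $|f_i(\vec v)|\leq\lambda_i$ directly, the final bound on $|g_k(\vec w)|$ would come out as $(d-1)c/|f_k(\vec v)|$, which exceeds $(d-1)\lambda/\lambda_k$ whenever $|f_k(\vec v)|<\lambda_k$; so the maximising choice is genuinely necessary, not merely convenient.
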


As we shall see in Section \ref{sec:several_linear_forms}, this theorem provides a rather simple proof of transference inequalities in the most general problem of homogeneous linear Diophantine approximation -- when zero is to be approximated simultaneously by the values of several linear forms at integer points. We shall actually reformulate it in terms of pseudocompounds (Theorem \ref{t:mahler_reformulated} below). Thus formulated, Mahler's theorem becomes rather concise and very convenient for applications.

%псевдоприсоединённых параллелепипедов

\subsubsection{Pseudocompounds and dual lattices}\label{subsubsec:pseudocompounds_and_dual_lattices}

In 1955, in his papers \cite{mahler_compound_bodies_I}, \cite{mahler_compound_bodies_II}, Mahler developed the theory of \emph{compound bodies} (see also Gruber and Lekkerkerker's book \cite{gruber_lekkerkerker}). This theory appeared to be rather fruitful in the context of problems related to the transference principle. We shall reformulate Theorem \ref{t:mahler} with the help of a construction from Schmidt's book \cite{schmidt_DA}, which is a simplification of what Mahler calls the \emph{$(d-1)$-th compound} of a parallelepiped.

\begin{definition}\label{def:pseudo_compound}
  Let $\eta_1,\ldots,\eta_d$ be positive real numbers. Consider the parallelepiped
  \begin{equation}\label{eq:pseudo_compound}
    \cP=\Big\{ \vec z=(z_1,\ldots,z_d)\in\R^d \,\Big|\, |z_i|\leq\eta_i,\ i=1,\ldots,d \Big\}.
  \end{equation}
  Then the parallelepiped
  \[
    \cP^\ast=\Big\{ \vec z=(z_1,\ldots,z_d)\in\R^d \,\Big|\, |z_i|\leq\frac1{\eta_i}\prod_{j=1}^d\eta_j,\ i=1,\ldots,d \Big\}
  \]
  is called the \emph{$(d-1)$-th pseudocompound} of $\cP$.
\end{definition}

We shall often call $\cP^\ast$ simply the \emph{compound} of $\cP$, omitting ``$(d-1)$-th''.

We also recall the definition of the dual lattice.

\begin{definition}\label{def:dual_lattice}
  Let $\La$ be a full rank lattice in $\R^d$. Then its \emph{dual} lattice is defined as
  \[
    \La^\ast=\big\{\, \vec z\in\R^d \,\big|\ \langle\vec z,\vec z'\rangle\in\Z\text{ for all }\vec z'\in\La \,\big\},
  \]
  where $\langle \,\cdot\,,\cdot\,\rangle$ denotes the inner product.
\end{definition}

%\begin{remark}
%  Из определения очевидно, что $(\La^\ast)^\ast=\La$.
%\end{remark}

Note that the relation of duality is symmetric in the case of lattices, i.e.
\[
  (\La^\ast)^\ast=\La.
\]

Let $F$ and $G$ be the matrices from Theorem \ref{t:mahler}. Consider the lattices $F\Z^d$ and $G\Z^d$. In view of Definition \ref{def:dual_lattice}, the fact that the coefficients of the form \eqref{eq:mahler_BLF} are integer means exactly that each of these two lattices is a sublattice of the other's dual one. Set $\La=G\Z^d$. Then $F\Z^d\subseteq\La^\ast$.

Given positive $\lambda_1,\ldots,\lambda_d$, let $\lambda$ be defined by \eqref{eq:mahler_lambda}. Set $\eta_i=\lambda/\lambda_i$, $i=1,\ldots,d$. Consider the parallelepiped $\cP$ determined by \eqref{eq:pseudo_compound}. Then
\begin{equation}\label{eq:lambdas_via_etas}
  \frac1{\eta_i}\prod_{j=1}^d\eta_j=
  \frac{\lambda_i\lambda^{d-1}}{\prod_{j=1}^d\lambda_j}=
  \lambda_i,\qquad
  i=1,\ldots,d,
\end{equation}
i.e.
\[
  \cP^\ast=\Big\{ \vec z=(z_1,\ldots,z_d)\in\R^d \,\Big|\, |z_i|\leq\lambda_i,\ i=1,\ldots,d \Big\}.
\]

Thus, Theorem \ref{t:mahler} states actually that there is a nonzero point of the unimodular lattice $\La$ in $(d-1)\cP$, provided that $\cP^\ast$ contains a nonzero point of some sublattice of $\La^\ast$. It is clear that in this statement the words ``of some sublattice'' can be omitted. We get the following reformulation of Theorem \ref{t:mahler}.

\begin{theorem}\label{t:mahler_reformulated}
  Let $\La$ be a full rank lattice in $\R^d$ with determinant $1$. Let $\cP$ be a parallelepiped in $\R^d$ centered at the origin with faces parallel to the coordinate planes. Then
  \begin{equation*}%\label{eq:mahler_reformulated}
    \cP^\ast\cap\La^\ast\neq\{\vec 0\}
    \implies
    (d-1)\cP\cap\La\neq\{\vec 0\}.
  \end{equation*}
\end{theorem}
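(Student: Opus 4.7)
The plan is to show that this is really just a repackaging of Theorem \ref{t:mahler} in the language of dual lattices and pseudocompounds, so the proof will consist of translating the data. The detailed setup has in fact been carried out in the paragraphs preceding the theorem; what remains is to assemble the pieces.

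First I would parametrise $\cP$ by positive reals $\eta_1,\ldots,\eta_d$ via \eqref{eq:pseudo_compound}, so that $\cP^\ast=\{|z_i|\leq \eta_1\cdots\eta_d/\eta_i\}$. Introducing $\lambda_i:=\eta_1\cdots\eta_d/\eta_i$, a direct computation gives $\prod_i\lambda_i=(\prod_j\eta_j)^{d-1}$, so the quantity $\lambda$ defined by \eqref{eq:mahler_lambda} equals $\prod_j\eta_j$, and therefore $\lambda/\lambda_i=\eta_i$. This is the key identity \eqref{eq:lambdas_via_etas} that matches the scaling of $\cP$ and $\cP^\ast$ to the hypothesis and conclusion of Theorem \ref{t:mahler}.

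Next I would pick bases: let $G$ be a $d\times d$ real matrix whose columns form a basis of $\La$, chosen with $\det G=1$ (possible since $\covol\La=1$; flip a sign of one column if needed). Define $F:=(G^{\intercal})^{-1}$, so that $F\Z^d=\La^\ast$ by Definition \ref{def:dual_lattice}. The bilinear form associated with $F$ and $G$ as in \eqref{eq:mahler_BLF} is
\[
  \Phi(\vec u',\vec u'')=(F\vec u')^{\intercal}(G\vec u'')=\vec u'^{\,\intercal}F^{\intercal}G\,\vec u''=\vec u'^{\,\intercal}\vec u'',
\]
which has integer coefficients; equivalently, this reflects the defining property of dual lattices. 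Thus the hypotheses of Theorem \ref{t:mahler} on $F$, $G$, and $\Phi$ are satisfied.

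Now I would invoke the hypothesis: $\cP^\ast\cap\La^\ast\neq\{\vec 0\}$ produces a nonzero $\vec u\in\Z^d$ with $|f_i(\vec u)|\leq\lambda_i$ for all $i$, i.e. \eqref{eq:mahler_f} is solvable with the above $\lambda_i$. Theorem \ref{t:mahler} then delivers a nonzero $\vec u\in\Z^d$ satisfying $|g_i(\vec u)|\leq(d-1)\lambda/\lambda_i=(d-1)\eta_i$, which means $G\vec u\in(d-1)\cP$, while automatically $G\vec u\in\La\setminus\{\vec 0\}$. This is exactly the desired conclusion. There is no genuine mathematical obstacle here — Mahler's theorem does all the heavy lifting; the only point requiring care is keeping the correspondence $\lambda_i\leftrightarrow \prod_j\eta_j/\eta_i$ straight, so that the $(d-1)\cP$ on the conclusion side matches the $(d-1)\lambda/\lambda_i$ bounds in \eqref{eq:mahler_g}.
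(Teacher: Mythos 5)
Your proof is correct and is essentially the translation argument that the paper itself carries out in the paragraphs preceding Theorem \ref{t:mahler_reformulated}: choose a basis matrix $G$ of $\La$ with $\det G=1$, take $F=(G^{\intercal})^{-1}$ so that $F\Z^d=\La^\ast$ and the bilinear form \eqref{eq:mahler_BLF} reduces to the standard inner product, set $\lambda_i=\eta_1\cdots\eta_d/\eta_i$, and invoke Theorem \ref{t:mahler}. The paper additionally gives, immediately after the statement, an independent geometric proof via Vaaler's theorem on central sections of the cube and Proposition \ref{prop:orthogonal_sublattices}, which yields the sharper constant $\big(\sqrt d\big)^{1/(d-1)}$ in place of $d-1$; but your derivation fully suffices for the statement as given.
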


Note that, since $(\La^\ast)^\ast=\La$, we can interchange $\La$ and $\La^\ast$ in Theorem \ref{t:mahler_reformulated}.

In this form, Mahler's theorem admits a rather vivid purely geometric proof. It can be described as follows.

Suppose $\cP^\ast$ contains a nonzero point $\vec v$ of $\La^\ast$. We can assume that $\vec v$ is primitive. Consider $(\R\vec v)^\perp$ -- the orthogonal complement to the one-dimensional subspace spanned by $\vec v$ and the section $\cS=\cP\cap(\R\vec v)^\perp$ (see Fig. \ref{fig:mahler}).

\begin{figure}[h]
\centering
\begin{tikzpicture}

%  \begin{scope}[scale=0.8,x={(1cm,-0.2cm)},y=1.3cm,z={(-0.4cm,-0.4cm)}]
  \begin{scope}[scale=1.0,x=0.5cm,y=4.5cm,z=-0.2cm]

  \draw (9,0.1,-9) -- (-9,0.1,-9);
  \draw[very thin] (-9,-0.1,-9) -- (9,-0.1,-9);
  \draw[very thin] (-9,-0.1,-9) -- (-9,0.1,-9);
  \draw[very thin] (-9,-0.1,-9) -- (-9,-0.1,9);

  \fill[white, opacity=1] (1.1,0.46,-1) -- (1.1,0.25,-1) -- (0.9,0.25,-1) -- (0.9,0.46,-1) -- cycle;
  \fill[white, opacity=1] (-1.1,0.46,-1) -- (-1.1,0.25,-1) -- (-0.9,0.25,-1) -- (-0.9,0.46,-1) -- cycle;
  \fill[white, opacity=1] (1.1,0.46+0.09,1) -- (1.1,0.25+0.09,1) -- (0.9,0.25+0.09,1) -- (0.9,0.46+0.09,1) -- cycle;
  \fill[white, opacity=1] (-1.1,0.46+0.09,1) -- (-1.1,0.25+0.09,1) -- (-0.9,0.25+0.09,1) -- (-0.9,0.46+0.09,1) -- cycle;

  \draw[very thin] (-1,-1,-1) -- (1,-1,-1);
  \draw[very thin] (-1,-1,-1) -- (-1,1,-1);
  \draw[very thin] (-1,-1,-1) -- (-1,-1,1);

  \fill[white, opacity=1] (-0.3,1.01,1) -- (-0.1,1.01,1) -- (-0.1,0.99,1) -- (-0.3,0.99,1) -- cycle;
  \fill[white, opacity=1] (0.3,-1.01,-1) -- (0.1,-1.01,-1) -- (0.1,-0.99,-1) -- (0.3,-0.99,-1) -- cycle;

  \draw[fill=blue!20!, opacity=0.7, draw=blue] (-1,1,0) -- (-1/6,1,-1) -- (1,-1/6,-1) -- (1,-1,0) -- (1/6,-1,1) -- (-1,1/6,1) -- cycle;

  \draw[blue] (0,0,0) -- (9*1,0.1*1-0.05,9*5/6);

  \draw (1,1,-1) -- (1,-1,-1) -- (1,-1,1) -- (-1,-1,1) -- (-1,1,1) -- (-1,1,-1) -- cycle;
  \draw (1,1,1) -- (1,1,-1);
  \draw (1,1,1) -- (1,-1,1);
  \draw (1,1,1) -- (-1,1,1);

  \fill[white, opacity=1] (7.3,-0.11,-9) -- (7.1,-0.11,-9) -- (7.1,-0.09,-9) -- (7.3,-0.09,-9) -- cycle;
  \fill[white, opacity=1] (-7.3,0.11,9) -- (-7.1,0.11,9) -- (-7.1,0.09,9) -- (-7.3,0.09,9) -- cycle;
  \fill[white, opacity=1] (2.18,0.11,9) -- (5.02,0.11,9) -- (5.02,0.09,9) -- (2.18,0.09,9) -- cycle;
  \fill[white, opacity=1] (2.18,-0.09,9) -- (5.02,-0.09,9) -- (5.02,-0.11,9) -- (2.18,-0.11,9) -- cycle;
  \fill[white, opacity=1] (7.27,-0.11,3.79) -- (7.13,-0.11,3.79) -- (7.13,-0.09,3.79) -- (7.27,-0.09,3.79) -- cycle;

  \draw (9,0.1,-9) -- (9,-0.1,-9) -- (9,-0.1,9) -- (-9,-0.1,9) -- (-9,0.1,9) -- (-9,0.1,-9);
  \draw (9,0.1,9) -- (9,0.1,-9);
  \draw (9,0.1,9) -- (9,-0.1,9);
  \draw (9,0.1,9) -- (-9,0.1,9);

  \node[fill=blue,circle,inner sep=1pt] at (9*1,0.1*1-0.05,9*5/6) {};
  \node[right] at (9*1,0.1*1-0.05,9*5/6) {$\vec v$};
  \node[fill=blue,circle,inner sep=1pt] at (0,0,0) {};
  \node[below] at (0,0,0) {$\vec 0$};
  \node[left] at (-9,0.13,4) {$\cP^\ast$};
  \node[left] at (-1.15,1,1) {$\cP$};
  \node[right] at (-0.4,0.75,1) {$\cS$};

  \end{scope}

\end{tikzpicture}
\caption{Section $\cS$ of $\cP$ orthogonal to $\vec v$} \label{fig:mahler}
\end{figure}

The set $\Gamma=\La\cap(\R\vec v)^\perp$ is a lattice of rank $d-1$ with determinant equal to $|\vec v|_2$, where $|\cdot|_2$ denotes the Euclidean norm. Hence, if a constant $c$ is chosen so that the $(d-1)$-dimensional volume of $c\cS$ is smaller than $2^{d-1}|\vec v|_2$, then by Minkowski's convex body theorem there is a nonzero point of $\Gamma$ in $c\cS$ and, therefore, there is a nonzero point of $\La$ in $c\cP$.

Finding an appropriate constant $c$ can be arranged with the help of Vaaler's theorem \cite{vaaler} on central sections of a cube. Consider the cube $\cB=[-1,1]^d$. By Vaaler's theorem the volume of any central $(d-1)$-dimensional section of $\cB$ is not less than $2^{d-1}$.
% Отметим, что ${\cB}^\ast=\cB$.
Consider the diagonal operators $A=\textup{diag}\big(\eta_1^{-1},\ldots,\eta_d^{-1}\big)$ and $C=\textup{diag}\big(\lambda_1^{-1},\ldots,\lambda_d^{-1}\big)$. Then
\[
  A\cP=C\cP^\ast=\cB.
\]
By \eqref{eq:lambdas_via_etas} $C$ coincides with the cofactor matrix of $A$, i.e. $C=(\det A)(A^\ast)^{-1}$. Therefore, since $\cS\perp\vec v$, we have
\[
  \frac{\vol(\cS)}{|\vec v|_2}=
  (\det A)^{-1}\frac{\vol(A\cS)}{\big|(A^\ast)^{-1}\vec v\big|_2}=
  \frac{\vol(A\cS)}{|C\vec v|_2}\geq
  \frac{2^{d-1}}{\sqrt d}\,.
\]
Thus, by choosing $c=\big(\sqrt d\big)^{1/(d-1)}$ we get $\vol(c\cS)\geq2^{d-1}|\vec v|_2$. As it was said before, the rest is done by applying Minkowski's convex body theorem to $c\cS$ and $\Gamma$.

The argument we have just discussed actually proves a stronger statement than Theorem \ref{t:mahler_reformulated}. It proves that
\begin{equation}\label{eq:mahler_improved}
  \cP^\ast\cap\La^\ast\neq\{\vec 0\}
  \implies
  c\cP\cap\La\neq\{\vec 0\}
\end{equation}
with $c=\big(\sqrt d\big)^{1/(d-1)}$ -- a constant which is less than $d-1$ and, moreover, which tends to $1$ as $d\to\infty$. However, it is worth mentioning that a combination of Mahler's theorem on successive minima proved in \cite{mahler_casopis_convex} with Minkowski's theorem on successive minima (both theorems can be found in Schmidt's book \cite{schmidt_DA}) provides an improvement of Theorem \ref{t:mahler} with the constant $c^2$. A detailed account can be found in \cite{german_evdokimov}. In that same paper some further improvements of Theorem \ref{t:mahler} are presented.

\subsubsection{Two two-parametric families of parallelepipeds} \label{subsubsec:parallelepiped_families}

Let us address once again Theorem \ref{t:khintchine_transference} of Khintchine, this time -- in the light of Theorem \ref{t:mahler_reformulated}.
% Положим $d=n+1$ и
Consider the lattice
\[
  \La=
  \left(
  \begin{matrix}
        1     &    0   &    0   & \cdots &    0   \\
    -\theta_1 &    1   &    0   & \cdots &    0   \\
    -\theta_2 &    0   &    1   & \cdots &    0   \\
     \phantom{-}
     \vdots   & \vdots & \vdots & \ddots & \vdots \\
    -\theta_n &    0   &    0   & \cdots &    1
  \end{matrix}
  \right)
  \Z^{n+1}.
\]
Then the dual lattice is
\[
  \La^\ast=
  \left(
  \begin{matrix}
       1   & \theta_1 & \theta_2 & \cdots & \theta_n \\
       0   &    1     &    0     & \cdots &    0     \\
       0   &    0     &    1     & \cdots &    0     \\
    \vdots & \vdots   & \vdots   & \ddots & \vdots   \\
       0   &    0     &    0     & \cdots &    1
  \end{matrix}
  \right)
  \Z^{n+1}.
\]
%Для каждых $t>1$, $\gamma\geq1/n$, $s>1$, $\delta\geq n$ определим параллелепипеды
For every positive $t$, $\gamma$, $s$, $\delta$ define the parallelepipeds
\begin{align}
  \label{eq:t_gamma_family}
  \cP(t,\gamma) & =\Bigg\{\,\vec z=(z_1,\ldots,z_{n+1})\in\R^{n+1} \ \Bigg|
                        \begin{array}{l}
                          |z_1|\leq t \\
%                          \max_{2\leq i\leq d}
                          |z_i|\leq t^{-\gamma},\ \ i=2,\ldots,n+1
                        \end{array} \Bigg\}, \\
  \label{eq:s_delta_family}
  \cQ(s,\delta) & =\Bigg\{\,\vec z=(z_1,\ldots,z_{n+1})\in\R^{n+1} \ \Bigg|
                        \begin{array}{l}
                          |z_1|\leq s^{-\delta} \\
%                          \max_{2\leq i\leq d}
                          |z_i|\leq s,\quad\,\ i=2,\ldots,n+1
                        \end{array} \Bigg\}.
\end{align}
Then
\begin{equation}\label{eq:omega_vs_parallelepipeds}
\begin{aligned}
  \omega(\Theta) & =\sup\bigg\{ \gamma\geq\frac1n \,\bigg|\ \forall\,t_0\in\R\,\ \exists\,t>t_0:\text{\,we have }\cP(t,\gamma)\cap\La\neq\{\vec 0\} \bigg\}, \\
  \omega(L_\Theta) & =\sup\bigg\{ \,\delta\geq n\ \bigg|\ \forall\,s_0\in\R\,\ \exists\,s>s_0:\text{\,we have }\cQ(s,\delta)\cap\La^\ast\neq\{\vec 0\} \bigg\}.
\end{aligned}
\end{equation}
Each parallelepiped from \eqref{eq:s_delta_family} is the pseudocompound of some parallelepiped from \eqref{eq:t_gamma_family}, and vice versa. Indeed, if
\begin{equation}\label{eq:Q_is_P_star}
%  \delta\geq n,
%  \qquad
  t=s^{((n-1)\delta+n)/n},
  \qquad
  \gamma=\frac{\delta}{(n-1)\delta+n}\,,
\end{equation}
then $\cQ(s,\delta)=\cP(t,\gamma)^\ast$. Conversely, if
\begin{equation}\label{eq:P_is_Q_star}
%  \gamma\geq1/n,
%  \qquad
  s=t^{1/n},
  \qquad
  \delta=n\gamma+n-1,
\end{equation}
then $\cP(t,\gamma)=\cQ(s,\delta)^\ast$.

%При этом каждый параллелепипед вида \eqref{eq:t_gamma_family} является псевдоприсоединённым для некоторого параллелепипеда вида \eqref{eq:s_delta_family}, и наоборот. Действительно, при
%\begin{equation}\label{eq:Q_gets_starred}
%  \delta\geq n,
%  \qquad
%  t=s^n,
%  \qquad
%  \gamma=\frac{\delta-(n-1)}{n}
%\end{equation}
%справедливо $\cP(t,\gamma)=\cQ(s,\delta)^\ast$.
%% При этом указанное соответствие $(s,\delta)\to(t,\gamma)$         $(1,\infty)\times[n,\infty)$
%И наоборот, при
%\begin{equation}\label{eq:P_gets_starred}
%  \frac1n\leq\gamma<\frac1{n-1}\,,
%  \qquad
%  s=t^{1-(n-1)\gamma},
%  \qquad
%  \delta=\frac{n\gamma}{1-(n-1)\gamma}
%\end{equation}
%справедливо $\cQ(s,\delta)=\cP(t,\gamma)^\ast$.

Let us apply Mahler's theorem in disguise of Theorem \ref{t:mahler_reformulated}.

Assuming \eqref{eq:Q_is_P_star} holds, we have
\[
  \cQ(s,\delta)\cap\La^\ast\neq\{\vec 0\}
  \implies
  (d-1)\cP(t,\gamma)\cap\La\neq\{\vec 0\},
\]
whence it follows, in view of \eqref{eq:omega_vs_parallelepipeds}, that
\[
  \omega(L_\Theta)\geq\delta
  \implies
  \omega(\Theta)\geq\gamma=\frac{\delta}{(n-1)\delta+n}\,.
\]
Thus,
\begin{equation} \label{eq:khintchine_transference_right}
  \omega(\Theta)\geq\frac{\omega(L_\Theta)}{(n-1)\omega(L_\Theta)+n}\,.
\end{equation}

Assuming \eqref{eq:P_is_Q_star} holds, we have
\[
  \cP(t,\gamma)\cap\La\neq\{\vec 0\}
  \implies
  (d-1)\cQ(s,\delta)\cap\La^\ast\neq\{\vec 0\},
\]
whence it follows, once again, in view of \eqref{eq:omega_vs_parallelepipeds}, that
\[
  \omega(\Theta)\geq\gamma
  \implies
  \omega(L_\Theta)\geq\delta=n\gamma+n-1.
\]
Thus,
\begin{equation} \label{eq:khintchine_transference_left}
  \omega(L_\Theta)\geq n\omega(\Theta)+n-1.
\end{equation}

It can be easily verified that \eqref{eq:khintchine_transference_right} and \eqref{eq:khintchine_transference_left} are nothing else but the right-hand and the left-hand inequalities \eqref{eq:khintchine_transference} respectively. The proof of Khintchine's theorem is completed.

\subsubsection{Uniform exponents and an analogue of Mahler's theorem}

The proof of Theorem \ref{t:mahler_reformulated} given in Section \ref{subsubsec:pseudocompounds_and_dual_lattices} is based on Minkowski's convex body theorem, Vaaler's theorem on central sections of a cube, and the fact that for any primitive vector $\vec v$ of a unimodular lattice $\La$ the set $\La^\ast\cap(\R\vec v)^\perp$ is a lattice of rank $d-1$ with determinant equal to the Euclidean norm of $\vec v$. The latter fact can be reformulated as the equality of the determinants of the lattices $\La\cap(\R\vec v)$ and $\La^\ast\cap(\R\vec v)^\perp$ of ranks $1$ and $d-1$ respectively.

A more general statement holds. It is very useful when working with uniform exponents.

\begin{proposition}\label{prop:orthogonal_sublattices}
  Let $\La$ be a full rank lattice in $\R^d$, $\det\La=1$. Let $\cL$ be a $k$-dimensional subspace of $\R^d$ and let the lattice $\Gamma=\cL\cap\La$ have rank $k$. Consider the orthogonal complement $\cL^\perp$ and set $\Gamma^\perp=\cL^\perp\cap\La^\ast$. Then $\Gamma^\perp$ is a lattice of rank $d-k$ and
  \[
    \det\Gamma^\perp=\det\Gamma.
  \]
\end{proposition}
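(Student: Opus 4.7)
The plan is to build explicit dual bases for $\Gamma$ and $\Gamma^\perp$ (inside $\cL$ and $\cL^\perp$ respectively) by starting from a basis of $\La$ adapted to $\Gamma$, and then relate the determinants through the orthogonal projection $\pi\colon\R^d\to\cL^\perp$.

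First I would verify that $\Gamma$ is a \emph{primitive} sublattice of $\La$: if $\vec v\in\La$ and $m\vec v\in\Gamma$ for some $m\in\N$, then $m\vec v\in\cL$, hence $\vec v\in\cL\cap\La=\Gamma$. Thus any $\Z$-basis $\vec e_1,\ldots,\vec e_k$ of $\Gamma$ can be extended to a basis $\vec e_1,\ldots,\vec e_d$ of $\La$. Let $\vec f_1,\ldots,\vec f_d$ be the dual basis (so $\langle\vec f_i,\vec e_j\rangle=\delta_{ij}$); it is a $\Z$-basis of $\La^\ast$, and since $\det\La=1$ the matrix built from the $\vec e_j$ has determinant $\pm1$ and likewise for the $\vec f_j$.

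Next I would identify $\Gamma^\perp$ explicitly. A vector $\vec z=\sum_{i=1}^d n_i\vec f_i\in\La^\ast$ lies in $\cL^\perp$ iff $\langle\vec z,\vec e_j\rangle=n_j=0$ for $j=1,\ldots,k$. Hence
\[
  \Gamma^\perp=\spanned_\Z(\vec f_{k+1},\ldots,\vec f_d),
\]
which is a lattice of rank $d-k$ as claimed.

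The remaining point is the equality of determinants, and this is where one must be a bit careful. Let $\pi\colon\R^d\to\cL^\perp$ be the orthogonal projection. Then $\pi(\vec e_i)=\vec 0$ for $i\leq k$, while $\pi(\vec e_{k+1}),\ldots,\pi(\vec e_d)$ span a rank $(d-k)$ lattice $\pi(\La)\subseteq\cL^\perp$. Since $\La=1\cdot\La$ has determinant $1$ and fibres over $\pi(\La)$ via translates of $\Gamma$, the standard volume factorisation gives
\[
  \det\Gamma\cdot\det\pi(\La)=\det\La=1.
\]
On the other hand, for $j,l>k$ one computes
\[
  \langle\pi(\vec e_j),\vec f_l\rangle=\langle\vec e_j,\vec f_l\rangle-\sum_{i=1}^k c_{ji}\langle\vec e_i,\vec f_l\rangle=\delta_{jl},
\]
since $\vec f_l\perp\vec e_i$ for $i\leq k<l$. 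Thus $\{\pi(\vec e_{k+1}),\ldots,\pi(\vec e_d)\}$ and $\{\vec f_{k+1},\ldots,\vec f_d\}$ are mutually dual bases of the Euclidean space $\cL^\perp$, so the product of their Gram determinants equals $1$, i.e.
\[
  \det\pi(\La)\cdot\det\Gamma^\perp=1.
\]
Combining the two equalities yields $\det\Gamma=\det\Gamma^\perp$, finishing the proof.

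The main obstacle is the determinant computation in the last paragraph: one must ensure that the projected vectors $\pi(\vec e_j)$ really form a lattice whose covolume inside $\cL^\perp$ is the reciprocal of $\det\Gamma^\perp$. This is handled cleanly by recognising the duality between $\pi(\vec e_{k+1}),\ldots,\pi(\vec e_d)$ and $\vec f_{k+1},\ldots,\vec f_d$ in $\cL^\perp$; everything else is routine bookkeeping with adapted bases.
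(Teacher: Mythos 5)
The paper itself does not supply a proof of this proposition; it simply cites \cite{schmidt_DADE} and \cite{german_MJCNT_2012} and calls the statement ``rather well known.'' So I am evaluating your argument on its own merits, not against a proof in the text.

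Your proof is correct and complete. The primitivity observation is right (if $m\vec v\in\cL$ then $\vec v\in\cL$, since $\cL$ is a linear subspace), and this is exactly what licenses extending a $\Z$-basis of $\Gamma$ to one of $\La$. The identification $\Gamma^\perp=\spanned_\Z(\vec f_{k+1},\ldots,\vec f_d)$ follows cleanly from the dual-basis relations, giving rank $d-k$ at once. The two covolume identities are both sound: the factorisation $\det\Gamma\cdot\det\pi(\La)=\det\La$ is the standard block-triangular computation in an orthonormal basis adapted to $\cL\oplus\cL^\perp$, and the duality $\langle\pi(\vec e_j),\vec f_l\rangle=\delta_{jl}$ for $j,l>k$ holds precisely because $\vec f_l\in\cL^\perp$ kills the $\cL$-component of $\vec e_j$. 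Recognising $\{\pi(\vec e_j)\}_{j>k}$ and $\{\vec f_l\}_{l>k}$ as mutually dual bases of $\cL^\perp$, so that the product of the covolumes of the two lattices they span equals $1$, is a tidy way to close the argument. This is essentially the proof one finds in the cited references, with the projection $\pi$ used to make the second covolume identity transparent; there is nothing to fix.
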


This statement is rather well known, its proof can be found, for instance, in \cite{schmidt_DADE} or \cite{german_MJCNT_2012}.

The nature of uniform exponents requires working with sublattices of rank $2$, therefore, Proposition \ref{prop:orthogonal_sublattices} is applied with $k=2$. Respectively, when working with two-dimensional and $(d-2)$-dimensional subspaces, it is natural to involve the $(d-2)$-th pseudocompounds instead of the $(d-1)$-th ones.

Let $\vec e_1,\ldots,\vec e_d$ be the standard basis of $\R^d$. For each multivector $\vec Z\in\bigwedge^2\R^d$, let us consider its representation
\[
  \vec Z=\sum_{1\leq i<j\leq d}Z_{ij}\vec e_i\wedge\vec e_j.
\]

\begin{definition}\label{def:second_pseudo_compound}
  Let $\eta_1,\ldots,\eta_d$ be positive real numbers. Consider the parallelepiped
  \[
    \cP=\Big\{ \vec z=(z_1,\ldots,z_d)\in\R^d \,\Big|\, |z_i|\leq\eta_i,\ i=1,\ldots,d \Big\}.
  \]
  Then the parallelepiped
  \[\cP^\circledast=\bigg\{ \vec Z %=\sum_{1\leq i<j\leq d}Z_{ij}\vec E_{ij}
                                   \in{\textstyle\bigwedge^2\R^d} \,\bigg|\,
                             |Z_{ij}|\leq\frac1{\eta_i\eta_j}\prod_{k=1}^d\eta_k,\ 1\leq i<j\leq d \bigg\}\]
  is called the \emph{$(d-2)$-th pseudocompound} of $\cP$.
\end{definition}

Given a full rank lattice $\La$ in $\R^d$ and its dual lattice $\La^\ast$, let us denote by $\La^{\circledast}$ the set of all the decomposable elements of the lattice $\bigwedge^2\La^\ast$, i.e.
\begin{equation*}
  \La^{\circledast}=\Big\{ \vec z_1\wedge\vec z_2 \,\Big|\, \vec z_1,\vec z_2\in\La^\ast \Big\}.
\end{equation*}

The following theorem proved in \cite{german_mathmatika_2020} is an analogue of Theorem \ref{t:mahler_reformulated}.

\begin{theorem}\label{t:second_pseudo_compound}
  Let $\La$ be a full rank lattice in $\R^d$ with determinant $1$. Let $\cP$ be a parallelepiped in $\R^d$ centered at the origin with faces parallel to the coordinate planes. Then
  \[
    \cP^{\circledast}\cap\La^{\circledast}\neq\{\vec 0\}
    \implies
    c\cP\cap\La\neq\{\vec 0\},
  \]
  where $c=\big(\frac{d(d-1)}2\big)^{\raisebox{1ex}{$\frac1{2(d-2)}$}}$.
\end{theorem}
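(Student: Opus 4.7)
The plan is to mirror the geometric proof of Theorem \ref{t:mahler_reformulated} given in Section \ref{subsubsec:pseudocompounds_and_dual_lattices}, but with rank-$2$ sublattices and $(d-2)$-dimensional orthogonal complements in place of rank-$1$ and $(d-1)$-dimensional ones. First, I would take a nonzero element of $\cP^{\circledast}\cap\La^{\circledast}$; by definition of $\La^{\circledast}$ it is decomposable, so we may write it as $\vec V=\vec v_1\wedge\vec v_2$ with $\vec v_1,\vec v_2\in\La^\ast$. These two vectors are linearly independent, hence they span a $2$-dimensional subspace $\cL=\spanned(\vec v_1,\vec v_2)$, and the lattice $\cL\cap\La^\ast$ has rank $2$.

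Next, I would invoke Proposition \ref{prop:orthogonal_sublattices} with the roles of $\La$ and $\La^\ast$ swapped (valid since $(\La^\ast)^\ast=\La$) to produce a lattice $\Gamma=\cL^\perp\cap\La$ of rank $d-2$ with $\det\Gamma=\det(\cL\cap\La^\ast)\leq|\vec v_1\wedge\vec v_2|_2=|\vec V|_2$; the inequality is just the fact that $\Z\vec v_1+\Z\vec v_2$ need not exhaust $\cL\cap\La^\ast$. To produce a nonzero point of $\La$ in $c\cP$, it is enough to produce a nonzero point of $\Gamma$ in the central $(d-2)$-dimensional section $c\cS:=c\cP\cap\cL^\perp$, and by Minkowski's convex body theorem applied to $\Gamma$ in $\cL^\perp$ this is guaranteed as soon as $\vol_{d-2}(c\cS)\geq 2^{d-2}\det\Gamma$, for which, by the estimate just obtained, it suffices to verify
\[
  \frac{\vol_{d-2}(\cS)}{|\vec V|_2}\geq\frac{2^{d-2}}{c^{d-2}}.
\]

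I would obtain this last inequality by the same transformation trick as in the $(d-1)$-th case. Set $A=\textup{diag}(\eta_1^{-1},\ldots,\eta_d^{-1})$, so that $A\cP$ is the cube $\cB=[-1,1]^d$ and $A\cS=\cB\cap A\cL^\perp$ is a central $(d-2)$-dimensional section of $\cB$; Vaaler's theorem (in its form for sections of arbitrary codimension) then yields $\vol_{d-2}(A\cS)\geq 2^{d-2}$. On the exterior-square side, the operator $C^{\circledast}=(\det A)(\wedge^2 A)^{-1}$ is diagonal in Pl\"ucker coordinates and maps $\cP^{\circledast}$ onto the unit cube $\cB^{\circledast}=[-1,1]^{\binom{d}{2}}$ in $\bigwedge^2\R^d$, so $\vec V\in\cP^{\circledast}$ gives
\[
  |C^{\circledast}\vec V|_2\leq\sqrt{\tbinom{d}{2}}\,\|C^{\circledast}\vec V\|_\infty\leq\sqrt{\tbinom{d}{2}}.
\]
Combining this bound with the volume-vs.-multivector identity
\[
  \frac{\vol_{d-2}(\cS)}{|\vec V|_2}=\frac{\vol_{d-2}(A\cS)}{|C^{\circledast}\vec V|_2}
\]
(the exact analogue of the identity $\vol(\cS)/|\vec v|_2=\vol(A\cS)/|C\vec v|_2$ used in the proof of Theorem \ref{t:mahler_reformulated}) yields the required lower bound $2^{d-2}/\sqrt{\binom{d}{2}}$, whence choosing $c=\binom{d}{2}^{1/(2(d-2))}=(d(d-1)/2)^{1/(2(d-2))}$ completes the argument.

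The step that is less routine than in the $(d-1)$-th case is the volume identity in the last display. For a hyperplane $\vec v^\perp$ it follows from a direct computation using the cofactor structure of $(A^\ast)^{-1}$, but for a codimension-$2$ subspace $\cL^\perp$ one must simultaneously track how the diagonal map $A$ distorts the $(d-2)$-volume along $\cL^\perp$ and how $\wedge^2 A$ acts on the Pl\"ucker coordinates of $\vec V$, and verify that these two distortions combine in exactly the ratio governed by $C^\circledast$. This is where the machinery of $(d-2)$-th pseudocompounds (Definition \ref{def:second_pseudo_compound}, a simplification of Mahler's compound bodies) earns its keep, and it is the main technical hurdle; once the identity is in place, the remaining ingredients -- Minkowski's theorem, Vaaler's theorem, and Proposition \ref{prop:orthogonal_sublattices} -- slot together exactly as in the $(d-1)$-th case.
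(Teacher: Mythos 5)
Your proposal is correct and follows essentially the same route as the paper: the paper's own description of the proof explicitly lists the same three ingredients (Minkowski's convex body theorem, Vaaler's theorem for central sections of codimension $2$, and Proposition~\ref{prop:orthogonal_sublattices} with $k=2$) and compares it to the proof of Theorem~\ref{t:mahler_reformulated}. The volume identity you flag as the main technical hurdle does indeed hold: for a diagonal $A$, a $2$-plane $\cL$ with Pl\"ucker multivector $\vec V$, and a set $\cS\subset\cL^\perp$, one has $\vol_{d-2}(A\cS)/\vol_{d-2}(\cS)=|C^{\circledast}\vec V|_2/|\vec V|_2$ with $C^{\circledast}=(\det A)\wedge^2(A^{-1})$, by the same prism-over-a-section argument that gives the $(d-1)$-case; your determinant estimate $\det\Gamma=\det(\cL\cap\La^\ast)\leq|\vec V|_2$ (via the sublattice $\Z\vec v_1+\Z\vec v_2$) is also correct, and the resulting constant $\binom{d}{2}^{1/(2(d-2))}$ matches the theorem.
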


The proof of Theorem \ref{t:second_pseudo_compound} resembles pretty much the proof of Theorem \ref{t:mahler_reformulated} described in Section \ref{subsubsec:pseudocompounds_and_dual_lattices}. It is also based on three facts. The first one is that same Minkowski's convex body theorem. The second one is Vaaler's theorem on central sections of a cube -- this time it is applied to $(d-2)$-dimensional sections. The third ingredient is Proposition \ref{prop:orthogonal_sublattices} with $k=2$.

Theorem \ref{t:second_pseudo_compound} is the key tool for proving Theorem \ref{t:my_simultaneous_vs_linear_form} on uniform exponents. It is applied within the frames of a parametric construction, which is most comprehensively described in terms of ``nodes'' and ``leaves''.

\subsubsection{Families of ``nodes'', ``antinodes'', and ``leaves''}\label{subsubsec:nodes_and_leaves}

%Опишем параметрическую конструкцию в относительно общем виде, чтобы можно было впоследствии (в параграфе \ref{sec:several_linear_forms}) воспользоваться ею и в задаче приближения нуля значениями нескольких линейных форм.
%
%Добавим к размерности $n$ размерность $m$ и положим
%\[
%  d=m+n.
%\]

Let us describe the construction of ``nodes'' and ``leaves'' in the form adapted for the proof of the right-hand inequality \eqref{eq:my_simultaneous_vs_linear_form}, i.e. the inequality
\begin{equation}\label{eq:my_simultaneous_vs_linear_form_right}
%  \hat\omega(L_\Theta)\geq\frac{n-1}{1-\hat\omega(\Theta)}\,,\qquad
  \hat\omega(\Theta)\geq \frac{1-\hat\omega(L_\Theta)^{-1}}{n-1}\,.
\end{equation}
Let us adopt the notation \eqref{eq:t_gamma_family} and \eqref{eq:s_delta_family} once again.

Fix arbitrary $h,\alpha,\beta\in\R$ such that
\[
%  h>1,\quad\beta\geq n,\quad n\leq\alpha\leq\beta,
  h>1,\quad\beta>0,\quad 0<\alpha\leq\beta
\]
and set
\[
  H=h^{\beta/\alpha}.
\]
To each $r$ in the interval $h\leq r\leq H$, let us assign the parallelepiped
%\begin{equation*}
%  \cQ_r=\Bigg\{\,\vec z=(z_1,\ldots,z_d) \in\R^d \ \Bigg|
%                 \begin{array}{l}
%                   |z_j|\leq (hH/r)^{-\alpha},\quad j=1,\ldots,m \\
%                   |z_i|\leq r,\qquad\qquad\quad i=m+1,\ldots,d
%                 \end{array} \Bigg\}.
%\end{equation*}
\begin{equation}\label{eq:Q_r}
  \cQ_r=\Bigg\{\,\vec z=(z_1,\ldots,z_d) \in\R^d \ \Bigg|
                 \begin{array}{l}
                   |z_1|\leq (hH/r)^{-\alpha} \\
                   |z_i|\leq r,\qquad\qquad i=2,\ldots,n+1
                 \end{array} \Bigg\}.
\end{equation}
It is easy to see that $\cQ_r$ belongs to \eqref{eq:s_delta_family}:
% с $s=r$ и $\delta=\alpha\log_r(hH/r)$.
\[
  \cQ_r=\cQ\big(r,\alpha\log_r(hH/r)\big).
\]
Consider the following three families of parallelepipeds:
\begin{equation*}
\begin{aligned}
  \mathfrak S & =\mathfrak S(h,\alpha,\beta)=\Big\{ \cQ_r \,\Big|\, h\leq r\leq\sqrt{hH} \Big\}, \\
  \mathfrak A & =\mathfrak A(h,\alpha,\beta)=\Big\{ \cQ_r \,\Big|\, \sqrt{hH}\leq r\leq H \Big\}, \\
  \mathfrak L & =\mathfrak L(h,\alpha,\beta)=\Big\{ \cQ(r,\alpha) \,\Big|\, h\leq r\leq H \Big\}.
\end{aligned}
\end{equation*}
Let us call $\mathfrak S$ the \emph{``stem'' family}, $\mathfrak A$ the \emph{``anti-stem'' family}, $\mathfrak L$ the \emph{``leaves'' family}. Let us call each element of $\mathfrak S$ a \emph{``node''}, each element of $\mathfrak A$ an \emph{``anti-node''}, each element of $\mathfrak L$ a \emph{``leaf''}.
We say that a ``leaf'' $\cQ(r,\alpha)$ is \emph{produced} by a ``node'' or an ``anti-node'' $\cQ_{r'}$ if
\begin{equation*}
  r=r'\quad\text{ or }\quad r=hH/r'.
\end{equation*}
We call $\cQ_h$ the \emph{root} ``node''.

``Nodes'' and ``leaves'' enjoy the following properties:
\begin{itemize}
  \item[(i)]

    $\cQ_h=\cQ(h,\beta)$;

  \item[(ii)]

    if $r<r'$, then $\cQ_r\subset\cQ_{r'}$;

  \item[(iii)]

    for each $r$ in the interval $h\leq r\leq\sqrt{hH}$ the ``node'' $\cQ_r$ and the ``anti-node'' $\cQ_{hH/r}$ produce exactly two ``leaves''
    \[
      \cQ(r,\alpha)
      \quad\text{ and }\quad
      \cQ(hH/r,\alpha),
    \]
    while the intersection of these ``leaves'' coincides with the ``node'' $\cQ_r$\,, and their union is contained in the ``anti-node'' $\cQ_{hH/r}$\,;

  \item[(iv)]

    every ``leaf'' $\cQ(r,\alpha)$ is produced by exactly one ``node'' $\cQ_{r'}$ and one ``anti-node'' $\cQ_{hH/r'}$, where
    \begin{equation*}
      r'=
      \begin{cases}
        r,\qquad\ \ \text{ if }r\leq\sqrt{hH} \\
        hH/r,   \ \ \text{ if }r\geq\sqrt{hH}
      \end{cases};
    \end{equation*}

  \item[(v)]

    if every ``leaf'' in $\mathfrak L$ contains nonzero points of $\La^\ast$, but the root ``node'' does not, then there is a ``leaf'' that contains at least two non-collinear points of $\La^\ast$, one of which lies in the ``node'' that produces the ``leaf''.
\end{itemize}

%\textup{(i)}
%$\cQ_h=\cQ(h,\beta)$;
%
%\textup{(ii)}
%если $r<r'$, то $\cQ_r\subset\cQ_{r'}$;
%
%\textup{(iii)}
%для каждого $r$ из промежутка $h\leq r\leq\sqrt{hH}$ из <<узла>> $\cQ_r$ и <<антиузла>> $\cQ_{hH/r}$ растут ровно два <<листа>>
%\[
%  \cQ(r,\alpha)
%  \quad\text{ и }\quad
%  \cQ(hH/r,\alpha),
%\]
%причём пересечение этих <<листьев>> совпадает с <<узлом>> $\cQ_r$\,, а их объединение содержится в <<антиузле>> $\cQ_{hH/r}$\,;
%
%\textup{(iv)}
%Каждый <<лист>> $\cQ(r,\alpha)$ растёт ровно из одного <<узла>> $\cQ_{r'}$ и одного <<антиузла>> $\cQ_{hH/r'}$, где
%\begin{equation*}
%  r'=
%  \begin{cases}
%    r,\qquad\ \ \text{ при }r\leq\sqrt{hH} \\
%    hH/r,   \ \ \text{ при }r\geq\sqrt{hH}
%  \end{cases}.
%\end{equation*}

These properties are illustrated by Fig. \ref{fig:nodes_and_leaves}, where we use $u$ and $v$ to denote $\max\big(|z_2|,\ldots,|z_{n+1}|\big)$ and $|z_1|$ respectively.

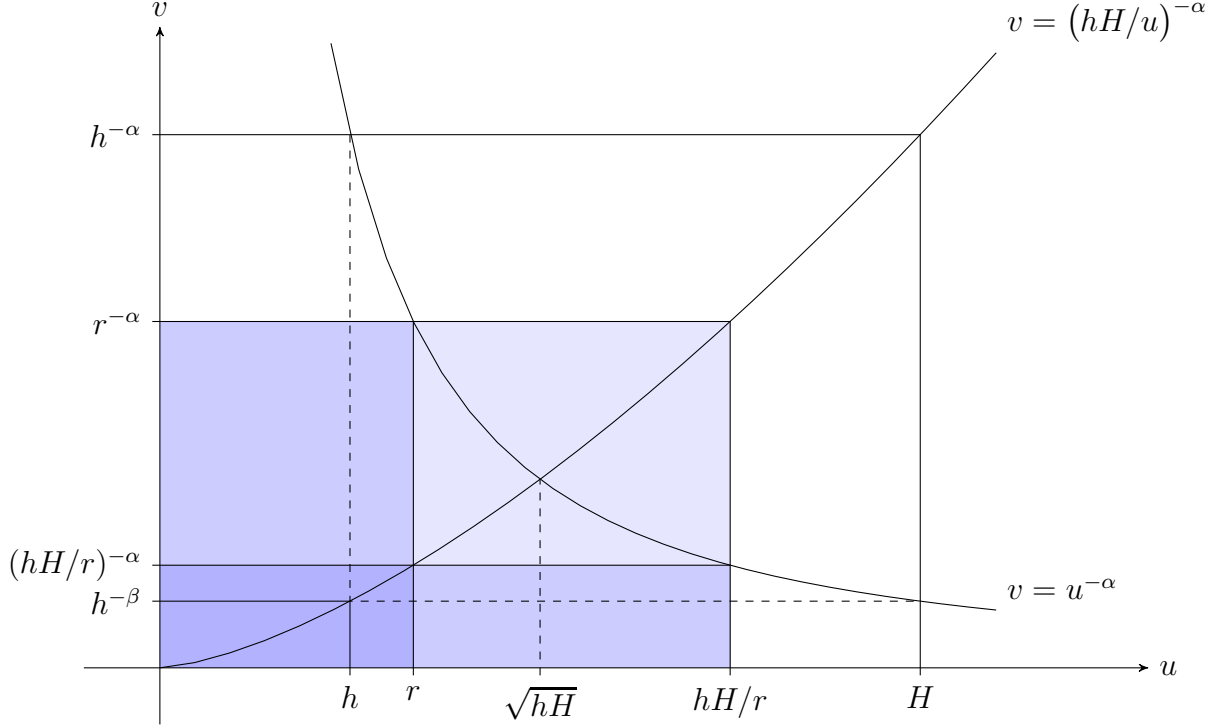
\begin{figure}[h]
\centering
\begin{tikzpicture}[scale=5]
  \fill[blue!10!] (0,0.5*1.5^1.5) -- (1.5,0.5*1.5^1.5) -- (1.5,0) -- (0,0) -- cycle;
  \fill[blue!20!] (0,0.5*2^1.5/3^1.5) -- (1.5,0.5*2^1.5/3^1.5) -- (1.5,0) -- (0,0) -- cycle;
  \fill[blue!20!] (0,0.5*1.5^1.5) -- (2/3,0.5*1.5^1.5) -- (2/3,0) -- (0,0) -- cycle;
  \fill[blue!30!] (0,0.5*2^1.5/3^1.5) -- (2/3,0.5*2^1.5/3^1.5) -- (2/3,0) -- (0,0) -- cycle;

  \draw[->,>=stealth'] (-0.2,0) -- (2.6,0) node[right] {$u$};
  \draw[->,>=stealth'] (0,-0.15) -- (0,0.5*3.4) node[above] {$v$};

  \draw[color=black] plot[domain=0:2.2] (\x, 0.5*\x^1.5) node[above right]{$v=\big(hH/u\big)^{-\alpha}$};
  \draw[color=black] plot[domain=0.45:2.2] (\x, 0.5/\x^1.5) node[above right]{$v=u^{-\alpha}$};

  \draw[color=black] (-0.02,0.5*2^1.5) -- (2,0.5*2^1.5) -- (2,-0.02);
  \draw[color=black] (-0.02,0.5*0.5^1.5) -- (0.5,0.5*0.5^1.5) -- (0.5,-0.02);
  \draw[color=black] (-0.02,0.5*2^1.5/3^1.5) -- (1.5,0.5*2^1.5/3^1.5);
  \draw[color=black] (2/3,0.5*1.5^1.5) -- (2/3,-0.02);
  \draw[color=black] (-0.02,0.5*1.5^1.5) -- (1.5,0.5*1.5^1.5) -- (1.5,-0.02);

  \draw[color=black,dashed] (0.5,0.5*2^1.5) -- (0.5,0.5*0.5^1.5) -- (2,0.5*0.5^1.5);
%      \draw[color=black,dashed] (-0.02,0.5) -- (1,0.5);
%      \draw[color=black,loosely dotted,thick] (1,-0.02) -- (1,0.5);
  \draw[color=black,dashed] (1,-0.02) -- (1,0.5);

  \draw (0.5,-0.02) node[below]{$h$};
  \draw (2/3,-0.02) node[below]{$r$};
  \draw (1,-0.02) node[below]{$\sqrt{hH}$};
  \draw (1.5,-0.02) node[below]{$hH/r$};
  \draw (2,-0.02) node[below]{$H$};

  \draw (-0.02,0.5*0.5^1.5) node[left]{$h^{-\beta}$};
  \draw (-0.02,0.5*2^1.5/3^1.5) node[left]{$(hH/r)^{-\alpha}$};
%      \draw (-0.02,0.5) node[left]{$\Big(\sqrt{hH}\Big)^{-\alpha}$};
  \draw (-0.02,0.5*1.5^1.5) node[left]{$r^{-\alpha}$};
  \draw (-0.02,0.5*2^1.5) node[left]{$h^{-\alpha}$};
\end{tikzpicture}
\caption{A ``node'', its ``anti-node'', and their ``leaves''} \label{fig:nodes_and_leaves}
\end{figure}

The construction we have described is adapted, as we said before, for the proof of \eqref{eq:my_simultaneous_vs_linear_form_right}, which is the right-hand inequality \eqref{eq:my_simultaneous_vs_linear_form}. Nevertheless, Fig. \ref{fig:nodes_and_leaves} can be used unaltered for the proof of the left-hand inequality \eqref{eq:my_simultaneous_vs_linear_form}. Moreover, it can also be used in more general problems -- in the problem of approximating zero with the values of several linear forms (Section \ref{sec:several_linear_forms}) and in the analogous problem with weights (Section \ref{sec:weights}).

\subsubsection{Outline of the proof of the transference inequalities for uniform exponents}\label{subsubsec:proof_of_uniform_transference}

Let us demonstrate how to apply Theorem \ref{t:second_pseudo_compound} and the parametric construction described above to prove the right-hand inequality \eqref{eq:my_simultaneous_vs_linear_form}. By analogy with \eqref{eq:omega_vs_parallelepipeds}, we have
\begin{equation}\label{eq:omega_hat_vs_parallelepipeds}
\begin{aligned}
  \hat\omega(\Theta) & =\sup\bigg\{ \gamma\geq\frac1n \,\bigg|\ \exists\,t_0\in\R:\ \forall\,t>t_0\text{ we have }\cP(t,\gamma)\cap\La\neq\{\vec 0\} \bigg\}, \\
  \hat\omega(L_\Theta) & =\sup\bigg\{ \,\delta\geq n\ \bigg|\ \exists\,s_0\in\R:\ \forall\,s>s_0\text{ we have }\cQ(s,\delta)\cap\La^\ast\neq\{\vec 0\} \bigg\}.
\end{aligned}
\end{equation}
Let us fix $s>1$ and $\delta\geq n$. Same as in \eqref{eq:Q_is_P_star}, let us set
\begin{equation*}
  t=s^{((n-1)\delta+n)/n},
  \qquad
  \gamma=\frac{\delta}{(n-1)\delta+n}\,.
\end{equation*}
Set also
\[
  h=s,\qquad
  \beta=\delta,\qquad
  \alpha=\frac{(n-1)\delta+n}{n}\,.
\]
Note that, with such a choice of parameters, the quantities $\gamma$ and $\alpha$ are related by
\begin{equation}\label{eq:gamma_via_alpha}
  \gamma=\frac{1-\alpha^{-1}}{n-1}\,.
\end{equation}
Then $\cQ(s,\delta)$ is the root ``node''. If it contains a nonzero point of $\La^\ast$, then by Theorem \ref{t:mahler_reformulated} or, to be more exact, by its improved version \eqref{eq:mahler_improved} we have
\begin{equation}\label{eq:if_the_root_is_nonempty}
  \cQ(s,\delta)\cap\La^\ast\neq\{\vec 0\}
  \implies
  c_1\cP(t,\gamma)\cap\La\neq\{\vec 0\},
\end{equation}
where $c_1=d^{\raisebox{1ex}{$\frac1{2(d-1)}$}}$. If $\cQ(s,\delta)$ does not contain nonzero points of $\La^\ast$, let us assume that every ``leaf'' $\cQ(r,\alpha)$ in $\mathfrak L$ does contain nonzero points of $\La^\ast$ and consider a ``leaf'' $\cQ(r_0,\alpha)$ whose existence is guaranteed by property (v) from the previous Section. Then there exist non-collinear points $\vec v_1$ and $\vec v_2$ in $\La^\ast$ such that
\[
  \vec v_1\in\cQ_{r_0}\,,
  \qquad
  \vec v_2\in\cQ_{hH/r_0}\,.
\]
The parameters of the parallelepipeds $\cQ_{r_0}$ and $\cQ_{hH/r_0}$ are known, hence we can estimate from above the coordinates of the multivector 
\[
  \vec v_1\wedge\vec v_2=\sum_{1\leq i<j\leq n+1}V_{ij}\vec e_i\wedge\vec e_j.
\]
Detailed calculations can be found in \cite{german_mathmatika_2020}. The estimates lead eventually to the key relation
\[
  \vec v_1\wedge\vec v_2\in2\cP(t,\gamma)^{\circledast},
\]
which enables us to apply Theorem \ref{t:second_pseudo_compound}. We get the following chain of implications:
\begin{multline}\label{eq:if_the_root_is_empty}
  \cQ(r,\alpha)\cap\La^\ast\neq\{\vec 0\}\text{ for each }r\in[h,H]
  \implies \\
  \vphantom{\frac{\big|}{}}
  \hskip 40mm
  \implies
  2\cP(t,\gamma)^{\circledast}\cap\La^{\circledast}\neq\{\vec 0\}\implies
  c_2\cP(t,\gamma)\cap\La\neq\{\vec 0\},
\end{multline}
where $c_2=\big(2d(d-1)\big)^{\raisebox{1ex}{$\frac1{2(d-2)}$}}$.

Thus, if $\cQ(s,\delta)$ contains a nonzero point of $\La^\ast$, then by \eqref{eq:if_the_root_is_nonempty} there is a nonzero point of $\La$ in $c_1\cP(t,\gamma)$. If $\cQ(s,\delta)$ does not contain nonzero points of $\La^\ast$, but each $\cQ(r,\alpha)$ in $\mathfrak L$ does, then by \eqref{eq:if_the_root_is_empty} a nonzero point of $\La$ can be found in $c_2\cP(t,\gamma)$.

Taking into account \eqref{eq:gamma_via_alpha}, we get
\[
  \hat\omega(L_\Theta)\geq\alpha
  \implies
  \hat\omega(\Theta)\geq\gamma=\frac{1-\alpha^{-1}}{n-1}\,,
\]
whence it follows that
\[
  \hat\omega(\Theta)\geq \frac{1-\hat\omega(L_\Theta)^{-1}}{n-1}\,.
\]
This is how the right-hand inequality \eqref{eq:my_simultaneous_vs_linear_form} is proved.

Proving the left-hand inequality \eqref{eq:my_simultaneous_vs_linear_form} requires an ``inverse'' argument. Instead of $\cQ_r$, parallelepipeds 
\begin{equation*}
  \cP_r=\Bigg\{\,\vec z=(z_1,\ldots,z_d) \in\R^d \ \Bigg|
                 \begin{array}{l}
                   |z_1|\leq r \\
                   |z_i|\leq (hH/r)^{-\alpha},\ \ i=2,\ldots,n+1
                 \end{array} \Bigg\}
\end{equation*}
are to be considered, $\cQ$ should be replaced with $\cP$ in the definitions of the families $\mathfrak S$, $\mathfrak A$, $\mathfrak L$, and also, $u$ and $v$ in Fig. \ref{fig:nodes_and_leaves} should denote $|z_1|$ and $\max\big(|z_2|,\ldots,|z_{n+1}|\big)$ respectively (in the previous Section it was vice versa). Note that nothing changes in Fig. \ref{fig:nodes_and_leaves} itself. Upon fixing $t>1$ and $\gamma\geq1/n$ we should set, as in \eqref{eq:P_is_Q_star},
\[
  s=t^{1/n},
  \qquad
  \delta=n\gamma+n-1,
\]
and also
\[
  h=t,\qquad
  \beta=\gamma,\qquad
  \alpha=\frac{n\gamma}{n\gamma+n-1}\,.
\]
With such a choice of parameters, the quantities $\delta$ and $\alpha$ are related by
\[
  \delta=\frac{n-1}{1-\alpha}\,.
\]
Further argument is absolutely analogous to the one discussed above. It leads to the implication
\[
  \hat\omega(\Theta)\geq\alpha
  \implies
  \hat\omega(L_\Theta)\geq\delta=\frac{n-1}{1-\alpha}\,,
\]
whence we get
\[
  \hat\omega(L_\Theta)\geq \frac{n-1}{1-\hat\omega(\Theta)}\,.
\]
This is how the left-hand inequality \eqref{eq:my_simultaneous_vs_linear_form} and, therefore, the whole Theorem \ref{t:my_simultaneous_vs_linear_form} is proved.

\subsubsection{Empty cylinder lemma and ``mixed'' inequalities}

Inequalities \eqref{eq:my_simultaneous_vs_linear_form} estimate uniform exponents from below. For estimating them from above -- for instance, as in Theorem \ref{t:schmidt_summerer_2013} of Schmidt and Summerer -- the following rather simple statement proved in \cite{german_moshchevitin_2022} is very effective. It is reasonable to call it ``empty cylinder lemma''. 

As before, let us denote by $\cL=\cL(\Theta)$ the one-dimensional subspace spanned by $(1,\theta_1,\ldots,\theta_n)$ and by $\cL^\perp$ its orthogonal complement. For $\vec u\in\R^{n+1}$ let us denote by $r(\vec u)$ the Euclidean distance from $\vec u$ to $\cL$ and by $h(\vec u)$ the Euclidean distance from $\vec u$ to $\cL^\perp$.

\begin{lemma}[``Empty cylinder lemma'']\label{l:empty_cylinder}
  Let $t,\alpha,\beta$ be positive real numbers such that $t^{\beta-\alpha}\geq2$ (or, equivalently, $t^{-\alpha}-t^{-\beta}\geq t^{-\beta}$). Suppose $\vec v\in\Z^{n+1}$ satisfies
  \begin{equation}\label{eq:empty_cylinder_v}
    r(\vec v)=t^{\alpha-1-\beta},\qquad
    h(\vec v)=t^\alpha.
  \end{equation}
  Consider the half open cylinder
  \begin{equation}\label{eq:empty_cylinder_C}
    \cC=\cC(t,\alpha,\beta)=\Big\{\, \vec u\in\R^{n+1}\ \Big|\ r(\vec u)<t,\ t^{-\beta}\leq h(\vec u)\leq t^{-\alpha}-t^{-\beta} \,\Big\}.
  \end{equation}
  Then $\cC\cap\Z^{n+1}=\varnothing$.
\end{lemma}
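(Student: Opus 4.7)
\smallskip
The proof proceeds by contradiction: assume that some $\vec u\in\cC\cap\Z^{n+1}$ exists. I will first dispose of the degenerate case in which $\vec u$ and $\vec v$ are linearly dependent, and then treat the main (independent) case by exploiting the integrality of the Plücker coordinates of $\vec u\wedge\vec v$.

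In the dependent case, $\vec u$ is a rational scalar multiple of $\vec v$, so the ratio $h(\vec v)/h(\vec u)$ must be rational; combined with $h(\vec v)=t^\alpha$ and $h(\vec u)\in[t^{-\beta},t^{-\alpha}-t^{-\beta}]$ (together with $t^{\beta-\alpha}\geq2$, which forces $t^\alpha\cdot h(\vec u)<1$), one finds that $\vec v$ would have to be a proper integer multiple of $\vec u$. This then forces $r(\vec u)=r(\vec v)/k$ with $k\geq t^{2\alpha}$, which is incompatible with $r(\vec u)\geq t^{-\beta}\cdot r(\vec v)/h(\vec v)$ and the cylinder constraints; in the primitive setting in which the lemma is applied the contradiction is immediate.

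For the main case, suppose $\vec u$ and $\vec v$ are linearly independent. Then $\vec u\wedge\vec v\in\bigwedge^2\Z^{n+1}\setminus\{\vec 0\}$, so its Plücker coordinates are integers not all zero, giving $|\vec u\wedge\vec v|\geq1$. Using the orthogonal decomposition $\R^{n+1}=\cL\oplus\cL^\perp$, write $\vec u=u_\cL+u_\perp$, $\vec v=v_\cL+v_\perp$, so that
\[
  \vec u\wedge\vec v\;=\;\bigl(u_\cL\wedge v_\perp-v_\cL\wedge u_\perp\bigr)\,+\,u_\perp\wedge v_\perp,
\]
with the two summands living in the orthogonal subspaces $\cL\wedge\cL^\perp$ and $\bigwedge^2\cL^\perp$. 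Hence $|\vec u\wedge\vec v|^2$ splits as the sum of the squared norms. The second summand is controlled by $|u_\perp\wedge v_\perp|\leq r(\vec u)r(\vec v)<t\cdot t^{\alpha-1-\beta}=t^{\alpha-\beta}\leq 1/2$ thanks to $t^{\beta-\alpha}\geq2$, so the first summand must have norm at least $\sqrt{1-1/4}=\sqrt3/2$.

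The decisive step, and the main obstacle, is to extract from this lower bound a quantitative contradiction with the cylinder constraints. The naive triangle-inequality estimate $|u_\cL\wedge v_\perp-v_\cL\wedge u_\perp|\leq h(\vec u)r(\vec v)+h(\vec v)r(\vec u)$ is not immediately incompatible with $\geq\sqrt3/2$, because the term $h(\vec v)r(\vec u)=t^\alpha r(\vec u)$ can be as large as $t^{\alpha+1}$. To sharpen the argument I would replace $\vec v$ by the integer vector $\vec w:=\vec v-k\vec u$ where $k\in\Z$ is chosen so that $|h(\vec w)|\leq h(\vec u)/2$; since $\vec u\wedge\vec w=\vec u\wedge\vec v$ the integer lower bound $\geq 1$ is preserved, while the decomposition above must now be rebalanced in terms of the new, much smaller $\cL$-component $h(\vec w)$ and the new $r$-component $r(\vec w)$. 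Plugging in the equalities $h(\vec v)=t^\alpha$, $r(\vec v)=t^{\alpha-1-\beta}$ (the equalities, not merely inequalities, are essential), together with the strict inequality $r(\vec u)<t$ from the definition of $\cC$ and the two-sided bound on $h(\vec u)$ implied by the hypothesis $t^{\beta-\alpha}\geq2$, produces an upper bound on $|\vec u\wedge\vec v|$ that is strictly less than $1$, contradicting integrality. This completes the proof.
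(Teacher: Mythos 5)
Your proposal takes a genuinely different route from the paper, but the route does not lead anywhere. The paper's proof is a short scalar-product computation and needs no case distinction: for any $\vec w\in\cC$, writing $\vec v=v_\cL+v_\perp$ and $\vec w=w_\cL+w_\perp$ in $\cL\oplus\cL^\perp$, one has $\langle\vec v,\vec w\rangle=\langle v_\cL,w_\cL\rangle+\langle v_\perp,w_\perp\rangle$, where $|\langle v_\cL,w_\cL\rangle|=h(\vec v)h(\vec w)\in[t^{\alpha-\beta},\,1-t^{\alpha-\beta}]$ and $|\langle v_\perp,w_\perp\rangle|\leq r(\vec v)r(\vec w)<t^{\alpha-1-\beta}\cdot t=t^{\alpha-\beta}$. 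Hence $0<|\langle\vec v,\vec w\rangle|<1$; since $\vec v\in\Z^{n+1}$, this rules out $\vec w\in\Z^{n+1}$.

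The wedge-product argument you propose cannot be completed, because the inequality you are aiming for --- $|\vec u\wedge\vec v|<1$ for $\vec u\in\cC$ --- is simply false. The cylinder $\cC$ is long in the $\cL^\perp$-direction ($r(\vec u)$ may be close to $t$) while $\vec v$ is long in the $\cL$-direction ($h(\vec v)=t^\alpha$), so the term $|u_\perp\wedge v_\cL|=r(\vec u)\,h(\vec v)$ can be nearly $t^{\alpha+1}$; for real points of $\cC$, $|\vec u\wedge\vec v|$ is typically enormous, not small. The wedge product only detects near-collinearity, whereas the true integrality obstruction is that $\langle\vec v,\vec u\rangle$ is trapped strictly between two consecutive integers, a constraint the wedge product does not see. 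Your proposed repair, replacing $\vec v$ by $\vec w=\vec v-k\vec u$ with $|h(\vec w)|\leq h(\vec u)/2$, does not fix this: one needs $|k|$ up to about $h(\vec v)/h(\vec u)\leq t^{\alpha+\beta}$, whence $r(\vec w)\leq r(\vec v)+|k|\,r(\vec u)$ can be of order $t^{\alpha+\beta+1}$, and then $h(\vec u)\,r(\vec w)$ can be of order $t^{1+\beta}$, which (since $\beta>0$ and $t^{\beta-\alpha}\geq2$ forces $t>1$) is still far larger than $1$. So the ``decisive step'' you explicitly flag as uncompleted is in fact unattainable along this line. (Separately, your treatment of the dependent case relies on a primitivity hypothesis on $\vec v$ that the lemma does not make.)
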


Geometrical meaning of this statement is that the cylinder $\cC$ is contained in the open ``layer'' between the planes determined by the equations $\langle\vec v,\vec u\rangle=0$ and $\langle\vec v,\vec u\rangle=1$, where $\langle\,\cdot\,,\cdot\,\rangle$ denotes, as before, the inner product in $\R^{n+1}$. To see this, let us consider the two-dimensional subspace $\pi$ spanned by $\cL$ and $\vec v$. It is demonstrated in Fig. \ref{fig:empty_cylinder}. Consider an arbitrary point $\vec w\in\cC$ and let $\vec w'$ be its orthogonal projection onto $\pi$. Within the plane $\pi$ the functionals $h(\cdot)$ and $r(\cdot)$ can be identified with the absolute values of coordinates of points w.r.t. the coordinate axes $\cL$ and $\cL^\perp\cap\pi$. Then
% Тогда $r(\vec w')$ и $h(\vec w')$ равны модулям координат точки $\vec w'$ в пространстве $\pi$ относительно координатных осей $\cL$ и $\cL^\perp\cap\pi$.
% При этом $h(\vec w')=h(\vec w)$ и $r(\vec w')\leq r(\vec w)$.
% Стало быть,
\[
  \langle\vec v,\vec w\rangle=
  \langle\vec v,\vec w'\rangle>
  \begin{pmatrix}
    t^{\alpha-1-\beta} & t^\alpha
  \end{pmatrix}
  \begin{pmatrix}
    -t \\
    t^{-\beta}
  \end{pmatrix}=0
  \qquad
\]
and
\[
  \langle\vec v,\vec w\rangle=
  \langle\vec v,\vec w'\rangle<
  \begin{pmatrix}
    t^{\alpha-1-\beta} & t^\alpha
  \end{pmatrix}
  \begin{pmatrix}
    t \\
    t^{-\alpha}-t^{-\beta}
  \end{pmatrix}=1.
\]
%\[
%  \langle\vec v,\vec w\rangle=
%  \langle\vec v,\vec w'\rangle\geq
%  h(\vec v)h(\vec w')-r(\vec v)r(\vec w')>
%  t^{\alpha-\beta}-t^{\alpha-\beta}=0
%  \qquad
%\]
%и
%\[
%  \langle\vec v,\vec w\rangle=
%  \langle\vec v,\vec w'\rangle\leq
%  h(\vec v)h(\vec w')+r(\vec v)r(\vec w')<
%  1-t^{\alpha-\beta}+t^{\alpha-\beta}=1.
%\]
Thus, $0<\langle\vec v,\vec w\rangle<1$ and, therefore, $\vec w$ cannot be an integer point. Lemma \ref{l:empty_cylinder} is proved.

\begin{figure}[h]
\centering
\begin{tikzpicture}[scale=0.5]
  \draw[->,>=stealth'] (-12,0) -- (13.3,0) node[right] {$\cL^\perp\cap\pi$};
  \draw[->,>=stealth'] (0,-3) -- (0,14) node[above] {$\cL$};

  \draw[color=blue] (0,0) -- (3,12);
  \node[fill=blue,circle,inner sep=1.2pt] at (3,12) {};
  \draw (3,12) node[right]{$\vec v$};

  \draw (1.5,12) node[above]{$r(\vec v)$};
  \draw (3,8) node[right]{$h(\vec v)$};

  \draw[color=blue] plot[domain=-12:12] (\x, {-\x/4}) node[right,color=black]{$\langle\vec v,\vec u\rangle=0$};
  \draw[color=blue] plot[domain=-12:12] (\x, {-\x/4+7}) node[right,color=black]{$\langle\vec v,\vec u\rangle=1$};

  \fill[blue,opacity=0.2] (8,2) -- (8,5) -- (-8,5) -- (-8,2) -- cycle;
%    \draw (8,2) -- (8,5) -- (-8,5) -- (-8,2) -- cycle;
  \draw[color=blue] (-8,2) -- (8,2);
  \draw[color=blue] (-8,5) -- (8,5);

  \draw[dashed] (-0.1,12) -- (3,12);
  \draw[dashed] (3,-0.1) -- (3,12);
  \draw[dashed] (8,-0.1) -- (8,2);
  \draw[dashed] (-8,-0.1) -- (-8,2);
%    \draw (-0.1,12) -- (0.1,12);
%    \draw (3,-0.1) -- (3,0.1);
%    \draw (8,-0.1) -- (8,0.1);
%    \draw (-8,-0.1) -- (-8,0.1);

  \draw (8,0) node[above right]{$t$};
  \draw (-8,0) node[above left]{$-t$};
  \draw (3,0) node[above right]{$t^{\alpha-1-\beta}$};
  \draw (0,12) node[above left]{$t^{\alpha}$};
  \draw (0,5) node[above left]{$t^{-\alpha}-t^{-\beta}$};
  \draw (0,2) node[above left]{$t^{-\beta}$};

  \draw (-7.1,3.5) node[right]{$\cC$};

  \node[fill=blue,circle,inner sep=1.2pt] at (6.5,3) {};
  \draw (6.5,3.15) node[right]{$\vec w'$};
\end{tikzpicture}
\caption{Empty cylinder} \label{fig:empty_cylinder}
\end{figure}
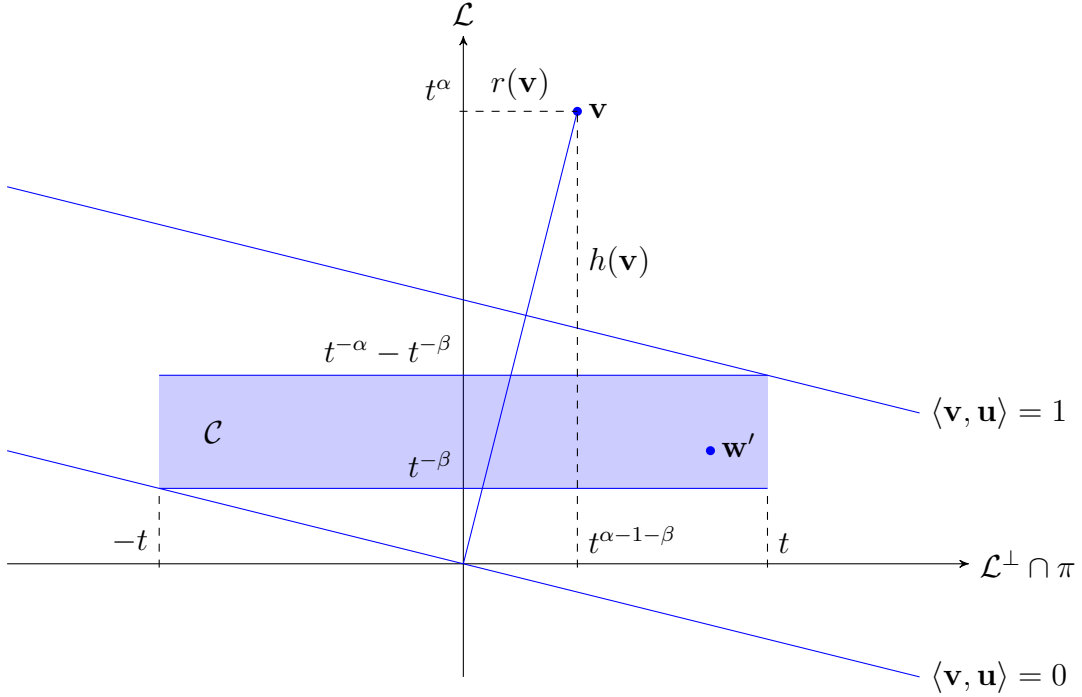

Let us outline the proof of Theorem \ref{t:schmidt_summerer_2013} of Schmidt and Summerer involving Lemma \ref{l:empty_cylinder}. We omit certain details for simplicity. They can be found in \cite{german_moshchevitin_2022}.

Consider an arbitrary nonzero point $\vec v\in\Z^{n+1}$ and
%зададим число $\gamma=\gamma(\vec v)$ равенством
%\begin{equation}\label{eq:gamma_definition}
%  r(\vec v)=h(\vec v)^{-\gamma}.
%\end{equation}
set $t=t(\vec v)$ to be the smallest positive real number such that the cylinder
\begin{equation}\label{eq:C_v_definition}
  \cC_{\vec v}=\bigg\{\, \vec u\in\R^{n+1}\ \bigg|\ r(\vec u)\leq t,\ h(\vec u)\leq t\cdot\frac{r(\vec v)}{h(\vec v)} \,\bigg\}
\end{equation}
%\begin{equation*}%\label{eq:C_v_definition}
%  \cC_{\vec v}=\Big\{\, \vec u\in\R^{n+1}\ \Big|\ r(\vec u)\leq t,\ h(\vec u)\leq t\cdot h(\vec v)^{-1-\gamma} \,\Big\}
%\end{equation*}
contains a nonzero point of $\Z^{n+1}$. Geometrical meaning of the inequalities determining $\cC_{\vec v}$ is that each of the two bases of this cylinder has exactly one common point with the plane $\langle\vec v,\vec u\rangle=0$. Define also $\gamma=\gamma(\vec v)$, $\alpha=\alpha(\vec v)$, and $\beta=\beta(\vec v)$ by
\begin{equation}\label{eq:gamma_alpha_beta_definition}
  r(\vec v)=h(\vec v)^{-\gamma},\qquad
  h(\vec v)=t^\alpha,\qquad
  \alpha=\frac{1+\beta}{1+\gamma}\,.
\end{equation}
It can be easily verified that with such a choice of parameters $\vec v$ satisfies \eqref{eq:empty_cylinder_v} and $\cC_{\vec v}$ satisfies
\[
  \cC_{\vec v}=\Big\{\, \vec u\in\R^{n+1}\ \Big|\ r(\vec u)\leq t,\ h(\vec u)\leq t^{-\beta} \,\Big\}.
\]
Hence, if the condition
\begin{equation}\label{eq:t_alpha_beta_condition}
  t^{-\alpha}-t^{-\beta}\geq t^{-\beta}
\end{equation}
is fulfilled, $\cC_{\vec v}$ can be supplemented with nonempty cylinders $\cC$ and $-\cC$ determined by \eqref{eq:empty_cylinder_C}, which do not contain integer points by Lemma \ref{l:empty_cylinder} (see Fig. \ref{fig:scmidt_summerer}).

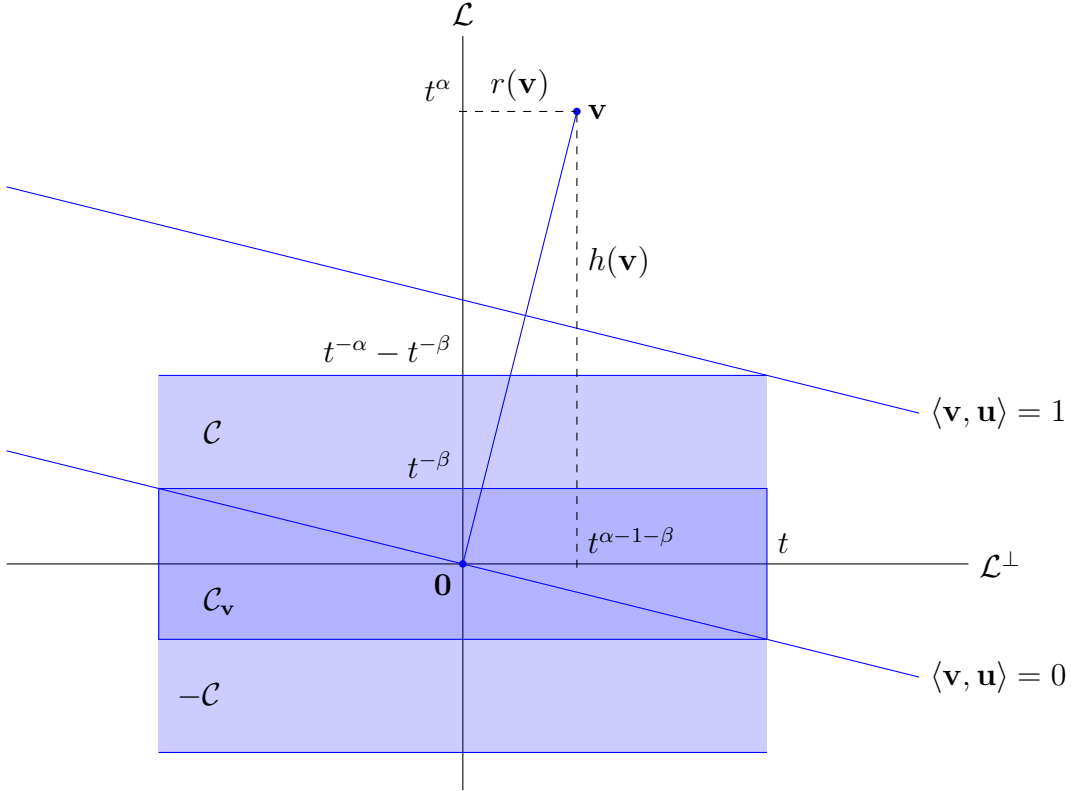
\begin{figure}[h]
\centering
\begin{tikzpicture}[scale=0.5]
  \draw (-12,0) -- (13.3,0) node[right] {$\cL^\perp$};
  \draw (0,-6) -- (0,14) node[above] {$\cL$};

  \draw[blue] (0,0) -- (3,12);
  \node[fill=blue,circle,inner sep=1pt] at (3,12) {};
  \draw (3,12) node[right]{$\vec v$};

  \draw (1.5,12) node[above]{$r(\vec v)$};
  \draw (3,8) node[right]{$h(\vec v)$};
%  \draw (7,12.9) node[right, color=blue]{$r(\vec v)\approx h(\vec v)^{-\lambda}$};

  \draw[color=blue] plot[domain=-12:12] (\x, {-\x/4}) node[right, color=black]{$\langle\vec v,\vec u\rangle=0$};
  \draw[color=blue] plot[domain=-12:12] (\x, {-\x/4+7}) node[right, color=black]{$\langle\vec v,\vec u\rangle=1$};

  \fill[blue,opacity=0.2] (8,2) -- (8,5) -- (-8,5) -- (-8,2) -- cycle;
  \fill[blue,opacity=0.2] (8,-2) -- (8,-5) -- (-8,-5) -- (-8,-2) -- cycle;
  \fill[blue,opacity=0.3] (-8,2) -- (8,2) -- (8,-2) -- (-8,-2) -- cycle;
%    \draw (8,2) -- (8,5) -- (-8,5) -- (-8,2) -- cycle;
  \draw[blue] (-8,5) -- (8,5);
  \draw[blue] (-8,-5) -- (8,-5);
  \draw[blue] (-8,2) -- (8,2) -- (8,-2) -- (-8,-2) -- cycle;

  \draw[dashed] (-0.1,12) -- (3,12);
  \draw[dashed] (3,-0.1) -- (3,12);
%    \draw[dashed] (8,-0.1) -- (8,2);
%    \draw[dashed] (-8,-0.1) -- (-8,2);
%    \draw (-0.1,12) -- (0.1,12);
%    \draw (3,-0.1) -- (3,0.1);
%    \draw (8,-0.1) -- (8,0.1);
%    \draw (-8,-0.1) -- (-8,0.1);

  \draw (8,0) node[above right]{$t$};
%    \draw (-8,0) node[above left]{$-t$};
  \draw (3,0) node[above right]{$t^{\alpha-1-\beta}$};
  \draw (0,12) node[above left]{$t^{\alpha}$};
  \draw (0,5) node[above left]{$t^{-\alpha}-t^{-\beta}$};
  \draw (0,2) node[above left]{$t^{-\beta}$};

  \draw (-7.1,3.5) node[right]{$\cC$};
  \draw (-7.1,-1) node[right]{$\cC_{\vec v}$};
  \draw (-7.8,-3.5) node[right]{$-\cC$};

%  \node[fill=blue,circle,inner sep=1pt] at (8,-2) {};
%  \draw (8,-2) node[above right]{$\vec w$};

  \node[fill=blue,circle,inner sep=1pt] at (0,0) {};
  \draw (0,0) node[below left]{$\vec 0$};

%  \draw (7,10) node[right]{\color[rgb]{0,0,1}$\cC'_{\vec v}=\cC_{\vec v}\cup\cC\cup(-\cC)$};
%  \draw (7,8) node[right]{\color[rgb]{0,0,1}$\interior\cC'_{\vec v}\cap\Z^{n+1}=\{\vec 0\}$};
\end{tikzpicture}
\caption{Illustration for the proof of Schmidt--Summerer's inequalities} \label{fig:scmidt_summerer}
\end{figure}

Thus, to each point $\vec v$ satisfying \eqref{eq:t_alpha_beta_condition}, we have assigned the cylinder $\cC_{\vec v}$, which contains integer points in its boundary, and the cylinder
\[
  \cC'_{\vec v}=\cC_{\vec v}\cup\cC\cup(-\cC),
\]
which does not contain nonzero integer points in its interior. Furthermore, the condition \eqref{eq:t_alpha_beta_condition} is guaranteed to be fulfilled if $h(\vec v_k)r(\vec v_k)^n$ is small enough. The reason is that the lattice which is the orthogonal projection of $\Z^{n+1}\cap(\R\vec v)^{\perp}$ onto $\cL^\perp$ has determinant equal to $h(\vec v_k)$. It follows by Minkowski's convex body theorem that $t^n\leq2^nh(\vec v_k)/B$, where $B$ equals the volume of the $n$-dimensional Euclidean ball of radius $1$. Thus, if $h(\vec v_k)r(\vec v_k)^n\leq4^{-n}B$, then $t^n\leq2^{-n}r(\vec v_k)^{-n}=(2t^{\alpha-1-\beta})^{-n}$ holds, i.e. $t^{\beta-\alpha}\geq2$, which is equivalent to \eqref{eq:t_alpha_beta_condition}. Geometrically, this means that the cylinder $\cC_{\vec v}$ does not ``reach'' the plane $\langle\vec v,\vec u\rangle=1$.

In order to prove the left-hand inequality \eqref{eq:schmidt_summerer_2013}, let us consider a sequence of points $\vec v_k$ such that
\[
  h(\vec v_k)\to\infty
  \qquad\text{ and }\qquad
  \gamma_k=\gamma(\vec v_k)\to\omega(\Theta)
  \qquad\text{ as }\qquad
  k\to\infty.
\]
Then, if $\omega(\Theta)>1/n$, the quantity $h(\vec v_k)r(\vec v_k)^n$ is small for large $k$, i.e. the condition \eqref{eq:t_alpha_beta_condition} is fulfilled. Set $\alpha_k=\alpha(\vec v)$ and $\beta_k=\beta(\vec v)$. Then
\[
  \omega(L_\Theta)\geq\limsup_{k\to\infty}\beta_k\,,
  \qquad
  \hat\omega(L_\Theta)\leq\liminf_{k\to\infty}\alpha_k\,.
\]
Taking into account \eqref{eq:gamma_alpha_beta_definition}, we get
\[
  \hat\omega(L_\Theta)\leq
  \liminf_{k\to\infty}\frac{1+\beta_k}{1+\gamma_k}\leq
  \frac{1+\limsup_{k\to\infty}\beta_k}{1+\lim_{k\to\infty}\gamma_k}\leq
  \frac{1+\omega(L_\Theta)}{1+\omega(\Theta)}\,.
\]
The left-hand inequality \eqref{eq:schmidt_summerer_2013} is proved.

In order to prove the right-hand inequality \eqref{eq:schmidt_summerer_2013}, let us consider a sequence of points $\vec v_k$ such that
\[
  h(\vec v_k)\to0
  \qquad\text{ and }\qquad
  \gamma_k=\gamma(\vec v_k)\to\omega(L_\Theta)^{-1}
  \qquad\text{ as }\qquad
  k\to\infty.
\]
Then, if $\omega(L_\Theta)>n$, the quantity $h(\vec v_k)r(\vec v_k)^n$ is small for large $k$, i.e. the condition \eqref{eq:t_alpha_beta_condition} is again fulfilled. Set  $\alpha_k=\alpha(\vec v)$ and $\beta_k=\beta(\vec v)$. Then
\[
  \omega(\Theta)\geq\limsup_{k\to\infty}\beta_k^{-1}\,,
  \qquad
  \hat\omega(\Theta)\leq\liminf_{k\to\infty}\alpha_k^{-1}\,.
\]
Taking into account \eqref{eq:gamma_alpha_beta_definition}, we get
\[
  \hat\omega(\Theta)\leq
  \liminf_{k\to\infty}\frac{1+\gamma_k}{1+\beta_k}\leq
  \frac{1+\lim_{k\to\infty}\gamma_k}{1+\liminf_{k\to\infty}\beta_k}\leq
  \frac{1+\omega(L_\Theta)^{-1}}{1+\omega(\Theta)^{-1}}\,.
\]
The right-hand inequality \eqref{eq:schmidt_summerer_2013} is also proved.

\subsubsection{Parametric geometry of numbers}\label{subsubsec:parametric_geometry_of_numbers}

Developing the ideas proposed by Schmidt in his paper \cite{schmidt_luminy_1982}, Schmidt and Summerer published a fundamental paper \cite{schmidt_summerer_2009} in 2009, where they proposed a new point of view at the problems under discussion. They called their approach ``parametric geometry of numbers''.

Most generally parametric geometry of numbers can be outlined as follows. Let $\La$ be a full rank lattice in $\R^d$ with determinant $1$.
% Пусть $|\cdot|$ обозначает sup-норму.
Consider the cube $\cB=[-1,1]^d$. Define the  \emph{space of parameters} by
\[
%  \cB=\Big\{ \vec z\in\R^d\, \Big|\,|\vec z|\leq1 \Big\},
%  \qquad
  \cT=\Big\{ \pmb\tau=(\tau_1,\ldots,\tau_d)\in\R^d\, \Big|\,\tau_1+\ldots+\tau_d=0 \Big\}.
\]
For each $\pmb\tau\in\cT$ set
\[
  \cB_{\pmb\tau}=
  \textup{diag}(e^{\tau_1},\ldots,e^{\tau_d})
  \cB.
\]
Let $\mu_k(\cB_{\pmb\tau},\La)$, $k=1,\ldots,d$, denote the $k$-th successive minimum of $\cB_{\pmb\tau}$ w.r.t. $\La$, i.e. the smallest positive $\mu$ such that  $\mu\cB_{\pmb\tau}$ contains not less than $k$ linearly independent vectors of $\La$. Finally, for each $k=1,\ldots,d$, let us define the functions
\[
  L_k(\pmb\tau)=L_k(\La,\pmb\tau)=\log\big(\mu_k(\cB_{\pmb\tau},\La)\big),\qquad
  S_k(\pmb\tau)=S_k(\La,\pmb\tau)=\sum_{1\leq j\leq k}L_j(\La,\pmb\tau).
\]

Many problems in Diophantine approximation can be formulated as questions concerning the asymptotic behaviour of $L_k(\pmb\tau)$ and $S_k(\pmb\tau)$. Each problem requires a proper choice of $\La$ and of a subset of $\cT$, with respect to which the asymptotics of these functions is to be studied. In the case of the problems we discuss in this Section $\pmb\tau$ is supposed to tend to infinity along certain one-dimensional subspaces of $\cT$. Let $\gT$ be the path in $\cT$ determined by a mapping $s\mapsto\pmb\tau(s)$, $s\in[0,\infty)$. For our purpose, we may assume that this mapping is linear.

\begin{definition} \label{def:schmidt_psi}
  Suppose a lattice $\La$ and a path $\gT$ are given. Let $k\in\{1,\ldots,d\}$. The quantities
  \[
    \bpsi_k(\La,\gT)=
    \liminf_{s\to+\infty}
    \frac{L_k(\La,\pmb\tau(s))}{s}
    \qquad\text{ and }\qquad
    \apsi_k(\La,\gT)=
    \limsup_{s\to+\infty}
    \frac{L_k(\La,\pmb\tau(s))}{s}
  \]
  are respectively called the \emph{$k$-th lower} and \emph{upper Schmidt--Summerer exponents of the first type}.
\end{definition}

\begin{definition} \label{def:schmidt_Psi}
  Suppose a lattice $\La$ and a path $\gT$ are given. Let $k\in\{1,\ldots,d\}$. The quantities
  \[
    \bPsi_k(\La,\gT)=
    \liminf_{s\to+\infty}
    \frac{S_k(\La,\pmb\tau(s))}{s}
    \qquad\text{ and }\qquad
    \aPsi_k(\La,\gT)=
    \limsup_{s\to+\infty}
    \frac{S_k(\La,\pmb\tau(s))}{s}
  \]
  are respectively called the \emph{$k$-th lower} and \emph{upper Schmidt--Summerer exponents of the second type}.
\end{definition}

% Sometimes, when it is clear from the context what lattice and what path are under consideration, we shall write simply $\psi_p(s)$, $\Psi_p(s)$, $\bpsi_p$, $\apsi_p$, $\bPsi_p$, and $\aPsi_p$.

For the problem of simultaneous approximation and the problem of approximating zero with the values of a linear form, the lattices and the paths can be chosen as follows. Set $d=n+1$.

As in Section \ref{subsubsec:parallelepiped_families}, consider the lattices
\begin{equation}\label{eq:lattices_sim_and_lin}
  \La=
  \left(
  \begin{matrix}
        1     &    0   &    0   & \cdots &    0   \\
    -\theta_1 &    1   &    0   & \cdots &    0   \\
    -\theta_2 &    0   &    1   & \cdots &    0   \\
     \phantom{-}
     \vdots   & \vdots & \vdots & \ddots & \vdots \\
    -\theta_n &    0   &    0   & \cdots &    1
  \end{matrix}
  \right)
  \Z^{n+1},
  \qquad
  \La^\ast=
  \left(
  \begin{matrix}
       1   & \theta_1 & \theta_2 & \cdots & \theta_n \\
       0   &    1     &    0     & \cdots &    0     \\
       0   &    0     &    1     & \cdots &    0     \\
    \vdots & \vdots   & \vdots   & \ddots & \vdots   \\
       0   &    0     &    0     & \cdots &    1
  \end{matrix}
  \right)
  \Z^{n+1}.
\end{equation}
Define the paths $\gT$ and $\gT^\ast$ respectively by the mappings
\begin{equation}\label{eq:path_for_sim}
\begin{aligned}
  & s\mapsto\pmb\tau(s)=\big(\tau_1(s),\ldots,\tau_{n+1}(s)\big), \\
  & \tau_1(s)=s,\quad\tau_2(s)=\ldots=\tau_{n+1}(s)=-s/n
\end{aligned}
\end{equation}
and
\begin{equation*}
\begin{aligned}
  & s\mapsto\pmb\tau^\ast(s)=\big(\tau^\ast_1(s),\ldots,\tau^\ast_{n+1}(s)\big), \\
  & \tau^\ast_1(s)=-ns,\quad\tau^\ast_2(s)=\ldots=\tau^\ast_{n+1}(s)=s.
\end{aligned}
\end{equation*}
Then $\pmb\tau^\ast(s)=-n\pmb\tau(s)$, i.e. $\gT\cup\gT^\ast$ is a one-dimensional subspace of $\cT$. As we noted in Section \ref{subsubsec:parallelepiped_families}, $\La^\ast$ is the dual of $\La$.

Regular and uniform Diophantine exponents are related to Schmidt--Summerer exponents by the equalities
\begin{equation}\label{eq:belpha_via_psis}
\begin{aligned}
  & \big(1+\omega(\Theta)\big)\big(1+\bpsi_1(\La,\gT)\big)=\big(1+\hat\omega(\Theta)\big)\big(1+\apsi_1(\La,\gT)\big)=(n+1)/n, \\
  & \big(1+\omega(L_\Theta)\big)\big(1+\bpsi_1(\La^\ast,\gT^\ast)\big)=\big(1+\hat\omega(L_\Theta)\big)\big(1+\apsi_1(\La^\ast,\gT^\ast)\big)=n+1.
\end{aligned}
\end{equation}
They are deduced directly from the definitions (see \cite{schmidt_summerer_2009}, \cite{german_AA_2012}). We note that analogous relations for Schmidt--Summerer exponents with indices greater than $1$ hold in the context of problems of approximating a given subspace by rational subspaces of fixed dimension. In those problems so called \emph{intermediate} exponents arise. They are studied in papers \cite{schmidt_annals_1967}, \cite{laurent_up_down}, \cite{bugeaud_laurent_up_down}, \cite{german_AA_2012}, the latter of which contains the equalities relating Schmidt--Summerer exponents and intermediate exponents.

By \eqref{eq:belpha_via_psis} each relation proved for Schmidt--Summerer exponents generates a relation for regular and uniform Diophantine exponents. As for relations for Schmidt--Summerer exponents, they are obtained by analysing the properties of the functions $L_k(\pmb\tau)$ and $S_k(\pmb\tau)$. The first observation that can be made is that $L_k(\pmb\tau)$ and $S_k(\pmb\tau)$ are continuous and piece-wise linear on $\cT$ and their restrictions to the paths $\gT$ and $\gT^\ast$ have two possible slopes on linearity intervals. Next, a very important role in studying the behaviour of $L_k(\pmb\tau)$ and $S_k(\pmb\tau)$ is played by two classical theorems on successive minima -- theorem of Minkowski \cite{minkowski} and theorem of Mahler \cite{mahler_casopis_convex}. Let us state these theorems for parallelepipeds $\cB_{\pmb\tau}$ and lattices $\La$, $\La^\ast$ (assuming that $d=n+1$).

\begin{theorem}[H.\,Minkowski, 1896] \label{t:minkowski_minima}
  For each $\pmb\tau\in\cT$ we have
  \begin{equation} \label{eq:minkowski_minima}
    \frac1{d!}\leq\prod_{i=1}^{d}\mu_i(\cB_{\pmb\tau},\La)\leq1.
  \end{equation}
\end{theorem}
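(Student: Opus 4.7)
The plan is to handle the two inequalities separately by two different classical tools. For the lower bound $\mu_1\cdots\mu_d\geq 1/d!$, I would use a direct sublattice argument. First, note that $\vol(\cB_{\pmb\tau})=2^d$, since $\cB_{\pmb\tau}$ is the image of $[-1,1]^d$ under the diagonal map $A_{\pmb\tau}=\textup{diag}(e^{\tau_1},\ldots,e^{\tau_d})$ of determinant $e^{\tau_1+\cdots+\tau_d}=1$. By definition of the successive minima, one can pick linearly independent lattice vectors $\vec v_1,\ldots,\vec v_d\in\La$ with $\vec v_i\in\mu_i\cB_{\pmb\tau}$, i.e.\ $\vec v_i=\mu_i A_{\pmb\tau}\vec w_i$ for some $\vec w_i\in[-1,1]^d$. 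The sublattice $\La'\subseteq\La$ they span satisfies $\det\La'\geq\det\La=1$, because $[\La:\La']$ is a positive integer. A direct computation yields
\[
  \det\La'=\mu_1\cdots\mu_d\cdot|\det A_{\pmb\tau}|\cdot|\det(\vec w_1,\ldots,\vec w_d)|\leq\mu_1\cdots\mu_d\cdot d!,
\]
where the final step uses the Leibniz expansion together with the fact that every entry of each $\vec w_i$ lies in $[-1,1]$. Combining the two bounds gives the required lower inequality.

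For the upper bound $\mu_1\cdots\mu_d\leq 1$, I would invoke Minkowski's second theorem on successive minima, which asserts that for any convex centrally symmetric body $K\subset\R^d$ and any full-rank lattice $\La$,
\[
  \mu_1(K,\La)\cdots\mu_d(K,\La)\cdot\vol(K)\leq 2^d\det\La.
\]
Applied to $K=\cB_{\pmb\tau}$ (of volume $2^d$) and to $\La$ (of determinant $1$), this yields the required inequality at once.

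The main obstacle is thus the proof of Minkowski's second theorem itself, which is substantially more delicate than the remaining steps. A standard route constructs an auxiliary convex symmetric body by anisotropically rescaling $\cB_{\pmb\tau}$ along the flag $\spanned(\vec v_1)\subset\spanned(\vec v_1,\vec v_2)\subset\cdots$ arising from the minima-achieving vectors, arranged so that a failure of the claim would force Minkowski's convex body theorem (Theorem \ref{t:minkowski_convex}) to produce a lattice point violating the definition of some $\mu_j$. Rather than reproduce this classical but intricate argument, I would cite the complete proofs available in Cassels \cite{cassels_GN} or Schmidt \cite{schmidt_DA}. The lower bound, by contrast, is essentially bookkeeping, and the only ingredient beyond the definitions is the elementary Leibniz bound on determinants with entries in $[-1,1]$.
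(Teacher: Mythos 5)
Your proof is correct. A few observations worth recording.

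The paper itself does not prove Theorem \ref{t:minkowski_minima}; it simply cites it as Minkowski's classical theorem on successive minima (reference \cite{minkowski}), so there is no ``paper's proof'' to compare against. Your treatment is the standard and natural one: the lower bound $\mu_1\cdots\mu_d\geq 1/d!$ is elementary, and you supply a complete, self-contained argument for it, while the upper bound $\mu_1\cdots\mu_d\leq 1$ is the genuinely hard half of Minkowski's second theorem and you correctly delegate it to \cite{cassels_GN} or \cite{schmidt_DA}.

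Your lower-bound argument is sound. Two small points. First, the existence of linearly independent $\vec v_1,\ldots,\vec v_d\in\La$ with $\vec v_i\in\mu_i\cB_{\pmb\tau}$ relies on the successive minima being attained, which is automatic here because $\cB_{\pmb\tau}$ is closed and bounded and $\La$ is discrete; you implicitly use this and it would be worth a clause. Second, the Leibniz bound $|\det(\vec w_1,\ldots,\vec w_d)|\leq d!$ is deliberately crude (Hadamard gives $d^{d/2}$, which is smaller for $d\geq 3$ and would yield a sharper lower bound than the one stated), but it is exactly what is needed to produce the $1/d!$ in the statement, so using it is the right call for matching the theorem as written. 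Finally, a terminological quibble: what most sources call ``Minkowski's second theorem'' already encompasses \emph{both} inequalities, so strictly speaking you could cite the entire statement; proving the easy half yourself is pedagogically cleaner but is not a different theorem from the one you invoke for the upper bound.
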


\begin{theorem}[K.\,Mahler, 1938]\label{t:mahler_koerper}
  For each $k=1,\ldots,d$ and each $\pmb\tau\in\cT$ we have
  \begin{equation} \label{eq:mahler_koerper}
    \frac1d\leq\mu_k(\cB_{\pmb\tau},\La)\mu_{d+1-k}(\cB_{-\pmb\tau},\La^\ast)\leq d!\,.
  \end{equation}
\end{theorem}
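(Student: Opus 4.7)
The plan is to treat the two inequalities separately, using in both cases that $\langle\vec v,\vec w\rangle\in\Z$ whenever $\vec v\in\La$ and $\vec w\in\La^\ast$, together with a dimension count on successive-minima bases. Minkowski's theorem on successive minima (Theorem \ref{t:minkowski_minima}) enters only for the upper bound, and it is applied to an auxiliary body --- the polar of $\cB_{\pmb\tau}$ --- rather than to $\cB_{-\pmb\tau}$ itself, because $\cB_{-\pmb\tau}$ has too much volume to give the correct constant.

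For the lower bound, set $\mu=\mu_k(\cB_{\pmb\tau},\La)$ and $\mu^\ast=\mu_{d+1-k}(\cB_{-\pmb\tau},\La^\ast)$, and pick linearly independent $\vec v_1,\ldots,\vec v_k\in\mu\cB_{\pmb\tau}\cap\La$ and $\vec w_1,\ldots,\vec w_{d+1-k}\in\mu^\ast\cB_{-\pmb\tau}\cap\La^\ast$. Since $k+(d+1-k)=d+1$, the orthogonal complement of $\spanned(\vec w_1,\ldots,\vec w_{d+1-k})$ has dimension at most $k-1$ and cannot contain all the $\vec v_i$, so some $\langle\vec v_i,\vec w_l\rangle$ is a nonzero integer and hence has absolute value at least $1$. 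On the other hand, the coordinate bounds $|v_i^{(m)}|\leq\mu e^{\tau_m}$ and $|w_l^{(m)}|\leq\mu^\ast e^{-\tau_m}$ give
\[
  1\leq|\langle\vec v_i,\vec w_l\rangle|\leq\sum_{m=1}^{d}\mu e^{\tau_m}\cdot\mu^\ast e^{-\tau_m}=d\mu\mu^\ast,
\]
proving $\mu\mu^\ast\geq 1/d$.

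For the upper bound, introduce the polar body $K^\circ=\bigl\{\vec y\in\R^d\bigm|\sum_{m}e^{\tau_m}|y_m|\leq 1\bigr\}$ of $\cB_{\pmb\tau}$. The obvious inequality $\max_m e^{\tau_m}|y_m|\leq\sum_m e^{\tau_m}|y_m|$ yields $K^\circ\subseteq\cB_{-\pmb\tau}$, so $\mu_j(\cB_{-\pmb\tau},\La^\ast)\leq\mu_j(K^\circ,\La^\ast)$ for every $j$. A direct change-of-variables computation gives the cross-polytope volume $\vol(K^\circ)=2^{d}/d!$, and since $\det\La=\det\La^\ast=1$ while $\vol(\cB_{\pmb\tau})=2^{d}$, Minkowski's theorem on successive minima yields
\[
  \prod_{i=1}^{d}\mu_i(\cB_{\pmb\tau},\La)\leq 1,\qquad\prod_{i=1}^{d}\mu_i(K^\circ,\La^\ast)\leq d!.
\]
Repeating the dimension-counting argument, but with $\cB_{-\pmb\tau}$ replaced by $K^\circ$ and the coordinate estimate replaced by the defining polar inequality $|\langle\vec x,\vec y\rangle|\leq 1$ valid for all $\vec x\in\cB_{\pmb\tau}$ and $\vec y\in K^\circ$, sharpens the lower bound to $\mu_j(\cB_{\pmb\tau},\La)\mu_{d+1-j}(K^\circ,\La^\ast)\geq 1$ for every $j$. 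Multiplying the $d$ products $\mu_j(\cB_{\pmb\tau},\La)\mu_{d+1-j}(K^\circ,\La^\ast)$ and comparing with the product of the two Minkowski estimates yields
\[
  \mu_k(\cB_{\pmb\tau},\La)\,\mu_{d+1-k}(K^\circ,\La^\ast)\leq\frac{d!}{\prod_{j\neq k}\mu_j(\cB_{\pmb\tau},\La)\mu_{d+1-j}(K^\circ,\La^\ast)}\leq d!,
\]
and the inclusion $K^\circ\subseteq\cB_{-\pmb\tau}$ delivers $\mu_k(\cB_{\pmb\tau},\La)\mu_{d+1-k}(\cB_{-\pmb\tau},\La^\ast)\leq d!$.

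The main subtle point is securing the correct constant in the upper bound: applying Minkowski directly to $\cB_{-\pmb\tau}$ loses a factor of $d$ coming from the weaker inclusion $\cB_{-\pmb\tau}\subseteq dK^\circ$, so routing through the polar body is essential. Once that is in place, the only nontrivial auxiliary computation is the cross-polytope volume $\vol(K^\circ)=2^{d}/d!$; the remaining steps are routine packaging of Minkowski's theorem with the integrality of $\langle\vec v,\vec w\rangle$.
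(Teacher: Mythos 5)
The paper states this theorem without proof, citing Mahler's original 1938 paper, so there is no in-text argument to compare against. Your proof is correct and is essentially the classical one for Mahler's theorem on successive minima of polar convex bodies: the dimension count $k+(d+1-k)=d+1>d$ forces a non-orthogonal pair $(\vec v_i,\vec w_l)$ with $\langle\vec v_i,\vec w_l\rangle$ a nonzero integer; the coordinate estimate then gives $\mu\mu^\ast\geq 1/d$ directly; and the upper bound goes through the cross-polytope $K^\circ=(\cB_{\pmb\tau})^\circ$, where Minkowski's second theorem gives $\prod\mu_i(K^\circ,\La^\ast)\leq d!$ and the refined polar inequality gives $\mu_j(\cB_{\pmb\tau},\La)\mu_{d+1-j}(K^\circ,\La^\ast)\geq 1$ for every $j$, after which dividing the Minkowski product by the remaining $d-1$ factors yields $d!$ and the inclusion $K^\circ\subseteq\cB_{-\pmb\tau}$ finishes. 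Your remark about why routing through $K^\circ$ is necessary is also accurate: applying Minkowski directly to $\cB_{-\pmb\tau}$ (volume $2^d$) together with the weaker pairwise bound $\geq 1/d$ only yields $d^{d-1}$, which is worse than $d!$ for $d\geq 3$, and the loss is exactly the factor $d$ in the inclusion $\cB_{-\pmb\tau}\subseteq dK^\circ$.
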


The following local properties holding for every $\pmb\tau\in\cT$ are easily deduced from Theorems \ref{t:minkowski_minima}, \ref{t:mahler_koerper} (see \cite{german_monatshefte_2022}):
\begin{itemize}
  \item[(i)]

    $L_k(\La,\pmb\tau)=-L_{d+1-k}(\La^\ast,-\pmb\tau)+O(1),\ \ k=1,\ldots,d$; \vphantom{$\frac{\big|}{|}$}

  \item[(ii)]

    $S_k(\La,\pmb\tau)=S_{d-k}(\La^\ast,-\pmb\tau)+O(1),\ \ k=1,\ldots,d-1$; \vphantom{$\frac{\big|}{|}$}

  \item[(iii)]

    $\displaystyle
    S_1(\La,\pmb\tau)\leq\ldots\leq
    \frac{S_k(\La,\pmb\tau)}{k}\leq\ldots\leq
    \frac{S_{d-1}(\La,\pmb\tau)}{d-1}\leq
    \frac{S_{d}(\La,\pmb\tau)}{d}=O(1)$;

  \item[(iv)]

    $\displaystyle
    \frac{S_1(\La,\pmb\tau)}{d-1}\geq\ldots\geq
    \frac{S_k(\La,\pmb\tau)}{d-k}\geq\ldots\geq
    S_{d-1}(\La,\pmb\tau)$.
\end{itemize}
The constants implied by $O(\cdot)$ depend only on $d$. These properties, as well as Theorems \ref{t:minkowski_minima} and \ref{t:mahler_koerper}, are valid for every unimodular lattice. Particularly, they remain valid if $\La$ is replaced with $\La^\ast$.

Properties (ii) and (iii) immediately imply the inequality
\begin{equation}\label{eq:essence_of_transference}
  S_1(\La,\pmb\tau)\leq
  \dfrac{S_1(\La^\ast,-\pmb\tau)}{d-1}+O(1),
\end{equation}
whence we get (by applying \eqref{eq:essence_of_transference} to $\La$ and $\La^\ast$) the following inequalities for Schmidt--Summerer exponents corresponding to the problem of simultaneous approximation and the problem of approximating zero with the values of a linear form:
\begin{equation}\label{eq:essence_of_transference_in_exponents}
  \bpsi_1(\La^\ast,\gT^\ast)
  \leq
  \bpsi_1(\La,\gT)
  \leq
  \dfrac{\bpsi_1(\La^\ast,\gT^\ast)}{n^2}\,.
\end{equation}
Rewriting \eqref{eq:essence_of_transference_in_exponents} in terms of $\omega(\Theta)$, $\omega(L_\Theta)$, with the help of \eqref{eq:belpha_via_psis}, we get Khintchine's inequalities \eqref{eq:khintchine_transference}. It is shown in \cite{german_AA_2012} that this way of proving Khintchine's inequalities enables to improve them by splitting each of them into a chain of inequalities between the aforementioned intermediate exponents. The first to obtain such a chain of inequalities were Laurent and Bugeaud (see \cite{laurent_up_down}, \cite{bugeaud_laurent_up_down}).

Restrictions of $L_1(\pmb\tau),\ldots,L_d(\pmb\tau)$ to the path $\gT$ enjoy a series of elementary properties: they are continuous, piece-wise linear, they have two possible slopes on linearity intervals, at each point their values are ordered as $L_1(\pmb\tau)\leq\ldots\leq L_d(\pmb\tau)$, their sum $S_d(\pmb\tau)=L_1(\pmb\tau)+\ldots+L_d(\pmb\tau)$ is nonpositive and bounded. In all other respects the behaviour of $L_1,\ldots,L_d$ is rather chaotic. Nevertheless, there is a class of $d$-tuples of functions having quite regular behaviour, enjoying the mentioned properties of $L_1,\ldots,L_d$, and approximating $L_1,\ldots,L_d$ up to bounded functions. This outstanding result belongs to Roy \cite{roy_annals_2015}. Its formulation requires the concept of a \emph{$d$--system}. We adapt the respective definition from \cite{roy_zeitschrift_2016} to % решётку $\La$ и
the path $\gT$ defined by \eqref{eq:path_for_sim} (we remind that this path corresponds to the problem of simultaneous approximation). As before, we assume that $d=n+1$.

\begin{definition}
  Let $I\subset[0,\infty)$ be an interval with nonempty interior. A continuous piece-wise linear mapping $\vec P=(P_1,\ldots,P_d):I\to\R^d$ is called a \emph{$d$--system} on $I$ if the following conditions are fulfilled:
  \begin{itemize}
    \item[(i)]

      for each $s\in I$ we have $P_1(s)\leq\ldots\leq P_d(s)$ and $P_1(s)+\ldots+P_d(s)=0$;

    \item[(ii)]

      for each open interval $J\subset I$ on which $\vec P$ is differentiable, there is an index $i$ such that the slope of $P_i$ equals $-1$ on $J$, while the slope of every $P_j$ with $j\neq i$ equals $1/(d-1)$;

    \item[(iii)]

      for every point $s$ in the interior of $I$ at which $\vec P$ is not differentiable and all indices $i,j$ such that $i<j$ and $P'_i(s-0)=P'_j(s+0)=-1$, we have $P_i(s)=P_{i+1}(s)=\ldots=P_j(s)$.

  \end{itemize}
\end{definition}

Assuming that the path $\gT$ and the lattice $\La=\La(\Theta)$ are defined by \eqref{eq:path_for_sim} and \eqref{eq:lattices_sim_and_lin}, let us define the mapping $\vec L_\Theta:[0,\infty)\to\R^d$ by
\[
  s\mapsto\Big(L_1\big(\La,\pmb\tau(s)\big),\ldots,L_d\big(\La,\pmb\tau(s)\big)\Big).
\]

\begin{theorem}[D.\,Roy, 2015]\label{t:roy_annals_2015}
  For each nonzero $\Theta\in\R^n$ there exist $s_0\geq0$ and a $d$--system $\vec P$ on $[s_0,\infty)$ such that $\vec L_\Theta-\vec P$ is bounded on $[s_0,\infty)$. Conversely, for each $d$--system $\vec P$ on $[s_0,\infty)$ with arbitrary $s_0\geq0$ there exists a nonzero $\Theta\in\R^n$ such that $\vec L_\Theta-\vec P$ is bounded on $[s_0,\infty)$.
\end{theorem}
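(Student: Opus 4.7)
The plan is to treat the two directions separately, since they require quite different techniques. For the forward direction, I would first verify the elementary properties of $\vec L_\Theta$ on $[0,\infty)$: continuity and piecewise linearity follow from the general theory of successive minima as $\cB_{\pmb\tau(s)}$ varies, while the ordering $L_1 \leq \ldots \leq L_d$ and the bound on $S_d$ come directly from the definition and from Minkowski's Theorem \ref{t:minkowski_minima}. The crucial structural fact is that along $\gT$ defined by \eqref{eq:path_for_sim}, exactly one coordinate of $\pmb\tau(s)$ grows with rate $1$ while the remaining $n$ decrease with rate $-1/n$; consequently, on each interval of differentiability, a minimum vector whose successive coordinates scale uniformly has slope $-1$ (if it ``uses'' the shrinking direction) or slope $1/(d-1)$ after normalizing by the $d-1$ expanding directions. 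A case analysis shows these are the only possible slopes, matching condition (ii) of a $d$-system.

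The remaining work for the forward direction is to modify $\vec L_\Theta$ at transition points so as to satisfy condition (iii) of the $d$-system definition exactly, not just up to $O(1)$. Near a point $s_\ast$ where a new minimum vector enters, the functions $L_i, L_{i+1}, \ldots, L_j$ become ``bunched'' within a bounded window but need not coincide. The idea is to replace $\vec L_\Theta$ by the piecewise linear function $\vec P$ that agrees with $\vec L_\Theta$ at chosen break-points and interpolates linearly with the allowed slopes between them; Mahler's Theorem \ref{t:mahler_koerper} ensures the perturbation remains bounded. Verifying that the re-clustered function satisfies (iii) at every newly created break uses the combinatorial structure of transitions inherited from $\vec L_\Theta$.

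For the converse direction, which I expect to be the main obstacle, the plan is to construct $\Theta$ inductively along the sequence of break-points $s_0 < s_1 < s_2 < \ldots$ of the given $d$-system $\vec P$. At each $s_k$, conditions (ii) and (iii) prescribe which index falls with slope $-1$, which rise with slope $1/(d-1)$, and how clusters of coincident coordinates rearrange into the next linearity interval. From this data one reads off a sequence of integer vectors $\vec v^{(1)}, \vec v^{(2)}, \ldots$ in $\Z^{d}$ whose Euclidean sizes and ``angular'' positions relative to the line $\cL(\Theta)$ match the prescribed successive minima on every parallelepiped $\cB_{\pmb\tau(s_k)}$. By choosing $\vec v^{(k)}$ primitive at each step and solving at each stage a finite linear system for the components of a provisional $\Theta^{(k)}$, the successive directions $\vec v^{(k)}/\|\vec v^{(k)}\|$ can be forced to converge; Mahler's compactness combined with the boundedness of $S_d$ ensures this limit $\Theta$ is well defined and irrational.

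The hard part is the combinatorial bookkeeping at break-points in the converse direction: enforcing condition (iii) exactly requires several consecutive minima to coincide at specific parameter values, which constrains the construction to satisfy a system of rigid equalities rather than just inequalities. One must also avoid that the inductive process collapses into a rational $\Theta$ or produces a lattice whose minima deviate from $\vec P$ by an unbounded amount as $s \to \infty$. Managing these constraints simultaneously — while keeping enough freedom to realize arbitrary $d$-systems — is the core technical difficulty, and is where the bulk of Roy's original argument is concentrated.
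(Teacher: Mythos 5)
The survey paper does not prove Roy's theorem: it is stated as a citation to \cite{roy_annals_2015} with no proof given, so there is nothing in the paper to compare your proposal against.

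Taken on its own, your outline gives a plausible high-level table of contents, but it leaves essentially all of the substance to ``the hard part,'' and the specific mechanisms you invoke at key points would not carry the load you place on them. For the forward direction, Mahler's Theorem~\ref{t:mahler_koerper} concerns the duality of successive minima between $\La$ and $\La^\ast$; it is not what controls the deviation between $\vec L_\Theta$ and an approximating $d$-system. The bounded-fluctuation property you need comes from Minkowski's second theorem (which bounds $S_d = L_1 + \ldots + L_d$) together with the a priori slope constraints coming from the geometry of the path, and even granting these, making condition~(iii) hold \emph{exactly} while keeping $\vec L_\Theta - \vec P$ uniformly bounded over all transitions is precisely where the work lies; your sketch does not indicate how to do it. For the converse, the appeal to ``Mahler's compactness combined with the boundedness of $S_d$'' to conclude that the limit $\Theta$ is ``well defined and irrational'' is a genuine gap: Mahler's compactness criterion gives subsequential convergence of unimodular lattices and nothing more. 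It does not by itself guarantee that the limiting lattice has $\vec L$-function within bounded distance of $\vec P$ (that requires uniform estimates built into the construction, not extracted from a compactness argument after the fact), nor that the limiting $\Theta$ is nonzero or irrational. Roy's argument also passes through an intermediate structural reduction — first approximating an arbitrary $d$-system by one with rigid combinatorics, then realizing the rigid systems by an explicit inductive construction with quantitative error control — and this reduction, which is the key step making the converse tractable, is entirely absent from your proposal.
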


Roy's theorem proved to be much in demand for proving the existence of $\Theta$ with prescribed Diophantine properties. It was Theorem \ref{t:roy_annals_2015} that enabled proving sharpness of many transference inequalities, and also Theorem \ref{t:marnat_moshchevitin_2020}.

\section{Several linear forms}\label{sec:several_linear_forms}

\subsection{General setting for the main problem of homogeneous linear Diophantine approximation}

Given a matrix
\[ \Theta=
   \begin{pmatrix}
     \theta_{11} & \cdots & \theta_{1m} \\
     \vdots & \ddots & \vdots \\
     \theta_{n1} & \cdots & \theta_{nm}
   \end{pmatrix},\qquad
   \theta_{ij}\in\R,\quad m+n\geq3, \]
consider the system of linear equations
\begin{equation*} %\label{eq:the_system}
  \Theta\vec x=\vec y
\end{equation*}
with variables $\vec x=(x_1,\ldots,x_m)\in\R^m$, $\vec y=(y_1,\ldots,y_n)\in\R^n$.

In the most general form, the main question of homogeneous linear Diophantine approximation is how small the quantity $|\Theta\vec x-\vec y|$ can be with $\vec x\in\Z^m$, $\vec y\in\Z^n$ satisfying the restriction $0<|\vec x|\leq t$. By $|\cdot|$ we denote hereafter the sup-norm.

It is easy to see that with $m=1$ we get the problem of simultaneous approximation, and with $n=1$ we get the problem of approximating zero with the values of a linear form. Same as in those problems, the respective Diophantine exponents are naturally defined. The following definition contains as particular cases Definitions \ref{def:beta_simultaneous}, \ref{def:beta_linear_form}, \ref{def:alpha_simultaneous}, \ref{def:alpha_linear_form}.

\begin{definition} \label{def:belpha}
  The supremum of real $\gamma$ satisfying the condition that there exist arbitrarily large $t$ such that (resp. for every $t$ large enough) the system 
  \begin{equation} \label{eq:belpha_1_definition}
    |\vec x|\leq t,\qquad|\Theta\vec x-\vec y|\leq t^{-\gamma}
  \end{equation}
  admits a nonzero solution $(\vec x,\vec y)\in\Z^m\oplus\Z^n$ is called the \emph{regular} (resp. \emph{uniform}) \emph{Diophantine exponent} of $\Theta$ and is denoted by $\omega(\Theta)$ (resp. $\hat\omega(\Theta)$).
\end{definition}

\subsection{Dirichlet's theorem again}

In the case of several linear forms an analogue of Theorems \ref{t:dirichlet}, \ref{t:dirichlet_linear_form}, \ref{t:dirichlet_simultaneous} holds. It was also proved in Dirichlet's paper \cite{dirichlet}, though, same as with these theorems, he originally formulated it in a weaker form. We remind that $|\cdot|$ denotes the sup-norm. Let us also denote by $\R^{n\times m}$ the set of $n\times m$ real matrices.

\begin{theorem}[G. Lejeune Dirichlet, 1842] \label{t:dirichlet_matrix}
  Let $\Theta\in\R^{n\times m}$. Then, for each $t\geq1$, there is a nonzero pair $(\vec x,\vec y)\in\Z^m\oplus\Z^n$ such that
  \begin{equation} \label{eq:dirichlet_matrix}
    |\vec x|\leq t,\qquad|\Theta\vec x-\vec y|\leq t^{-m/n}.
  \end{equation}
\end{theorem}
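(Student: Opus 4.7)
The plan is to deduce Theorem \ref{t:dirichlet_matrix} as an immediate consequence of Minkowski's theorem for linear forms (Corollary \ref{cor:minkowski_linear_forms}), applied to a natural system of $d=m+n$ linear forms in $d$ variables. This mirrors how the author derives Theorems \ref{t:dirichlet}, \ref{t:dirichlet_linear_form}, \ref{t:dirichlet_simultaneous} from that same corollary.

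First, I would work in $\R^{m+n}$ with coordinates $(\vec x, \vec y)=(x_1,\ldots,x_m,y_1,\ldots,y_n)$ and introduce the $d$-tuple of homogeneous linear forms
\[
  L_i(\vec x,\vec y)=x_i\quad(i=1,\ldots,m),\qquad
  L_{m+j}(\vec x,\vec y)=\sum_{k=1}^{m}\theta_{jk}x_k-y_j\quad(j=1,\ldots,n).
\]
The matrix of these forms in the chosen basis is block-triangular,
\[
  \begin{pmatrix} I_m & 0 \\ \Theta & -I_n \end{pmatrix},
\]
so its determinant is $D=(-1)^n$, in particular $|D|=1$.

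Next I would choose the bounds
\[
  \delta_1=\ldots=\delta_m=t,\qquad \delta_{m+1}=\ldots=\delta_{m+n}=t^{-m/n},
\]
which are positive and satisfy $\prod_{i=1}^{d}\delta_i=t^m\cdot t^{-m}=1=|D|$. Corollary \ref{cor:minkowski_linear_forms} then produces a nonzero point $(\vec x,\vec y)\in\Z^m\oplus\Z^n$ with $|L_i(\vec x,\vec y)|\leq\delta_i$ for every $i$, which is exactly \eqref{eq:dirichlet_matrix}. (The fact that Corollary \ref{cor:minkowski_linear_forms} allows the first inequality to be non-strict while the others are strict is more than enough; strictness of all inequalities is unnecessary for the statement.)

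Strictly speaking, the theorem only claims a nonzero pair $(\vec x,\vec y)$, so no further argument is needed. Nevertheless, it is worth observing that for $t>1$ one automatically has $\vec x\neq\vec 0$: indeed, if $\vec x=\vec 0$ then $|\vec y|\leq t^{-m/n}<1$ forces $\vec y=\vec 0$, contradicting the nonzeroness supplied by Minkowski's theorem. There is really no hard step here; the whole content of the proof is the choice of linear forms together with the verification that the product of the bounds equals the modulus of their determinant, which is the mechanism that turns the constraint $m\cdot 1=n\cdot(m/n)$ into the exponent $-m/n$ in \eqref{eq:dirichlet_matrix}.
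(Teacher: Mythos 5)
Your proof is correct and takes exactly the route the paper indicates: right after Theorem \ref{t:dirichlet_matrix} the author notes that it "follows immediately from Minkowski's convex body theorem in the form of Corollary \ref{cor:minkowski_linear_forms} applied to the system \eqref{eq:dirichlet_matrix}," and your argument carries this out, with the correct block-triangular matrix, the check that the product of the $\delta_i$ equals $|D|=1$, and a harmless bonus remark about $\vec x\neq\vec 0$ when $t>1$.
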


We note again that Theorem \ref{t:dirichlet_matrix} follows immediately from Minkowski's convex body theorem in the form of Corollary \ref{cor:minkowski_linear_forms} applied to the system \eqref{eq:dirichlet_matrix}.

\begin{corollary} \label{cor:belpha_matrix_trivial_inequalities}
  For each $\Theta\in\R^{n\times m}$ we have
  \begin{equation} \label{eq:belpha_matrix_trivial_inequalities}
    \omega(\Theta)\geq\hat\omega(\Theta)\geq m/n.
  \end{equation}
\end{corollary}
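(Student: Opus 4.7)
My plan is to derive both inequalities in \eqref{eq:belpha_matrix_trivial_inequalities} as immediate consequences of Definition \ref{def:belpha} and Theorem \ref{t:dirichlet_matrix}. The first inequality $\omega(\Theta)\geq\hat\omega(\Theta)$ is purely formal: any real $\gamma$ for which system \eqref{eq:belpha_1_definition} admits a nonzero solution $(\vec x,\vec y)\in\Z^m\oplus\Z^n$ for \emph{every} $t$ large enough automatically admits such a solution for \emph{arbitrarily large} $t$. Hence the set of admissible $\gamma$ in the definition of $\hat\omega(\Theta)$ is contained in the analogous set defining $\omega(\Theta)$, and taking suprema preserves this inclusion.

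For the second inequality $\hat\omega(\Theta)\geq m/n$, I would plug $\gamma=m/n$ directly into \eqref{eq:belpha_1_definition} and invoke Theorem \ref{t:dirichlet_matrix}. That theorem supplies, for each $t\geq 1$, a nonzero pair $(\vec x,\vec y)\in\Z^m\oplus\Z^n$ satisfying $|\vec x|\leq t$ and $|\Theta\vec x-\vec y|\leq t^{-m/n}$, which is exactly the condition required in Definition \ref{def:belpha} for the value $\gamma=m/n$. Therefore $m/n$ belongs to the set whose supremum is $\hat\omega(\Theta)$, and the claim follows.

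There is essentially no obstacle here, as the corollary is just a reformulation of Dirichlet's theorem in the language of Diophantine exponents. The only point I would double-check is the nonvanishing of the pair $(\vec x,\vec y)$ supplied by Theorem \ref{t:dirichlet_matrix}, which is in fact part of its statement; and I would note in passing that for sufficiently large $t$ the case $\vec x=\vec 0$ is ruled out anyway, since then $|\vec y|\leq t^{-m/n}<1$ would force $\vec y=\vec 0$ as well. Thus the pair meaningfully witnesses the inequality, and no further work is required.
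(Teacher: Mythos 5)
Your argument is correct and is essentially the same as the paper's implicit proof: the corollary is stated without a separate argument precisely because it follows at once from Theorem \ref{t:dirichlet_matrix} by plugging $\gamma=m/n$ into Definition \ref{def:belpha}, together with the formal observation that the uniform condition implies the regular one. Your side remark about $\vec x=\vec 0$ being excluded for $t>1$ is a sensible sanity check and matches the wording of Theorem \ref{t:dirichlet_matrix}, which already guarantees a nonzero pair.
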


These inequalities, same as inequalities \eqref{eq:belpha_trivial_inequalities}, are sharp. Moreover, Theorem \ref{t:perron} of Perron can be generalised to the matrix setting. Perron's idea can be used to prove (for instance, this is done in Schmidt's book \cite{schmidt_DA}) that there exist matrices with algebraic entries that are \emph{badly approximable}.

\begin{definition}\label{def:BA_matrix}
  A matrix $\Theta$ is called \emph{badly approximable} if there is a constant $c>0$ depending only on $\Theta$ such that for every pair $(\vec x,\vec y)\in\Z^m\oplus\Z^n$ with nonzero $\vec x$ we have
  \[ 
    |\Theta\vec x-\vec y|^n|\vec x|^m\geqslant c. 
  \]
\end{definition}

In that same Schmidt's book \cite{schmidt_DA} it is proved that a matrix is badly approximable if and only if so is its transpose. In Section \ref{subsubsec:badly_approximable_matrices} we show how to deduce this fact immediately from Mahler's theorem. This fact is a manifestation of the \emph{transference principle}, which we already paid attention to in Section \ref{subsec:transference_only_n}.

\subsection{Transference theorems}

In Section \ref{subsec:transference_only_n} we discussed the \emph{transference principle} discovered by Khintchine, which relates the problem of simultaneous approximation and the problem of approximating zero by the values of a linear form. This relation can be generalised to the case of an arbitrary matrix $\Theta\in\R^{n\times m}$. Let us denote by $\tr\Theta$ the transposed matrix.

\subsubsection{Inequalities for regular exponents}

In 1947 Dyson \cite{dyson} generalised Khintchine's transference principle (Theorem \ref{t:khintchine_transference}) as follows.

\begin{theorem}[F.\,Dyson, 1947] \label{t:dyson_transference}
  For every matrix $\Theta\in\R^{n\times m}$ we have
  \begin{equation} \label{eq:dyson_transference}
    \omega(\tr\Theta)\geq\frac{n\omega(\Theta)+n-1}{(m-1)\omega(\Theta)+m}\,.
  \end{equation}
\end{theorem}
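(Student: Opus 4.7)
The plan is to extend the pseudocompound--Mahler argument of Section \ref{subsubsec:parallelepiped_families} from the case $m=1$ to arbitrary $m$. Set $d=m+n$ and consider the unimodular lattice
\[
\La=\begin{pmatrix} I_m & 0 \\ -\Theta & I_n \end{pmatrix}\Z^d
\qquad\text{and its dual}\qquad
\La^\ast=\begin{pmatrix} I_m & \tr\Theta \\ 0 & I_n \end{pmatrix}\Z^d.
\]
Then $\omega(\Theta)\geq\gamma$ is equivalent to the existence of arbitrarily large $t$ for which
\[
\cP(t,\gamma):=\bigl\{\vec z\in\R^d\,:\,|z_i|\leq t\text{ for }i\leq m,\ |z_i|\leq t^{-\gamma}\text{ for }i>m\bigr\}
\]
contains a nonzero point of $\La$, while $\omega(\tr\Theta)\geq\delta$ is equivalent to the analogous property for $\La^\ast$ and
\[
\cQ(s,\delta):=\bigl\{\vec z\in\R^d\,:\,|z_i|\leq s^{-\delta}\text{ for }i\leq m,\ |z_i|\leq s\text{ for }i>m\bigr\}.
\]

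The heart of the argument is to construct an axis-parallel parallelepiped $\cR$ whose $(d-1)$-th pseudocompound coincides with $\cP(t,\gamma)$. Writing $\nu_i=A$ for $i\leq m$ and $\nu_i=B$ for $i>m$, the defining equations $\nu_i^{-1}\prod_j\nu_j=t$ (for $i\leq m$) and $\nu_i^{-1}\prod_j\nu_j=t^{-\gamma}$ (for $i>m$) become a $2\times 2$ linear system in $\log_t A,\log_t B$ whose determinant is exactly $m+n-1=d-1$; solving it yields
\[
A=t^{-(n\gamma+n-1)/(d-1)},\qquad B=t^{(m+(m-1)\gamma)/(d-1)},
\]
so that $\prod_j\nu_j=t^{(m-n\gamma)/(d-1)}$ and $\cR^\ast=\cP(t,\gamma)$ by direct verification. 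Applying Theorem \ref{t:mahler_reformulated} with the roles of $\La$ and $\La^\ast$ interchanged (legitimate since $(\La^\ast)^\ast=\La$) gives
\[
\cP(t,\gamma)\cap\La=\cR^\ast\cap\La\neq\{\vec 0\}\ \Longrightarrow\ (d-1)\cR\cap\La^\ast\neq\{\vec 0\}.
\]

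To finish, set
\[
s=(d-1)\,t^{(m+(m-1)\gamma)/(d-1)}\qquad\text{and}\qquad\delta=\frac{n\gamma+n-1}{(m-1)\gamma+m}.
\]
Then $(d-1)\cR\subset\cQ(s,\delta')$ for every $\delta'<\delta$ once $t$ is sufficiently large, because the constant factor $(d-1)$ is absorbed into the exponent as $t\to\infty$. Hence $\omega(\Theta)\geq\gamma$ forces $\omega(\tr\Theta)\geq\delta'$ for every $\delta'<\delta$; letting $\gamma\nearrow\omega(\Theta)$ and $\delta'\nearrow\delta$ yields Dyson's inequality. Conceptually the proof is identical to the Khintchine case, and I expect no serious obstacle beyond the algebraic bookkeeping of verifying that the half-sides of $\cR$ produce exactly the exponent $(n\gamma+n-1)/((m-1)\gamma+m)$ that Dyson states; the key simplification that makes the formula collapse to this form is the identity $m+n-1=d-1$.
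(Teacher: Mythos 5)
Your proof is correct and follows essentially the same pseudocompound--Mahler approach as the paper's Section~4.4.1: both reduce Dyson's inequality to Theorem~\ref{t:mahler_reformulated} applied to a pair of axis-parallel parallelepipeds related by the $(d-1)$-th pseudocompound, with identical lattices $\La$, $\La^\ast$. The only presentational difference is that the paper fixes $(s,\delta)$ and realises $\cP(t,\gamma)$ as the compound of $\cQ(s,\delta)$ via \eqref{eq:Q_is_P_star_arbitrary_nm}, derives the swapped inequality \eqref{eq:dyson_transference_swapped}, and then invokes the formal $(n,m,\Theta)\leftrightarrow(m,n,\tr\Theta)$ symmetry, whereas you fix $(t,\gamma)$, solve for the box $\cR$ with $\cR^\ast=\cP(t,\gamma)$, and apply Mahler's theorem with $\La$ and $\La^\ast$ interchanged, obtaining Dyson's inequality directly --- a harmless reordering of the same argument (one cosmetic slip: the determinant of your $2\times2$ system is $-(d-1)$, not $d-1$, but the sign cancels and your values of $A$ and $B$ are correct).
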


%Несложно убедиться, подставив $n=1$ и $m=1$, что теорема \ref{t:khintchine_transference} является частным случаем теоремы \ref{t:dyson_transference}.

Note that it follows from \ref{eq:dyson_transference} and \ref{eq:belpha_matrix_trivial_inequalities} that
\begin{equation} \label{eq:dyson_n_m_equivalence}
  \omega(\Theta)=m/n\iff\omega(\tr\Theta)=n/m.
\end{equation}

A year later, Khintchine published in \cite{khintchine_dobav} a simpler proof of Theorem \ref{t:dyson_transference}. But actually, Theorem \ref{t:dyson_transference} could have been derived in 1937 as a corollary to Theorem \ref{t:mahler} of Mahler. We discuss this in more detail in Section \ref{subsec:embedding_into_R_d}.

Note that any statement proved for \emph{an arbitrary matrix $\Theta\in\R^{n\times m}$ with arbitrary $n$, $m$} automatically generates an analogous statement for $\tr\Theta$. We simply need to swap the triple $(n,m,\Theta)$ for the triple $(m,n,\tr\Theta)$. Therefore, for an arbitrary matrix $\Theta\in\R^{n\times m}$, we also have
\begin{equation} \label{eq:dyson_transference_swapped}
  \omega(\Theta)\geq\frac{m\omega(\tr\Theta)+m-1}{(n-1)\omega(\tr\Theta)+n}\,.
\end{equation}

As we mentioned in Section \ref{subsubsec:transference_only_n_regular}, in the case when either $m=1$, or $n=1$, sharpness of \eqref{eq:dyson_transference} was proved by Jarn\'{\i}k in \cite{jarnik_1936_1}, \cite{jarnik_1936_2}. For $\min(n,m)>1$ sharpness of \eqref{eq:dyson_transference} is proved only if
\[
  \omega(\Theta)\geq\max\Big(\frac mn,\frac{n-1}{m-1}\Big)
\]
(particularly, if $m\geq n$). This result also belongs to Jarn\'{\i}k \cite{jarnik_bulg_1959}. In all the remaining cases sharpness of \eqref{eq:dyson_transference} remains unproved.

\subsubsection{Inequalities for uniform exponents}

Jarn\'{\i}k's transference inequalities, i.e. inequalities \eqref{eq:khintchine_transference_alpha}, \eqref{eq:jarnik_inequalities_cases}, were generalised in 1951 by Apfelbeck \cite{apfelbeck} to the case of arbitrary $n$, $m$. He proved the ``uniform'' analogue of \eqref{eq:dyson_transference}
\begin{equation} \label{eq:apfel_dyson}
  \hat\omega(\tr\Theta)\geq\frac{n\hat\omega(\Theta)+n-1}{(m-1)\hat\omega(\Theta)+m}
\end{equation}
and also the inequality
\begin{equation} \label{eq:apfelbeck}
  \hat\omega(\tr\Theta)\geq\frac 1m
  \left(n+\frac{n(n\hat\omega(\Theta)-m)-2n(m+n-3)}{(m-1)(n\hat\omega(\Theta)-m)+m-(m-2)(m+n-3)}\right),
\end{equation}
under the assumption that $m>1$, $\hat\omega(\Theta)>(2(m+n-1)(m+n-3)+m)/n$. In 2012 inequalities \eqref{eq:apfel_dyson}, \eqref{eq:apfelbeck} were improved by the author in papers \cite{german_MJCNT_2012}, \cite{german_AA_2012}. The following theorem generalises Theorem \ref{t:my_simultaneous_vs_linear_form} to the case of arbitrary $n$, $m$.

\begin{theorem}[O.\,G., 2012] \label{t:my_inequalities}
  For every $\Theta\in\R^{n\times m}$, $m+n\geq3$, we have
  \begin{equation} \label{eq:my_inequalities}
    \hat\omega(\tr\Theta)\geq
    \begin{cases}
      \dfrac{n-1}{m-\hat\omega(\Theta)}\quad\ \ \text{ if }\ \hat\omega(\Theta)\leq1, \\
      \dfrac{n-\hat\omega(\Theta)^{-1\vphantom{\big|}}}{m-1}\quad\text{ if }\ \hat\omega(\Theta)\geq1.
    \end{cases}
  \end{equation}
\end{theorem}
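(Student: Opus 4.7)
I would follow the template of Section \ref{subsubsec:proof_of_uniform_transference}, embedding both approximation problems in $\R^d$ with $d = m+n$. Consider the unimodular lattices
\[
\La = \begin{pmatrix} I_m & 0 \\ -\Theta & I_n \end{pmatrix}\Z^{m+n},
\qquad
\La^\ast = \begin{pmatrix} I_m & \tr\Theta \\ 0 & I_n \end{pmatrix}\Z^{m+n},
\]
and the two-parameter families of parallelepipeds
\[
\cP(t,\gamma) = \Big\{\vec z \in \R^{m+n}\,\Big|\, |z_i| \leq t,\ i \leq m;\ |z_j| \leq t^{-\gamma},\ j > m\Big\},
\]
\[
\cQ(s,\delta) = \Big\{\vec z \in \R^{m+n}\,\Big|\, |z_i| \leq s^{-\delta},\ i \leq m;\ |z_j| \leq s,\ j > m\Big\}.
\]
Unwinding Definition \ref{def:belpha} gives the matrix analogue of \eqref{eq:omega_hat_vs_parallelepipeds}: $\cP(t,\gamma)\cap\La\neq\{\vec 0\}$ for every sufficiently large $t$ whenever $\gamma<\hat\omega(\Theta)$, and $\cQ(s,\delta)\cap\La^\ast\neq\{\vec 0\}$ for every sufficiently large $s$ whenever $\delta<\hat\omega(\tr\Theta)$. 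Hence the task is to produce nonzero points of $\La^\ast$ in $\cQ(s,\delta)$ for all large $s$ out of the corresponding information about $\cP(t,\gamma)\cap\La$.

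\textbf{The scheme.} In both regimes I would deploy the ``inverse'' variant of the node/leaf scheme of Section \ref{subsubsec:nodes_and_leaves}, with $\cP$-shaped parallelepipeds
\[
\cP_r = \Big\{\vec z\,\Big|\, |z_i|\leq r,\ i\leq m;\ |z_j|\leq (hH/r)^{-\alpha},\ j>m\Big\}
\]
on $r\in[h,H]$ with $H=h^{\beta/\alpha}$; Figure \ref{fig:nodes_and_leaves} applies verbatim after setting $u = \max_{i\leq m}|z_i|$ and $v = \max_{j>m}|z_j|$. The two regimes differ only in the parametric calibration, chosen so that the target exponent $\delta$ emerges in the leaf case through the wedge estimate and Theorem \ref{t:second_pseudo_compound}: in Case~1 ($\hat\omega(\Theta)\leq 1$) one forces $\delta=(n-1)/(m-\gamma)$, and in Case~2 ($\hat\omega(\Theta)\geq 1$) the dual one forces $\delta=(n-\gamma^{-1})/(m-1)$, in each case with $\gamma$ approaching the value at which the induced leaf exponent $\alpha$ meets $\hat\omega(\Theta)$ (as in Section \ref{subsubsec:proof_of_uniform_transference}, the operative condition is $\alpha<\hat\omega(\Theta)$ rather than $\gamma<\hat\omega(\Theta)$, so $\gamma$ may itself exceed $\hat\omega(\Theta)$). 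Once $\alpha<\hat\omega(\Theta)$ is secured, for $h$ large every leaf $\cP(r,\alpha)$, $r\in[h,H]$, contains a nonzero point of $\La$. The scheme then dichotomizes: either the root $\cP(h,\beta)$ does too -- in which case Theorem \ref{t:mahler_reformulated} in the refined form \eqref{eq:mahler_improved} transfers that point to a nonzero point of $\La^\ast$ in a constant multiple of $\cP(h,\beta)^\ast$, which contains $\cQ(s,\delta)$ up to a uniform dilation -- or property~(v) supplies a leaf $\cP(r_0,\alpha)$ containing two non-collinear points $\vec v_1\in\cP_{r_0}$, $\vec v_2\in\cP_{hH/r_0}$ of $\La$, whose wedge $\vec v_1\wedge\vec v_2\in\bigwedge^2\R^{m+n}$ is estimated coordinate by coordinate to lie in $c_0\cdot\cQ(s,\delta)^\circledast$ for a constant $c_0=c_0(d)$; Theorem \ref{t:second_pseudo_compound} then delivers the required nonzero point of $\La^\ast$ in a constant multiple of $\cQ(s,\delta)$.

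\textbf{Main obstacle.} The decisive technical step is the leaf alternative: bounding each of the $\binom{m+n}{2}$ coordinates of $\vec v_1\wedge\vec v_2$. These coordinates split into three groups by index type -- both $\leq m$, both $>m$, or mixed -- and each group admits a distinct a priori bound drawn from the defining inequalities of $\cP_{r_0}$ and $\cP_{hH/r_0}$. Calibrating $h,H,\alpha,\beta$ so that the resulting three bounds simultaneously fit inside $\cQ(s,\delta)^\circledast$ up to a $d$-dependent constant is where the rectangular asymmetry $n\neq m$ enters in a genuinely new way: it is strictly more delicate than in the case $m=1$ handled in \cite{german_mathmatika_2020}, where the group of index pairs with both indices in $\{1,\ldots,m\}$ is vacuous. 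A secondary technicality is to ensure that $\alpha$ and $\beta$ remain positive and that the two calibrations join continuously at $\hat\omega(\Theta)=1$, where both formulas in \eqref{eq:my_inequalities} reduce to $(n-1)/(m-1)$.
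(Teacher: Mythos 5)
Your proposal is correct and follows essentially the same approach as the paper (Section~\ref{subsec:embedding_into_R_d} combined with Sections~\ref{subsubsec:nodes_and_leaves}--\ref{subsubsec:proof_of_uniform_transference}): same lattices and parallelepiped families, same node/leaf dichotomy resolved via Theorems~\ref{t:mahler_reformulated} and~\ref{t:second_pseudo_compound}, the only cosmetic difference being that you run the $\cP$-shaped ``inverse'' variant to prove $\hat\omega(\Theta)\geq\alpha\Rightarrow\hat\omega(\tr\Theta)\geq\delta$ directly, whereas the paper runs the $\cQ$-shaped variant and then swaps $(n,m,\Theta)\leftrightarrow(m,n,\tr\Theta)$ at the end. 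One transcription slip to fix: in your calibration ``$\delta=(n-1)/(m-\gamma)$'' (and its Case~2 analogue) the variable must be the leaf exponent $\alpha$, not the root exponent $\gamma$, consistent with your own remark that $\alpha<\hat\omega(\Theta)$ is the operative condition.
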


Note that, generally, $\hat\omega(\Theta)$ and $\hat\omega(\tr\Theta)$ may equal $+\infty$, and this gives sense to \eqref{eq:my_inequalities} when some of the denominators equals zero.

If $\min(n,m)>1$, the question whether \eqref{eq:my_inequalities} is sharp remains open. We remind that in the case when either $m=1$ or $n=1$ sharpness of \eqref{eq:my_inequalities} was proved by Marnat \cite{marnat_sharpness} and (independently) by Schmidt and Summerer \cite{schmidt_summerer_AA_2016}.

\subsubsection{``Mixed'' inequalities}

In those same papers \cite{german_MJCNT_2012}, \cite{german_AA_2012} the following theorem is proved. It generalises Theorem \ref{t:bugeaud_laurent} of Laurent and Bugeaud and refines Theorem \ref{t:dyson_transference} of Dyson.

\begin{theorem}[O.\,G., 2012] \label{t:loranoyadenie}
  Given $\Theta\in\R^{n\times m}$, $m+n\geq3$, assume that the space of rational solutions of the equation
  \[ 
    \Theta\vec x=\vec y 
  \]
  is not one-dimensional. Then we have
  \begin{align}
    & \omega(\tr\Theta)\geq
    \frac{n\omega(\Theta)+n-1}{(m-1)\omega(\Theta)+m}\,,
    \label{eq:loranoyadenie_1} \\
    & \omega(\tr\Theta)\geq
    \frac{(n-1)(1+\omega(\Theta))-(1-\hat\omega(\Theta))}{(m-1)(1+\omega(\Theta))+(1-\hat\omega(\Theta))}\,, \label{eq:loranoyadenie_2} \\
    & \omega(\tr\Theta)\geq
    \frac{(n-1)(1+\omega(\Theta)^{-1})-(\hat\omega(\Theta)^{-1}-1)}{(m-1)(1+\omega(\Theta)^{-1})+(\hat\omega(\Theta)^{-1}-1)}
    \,. \label{eq:loranoyadenie_3}
  \end{align}
\end{theorem}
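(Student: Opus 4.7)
The plan is to extend the parametric ``nodes-and-leaves'' machinery of Section~\ref{subsubsec:nodes_and_leaves}, together with Mahler's Theorem~\ref{t:mahler_reformulated} and the second pseudocompound Theorem~\ref{t:second_pseudo_compound}, from the single-form case to the full matrix setting. First I would set up the standard embedding into $\R^{m+n}$ by the lattice
\[
  \La=\La(\Theta)=\begin{pmatrix} I_m & 0 \\ -\Theta & I_n \end{pmatrix}\Z^{m+n},
\]
whose dual $\La^\ast$ is built analogously from $\tr\Theta$. Two parameter families
\[
  \cP(t,\gamma)=\bigl\{\vec z\in\R^{m+n}\,\big|\,|z_i|\leq t\text{ for }i\leq m,\ |z_j|\leq t^{-\gamma}\text{ for }j>m\bigr\}
\]
and their $(d-1)$-th pseudocompounds $\cQ(s,\delta)=\cP(t,\gamma)^\ast$ then encode $\omega(\Theta)$, $\hat\omega(\Theta)$ (via points of $\La$ in $\cP$) and $\omega(\tr\Theta)$ (via points of $\La^\ast$ in $\cQ$), exactly as in Section~\ref{subsubsec:parallelepiped_families} but with $\R^{n+1}$ replaced by $\R^{m+n}$. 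Inequality \eqref{eq:loranoyadenie_1} is Dyson's bound and follows at once from Theorem~\ref{t:mahler_reformulated}: for $\gamma<\omega(\Theta)$ there exist arbitrarily large $t$ with $\cP(t,\gamma)\cap\La\neq\{\vec 0\}$, and the matrix analogue of \eqref{eq:Q_is_P_star} converts this into $\cQ(s,\delta)\cap\La^\ast\neq\{\vec 0\}$ with the required $\delta$.

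The refinements \eqref{eq:loranoyadenie_2} and \eqref{eq:loranoyadenie_3} would come from combining the above with the nodes/anti-nodes/leaves construction. Fix $\gamma<\omega(\Theta)$ and $\hat\gamma<\hat\omega(\Theta)$; choose an arbitrarily large scale $T$ at which $\cP(T,\gamma)\cap\La\neq\{\vec 0\}$ and set up the family $\mathfrak S,\mathfrak A,\mathfrak L$ around $T$, with $h,\alpha,\beta$ tuned so that the root ``node'' is (a variant of) $\cP(T,\gamma)$ and every ``leaf'' $\cP(r,\hat\gamma)$ is forced nonempty by the definition of $\hat\omega(\Theta)$. The key dichotomy is then the one used in Section~\ref{subsubsec:proof_of_uniform_transference}: either the root ``node'' already contains a nonzero point of $\La$ suitable for applying Theorem~\ref{t:mahler_reformulated} directly (which reproduces the Dyson bound); or it does not, and property~(v) of Section~\ref{subsubsec:nodes_and_leaves} yields two non-collinear points $\vec v_1,\vec v_2\in\La$ in a single leaf, whose wedge lies in $2\cP(t,\gamma)^{\circledast}\cap\La^{\circledast}$ up to an absorbable constant. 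Theorem~\ref{t:second_pseudo_compound} then produces a nonzero point of $\La^\ast$ in a bounded dilate of $\cP(t,\gamma)$, translating into the strictly sharper bound for $\omega(\tr\Theta)$. The hypothesis that the rational solution space of $\Theta\vec x=\vec y$ is not one-dimensional enters precisely here: it is what guarantees that property~(v) produces genuinely non-collinear vectors rather than rational multiples of a single fixed direction, so that the wedge $\vec v_1\wedge\vec v_2$ is nonzero and Theorem~\ref{t:second_pseudo_compound} is actually applicable. The two inequalities \eqref{eq:loranoyadenie_2} and \eqref{eq:loranoyadenie_3} arise from the ``direct'' and ``inverse'' variants of the parametric setup (cf.\ the last part of Section~\ref{subsubsec:proof_of_uniform_transference}), each corresponding to one of the two regimes $\hat\omega(\Theta)\lessgtr 1$ for which a different parameter substitution is optimal.

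The hard part will be the parameter bookkeeping. One must tune the ratio $\beta/\alpha$ (and its matrix analogues) so that, after translating the output of Theorem~\ref{t:second_pseudo_compound} back into a value of $\delta$ for which $\cQ(s,\delta)\cap\La^\ast\neq\{\vec 0\}$, the resulting bound has precisely the form $\frac{(n-1)(1+\omega(\Theta))\mp(\cdots)}{(m-1)(1+\omega(\Theta))\pm(\cdots)}$ with the $\hat\omega(\Theta)$-dependent correction in the expected place; obtaining the correct numerator and denominator simultaneously is what forces the particular two-parameter blend of $\omega(\Theta)$ and $\hat\omega(\Theta)$ seen in \eqref{eq:loranoyadenie_2}, \eqref{eq:loranoyadenie_3}. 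A secondary but routine issue is to show that the multiplicative constants $c_1,c_2$ from Theorems~\ref{t:mahler_reformulated} and~\ref{t:second_pseudo_compound} are absorbed into $o(1)$ in the exponent as $T\to\infty$, so that only the limiting Diophantine inequalities survive.
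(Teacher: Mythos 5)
Your overall toolbox is the right one: the embedding into $\R^{m+n}$ via the lattices of \eqref{eq:lattices_arbitrary_nm}, Mahler's Theorem~\ref{t:mahler_reformulated} for \eqref{eq:loranoyadenie_1}, and the second-pseudocompound Theorem~\ref{t:second_pseudo_compound} for the refinements. The derivation of \eqref{eq:loranoyadenie_1} is fine. But there is a genuine gap in how you propose to obtain \eqref{eq:loranoyadenie_2} and \eqref{eq:loranoyadenie_3}, and it is structural, not cosmetic.

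The dichotomy you invoke (root ``node'' nonempty $\Rightarrow$ apply Mahler directly; root ``node'' empty $\Rightarrow$ invoke property~(v)) is exactly the mechanism behind Theorem~\ref{t:my_inequalities}, which bounds $\hat\omega(\tr\Theta)$ in terms of $\hat\omega(\Theta)$ alone. Neither branch of that dichotomy uses the particular lattice point that $\omega(\Theta)$ supplies: the first branch uses whatever sits in the root node, the second uses whatever property~(v) produces in the leaves, and both the root node's shape and the leaves' shape are tuned by $\hat\gamma$. Choosing the large scale $T$ so that $\cP(T,\gamma)\cap\La\neq\{\vec 0\}$ only tells you that $T$ exists; once you discard the actual point $\vec u\in\cP(T,\gamma)\cap\La$ and pass to the dichotomy, the resulting $\delta$ can depend only on $\hat\gamma$. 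One can check this is not enough: for $n=m=2$, $\hat\omega(\Theta)=2$ the right-hand side of \eqref{eq:loranoyadenie_2} equals $2$ when $\omega(\Theta)=2$, yet the purely uniform bound from Theorem~\ref{t:my_inequalities} is only $3/2$. So the dichotomy argument cannot reach \eqref{eq:loranoyadenie_2}. What is needed is to \emph{keep} the point $\vec u$: having chosen $T$ to be exactly the sup-norm of $\vec u$'s first $m$ coordinates, the uniform exponent gives for every $h<T$ (sufficiently large) a nonzero $\vec v\in\cP(h,\hat\gamma)\cap\La$ whose first-block norm is $\le h<T$, hence $\vec v$ cannot be an integer multiple of $\vec u$; the wedge $\vec u\wedge\vec v$ is therefore nonzero, and it is the estimates on $\vec u\wedge\vec v$, with $h=T^\theta$ optimised, that produce the $\omega(\Theta)$-dependence in \eqref{eq:loranoyadenie_2} and \eqref{eq:loranoyadenie_3}. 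There is no dichotomy; the two inequalities are two distinct parameter choices in the same argument, and both always hold --- the regime $\hat\omega(\Theta)\lessgtr 1$ that you mention governs Theorem~\ref{t:my_inequalities}, not this one.

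Three further points of the write-up need correction. First, the direction is reversed at the conclusion: you want a nonzero point of $\La^\ast$ in a dilate of $\cQ(s,\delta)$ (small first block, large last block --- this is what encodes $\omega(\tr\Theta)$), not of $\cP(t,\gamma)$. Correspondingly, the wedge $\vec u\wedge\vec v$ with $\vec u,\vec v\in\La$ belongs to $(\La^\ast)^{\circledast}$ (decomposable elements of $\bigwedge^2\La=\bigwedge^2(\La^\ast)^\ast$), not to $\La^{\circledast}$, and it should be compared against $\cQ(s,\delta)^{\circledast}$, not $\cP(t,\gamma)^{\circledast}$; Theorem~\ref{t:second_pseudo_compound} is then applied with $\La$ and $\La^\ast$ interchanged. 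Second, the hypothesis that the rational solution space is not one-dimensional does not serve to make property~(v) produce non-collinear vectors --- that is part of the conclusion of property~(v) and needs no extra hypothesis; rather, it excludes the degenerate situation where every good approximation $\vec u$ collapses onto a single rational direction with vanishing second block, which would break the wedge construction and render the right-hand sides of \eqref{eq:loranoyadenie_2}, \eqref{eq:loranoyadenie_3} indeterminate (both $\omega$ and $\hat\omega$ equal to $+\infty$).
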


Clearly, \eqref{eq:loranoyadenie_1} coincides with \eqref{eq:dyson_transference}. This inequality is stronger than \eqref{eq:loranoyadenie_2} and \eqref{eq:loranoyadenie_3} if and only if
\[
  \hat\omega(\Theta)<\min\left(\frac{(m-1)\omega(\Theta)+m}{m+n-1}\,,\,
  \frac{(m+n-1)\omega(\Theta)}{(n-1)+n\omega(\Theta)}\right).
\]
For instance, if $\hat\omega(\Theta)<(m+n-1)/n$ and $\omega(\Theta)$ is large enough. Thus, if $\min(n,m)>1$, inequalities \eqref{eq:loranoyadenie_2}, \eqref{eq:loranoyadenie_3} are not guaranteed to improve upon Dyson's inequality. Their sharpness is also doubtful, for, as we noted in Section \ref{subsubsec:transference_only_n_mixed}, in the cases $m=1$, $n\geq3$ and $n=1$, $m\geq3$ they are not sharp.

We note also that, up to now, no analogue of Schmidt--Summerer's inequality \eqref{eq:schmidt_summerer_2013} for the case when $\min(n,m)>1$ is known. It would be very interesting to find such an analogue that, combined with \eqref{eq:my_inequalities}, it would give \eqref{eq:loranoyadenie_2} and \eqref{eq:loranoyadenie_3} -- same as \eqref{eq:schmidt_summerer_2013} combined with \eqref{eq:my_simultaneous_vs_linear_form} gives \eqref{eq:bugeaud_laurent}.

\subsubsection{Inequalities between regular and uniform exponents}

In paper \cite{jarnik_czech_1954} Jarn\'{\i}k obtained the following result.

\begin{theorem}[V.\,Jarn\'{\i}k, 1954] \label{t:jarnik_czech_1954}
  Given $\Theta\in\R^{n\times m}$, assume that the equation
  \[
    \Theta\vec x=\vec y
  \]
  admits no nonzero integer solutions. Then

  (i) for $m=2$ we have
  \begin{equation}\label{eq:jarnik_czech_1954_m=2}
    \frac{\omega(\Theta)}{\hat\omega(\Theta)}\geq\hat\omega(\Theta)-1;
  \end{equation}

  (ii) for $m\geq3$ and $\hat\omega\geq(5m^2)^{m-1}$ we have
  \[
    \frac{\omega(\Theta)}{\hat\omega(\Theta)}\geq\hat\omega(\Theta)^{1/(m-1)}-3.
  \]
\end{theorem}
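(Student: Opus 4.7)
The plan is to follow Jarn\'{\i}k's classical technique: construct a sequence of integer approximation vectors using the uniform exponent $\hat\omega := \hat\omega(\Theta)$, and then extract from this sequence a single improved approximation by exploiting linear dependencies in the $m$-dimensional space of the $\vec x$-projections.

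Fix $\gamma$ slightly below $\hat\omega$ and a large starting parameter $T$. Using the defining property of $\hat\omega(\Theta)$, I recursively construct a sequence of nonzero integer points $\vec v_k=(\vec x_k,\vec y_k)\in\Z^m\oplus\Z^n$, $k=0,1,2,\ldots$, satisfying $|\vec x_k|\le t_k$ and $|\Theta\vec x_k-\vec y_k|\le t_k^{-\gamma}$, where the scales are chosen so that $t_{k+1}\approx t_k^{\gamma}$ and each $\vec v_{k+1}$ is not a rational rescaling of any previous $\vec v_j$ (if it were, the quality $t_{k+1}^{-\gamma}$ would propagate to $\vec v_j$ at its smaller scale, contradicting the scheduled growth of $t_k$). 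The hypothesis that $\Theta\vec x=\vec y$ has no nonzero integer solution will guarantee that this independence is preserved by the projections as well.

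Now I consider the $m+1$ vectors $\vec x_0,\ldots,\vec x_m\in\Z^m$, which are necessarily linearly dependent over $\Q$. Picking a primitive integer relation $\sum_{i=0}^m c_i\vec x_i=\vec 0$, Cramer's rule gives $|c_i|\le C_m\prod_{j\ne i}|\vec x_j|$ with $C_m$ depending only on $m$. Since $\Theta\sum_i c_i\vec x_i=\vec 0$, the vector $\vec w:=\sum_{i=0}^m c_i\vec y_i=-\sum_i c_i(\Theta\vec x_i-\vec y_i)$ lies in $\Z^n$ with sup-norm at most $\sum_i|c_i|\cdot|\Theta\vec x_i-\vec y_i|$. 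The standing assumption forces $\vec w\ne\vec 0$: otherwise $\sum_i c_i\vec v_i=\vec 0$ in $\Z^{m+n}$, contradicting the linear independence built into the construction. Hence $|\vec w|\ge 1$, which combined with the smallness of $|\Theta\vec x_i-\vec y_i|$ and the polynomial growth of $|c_i|$ yields an inequality on the scales $t_k$ that translates into an improved regular-quality approximation coming from a suitable combination of the $\vec v_i$'s.

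In case (i), with $m=2$, only the three vectors $\vec x_0,\vec x_1,\vec x_2$ are involved, and the dominant balance between the $|c_i|$'s and the approximation errors yields an integer solution with approximation quality of order $t_m^{-\gamma(\gamma-1)}$ at a scale comparable to $t_m$; letting $\gamma\nearrow\hat\omega$ gives $\omega(\Theta)\ge\hat\omega(\hat\omega-1)$, which is the desired inequality \eqref{eq:jarnik_czech_1954_m=2}. In case (ii), the same mechanism applied to $m+1$ vectors in $\R^m$ produces $m$-fold products in the bound for $|c_i|$, from which optimization yields the weaker exponent $\hat\omega^{1/(m-1)}$; the hypothesis $\hat\omega\ge(5m^2)^{m-1}$ is used precisely to ensure that this leading term dominates the combinatorial error absorbed by the additive $-3$. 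The main obstacle is calibrating the growth rate of $t_k$ finely enough so that the $\vec v_k$'s remain verifiably independent in the required sense and the resulting bound on $|\vec w|$ strictly sharpens the trivial estimate $\omega(\Theta)\ge\hat\omega(\Theta)$ rather than merely recovering it; this balance becomes increasingly delicate as $m$ grows, which is ultimately the source of the $1/(m-1)$ exponent appearing in part (ii).
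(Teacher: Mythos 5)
The paper does not give a proof of Theorem~\ref{t:jarnik_czech_1954}; it cites Jarn\'{\i}k's paper \cite{jarnik_czech_1954} without reproducing the argument, so the only thing I can do is examine your sketch on its own merits.

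The overall idea — exploit that $m+1$ of the $\vec x$-projections in $\R^m$ must be linearly dependent, and that the no-nonzero-rational-solution hypothesis then forces $\vec w=\sum_i c_i\vec y_i\ne\vec 0$, hence $1\leq\sum_i|c_i|\,|\Theta\vec x_i-\vec y_i|$ — is indeed at the heart of these Jarn\'{\i}k-type inequalities. But the crucial step is missing. The inequality $1\leq\sum_i|c_i|\,|\Theta\vec x_i-\vec y_i|$ is a \emph{lower} bound on the approximation errors: it says the errors cannot all be simultaneously small compared with the $|c_i|^{-1}$. It does not, by itself, \emph{produce} a new integer point with an improved error at a given scale; and lower-bounding $\omega(\Theta)$ requires exactly that. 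You assert that ``the dominant balance \ldots yields an integer solution with approximation quality of order $t_m^{-\gamma(\gamma-1)}$ at a scale comparable to $t_m$'' without any calculation, and if one plugs in your choice $t_{k+1}\approx t_k^{\gamma}$, the bound $|c_i|\leq C\prod_{j\ne i}t_j$, and $|\Theta\vec x_i-\vec y_i|\leq t_i^{-\gamma}$, one finds that all three terms $|c_i|t_i^{-\gamma}$ are large (e.g.\ for $m=2$, $i=0$: $t_1t_2t_0^{-\gamma}=t_0^{\gamma^2}\gg1$), so the constraint is vacuously satisfied and carries no information. In short, with vectors chosen simply by appealing to the definition of $\hat\omega$ at geometrically spaced scales, there is no tension to exploit.

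What is actually needed is the machinery of \emph{best approximations}: one takes the sequence $\vec v_k=(\vec x_k,\vec y_k)$ with $M_k=|\vec x_k|$ increasing and $r_k=|\Theta\vec x_k-\vec y_k|$ decreasing and minimal, so that $r_k<M_{k+1}^{-\gamma}$ encodes the uniform exponent while $\omega=\limsup(-\log r_k/\log M_k)$ encodes the regular one. The linear-dependence relation is then applied to blocks of consecutive best approximations, and the minimality built into the definition gives the extra leverage (one knows that any integer point at a smaller height already has error at least $r_k$), which is what turns the constraint into a nontrivial recursion between $M_k$ and $r_k$. One also needs to control the determinants $|\det(\vec x_{k_1},\ldots,\vec x_{k_m})|$ of the chosen $\vec x$-projections, which is a separate issue: they need not be of size $O(1)$, and this is where the analysis becomes delicate, ultimately accounting for the weaker exponent $\hat\omega^{1/(m-1)}-3$ and the hypothesis $\hat\omega\geq(5m^2)^{m-1}$ in part~(ii). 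None of this is present in your sketch, and the ``rational rescaling'' independence you impose on the constructed sequence is too weak to guarantee linear independence of $\vec v_0,\ldots,\vec v_m$ in $\Z^{m+n}$, which your argument requires in order to conclude $\vec w\ne\vec 0$.
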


It is easy to see that \eqref{eq:jarnik_czech_1954_m=2} is stronger than the trivial estimate $\omega(\Theta)\geq\hat\omega(\Theta)$ only for $\hat\omega(\Theta)>2$. In 2013 Moshchevitin \cite{moshchevitin_acta_sci_2013} improved upon Jarn\'{\i}k's result for $m=2$, $n\geq2$. His inequality is stronger than \eqref{eq:jarnik_czech_1954_m=2} and than the trivial inequality for each $\hat\omega(\Theta)>1$.

\begin{theorem}[N.\,G.\,Moshchevitin, 2013] \label{t:moshchevitin_acta_sci_2013}
  Let $\Theta\in\R^{n\times 2}$, $n\geq2$. Assume that among the rows of $\Theta$ there are two ones that are linearly independent with the vectors $(1,0)$ and $(0,1)$ over $\Q$. Assume also that $\hat\omega(\Theta)\geq1$. Then
  \[
    \frac{\omega(\Theta)}{\hat\omega(\Theta)}\geq G(\Theta),
  \]
  where $G(\Theta)$ is defined as the greatest root of the polynomial
  \[
    \hat\omega(\Theta)x^2-\big(\hat\omega(\Theta)^2-\hat\omega(\Theta)+1\big)x-\big(\hat\omega(\Theta)-1\big)^2,
  \]
  if $1\leq\hat\omega(\Theta)\leq2$, and as the greatest root of the polynomial
  \[
    \hat\omega(\Theta)x^2-\big(\hat\omega(\Theta)^2-1\big)x-\big(\hat\omega(\Theta)-1\big),
  \]
  if $\hat\omega(\Theta)\geq2$.
\end{theorem}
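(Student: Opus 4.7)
The plan is to adapt Moshchevitin's best-approximations method from \cite{moshchevitin_czech_2012} to the matrix case $m=2$, exploiting the fact that the admissible vectors $\vec x\in\Z^2$ live in a rank-$2$ lattice. Embed the problem in $\R^{n+2}$ via
\[
  \La=\bigl\{(\vec x,\,\Theta\vec x-\vec y)\,:\,\vec x\in\Z^2,\ \vec y\in\Z^n\bigr\},
\]
and extract an infinite sequence of best approximations $\vec z_k=(\vec x_k,\Theta\vec x_k-\vec y_k)$ with $M_k=|\vec x_k|$ strictly increasing and $\zeta_k=|\Theta\vec x_k-\vec y_k|$ strictly decreasing, no other nonzero point of $\La$ having smaller $\zeta$-coordinate within $|\vec x|\leq M_k$. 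The hypothesis that two rows of $\Theta$ are linearly independent with $(1,0)$ and $(0,1)$ over $\Q$ guarantees that this sequence is infinite and that $(\vec x_k)$ is not eventually confined to a single rational line in $\Z^2$.

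First I would translate the two exponents into asymptotic statements on $(M_k,\zeta_k)$:
\[
  \omega(\Theta)=\limsup_{k\to\infty}\frac{-\log\zeta_k}{\log M_k},\qquad
  \hat\omega(\Theta)=\liminf_{k\to\infty}\frac{-\log\zeta_k}{\log M_{k+1}},
\]
the latter coming from applying the uniform exponent at $t$ slightly below $M_{k+1}$. Normalising by $\rho_k=\log M_{k+1}/\log M_k$ and $\mu_k=-\log\zeta_k/\log M_k$, the quotient $\omega(\Theta)/\hat\omega(\Theta)$ is then controlled from below by the asymptotic behaviour of the sequence $(\rho_k,\mu_k)$.

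The geometric heart is to exploit the $2$-dimensional structure of $(\vec x_k)$. For three consecutive best approximations $\vec z_{k-1},\vec z_k,\vec z_{k+1}$ consider the positive integer $D_k=|\det(\vec x_{k-1},\vec x_k)|$. The emptiness of the parallelepipeds $|\vec x|<M_{k+1}$, $|\Theta\vec x-\vec y|<\zeta_k$ combined with Minkowski's theorem applied inside the $2$-plane spanned by $\vec x_{k-1},\vec x_k$, and with Mahler's Theorem~\ref{t:mahler_reformulated} applied to the compound body, yields inequalities of the shape
\[
  M_{k+1}\,\zeta_k^{\,n/2}\gtrsim D_k^{\,1/2},\qquad D_k\cdot D_{k+1}\gtrsim 1,
\]
with explicit exponents. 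Together with the minimality of $\vec z_k$, these are precisely the ingredients needed to force a recurrence of the form $\mu_{k+1}\geq\Phi(\rho_k,\mu_k,\hat\omega(\Theta))$ with $\Phi$ an explicit rational function.

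Finally, I would optimise: the recurrence admits two fixed points corresponding to two extremal geometric regimes. When $\hat\omega(\Theta)\geq 2$, consecutive vectors $\vec x_k,\vec x_{k+1}$ can be arranged so as to form a basis of $\Z^2$ (i.e.\ $D_k=1$) along the extremal trajectory, and the fixed-point equation becomes $\hat\omega(\Theta)x^2-(\hat\omega(\Theta)^2-1)x-(\hat\omega(\Theta)-1)=0$. When $1\leq\hat\omega(\Theta)\leq 2$, this basis property must fail at a definite rate, forcing the extremal trajectory to alternate between two geometric types, and producing the modified equation $\hat\omega(\Theta)x^2-(\hat\omega(\Theta)^2-\hat\omega(\Theta)+1)x-(\hat\omega(\Theta)-1)^2=0$; the two curves meet at $\hat\omega(\Theta)=2$. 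The main obstacle lies precisely here: establishing the correct case distinction and ruling out intermediate geometric types, since it is the interaction between the integer determinants $D_k$ and the continuous exponents $(\rho_k,\mu_k)$ that yields the dichotomy at $\hat\omega(\Theta)=2$ and makes the bound $G(\Theta)$ sharp.
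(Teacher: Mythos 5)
This theorem appears in the survey without proof; the paper merely cites Moshchevitin's original article \cite{moshchevitin_acta_sci_2013} and records the corrected statement, so there is no argument in the text to check your sketch against. Judged on its own, your outline has a reasonable skeleton: working with a sequence of best approximations $\vec z_k=(\vec x_k,\Theta\vec x_k-\vec y_k)$, translating $\omega(\Theta)=\limsup(-\log\zeta_k/\log M_k)$ and $\hat\omega(\Theta)=\liminf(-\log\zeta_k/\log M_{k+1})$, and tracking the integer determinants $D_k=|\det(\vec x_{k-1},\vec x_k)|$ of the $\Z^2$-projections. This is plausibly in the spirit of the actual argument, since following how the $\vec x_k$ wander in $\Z^2$ is precisely what makes $m=2$ different from the simultaneous case.

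However, the heart of the proof is absent. The two displayed estimates are acknowledged to be schematic (``of the shape'', ``with explicit exponents'') and neither is derived. The second, $D_k\cdot D_{k+1}\gtrsim1$, is trivially true whenever both determinants are nonzero (they are integers $\geq1$), so it cannot be a load-bearing estimate; and when consecutive $\vec x_k$'s are collinear it is simply false. The first, $M_{k+1}\zeta_k^{n/2}\gtrsim D_k^{1/2}$, is asserted to follow from Minkowski and Mahler but no geometric configuration is specified, and with placeholder exponents the recurrence $\mu_{k+1}\geq\Phi(\rho_k,\mu_k,\hat\omega)$ cannot even be written out. Without a concrete recurrence, the fixed-point analysis that would produce the two polynomials, and the reason they match up at $\hat\omega(\Theta)=2$, is missing entirely: your description of the two ``extremal geometric regimes'' restates the dichotomy in the conclusion rather than deriving it. You also do not address the fact that for $m\geq2$ best approximations need not be unique and consecutive $\vec x_k$'s may be collinear (so $D_k=0$ and nothing you wrote applies); the hypothesis on the rows of $\Theta$ being independent of $(1,0),(0,1)$ over $\Q$ must be what prevents this from persisting, but you never connect it to that purpose. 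The plan is a sensible outline; the proof is not there.
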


For $m=2$, $n=2$ Theorem \ref{t:moshchevitin_acta_sci_2013} is not the first improvement of inequality \eqref{eq:jarnik_czech_1954_m=2}. In his survey \cite{moshchevitin_UMN_2010} Moshchevitin presented an improvement of \eqref{eq:jarnik_czech_1954_m=2}, however, it is indeed an improvement only if $1<\hat\omega(\Theta)<(3+\sqrt5)/2$. Theorem \ref{t:moshchevitin_acta_sci_2013} is stronger than this result.

It is worth mentioning that 
%Let us make a remark concerning Moshchevitin's paper \cite{moshchevitin_acta_sci_2013}. 
Schmidt noticed that the statement of Theorem \ref{t:moshchevitin_acta_sci_2013} in Moshchevitin's paper \cite{moshchevitin_acta_sci_2013} contains two misprints. First, Moshchevitin's proof works for every $n\geq2$, however, in the statement of the theorem the restriction $n\geq3$ is imposed. Second, there is a misprint in the formula which defines the constant $G(\Theta)$.

%\begin{theorem}[Н.\,Г.\,Мощевитин, 2010] \label{t:moshchevitin_UMN_2010}
%  Пусть $\Theta\in\R^{2\times 2}$ и пусть элементы матрицы $\Theta$ линейно независимы вместе с единицей над $\Q$. Пусть $\hat\omega(\Theta)\geq1$. Тогда
%  \[
%    \frac{\omega(\Theta)}{\hat\omega(\Theta)}\geq G(\Theta),
%  \]
%  где $G(\Theta)$ --- больший корень многочлена
%  \[
%    \hat\omega(\Theta)x^2+\big(\hat\omega(\Theta)-1\big)x-\big(2\hat\omega(\Theta)^2-2\hat\omega(\Theta)+1\big).
%  \]
%\end{theorem}

Currently, there are no estimates for the ratio $\omega(\Theta)/\hat\omega(\Theta)$ in the case when $\min(n,m)>1$ stronger than Theorems \ref{t:jarnik_czech_1954} and \ref{t:moshchevitin_acta_sci_2013} of Jarn\'{\i}k and Moshchevitin.

\subsection{Embedding into $\R^{m+n}$} \label{subsec:embedding_into_R_d}

Let us demonstrate how to use the constructions discussed in Section \ref{subsec:ideas_and_methods} to prove transference theorems. Let us generalise the approach described in Section \ref{subsubsec:parallelepiped_families} by ``embedding'' the problems corresponding to $\Theta$ and $\tr\Theta$ into an $(m+n)$-dimensional Euclidean space. Set
\[
  d=m+n.
\]
Consider the lattices
\begin{equation}\label{eq:lattices_arbitrary_nm}
  \La=\La(\Theta)=
  \begin{pmatrix}
    \vec I_m & \\
    -\Theta  & \vec I_n
  \end{pmatrix}
  \Z^d,
  \qquad
  \La^\ast=\La^\ast(\Theta)=
  \begin{pmatrix}
    \vec I_m & \tr\Theta \\
    & \vec I_n
  \end{pmatrix}
  \Z^d,
\end{equation}
and two families of parallelepipeds
\begin{align}
  \label{eq:t_gamma_family_arbitrary_nm}
  \cP(t,\gamma) & =\Bigg\{\,\vec z=(z_1,\ldots,z_d)\in\R^d \ \Bigg|
                        \begin{array}{l}
%                          \max_{1\leq j\leq m}
                          |z_j|\leq t,\qquad\ \ j=1,\ldots,m \\
%                          \max_{1\leq i\leq n}
                          |z_{m+i}|\leq t^{-\gamma},\ \ i=1,\ldots,n
                        \end{array} \Bigg\}, \\
  \label{eq:s_delta_family_arbitrary_nm}
  \cQ(s,\delta) & =\Bigg\{\,\vec z=(z_1,\ldots,z_d)\in\R^d \ \Bigg|
                        \begin{array}{l}
%                          \max_{1\leq j\leq m}
                          |z_j|\leq s^{-\delta},\quad\ \ j=1,\ldots,m \\
%                          \max_{1\leq i\leq n}
                          |z_{m+i}|\leq s,\quad\ \ i=1,\ldots,n
                        \end{array} \Bigg\}.
\end{align}
Then
\begin{equation}\label{eq:omega_vs_parallelepipeds_arbitrary_nm}
\begin{aligned}
  \omega(\Theta) & =\sup\bigg\{ \gamma\geq\frac mn \,\bigg|\ \forall\,t_0\in\R\,\ \exists\,t>t_0:\text{\,we have }\cP(t,\gamma)\cap\La\neq\{\vec 0\} \bigg\}, \\
  \hat\omega(\Theta) & =\sup\bigg\{ \gamma\geq\frac mn \,\bigg|\ \exists\,t_0\in\R:\ \forall\,t>t_0\text{ we have }\cP(t,\gamma)\cap\La\neq\{\vec 0\} \bigg\}, \\
  \omega(\tr\Theta) & =\sup\bigg\{ \delta\geq\frac nm\ \bigg|\ \forall\,s_0\in\R\,\ \exists\,s>s_0:\text{\,we have }\cQ(s,\delta)\cap\La^\ast\neq\{\vec 0\} \bigg\}, \\
  \hat\omega(\tr\Theta) & =\sup\bigg\{ \delta\geq\frac nm\ \bigg|\ \exists\,s_0\in\R:\ \forall\,s>s_0\text{ we have }\cQ(s,\delta)\cap\La^\ast\neq\{\vec 0\} \bigg\}.
\end{aligned}
\end{equation}

\subsubsection{Dyson's theorem}\label{subsubsec:dyson_theorem}

If
\begin{equation}\label{eq:Q_is_P_star_arbitrary_nm}
  t=s^{((n-1)\delta+n)/(d-1)},
  \qquad
  \gamma=\frac{m\delta+m-1}{(n-1)\delta+n}\,,
\end{equation}
then $\cQ(s,\delta)$ is the $(d-1)$-th compound (see Definition \ref{def:pseudo_compound} in Section \ref{subsubsec:pseudocompounds_and_dual_lattices}) of $\cP(t,\gamma)$, i.e. $\cQ(s,\delta)=\cP(t,\gamma)^\ast$. By Mahler's theorem (once again, in disguise of Theorem \ref{t:mahler_reformulated}) we have
\begin{equation}\label{eq:dyson_transference_implication}
  \cQ(s,\delta)\cap\La^\ast\neq\{\vec 0\}
  \implies
  (d-1)\cP(t,\gamma)\cap\La\neq\{\vec 0\}.
\end{equation}
Hence by \eqref{eq:omega_vs_parallelepipeds_arbitrary_nm}
\[
  \omega(\tr\Theta)\geq\delta
  \implies
  \omega(\Theta)\geq\gamma=\frac{m\delta+m-1}{(n-1)\delta+n}\,.
\]
We get
\begin{equation*}%\label{eq:dyson_transference_swapped_recovered}
  \omega(\Theta)\geq\frac{\omega(m\tr\Theta)+m-1}{(n-1)\omega(\tr\Theta)+n}\,.
\end{equation*}
This is how inequality \eqref{eq:dyson_transference_swapped} and, therefore, Theorem \ref{t:dyson_transference} of Dyson is proved.

\subsubsection{Badly approximable matrices}\label{subsubsec:badly_approximable_matrices}

Correspondence \eqref{eq:Q_is_P_star_arbitrary_nm} and implication \eqref{eq:dyson_transference_implication} provide a very simple proof of the fact that $\Theta$ is badly approximable if and only if so is $\tr\Theta$ (see Definition \ref{def:BA_matrix}). Indeed, $\Theta$ is badly approximable if and only if there is a constant $c>0$ such that for every $t>1$ the parallelepiped $c\cP(t,m/n)$ contains no nonzero points of $\La$. Similarly, $\tr\Theta$ is badly approximable if and only if there is a constant $c>0$ such that for every $s>1$ the parallelepiped $c\cQ(s,n/m)$ contains no nonzero points of $\La^\ast$. Applying \eqref{eq:dyson_transference_implication} completes the argument.

\subsubsection{Theorem on uniform exponents}

Theorem \ref{t:my_inequalities} on uniform exponents can be proved with the help of the ``nodes'' and ``leaves'' construction described in Section \ref{subsubsec:nodes_and_leaves}. Now, instead of parallelepipeds $\cQ_r$ determined by \eqref{eq:Q_r}, we are to consider parallelepipeds
\begin{equation*}%\label{eq:Q_r_arbitrary_nm}
  \cQ_r=\Bigg\{\,\vec z=(z_1,\ldots,z_d) \in\R^d \ \Bigg|
                 \begin{array}{l}
                   |z_j|\leq (hH/r)^{-\alpha},\quad\, j=1,\ldots,m \\
                   |z_{m+i}|\leq r,\qquad\qquad i=1,\ldots,n
                 \end{array} \Bigg\}.
\end{equation*}
Respectively, parallelepipeds $\cP(t,\gamma)$ and $\cQ(s,\delta)$ are to be defined not by \eqref{eq:t_gamma_family}, \eqref{eq:s_delta_family}, but by \eqref{eq:t_gamma_family_arbitrary_nm}, \eqref{eq:s_delta_family_arbitrary_nm}. Then we can use Fig. \ref{fig:nodes_and_leaves} unaltered, with the agreement that $u$ and $v$ denote now $\max\big(|z_{m+1}|,\ldots,|z_d|\big)$ and $\max\big(|z_1|,\ldots,|z_m|\big)$ respectively.

Upon fixing $s>1$ and $\delta\geq n/m$, we set, as in \eqref{eq:Q_is_P_star_arbitrary_nm},
\[
  t=s^{((n-1)\delta+n)/(d-1)},
  \qquad
  \gamma=\frac{m\delta+m-1}{(n-1)\delta+n}\,.
\]
Set also
\[
  h=s,\qquad
  \beta=\delta,\qquad
  \alpha=
  \begin{cases}
    \dfrac{(d-1)\delta}{m\delta+m-1}
%    \dfrac{d-1}{m+(m-1)\delta^{-1}}
    \quad\,\ \text{ if }\ \delta\leq\dfrac{m-1}{n-1}\,, \\
    \dfrac{(n-1)\delta+n\vphantom{1^{\big|}}}{d-1}
    \quad\ \text{ if }\ \delta\geq\dfrac{m-1}{n-1}\,.
  \end{cases}
\]
With such a choice of parameters, the quantities $\gamma$ and $\alpha$ are related by
\begin{equation}\label{eq:gamma_via_alpha_arbitrary_nm}
  \gamma=
  \begin{cases}
    \dfrac{m-1}{n-\alpha}\qquad\ \text{ if }\ \alpha\leq1, \\
    \dfrac{m-\alpha^{-1\vphantom{\big|}}}{n-1}\quad\ \text{ if }\ \alpha\geq1.
  \end{cases}
\end{equation}
Arguing in the manner of Section \ref{subsubsec:proof_of_uniform_transference}, we get the implication
\[
  \hat\omega(\tr\Theta)\geq\alpha
  \implies
  \hat\omega(\Theta)\geq\gamma,
\]
whence it follows, in view of \eqref{eq:gamma_via_alpha_arbitrary_nm}, that
\[
  \hat\omega(\Theta)\geq
  \begin{cases}
    \dfrac{m-1}{n-\hat\omega(\tr\Theta)}\qquad\,\text{ if }\ \hat\omega(\tr\Theta)\leq1, \\
    \dfrac{m-\hat\omega(\tr\Theta)^{-1\vphantom{\big|}}}{n-1}\quad\text{ if }\ \hat\omega(\tr\Theta)\geq1.
  \end{cases}
\]
Swapping the triple $(n,m,\Theta)$ for the triple $(m,n,\tr\Theta)$, we come to \eqref{eq:my_inequalities}. This is how Theorem \ref{t:my_inequalities} is proved.

\subsubsection{Parametric geometry of numbers}\label{subsubsec:parametric_geometry_of_numbers_arbitrary_nm}

Let us interpret the problem of approximating zero with the values of several linear forms in the spirit of parametric geometry of numbers. According to Section \ref{subsubsec:parametric_geometry_of_numbers}, let us choose a lattice and a path. Define the lattices $\La$ and $\La^\ast$ by \eqref{eq:lattices_arbitrary_nm}. Define the paths $\gT$ and $\gT^\ast$ respectively by the mappings
\begin{equation*}
\begin{aligned}
  & s\mapsto\pmb\tau(s)=\big(\tau_1(s),\ldots,\tau_d(s)\big), \\
  & \tau_1(s)=\ldots=\tau_m(s)=s,\quad\tau_{m+1}(s)=\ldots=\tau_d(s)=-ms/n
\end{aligned}
\end{equation*}
and
\begin{equation*}
\begin{aligned}
  & s\mapsto\pmb\tau^\ast(s)=\big(\tau^\ast_1(s),\ldots,\tau^\ast_d(s)\big), \\
  & \tau^\ast_1(s)=\ldots=\tau^\ast_m(s)=-ns/m,\quad\tau^\ast_{m+1}(s)=\ldots=\tau^\ast_d(s)=s.
\end{aligned}
\end{equation*}
Then the regular and uniform Diophantine exponents can be expressed in terms of Schmidt--Summerer exponents with the help of the relations
\begin{equation}\label{eq:belpha_via_psis_arbitrary_nm}
\begin{aligned}
  & \big(1+\omega(\Theta)\big)\big(1+\bpsi_1(\La,\gT)\big)=\big(1+\hat\omega(\Theta)\big)\big(1+\apsi_1(\La,\gT)\big)=d/n, \\
  & \big(1+\omega(\tr\Theta)\big)\big(1+\bpsi_1(\La^\ast,\gT^\ast)\big)=\big(1+\hat\omega(\tr\Theta)\big)\big(1+\apsi_1(\La^\ast,\gT^\ast)\big)=d/m.
\end{aligned}
\end{equation}
They are deduced, same as relations \eqref{eq:belpha_via_psis}, directly from the definitions (see \cite{german_AA_2012}). % \cite{schmidt_summerer_2009}
Since the functions $L_k(\pmb\tau)$ and $S_k(\pmb\tau)$ enjoy properties (i)--(iv) formulated in Section \ref{subsubsec:parametric_geometry_of_numbers} with any given lattice, we also have % \eqref{eq:essence_of_transference}, то есть неравенство
\begin{equation*}%\label{eq:essence_of_transference}
  S_1(\La,\pmb\tau)\leq
  \dfrac{S_{d-1}(\La,\pmb\tau)}{d-1}=
  \dfrac{S_1(\La^\ast,-\pmb\tau)}{d-1}+O(1).
\end{equation*}
For Schmidt--Summerer exponents this gives
\begin{equation}\label{eq:essence_of_transference_in_exponents_arbitrary_nm}
%  \bpsi_1(\La^\ast,\gT^\ast)
%  \leq
  \bPsi_1(\La,\gT)
  \leq
  \dfrac{\bPsi_{d-1}(\La,\gT)}{d-1}=
  \dfrac{n}{m(d-1)}\,\bPsi_1(\La^\ast,\gT^\ast),
\end{equation}
since $\pmb\tau^\ast(s)=-\frac nm\pmb\tau(s)$. Taking into account that $\bPsi_1(\La,\gT)=\bpsi_1(\La,\gT)$ and $\bPsi_1(\La^\ast,\gT^\ast)=\bpsi_1(\La^\ast,\gT^\ast)$, we get
\begin{equation}\label{eq:dyson_schmimmerered}
  \bpsi_1(\La,\gT)
  \leq
  \dfrac{n}{m(d-1)}\,\bpsi_1(\La^\ast,\gT^\ast).
\end{equation}
Rewriting \eqref{eq:dyson_schmimmerered} with the help of \eqref{eq:belpha_via_psis_arbitrary_nm} in terms of $\omega(\Theta)$, $\omega(\tr\Theta)$ leads exactly to Dyson's inequality \eqref{eq:dyson_transference}. Thus, parametric geometry of numbers provides another way to prove Dyson's theorem. And same as with Khintchine's inequalities, this method enables to refine Dyson's inequality by splitting it into a chain of inequalities between intermediate exponents. The explicit formulation of this result, as well as its proof, can be found in \cite{german_AA_2012}.

\section{Multiplicative exponents}\label{sec:mult}

As in the previous Section, given a matrix
\[ 
  \Theta=
  \begin{pmatrix}
    \theta_{11} & \cdots & \theta_{1m} \\
    \vdots & \ddots & \vdots \\
    \theta_{n1} & \cdots & \theta_{nm}
  \end{pmatrix},\qquad
  \theta_{ij}\in\R,\quad m+n\geq3,
\]
let us consider the same system of linear equations
\begin{equation*} %\label{eq:the_system}
  \Theta\vec x=\vec y
\end{equation*}
with variables $\vec x=(x_1,\ldots,x_m)\in\R^m$, $\vec y=(y_1,\ldots,y_n)\in\R^n$.

Up until now, we have been dealing with the question how small the quantity $|\Theta\vec x-\vec y|$ can be with $\vec x\in\Z^m$, $\vec y\in\Z^n$ satisfying the restriction $0<|\vec x|\leq t$, where $|\cdot|$ denotes $\ell^\infty$-norm (sup-norm). The choice of $\ell^\infty$-norm for estimating the ``magnitude'' of a vector is quite conditional. Considering $\ell^2$-norm slightly alters the problem preserving, however, the values of Diophantine exponents. The reason is that any two norms in a finite dimensional space are known to be equivalent. The problem changes essentially if a vector's ``magnitude'' is measured by the product of its coordinates. If we associate $\ell^1$-norm with the arithmetic mean, then the product of coordinates can be associated with the geometric mean. Such an approach is also absolutely classical. As an example, we can recall the famous Littlewood conjecture.

\begin{littlewood}
  For any $\theta_1,\theta_2\in\R$ and any $\e>0$ the inequality
  \[
    \prod_{i=1,2}|\theta_ix-y_i|
    \leq
    \e|x|^{-1}
  \]
  admits infinitely many solutions in $(x,y_1,y_2)\in\Z^3$ with nonzero $x$.
\end{littlewood}

Respectively, it is natural to consider as a multiplicative analogue of the regular Diophantine exponent of a pair $(\theta_1,\theta_2)$ the supremum of real $\gamma$ such that the inequality
\[
  \prod_{i=1,2}|\theta_ix-y_i|^{1/2}
  \leq
  |x|^{-\gamma}
\]
admits infinitely many solutions in $(x,y_1,y_2)\in\Z^3$ with nonzero $x$. This is an example for the case $n=2$, $m=1$.

In order to give the definition of multiplicative exponents in the general case, let us set for each $\vec z=(z_1,\ldots,z_k)\in\R^k$
\[
  \Pi(\vec z)=\prod_{\begin{subarray}{c}1\leq i\leq k\end{subarray}}|z_i|^{1/k}
  \quad\text{ and }\quad
  \Pi'(\vec z)=\prod_{1\leq i\leq k}\max\big(1,|z_i|\big)^{1/k}.
\]

\begin{definition} \label{def:mbeta}
  The supremum of real $\gamma$ satisfying the condition that there exist arbitrarily large $t$ such that (resp. for every $t$ large enough) the system
  \begin{equation} \label{eq:mbeta}
    \Pi'(\vec x)\leq t,
    \qquad
    \Pi(\Theta\vec x-\vec y)\leq t^{-\gamma}
  \end{equation}
  admits a nonzero solution $(\vec x,\vec y)\in\Z^m\oplus\Z^n$ with nonzero $\vec x$ is called the \emph{regular} (resp. \emph{uniform}) \emph{multiplicative Diophantine exponent} of $\Theta$ and is denoted by $\omega_\times(\Theta)$ (resp. $\hat\omega_\times(\Theta)$).
\end{definition}

Ordinary and multiplicative exponents are related by the inequalities
\begin{equation} \label{eq:ord_and_mult_trivial}
\begin{aligned}
%  m/n\leq
  &
  \omega(\Theta)\leq
  \omega_\times(\Theta)\leq
  \begin{cases}
    m\omega(\Theta),\quad
    \text{ if }n=1, \\
    +\infty,\qquad\
    \text{ if }n\geq2,
  \end{cases} \\
  &
  \hat\omega(\Theta)\leq
  \hat\omega_\times(\Theta)\leq
  \begin{cases}
    m\hat\omega(\Theta),\quad
    \text{ if }n=1, \\
    +\infty,\qquad\
    \text{ if }n\geq2,
  \end{cases}
\end{aligned}
\end{equation}
which follow from the fact that for each $\vec z\in\R^k$ we have
\[
  \Pi(\vec z)\leq|\vec z|,
\]
and for each $\vec z\in\Z^k$ we have
\[
  |\vec z|^{1/k}\leq\Pi'(\vec z)\leq|\vec z|.
\]
Corollary \ref{cor:belpha_matrix_trivial_inequalities} to Dirichlet's theorem implies ``trivial'' inequalities
\[
\begin{aligned}
  &
  \omega_\times(\Theta)\geq\omega(\Theta)\geq m/n,
  \qquad
  \omega_\times(\tr\Theta)\geq\omega(\tr\Theta)\geq n/m, \\
  &
  \hat\omega_\times(\Theta)\geq\hat\omega(\Theta)\geq m/n,
  \qquad
  \hat\omega_\times(\tr\Theta)\geq\hat\omega(\tr\Theta)\geq n/m,
\end{aligned}
\]
where $\tr\Theta$, as before, denotes the transposed matrix.

\subsection{Analogue of Dyson's theorem}

In 1979 Schmidt and Wang showed in \cite{schmidt_wang} that, same as in the case of ordinary Diophantine exponents (see \eqref{eq:dyson_n_m_equivalence}), we have
\begin{equation}\label{eq:schmidt_wang}
  \omega_\times(\Theta)=m/n\iff\omega_\times(\tr\Theta)=n/m.
\end{equation}
Again, this is a manifestation of the transference principle. As Bugeaud noted in \cite{bugeaud_multiplicative_proceedings}, the argument Schmidt and Wang used to prove \eqref{eq:schmidt_wang} can be applied to prove an analogue of Dyson's inequality \eqref{eq:dyson_transference}, under certain restrictions imposed on $\Theta$. The respective analogue, with no restriction on $\Theta$, was obtained by the author in \cite{german_mult}.

\begin{theorem}[O.\,G, 2011]\label{t:german_multiplicative_dysonlike}
  For every $\Theta\in\R^{n\times m}$ we have
  \begin{equation}\label{eq:german_multiplicative_dysonlike}
    \omega_\times(\tr\Theta)\geq\frac{n\omega_\times(\Theta)+n-1}{(m-1)\omega_\times(\Theta)+m}\,.
  \end{equation}
\end{theorem}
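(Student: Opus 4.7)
The plan is to imitate the embedding--and--Mahler approach from Section~\ref{subsubsec:dyson_theorem}, but instead of working with the single two-parameter family $\cP(t,\gamma)$ of ``symmetric'' parallelepipeds, I will attach to each individual multiplicative solution a box whose side lengths are tailored to that solution's coordinates. Fix $\gamma<\omega_\times(\Theta)$; by definition there exist arbitrarily large $t$ and nonzero pairs $(\vec x,\vec y)\in\Z^m\oplus\Z^n$ with $\vec x\neq\vec 0$, $\Pi'(\vec x)\leq t$ and $\Pi(\Theta\vec x-\vec y)\leq t^{-\gamma}$. Embedding as in \eqref{eq:lattices_arbitrary_nm}, to each such solution I attach the box
\[
  \cP=\Big\{\vec z\in\R^{m+n}\,\Big|\,|z_j|\leq\xi_j\text{ for }j\leq m,\ |z_{m+i}|\leq\eta_i\text{ for }i\leq n\Big\},
\]
with $\xi_j=\max(1,|x_j|)$ and $\eta_i=|\theta_i^{\textup{row}}\vec x-y_i|$ (perturbed by a vanishing $\varepsilon>0$ to avoid zero coordinates). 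Then $(\vec x,\vec y-\Theta\vec x)\in\cP\cap\La\setminus\{\vec 0\}$, and the quantities $X=\prod\xi_j$, $Y=\prod\eta_i$, $V=XY$ satisfy $X\leq t^m$, $Y\leq t^{-n\gamma}$, hence $V\leq t^{m-n\gamma}$.

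Next, I invoke Theorem~\ref{t:mahler_reformulated} with the roles of $\La$ and $\La^\ast$ interchanged, applied to the box $\cR=\{|z_j|\leq V^{1/(d-1)}/\xi_j,\ |z_{m+i}|\leq V^{1/(d-1)}/\eta_i\}$ whose $(d-1)$-th pseudocompound is precisely $\cP$. This yields a nonzero $(\vec p,\vec q)\in(d-1)\cR\cap\La^\ast$, which in the parametrization of \eqref{eq:lattices_arbitrary_nm} writes as $(\vec x'+\tr\Theta\,\vec y',\vec y')$ with $(\vec x',\vec y')\in\Z^m\oplus\Z^n$, subject to the coordinatewise bounds $|(\vec x'+\tr\Theta\,\vec y')_j|\leq(d-1)V^{1/(d-1)}/\xi_j$ and $|y'_i|\leq(d-1)V^{1/(d-1)}/\eta_i$. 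A short argument shows $\vec y'\neq\vec 0$: if every $|y'_i|<1$, then $\eta_i>(d-1)V^{1/(d-1)}$ for all $i$; taking the product and using $V=XY$ with $X\geq 1$ gives $Y^{m-1}>(d-1)^{n(d-1)}X^n\geq 1$, incompatible with $Y<1$ and $m\geq 1$ (since $d\geq 3$).

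Multiplying the coordinate bounds (and accounting for the indices with $y'_i=0$, which cost only a bounded factor), I obtain
\[
  \Pi(\vec x'+\tr\Theta\,\vec y')\ll V^{1/(d-1)}\,X^{-1/m},
  \qquad
  \Pi'(\vec y')\ll V^{1/(d-1)}\,Y^{-1/n}.
\]
Writing $X=t^{a}$ with $a\in[0,m]$ and $Y^{-1}=t^{b}$ with $b\geq n\gamma$, the exponent $\delta$ of the produced solution to the transposed multiplicative problem satisfies
\[
  \delta=\frac{-\log\Pi(\vec x'+\tr\Theta\,\vec y')}{\log\Pi'(\vec y')}
  \geq\frac{n(n-1)\,a+mn\,b}{mn\,a+m(m-1)\,b}+o(1).
\]
A one-line derivative computation shows that this rational function is strictly decreasing in $a/b$, so its infimum over the feasible rectangle is attained at $(a,b)=(m,n\gamma)$, where it equals $\frac{n\gamma+n-1}{(m-1)\gamma+m}$. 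Since $\Pi'(\vec y')$ tends to infinity along the sequence of solutions (because $V^{1/(d-1)}Y^{-1/n}$ grows unboundedly with $t$), we conclude $\omega_\times(\tr\Theta)\geq\frac{n\gamma+n-1}{(m-1)\gamma+m}$; sending $\gamma\to\omega_\times(\Theta)^-$ and using monotonicity of the right-hand side completes the proof.

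The main obstacle is the treatment of degenerate configurations in which some $\eta_i$ are orders of magnitude smaller than $V^{1/(d-1)}$ (forcing certain coordinates $y'_i$ to vanish), which is precisely when $\Pi'$ genuinely differs from $\Pi$. Earlier proofs along the Schmidt--Wang lines had to impose non-degeneracy assumptions on $\Theta$ to exclude such configurations. To obtain Theorem~\ref{t:german_multiplicative_dysonlike} without restrictions one must verify that either the original box can be redistributed (preserving its product $V$ and still containing the required lattice point) so as to keep all $\eta_i$ comparable to $V^{1/(d-1)}$, or that the dual estimate on $\Pi'(\vec y')$ survives the partial collapse of coordinates up to a factor bounded independently of $\Theta$. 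Carrying out this uniform bookkeeping, rather than only handling the ``balanced'' boxes, is the technical heart of the argument in~\cite{german_mult}.
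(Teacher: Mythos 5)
Your framework---embedding into $\R^{m+n}$ via \eqref{eq:lattices_arbitrary_nm}, applying Mahler's theorem in the pseudocompound form of Theorem~\ref{t:mahler_reformulated}, and optimizing the exponent over the feasible $(a,b)$---is exactly the mechanism the paper uses (Sections~\ref{subsubsec:embedding_into_R_d_mult}--\ref{subsubsec:dimension_reduction}); the cosmetic differences (producing $\tr\Theta$-solutions from $\Theta$-solutions instead of the reverse; attaching a box to each solution instead of the families $\cF(t,\gamma)$, $\cG(s,\delta)$) are harmless, and the final optimization is correct. However, the step ``accounting for the indices with $y'_i=0$, which cost only a bounded factor'' and the resulting bound $\Pi'(\vec y')\ll V^{1/(d-1)}Y^{-1/n}$ contain a genuine error: the factor is \emph{not} bounded. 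Mahler gives $|y'_i|\le(d-1)V^{1/(d-1)}/\eta_i$, and when $\eta_i>(d-1)V^{1/(d-1)}$ this right-hand side is $<1$, forcing $y'_i=0$; but then $\max(1,|y'_i|)=1>(d-1)V^{1/(d-1)}/\eta_i$, so $\max(1,|y'_i|)$ is \emph{not} dominated by $(d-1)V^{1/(d-1)}/\eta_i$. The correction one must pay relative to the target $(d-1)V^{1/(d-1)}Y^{-1/n}$ is
\[
\Big(\prod_{i:\,\eta_i>(d-1)V^{1/(d-1)}}\frac{\eta_i}{(d-1)V^{1/(d-1)}}\Big)^{1/n},
\]
which is unbounded whenever $\gamma>m/n$: then $V\le t^{m-n\gamma}\to0$, so $V^{-1/(d-1)}\to\infty$ while an individual $\eta_i$ can remain of order one. (You also have the direction reversed in your last paragraph: it is the coordinates where $\eta_i$ is \emph{large} relative to $V^{1/(d-1)}$, not small, that force $y'_i=0$.)

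You correctly flag that handling this degeneracy is the crux, but the resolution is not the kind of ``uniform bookkeeping'' your last paragraph imagines; no direct bound survives. The paper (Theorem~\ref{t:multi_transference_implication_after_induction} and Section~\ref{subsubsec:dimension_reduction}) performs a genuine dimension reduction: set the offending side lengths to $1$, rescale the remaining ones to preserve the product (the tuple $\hat{\pmb\lambda}$), project the lattice point onto the complementary coordinate subspace, apply Mahler for the truncated lattices $\La_\downarrow$, $\La^\ast_\downarrow$ in dimension $d-k$, and lift back. Without this step the inequality $\Pi'(\vec y')\ll V^{1/(d-1)}Y^{-1/n}$ is simply false, and the final estimate does not follow. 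A secondary issue: an upper bound $V^{1/(d-1)}Y^{-1/n}\to\infty$ does not show $\Pi'(\vec y')\to\infty$; one must either argue the produced $\vec y'$ form an infinite set, or note that if they were finitely many then some $\Pi(\tr\Theta\vec y'-\vec x')$ would have to vanish, giving $\omega_\times(\tr\Theta)=\infty$ trivially.
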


The method \eqref{eq:german_multiplicative_dysonlike} is proved with gives an analogous inequality for uniform exponents:
\begin{equation}\label{eq:german_multiplicative_uniform_dysonlike}
  \hat\omega_\times(\tr\Theta)\geq\frac{n\hat\omega_\times(\Theta)+n-1}{(m-1)\hat\omega_\times(\Theta)+m}\,.
\end{equation}
However, nothing stronger concerning uniform multiplicative exponents is currently known. The method Theorem \ref{t:my_inequalities} concerning ordinary uniform exponents was proved with does not work in the multiplicative setting because of the non-convexity of the functionals $\Pi(\cdot)$ and $\Pi'(\cdot)$. Besides that, there exist neither ``mixed'' inequalities in the spirit of Theorems \ref{t:bugeaud_laurent}, \ref{t:schmidt_summerer_2013}, \ref{t:loranoyadenie}, nor inequalities relating regular and uniform exponents -- even in the case $m+n=3$.

\subsection{Applying Mahler's theorem and dimension reduction}

The standard application of Mahler's theorem -- either in the form of Theorem \ref{t:mahler}, or in the form of Theorem \ref{t:mahler_reformulated} -- is not enough for proving Theorem \ref{t:german_multiplicative_dysonlike}. The reason is that $\Pi'(\cdot)$ differs from $\Pi(\cdot)$. In the aforementioned paper \cite{schmidt_wang} Schmidt and Wang evade this obstacle by induction with respect to dimension. The same technique is used in paper \cite{german_mult}. Here we describe a somewhat more explicit construction that also requires reduction of dimension.

\subsubsection{Embedding into $\R^{m+n}$} \label{subsubsec:embedding_into_R_d_mult}

Working with the functionals $\Pi(\cdot)$ and $\Pi'(\cdot)$ requires considering more vast families of parallelepipeds than \eqref{eq:t_gamma_family_arbitrary_nm} and \eqref{eq:s_delta_family_arbitrary_nm}.

As in Section \ref{subsec:embedding_into_R_d}, let us set
\[
  d=m+n
\]
and consider the lattices
\begin{equation}\label{eq:lattices_mult}
  \La=\La(\Theta)=
  \begin{pmatrix}
    \vec I_m & \\
    -\Theta  & \vec I_n
  \end{pmatrix}
  \Z^d,
  \qquad
  \La^\ast=\La^\ast(\Theta)=
  \begin{pmatrix}
    \vec I_m & \tr\Theta \\
    & \vec I_n
  \end{pmatrix}
  \Z^d.
\end{equation}
For each tuple $(\pmb\lambda,\pmb\mu)=(\lambda_1,\ldots,\lambda_m,\mu_1,\ldots,\mu_n)\in\R_+^d$, let us define the parallelepiped $\cP(\pmb\lambda,\pmb\mu)$ by
\begin{equation}\label{eq:prallelepipeds_mult}
  \cP(\pmb\lambda,\pmb\mu)=\Bigg\{\,\vec z=(z_1,\ldots,z_d)\in\R^d \ \Bigg|
  \begin{array}{l}
    |z_j|\leq\lambda_j,\quad\,\ j=1,\ldots,m \\
    |z_{m+i}|\leq\mu_i,\ \ i=1,\ldots,n
  \end{array} \Bigg\}.
\end{equation}
Let us also define, for all positive $t$, $\gamma$, $s$, $\delta$, the families
\begin{align}
  \label{eq:t_gamma_family_arbitrary_nm_for_mult}
  \cF(t,\gamma) & =\Big\{\, \cP(\pmb\lambda,\pmb\mu) \ \Big|\
                            \Pi(\pmb\lambda)=t,\
                            \Pi(\pmb\mu)=t^{-\gamma},\
                            \min_{1\leq j\leq m}\lambda_j\geq1 \Big\}, \\
  \label{eq:s_delta_family_arbitrary_nm_for_mult}
  \cG(s,\delta) & =\Big\{\, \cP(\pmb\lambda,\pmb\mu) \ \Big|\
                            \Pi(\pmb\lambda)=s^{-\delta},\
                            \Pi(\pmb\mu)=s,\
                            \min_{1\leq i\leq n}\mu_i\geq1 \Big\}.
\end{align}
Every parallelepiped $\cP(\pmb\lambda,\pmb\mu)$ satisfying the conditions
\begin{equation}\label{eq:lambda_mu_for_omega_mult}
  \Pi'(\pmb\lambda)\leq t,
  \qquad
  \Pi(\pmb\mu)\leq t^{-\gamma},
\end{equation}
is contained in a parallelepiped from \eqref{eq:t_gamma_family_arbitrary_nm_for_mult}. Conversely, each parallelepiped $\cP(\pmb\lambda,\pmb\mu)$ from \eqref{eq:t_gamma_family_arbitrary_nm_for_mult} satisfies \eqref{eq:lambda_mu_for_omega_mult}. Similarly, every parallelepiped $\cP(\pmb\lambda,\pmb\mu)$ satisfying the conditions
\begin{equation}\label{eq:lambda_mu_for_omega_mult_transpose}
  \Pi(\pmb\lambda)\leq s^{-\delta},
  \qquad
  \Pi'(\pmb\mu)\leq s
\end{equation}
is contained in a parallelepiped from \eqref{eq:s_delta_family_arbitrary_nm_for_mult}. And conversely, each parallelepiped $\cP(\pmb\lambda,\pmb\mu)$ from \eqref{eq:s_delta_family_arbitrary_nm_for_mult} satisfies \eqref{eq:lambda_mu_for_omega_mult_transpose}. Therefore, the following analogue of \eqref{eq:omega_vs_parallelepipeds} and \eqref{eq:omega_vs_parallelepipeds_arbitrary_nm} holds for multiplicative exponents:
\begin{equation}\label{eq:omega_mult_vs_parallelepipeds}
\begin{aligned}
  \omega_\times(\Theta) & =
  \sup\bigg\{ \gamma\geq\frac mn \,\bigg|\
              \forall\,t_0\in\R\,\ \exists\,t>t_0:\,\
              \exists\cP\in\cF(t,\gamma):\ \cP\cap\La\neq\{\vec 0\} \bigg\}, \\
%  \hat\omega(\Theta) & =\sup\bigg\{ \gamma\geq\frac mn \,\bigg|\ \exists\,t_0\in\R:\ \forall\,t>t_0\text{ справедливо }\cP(t,\gamma)\cap\La\neq\{\vec 0\} \bigg\}, \\
  \omega_\times(\tr\Theta) & =
  \sup\bigg\{ \delta\geq\frac nm\ \bigg|\
              \forall\,s_0\in\R\,\ \exists\,s>s_0:\,\
              \exists\cP\in\cG(s,\delta):\ \cP\cap\La^\ast\neq\{\vec 0\} \bigg\}.
%  \hat\omega(\tr\Theta) & =\sup\bigg\{ \,\delta\geq\frac nm\ \bigg|\ \exists\,s_0\in\R:\ \forall\,s>s_0\text{ справедливо }\cQ(s,\delta)\cap\La^\ast\neq\{\vec 0\} \bigg\}.
\end{aligned}
\end{equation}

\subsubsection{Pseudocompounds} \label{subsubsec:pseudocompound_mult}

Let us assign to each tuple $(\pmb\lambda,\pmb\mu)=(\lambda_1,\ldots,\lambda_m,\mu_1,\ldots,\mu_n)\in\R_+^d$ the tuple $(\pmb\lambda^\ast,\pmb\mu^\ast)=(\lambda_1^\ast,\ldots,\lambda_m^\ast,\mu_1^\ast,\ldots,\mu_n^\ast)$,
\begin{equation}\label{eq:lambda_mu_ast}
\begin{aligned}
  & \lambda_j^\ast=\lambda_j^{-1}\Pi(\pmb\lambda)^m\Pi(\pmb\mu)^n,
    \qquad j=1,\ldots,m,
    \vphantom{\bigg|} \\
  & \mu_i^\ast=\mu_i^{-1}\Pi(\pmb\lambda)^m\Pi(\pmb\mu)^n,
    \qquad\,i=1,\ldots,n.
\end{aligned}
\end{equation}
Then $\cP(\pmb\lambda,\pmb\mu)^\ast=\cP(\pmb\lambda^\ast,\pmb\mu^\ast)$, i.e. $\cP(\pmb\lambda^\ast,\pmb\mu^\ast)$ is the pseudocompound of $\cP(\pmb\lambda,\pmb\mu)$ (see Definition \ref{def:pseudo_compound} in Section \ref{subsubsec:pseudocompounds_and_dual_lattices}).

As in Section \ref{subsubsec:dyson_theorem}, let us assign to each pair $(s,\delta)$ the pair $(t,\gamma)$ determined by \ref{eq:Q_is_P_star_arbitrary_nm}, i.e.
\begin{equation}\label{eq:Q_is_P_star_arbitrary_nm_for_mult}
  t=s^{((n-1)\delta+n)/(d-1)},
  \qquad
  \gamma=\frac{m\delta+m-1}{(n-1)\delta+n}.
\end{equation}
Consider the family
\[
  \cF^\ast(t,\gamma)=\big\{ \cP^\ast \,\big|\, \cP\in\cF(t,\gamma) \big\}
\]
of pseudocompounds. If $\cP(\pmb\lambda,\pmb\mu)\in\cF(t,\gamma)$, then
\[
\begin{aligned}
  & \,\ \Pi(\pmb\lambda^\ast)=
    \Pi(\pmb\lambda)^{m-1}\Pi(\pmb\mu)^n=
    t^{(m-1)-n\gamma}=
    s^{-\delta}, \\
  & \,\ \Pi(\pmb\mu^\ast)=
    \Pi(\pmb\lambda)^m\Pi(\pmb\mu)^{n-1}=
    t^{m-(n-1)\gamma}=
    s,\vphantom{\bigg|} \\
  & \max_{1\leq j\leq m}\lambda_j^\ast
    \leq
    \Pi(\pmb\lambda)^m\Pi(\pmb\mu)^n.
%    =t^{m-n\gamma}=
%    s^{(n-m\delta)/(d-1)}
\end{aligned}
\]
For convenience, let us set
\begin{equation}\label{eq:lambda_mu_product}
  \pi(\pmb\lambda,\pmb\mu)=\Pi(\pmb\lambda)^m\Pi(\pmb\mu)^n.
\end{equation}
Then $\cF^\ast(t,\gamma)$ can be written as
\begin{equation}\label{eq:t_gamma_family_arbitrary_nm_for_mult_pseudocompound}
  \cF^\ast(t,\gamma)=\Big\{\, \cP(\pmb\lambda,\pmb\mu) \ \Big|\
                              \Pi(\pmb\lambda)=s^{-\delta},\
                              \Pi(\pmb\mu)=s,\
                              \max_{1\leq j\leq m}\lambda_j\leq\pi(\pmb\lambda,\pmb\mu) \Big\}.
\end{equation}
As we can see, the inclusion $\cF^\ast(t,\gamma)\subset\cG(s,\delta)$ does not hold because of the absence of the condition $\min_{1\leq i\leq n}\mu_i\geq1$ in \eqref{eq:t_gamma_family_arbitrary_nm_for_mult_pseudocompound}. Therefore, we consider more narrow families
\begin{equation}\label{eq:G_prime_definition}
\begin{aligned}
  \cG'(s,\delta) & =\Big\{\, \cP(\pmb\lambda,\pmb\mu)\in\cF^\ast(t,\gamma) \ \Big|\
                             \min_{1\leq i\leq n}\mu_i\geq1 \Big\}= \\
                 & =\Big\{\, \cP(\pmb\lambda,\pmb\mu)\in\cG(s,\delta) \ \Big|\
                             \max_{1\leq j\leq m}\lambda_j\leq\pi(\pmb\lambda,\pmb\mu) \Big\}=\vphantom{\frac{\big|}{}} \\
                 & =\Bigg\{\, \cP(\pmb\lambda,\pmb\mu) \ \Bigg|\
                              \begin{array}{l}
                                \Pi(\pmb\lambda)=s^{-\delta},\ \
                                \displaystyle
                                \max_{1\leq j\leq m}\lambda_j\leq\pi(\pmb\lambda,\pmb\mu)\vphantom{1^{\big|}} \\
                                \Pi(\pmb\mu)=s,\quad\ \
                                \displaystyle
                                \min_{1\leq i\leq n}\mu_i\geq1
                              \end{array} \Bigg\},
\end{aligned}
\end{equation}
\begin{equation}\label{eq:F_prime_definition}
\begin{aligned}
  \cF'(t,\gamma) & =\Big\{\, \cP(\pmb\lambda,\pmb\mu)\in\cF(t,\gamma) \ \Big|\
                             \max_{1\leq i\leq n}\mu_i\leq\pi(\pmb\lambda,\pmb\mu) \Big\}= \vphantom{\Big|} \\
                 & =\Bigg\{\, \cP(\pmb\lambda,\pmb\mu) \ \Bigg|\
                              \begin{array}{l}
                                \Pi(\pmb\lambda)=t,\quad\,\
                                \displaystyle
                                \min_{1\leq j\leq m}\lambda_j\geq1\vphantom{1^{\big|}} \\
                                \Pi(\pmb\mu)=t^{-\gamma},\ \
                                \displaystyle
                                \max_{1\leq i\leq n}\mu_i\leq\pi(\pmb\lambda,\pmb\mu)
                              \end{array} \Bigg\}.\ \
\end{aligned}
\end{equation}
Then
\[
  \cG'(s,\delta)=
  \big\{ \cP^\ast \,\big|\, \cP\in\cF'(t,\gamma) \big\}.
\]
Thus, Mahler's theorem in disguise of Theorem \ref{t:mahler_reformulated} gives the implication
\begin{equation}\label{eq:multi_transference_implication_before_induction}
  \exists\cP\in\cG'(s,\delta):\cP\cap\La^\ast\neq\{\vec 0\}
  \implies
  \exists\cP\in\cF'(t,\gamma):(d-1)\cP\cap\La\neq\{\vec 0\}.
\end{equation}
But proving Theorem \ref{t:german_multiplicative_dysonlike} requires an analogous implication for the families $\cG(s,\delta)$ and $\cF(t,\gamma)$. Note, however, that it suffices to show that we can replace $\cG'(s,\delta)$ with $\cG(s,\delta)$ in \eqref{eq:multi_transference_implication_before_induction}, since $\cF'(t,\gamma)\subset\cF(t,\gamma)$.

\subsubsection{Dimension reduction} \label{subsubsec:dimension_reduction}

Let us show that we can indeed improve \eqref{eq:multi_transference_implication_before_induction} the way indicated above, i.e. that the following statement holds.

\begin{theorem}\label{t:multi_transference_implication_after_induction}
  Let $\La$, $\La^\ast$ be defined by \eqref{eq:lattices_mult}. Let $t$, $\gamma$, $s$, $\delta$ be positive real numbers related by \eqref{eq:Q_is_P_star_arbitrary_nm_for_mult}. Let $\cF(s,\delta)$, $\cG(s,\delta)$, $\cF'(t,\gamma)$, $\cG'(s,\delta)$ be defined by \eqref{eq:t_gamma_family_arbitrary_nm_for_mult}, \eqref{eq:s_delta_family_arbitrary_nm_for_mult}, \eqref{eq:G_prime_definition}, \eqref{eq:F_prime_definition}. Then
  \begin{equation}\label{eq:multi_transference_implication_after_induction}
    \exists\cP\in\cG(s,\delta):\cP\cap\La^\ast\neq\{\vec 0\}
    \implies
    \exists\cP\in\cF'(t,\gamma):(d-1)\cP\cap\La\neq\{\vec 0\}.
  \end{equation}
\end{theorem}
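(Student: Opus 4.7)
The plan is to prove the theorem by induction on $m$, the number of columns of $\Theta$. The base case $m=1$ is immediate: the condition $\max_j \lambda_j \leq \pi(\pmb\lambda,\pmb\mu)$ that defines $\cG'$ inside $\cG$ becomes $\lambda_1 \leq \lambda_1 s^n$, which holds automatically because the requirement $\min_i\mu_i \geq 1$ in $\cG(s,\delta)$, combined with $\Pi(\pmb\mu)=s$, forces $s \geq 1$. Hence $\cG(s,\delta) = \cG'(s,\delta)$ when $m=1$, and \eqref{eq:multi_transference_implication_after_induction} reduces verbatim to \eqref{eq:multi_transference_implication_before_induction}.

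For the inductive step $m \geq 2$, let $\cP(\pmb\lambda,\pmb\mu) \in \cG(s,\delta)$ contain a nonzero $\vec v \in \La^\ast$. If this $\cP$ already belongs to $\cG'(s,\delta)$, the conclusion follows from \eqref{eq:multi_transference_implication_before_induction}. Otherwise $\lambda_{j_0} > \pi(\pmb\lambda,\pmb\mu)$ for some $j_0$, which we take to be $j_0=m$ after permuting coordinates. Since the standard basis vector $\vec e_m$ is one of the generators of $\La^\ast$, the vector $\vec v' = \vec v - v_m\vec e_m$ still lies in $\La^\ast$ and has zero $m$-th coordinate. In the generic case $\vec v \neq v_m\vec e_m$, the projection $\varpi \colon \R^d \to \R^{d-1}$ forgetting the $m$-th coordinate sends $\vec v'$ to a nonzero point of the lattice $\tilde\La^\ast := \La^\ast(\tilde\Theta)$ attached to the reduced matrix $\tilde\Theta \in \R^{n\times(m-1)}$ obtained by deleting the $m$-th column of $\Theta$, and this image lies inside the reduced parallelepiped $\cP(\tilde{\pmb\lambda},\pmb\mu)$ with $\tilde{\pmb\lambda} = (\lambda_1,\ldots,\lambda_{m-1})$.

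Next I choose reduced parameters $\tilde s,\tilde\delta$ so that $\cP(\tilde{\pmb\lambda},\pmb\mu) \in \tilde\cG(\tilde s,\tilde\delta)$ in the $(n,m-1)$-setting and link them to $\tilde t,\tilde\gamma$ via the reduced analogue of \eqref{eq:Q_is_P_star_arbitrary_nm_for_mult}. The inductive hypothesis then produces a nonzero point of $\tilde\La$ inside $(d-2)\tilde\cP$ for some $\tilde\cP \in \tilde\cF'(\tilde t,\tilde\gamma)$. This point lifts canonically to $\La$ by inserting a zero in the $m$-th slot, since the embedding $\tilde\La \hookrightarrow \La$ defined by $x_m = 0$ is consistent with dropping the $m$-th column of $\Theta$ from the product $\Theta\vec x$. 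Finally, one calibrates the $m$-th face length $\lambda_m$ of the lifted parallelepiped so that $\Pi(\pmb\lambda) = t$, $\lambda_m \geq 1$, and $\max_i\mu_i \leq \pi(\pmb\lambda,\pmb\mu)$ all hold; the lifted point then sits inside $(d-1)\cP$ for the resulting $\cP \in \cF'(t,\gamma)$. The degenerate subcase $\vec v = v_m\vec e_m$ delivers no Diophantine information on $\Theta$ and must be treated separately, by producing a nonzero element of $\La$ directly from a rounding of the $m$-th column of $\Theta$ against a unit vector of $\Z^m$.

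The main obstacle is the careful parameter bookkeeping. Because the relations \eqref{eq:Q_is_P_star_arbitrary_nm_for_mult} linking $(s,\delta)$ to $(t,\gamma)$ depend explicitly on the dimensions $m$ and $n$, the reduced parameters $(\tilde s,\tilde\delta,\tilde t,\tilde\gamma)$ for the $(n,m-1)$-problem must be selected so that, after the lift, the full-dimensional defining constraints of $\cF'(t,\gamma)$ are all satisfied --- especially the delicate condition $\max_i\mu_i \leq \pi$, whose value is shifted by the introduction of $\lambda_m$. The multiplicative constant $d-2$ coming from the inductive conclusion is readily absorbed by the factor $d-1$ appearing in the goal, but the precise dependence of $\pi$ on the chosen $\lambda_m$ must be tracked carefully to verify that this absorption is legitimate.
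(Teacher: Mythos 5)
Your high-level idea — delete a coordinate along which the defining inequality for $\cG'(s,\delta)$ fails, pass to the lower-dimensional problem, and lift — is the same dimension-reduction strategy the paper uses, so your instinct is right. But you implement it as an induction on $m$, whereas the paper does \emph{not} invoke Theorem~\ref{t:multi_transference_implication_after_induction} in lower dimension: it deletes all the offending coordinates \emph{at once} (the number $k$ is chosen from a cumulative-product criterion $\lambda_1\cdots\lambda_k<1$), constructs the target $\cP(\hat{\pmb\lambda},\pmb\mu)\in\cF'(t,\gamma)$ explicitly by redistributing the small $\lambda$-components while \emph{keeping the original} $\pmb\mu$, and then applies Mahler's theorem (Theorem~\ref{t:mahler_reformulated}) directly in dimension $d-k$. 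Because Mahler's theorem places no constraint on the shape parameters of the box, this one-shot reduction sidesteps the bookkeeping problem that kills your approach.

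That bookkeeping problem is a genuine gap. Your inductive hypothesis delivers some $\tilde\cP'\in\tilde\cF'(\tilde t,\tilde\gamma)$ for the $(n,m-1)$ problem, hence one whose $\pmb\mu$-component satisfies $\Pi(\tilde{\pmb\mu}')=\tilde t^{-\tilde\gamma}$. To lift $\tilde\cP'$ into $\cF'(t,\gamma)$ you append a new $\lambda_m$, but this does not touch the $\pmb\mu$-component, so membership in $\cF'(t,\gamma)$ forces $\tilde t^{-\tilde\gamma}=t^{-\gamma}$. Here $\tilde s$ is forced to equal $s$ (because $\Pi(\pmb\mu)=s$ is inherited), so $\tilde\delta$ is determined by $\Pi(\tilde{\pmb\lambda})=s^{-\tilde\delta}$, which in turn depends on the discarded component $\lambda_m$ of the given parallelepiped in $\cG(s,\delta)$. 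Tracking the relations \eqref{eq:Q_is_P_star_arbitrary_nm_for_mult} through the reduction shows that the constraint $\tilde t^{\tilde\gamma}=t^{\gamma}$ holds only when $\lambda_m = \pi(\pmb\lambda,\pmb\mu)^{1/(d-1)}$ exactly; for a generic parallelepiped in $\cG(s,\delta)\setminus\cG'(s,\delta)$ this fails, and the lift does not land in $\cF'(t,\gamma)$. You list three conditions to calibrate ($\Pi(\pmb\lambda)=t$, $\lambda_m\geq1$, $\max_i\mu_i\leq\pi$), but omit precisely the fourth, $\Pi(\pmb\mu)=t^{-\gamma}$, which cannot be arranged by choosing $\lambda_m$. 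Separately, the assertion that $\vec v'=\vec v-v_m\vec e_m$ lies in $\La^\ast$ is incorrect: $v_m = a_m + \sum_i b_i\theta_{im}$ is generically irrational, so $v_m\vec e_m\notin\La^\ast$. The intended conclusion — that the image of $\vec v$ under the coordinate-forgetting projection lies in the reduced lattice $\tilde\La^\ast$ — is true, but the justification is that $\tilde\La^\ast$ is the \emph{projection} of $\La^\ast$, not that $\La^\ast$ contains a representative with zero $m$-th coordinate.
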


It is clear that for $m=1$ the families $\cG(s,\delta)$ and $\cG'(s,\delta)$ coincide, since $s>1$. Let us assume that $m\geq2$.

Consider arbitrary tuples $\pmb\lambda=(\lambda_1,\ldots,\lambda_m)\in\R_+^m$ and $\pmb\mu=(\mu_1,\ldots,\mu_n)\in\R_+^n$ satisfying the conditions
\begin{equation}\label{eq:lambda_mu_for_omega_mult_induction}
  \Pi(\pmb\lambda)=t,
  \qquad
  \Pi(\pmb\mu)=t^{-\gamma},
\end{equation}
and define $\pmb\lambda^\ast$, $\pmb\mu^\ast$ by \eqref{eq:lambda_mu_ast}. Suppose,
\begin{equation}\label{eq:suppose_P_ast_is_bad}
  \cP(\pmb\lambda^\ast,\pmb\mu^\ast)\in\cG(s,\delta)\backslash\cG'(s,\delta).
\end{equation}
Changing the ordering, if required, we can assume that
\[
  \lambda_1\leq\ldots\leq\lambda_m.
\]
Then by \eqref{eq:lambda_mu_ast}
\[
  \lambda_1^\ast\geq\ldots\geq\lambda_m^\ast.
\]
It follows from \eqref{eq:suppose_P_ast_is_bad} that $\lambda_1^\ast>\pi(\pmb\lambda,\pmb\mu)$. Hence by \eqref{eq:lambda_mu_ast} we also have $\lambda_1<1$. Let $k$ be the greatest index such that $\lambda_1\cdot\ldots\cdot\lambda_k<1$. Since $t>1$, it follows from the relation $\Pi(\pmb\lambda)=t$ that $k<m$. Taking into account \eqref{eq:lambda_mu_ast}, we get
\begin{equation}\label{eq:lambda_inequalities}
\begin{aligned}
  \lambda_1\cdot\ldots\cdot\lambda_k<1,\qquad &
  \lambda_m\geq\ldots\geq\lambda_{k+1}\geq1, \\
  \lambda_1\cdot\ldots\cdot\lambda_k\lambda_{k+1}^{m-k}\geq1,\qquad &
  \lambda^\ast_m\leq\ldots\leq\lambda^\ast_{k+1}\leq\pi(\pmb\lambda,\pmb\mu).
\end{aligned}
\end{equation}
Along with the tuple $\pmb\lambda$, let us consider the tuple $\hat{\pmb\lambda}=(\hat\lambda_1,\ldots,\hat\lambda_m)$,
\begin{equation}\label{eq:lambda_mu_hat}
\begin{aligned}
  & \hat\lambda_j=1,
    \qquad\qquad\qquad\qquad\qquad\quad\ \ j=1,\ldots,k, \\
  & \hat\lambda_j=\lambda_j\big(\lambda_1\cdot\ldots\cdot\lambda_k\big)^{1/(m-k)},
    \qquad\ \ j=k+1,\ldots,m.
    \vphantom{1^{\big|}}
\end{aligned}
\end{equation}
Then by \eqref{eq:lambda_mu_for_omega_mult_induction}, \eqref{eq:suppose_P_ast_is_bad}, and \eqref{eq:lambda_inequalities} we have
\[
  \Pi(\hat{\pmb\lambda})=t,\ \
  \Pi(\pmb\mu)=t^{-\gamma},\ \
  \min_{1\leq j\leq m}\hat\lambda_j\geq1,\ \
  \max_{1\leq i\leq n}\mu_i\leq\pi(\hat{\pmb\lambda},\pmb\mu),
\]
i.e.
\begin{equation}\label{eq:hat_is_good}
  \cP(\hat{\pmb\lambda},\pmb\mu)\in\cF'(t,\gamma).
\end{equation}

Let us show that
\begin{equation}\label{eq:bad_is_not_that_bad}
  \cP(\pmb\lambda^\ast,\pmb\mu^\ast)\cap\La^\ast\neq\{\vec 0\}
  \implies
  (d-1)\cP(\hat{\pmb\lambda},\pmb\mu)\cap\La\neq\{\vec 0\}.
\end{equation}
Consider the truncated tuples
\[
  \pmb\lambda_\downarrow=(\lambda_{k+1},\ldots,\lambda_{m}),
  \qquad
  \pmb\lambda^\ast_\downarrow=(\lambda^\ast_{k+1},\ldots,\lambda^\ast_{m}),
  \qquad
  \hat{\pmb\lambda}_\downarrow=(\hat\lambda_{k+1},\ldots,\hat\lambda_m).
\]
Then
\[
  \cP(\hat{\pmb\lambda}_\downarrow,\pmb\mu)^\ast=
  \Bigg\{\,(z_{k+1},\ldots,z_d)\in\R^{d-k} \ \Bigg|
  \begin{array}{l}
    |z_j|\leq c\lambda_j^\ast,\quad\,\ j=k+1,\ldots,m \\
    |z_{m+i}|\leq\mu_i^\ast,\quad i=1,\ldots,n
  \end{array} \Bigg\},
\]
where $c=\big(\lambda_1\cdot\ldots\cdot\lambda_k\big)^{-1/(m-k)}$. Since $c>1$, we have
\begin{equation}\label{eq:ammendment_of_bad}
  \cP(\pmb\lambda_\downarrow^\ast,\pmb\mu^\ast)
  \subset
  \cP(\hat{\pmb\lambda}_\downarrow,\pmb\mu)^\ast.
\end{equation}
Consider also the matrix
\[
  \Theta_\downarrow=
  \begin{pmatrix}
   \theta_{1\,k+1} & \cdots & \theta_{1m} \\
   \vdots & \ddots & \vdots \\
   \theta_{n\,k+1} & \cdots & \theta_{nm}
  \end{pmatrix}
\]
obtained from $\Theta$ by deleting the first $k$ columns, and the lattices
\begin{equation*}
  \La_\downarrow=
  \begin{pmatrix}
    \vec I_{m-k}        & \\
    -\Theta_\downarrow  & \vec I_n
  \end{pmatrix}
  \Z^{d-k},
  \qquad
  \La^\ast_\downarrow=
  \begin{pmatrix}
    \vec I_{m-k} & \tr\Theta_\downarrow \\
                 & \vec I_n
  \end{pmatrix}
  \Z^{d-k}.
\end{equation*}
It can be easily verified that the set
\[
  \Big\{ (0,\ldots,0,z_{k+1},\ldots,z_d)\in\R^d \ \Big|\
                    (z_{k+1},\ldots,z_d)\in\La_\downarrow \Big\}
\]
is a sublattice of $\La$, and the set
\[
  \Big\{ (0,\ldots,0,z_{k+1},\ldots,z_d)\in\R^d \ \Big|\
                    (z_{k+1},\ldots,z_d)\in\La^\ast_\downarrow \Big\}
\]
is the projection of $\La^\ast$ onto the plane of coordinates $z_{k+1},\ldots,z_d$. Thus, we get the implications
\begin{equation}\label{eq:down_and_up}
\begin{aligned}
  \cP(\pmb\lambda^\ast,\pmb\mu^\ast)\cap\La^\ast\neq\{\vec 0\}
  & \implies
  \cP(\pmb\lambda^\ast_\downarrow,\pmb\mu^\ast)\cap\La^\ast_\downarrow\neq\{\vec 0\}, \\
  \cP(\hat{\pmb\lambda}_\downarrow,\pmb\mu)\cap\La_\downarrow\neq\{\vec 0\}
  & \implies
  \cP(\hat{\pmb\lambda},\pmb\mu)\cap\La\neq\{\vec 0\}.
  \vphantom{1^{\big|}}
\end{aligned}
\end{equation}
Finally, applying Mahler's theorem in disguise of Theorem \ref{t:mahler_reformulated} we get the implication
\begin{equation}\label{eq:mahler_downarrowed}
  \cP(\hat{\pmb\lambda}_\downarrow,\pmb\mu)^\ast\cap\La^\ast_\downarrow\neq\{\vec 0\}
  \implies
  (d-k-1)\cP(\hat{\pmb\lambda}_\downarrow,\pmb\mu)\cap\La_\downarrow\neq\{\vec 0\}.
\end{equation}
Gathering up \eqref{eq:ammendment_of_bad}, \eqref{eq:down_and_up}, and \eqref{eq:mahler_downarrowed}, we get the following chain of implications:
\begin{multline*}
  \cP(\pmb\lambda^\ast,\pmb\mu^\ast)\cap\La^\ast\neq\{\vec 0\}
  \implies
  \cP(\pmb\lambda^\ast_\downarrow,\pmb\mu^\ast)\cap\La^\ast_\downarrow\neq\{\vec 0\}
  \implies \\ \implies
  \cP(\hat{\pmb\lambda}_\downarrow,\pmb\mu)^\ast\cap\La^\ast_\downarrow\neq\{\vec 0\}
  \implies
  (d-k-1)\cP(\hat{\pmb\lambda}_\downarrow,\pmb\mu)\cap\La_\downarrow\neq\{\vec 0\}
  \implies \vphantom{\bigg|} \\ \implies
  (d-k-1)\cP(\hat{\pmb\lambda},\pmb\mu)\cap\La\neq\{\vec 0\}
  \implies
  (d-1)\cP(\hat{\pmb\lambda},\pmb\mu)\cap\La\neq\{\vec 0\}.
\end{multline*}
Thus, indeed, \eqref{eq:bad_is_not_that_bad} does hold. Taking into account \eqref{eq:hat_is_good}, we get
\begin{equation}\label{eq:multi_transference_implication_instead_of_induction}
\begin{aligned}
  \exists\cP\in\cG(s,\delta)\backslash\cG'(s,\delta) & :\cP\cap\La^\ast\neq\{\vec 0\}
  \implies \\ & \implies
  \exists\cP\in\cF'(t,\gamma):(d-1)\cP\cap\La\neq\{\vec 0\}.
  \vphantom{1^{\big|}}
\end{aligned}
\end{equation}
Clearly, 
\eqref{eq:multi_transference_implication_instead_of_induction}
and
\eqref{eq:multi_transference_implication_before_induction}
give the desired implication
\eqref{eq:multi_transference_implication_after_induction}.
Thus, in view of \eqref{eq:omega_mult_vs_parallelepipeds}, Theorem \ref{t:german_multiplicative_dysonlike} is proved.

\subsubsection{Refinement of Theorem \ref{t:multi_transference_implication_after_induction}} \label{subsubsec:multi_transference_essence}

We actually prove something stronger than Theorem \ref{t:multi_transference_implication_after_induction} in Section \ref{subsubsec:dimension_reduction}. Implication \eqref{eq:bad_is_not_that_bad} can be naturally generalised to the case of an arbitrary parallelepiped $\cP(\pmb\lambda^\ast,\pmb\mu^\ast)\in\cG(s,\delta)$. First, we note that for $\cP(\pmb\lambda^\ast,\pmb\mu^\ast)\in\cG'(s,\delta)$ we can set $k$ to be equal to zero, as by \eqref{eq:lambda_mu_ast} we have $\lambda_j\geq1$ foe each $j$. In this case the equality $\hat{\pmb\lambda}=\pmb\lambda$ is an analogue of \eqref{eq:lambda_mu_hat}. Second, the definition of $\hat{\pmb\lambda}$ is naturally generalised to the case of arbitrary ordering of the components of $\pmb\lambda$. Let $\lambda_{j_1}\leq\ldots\leq\lambda_{j_m}$. Then, if $\lambda_{j_1}<1$, we set $k$ to be equal to the greatest index such that $\lambda_{j_1}\cdot\ldots\cdot\lambda_{j_k}<1$. If $\lambda_{j_1}\geq1$, we set $k=0$. Finally, we define the tuple $\hat{\pmb\lambda}=(\hat\lambda_1,\ldots,\hat\lambda_m)$ by
\begin{equation}\label{eq:lambda_mu_hat_general_case}
\begin{aligned}
  & \hat\lambda_{j_i}=1,
    \qquad\qquad\qquad\qquad\qquad\qquad\ i=1,\ldots,k, \\
  & \hat\lambda_{j_i}=\lambda_{j_i}\big(\lambda_{j_1}\cdot\ldots\cdot\lambda_{j_k}\big)^{1/(m-k)},
    \qquad\ \ i=k+1,\ldots,m.
    \vphantom{1^{\big|}}
\end{aligned}
\end{equation}
Arguing in the manner of Section \ref{subsubsec:dimension_reduction}, we come to the following refinement of Theorem \ref{t:multi_transference_implication_after_induction}.

\begin{theorem}\label{t:multi_transference_essence}
  Let $\La$, $\La^\ast$ be defined by \eqref{eq:lattices_mult}. Given arbitrary tuples $\pmb\lambda=(\lambda_1,\ldots,\lambda_m)\in\R_+^m$ and $\pmb\mu=(\mu_1,\ldots,\mu_n)\in\R_+^n$, define $\pmb\lambda^\ast$, $\pmb\mu^\ast$ by \eqref{eq:lambda_mu_ast}, and $\hat{\pmb\lambda}$ by \eqref{eq:lambda_mu_hat_general_case}. Then
  \begin{equation}\label{eq:multi_transference_essence}
    \cP(\pmb\lambda^\ast,\pmb\mu^\ast)\cap\La^\ast\neq\{\vec 0\}
    \implies
    (d-1)\cP(\hat{\pmb\lambda},\pmb\mu)\cap\La\neq\{\vec 0\}.
  \end{equation}
%  При этом, если $\cP(\pmb\lambda^\ast,\pmb\mu^\ast)\in\cG(s,\delta)$, то $\cP(\hat{\pmb\lambda},\pmb\mu)\in\cF'(t,\gamma)$.
  Moreover,
  $
    \pi(\hat{\pmb\lambda},\pmb\mu)=
    \pi(\pmb\lambda,\pmb\mu),
  $
  $
    \displaystyle
    \min_{1\leq j\leq m}\hat\lambda_j\geq1,
  $
  and
  \[
    \,\
    \min_{1\leq i\leq n}\mu^\ast_i\geq1
    \implies
    \max_{1\leq i\leq n}\mu_i\leq\pi(\pmb\lambda,\pmb\mu).
  \]
\end{theorem}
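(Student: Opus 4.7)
My plan is to follow the dimension-reduction strategy of Section \ref{subsubsec:dimension_reduction}, generalised to allow arbitrary ordering of the components of $\pmb\lambda$. First I would dispatch the three auxiliary assertions. The identity $\pi(\hat{\pmb\lambda},\pmb\mu)=\pi(\pmb\lambda,\pmb\mu)$ reduces to $\Pi(\hat{\pmb\lambda})=\Pi(\pmb\lambda)$, which is immediate from \eqref{eq:lambda_mu_hat_general_case}: the factor $(\lambda_{j_1}\cdots\lambda_{j_k})^{1/(m-k)}$ is raised to the power $m-k$ in the product and combines with the $k$ ones to reconstitute $\lambda_{j_1}\cdots\lambda_{j_m}$. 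The bound $\min_j\hat\lambda_j\geq1$ is trivial for $i\leq k$; for $i\geq k+1$ the maximality of $k$ gives $\lambda_{j_1}\cdots\lambda_{j_{k+1}}\geq1$, which combined with $\lambda_{j_1}\leq\cdots\leq\lambda_{j_{k+1}}$ yields $\lambda_{j_{k+1}}\geq1$, whence $\hat\lambda_{j_{k+1}}^{m-k}=\lambda_{j_{k+1}}^{m-k}\cdot\lambda_{j_1}\cdots\lambda_{j_k}\geq\lambda_{j_{k+1}}^{m-k-1}\geq1$. The last assertion is just the equivalence $\mu_i^\ast\geq1\iff\mu_i\leq\pi(\pmb\lambda,\pmb\mu)$ read off from \eqref{eq:lambda_mu_ast}.

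For the main implication, by permuting the first $m$ coordinates (which preserves both $\La$ and $\La^\ast$ together with the structure of $\cP(\pmb\lambda,\pmb\mu)$ and $\cP(\pmb\lambda^\ast,\pmb\mu^\ast)$) I may assume $\lambda_1\leq\cdots\leq\lambda_m$. When $k=0$ one has $\hat{\pmb\lambda}=\pmb\lambda$ and the statement is a direct application of Mahler's theorem in the form of Theorem \ref{t:mahler_reformulated}. When $k\geq1$ I perform the truncation of Section \ref{subsubsec:dimension_reduction}: delete the first $k$ columns of $\Theta$ to form $\Theta_\downarrow\in\R^{n\times(m-k)}$ and consider the associated lattices $\La_\downarrow$ and $\La^\ast_\downarrow$ in $\R^{d-k}$. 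Using the key identity $\Pi(\hat{\pmb\lambda}_\downarrow)^{m-k}=\lambda_{k+1}\cdots\lambda_m\cdot\lambda_1\cdots\lambda_k=\Pi(\pmb\lambda)^m$, I verify that the $(d-k-1)$-th pseudocompound of $\cP(\hat{\pmb\lambda}_\downarrow,\pmb\mu)$ in $\R^{d-k}$ has $j$-th bound $c\lambda_j^\ast$ for $j>k$ and $\mu_i^\ast$ on the last $n$ coordinates, where $c=(\lambda_1\cdots\lambda_k)^{-1/(m-k)}\geq1$; consequently $\cP(\pmb\lambda^\ast_\downarrow,\pmb\mu^\ast)\subseteq\cP(\hat{\pmb\lambda}_\downarrow,\pmb\mu)^\ast$.

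The argument is then completed by combining the observations that the vectors in $\La$ whose first $k$ coordinates vanish form a sublattice naturally isomorphic to $\La_\downarrow$, and that the orthogonal projection of $\La^\ast$ onto the plane of the last $d-k$ coordinates equals $\La^\ast_\downarrow$. Mahler's theorem in dimension $d-k$ then produces
\begin{multline*}
  \cP(\pmb\lambda^\ast,\pmb\mu^\ast)\cap\La^\ast\neq\{\vec 0\}
  \implies
  \cP(\pmb\lambda^\ast_\downarrow,\pmb\mu^\ast)\cap\La^\ast_\downarrow\neq\{\vec 0\}
  \implies \\
  \implies
  \cP(\hat{\pmb\lambda}_\downarrow,\pmb\mu)^\ast\cap\La^\ast_\downarrow\neq\{\vec 0\}
  \implies
  (d-k-1)\cP(\hat{\pmb\lambda}_\downarrow,\pmb\mu)\cap\La_\downarrow\neq\{\vec 0\}
  \implies
  (d-1)\cP(\hat{\pmb\lambda},\pmb\mu)\cap\La\neq\{\vec 0\},
\end{multline*}
the last step being achieved by prepending $k$ zeros and using $d-k-1\leq d-1$.

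The main obstacle I anticipate is the very first implication in this chain: projecting a nonzero point of $\La^\ast\cap\cP(\pmb\lambda^\ast,\pmb\mu^\ast)$ onto the last $d-k$ coordinates could in principle yield $\vec 0$, since, for example, the basis vector $\vec e_1\in\La^\ast$ lies in $\cP(\pmb\lambda^\ast,\pmb\mu^\ast)$ whenever $\lambda_1\leq\pi(\pmb\lambda,\pmb\mu)$. Treating this degenerate case cleanly requires either a careful choice of the nonzero lattice point whose projection survives, or a separate geometric argument producing a nonzero element of $\La$ in $(d-1)\cP(\hat{\pmb\lambda},\pmb\mu)$ directly when every point of $\La^\ast\cap\cP(\pmb\lambda^\ast,\pmb\mu^\ast)$ happens to be supported on the first $k$ coordinates.
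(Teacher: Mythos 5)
Your argument tracks the paper's proof in Section~\ref{subsubsec:dimension_reduction} step by step, and the three auxiliary assertions are dispatched correctly. However, the obstacle you flag at the end is not a technicality that a cleverer choice of lattice point would remove: it is a genuine gap, and it is present in the paper's own argument as well, since the first implication in the chain silently assumes that the chosen nonzero point of $\cP(\pmb\lambda^\ast,\pmb\mu^\ast)\cap\La^\ast$ has nonzero projection onto the last $d-k$ coordinates. As you observe, every vector of $\Z^m\times\{\vec 0\}^n$ lies in $\La^\ast$, and if one of these, supported on the first $k$ coordinates, is the only nonzero point of the intersection, the projection collapses and no replacement point is available.

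That the degeneracy is not vacuous can be seen already for $m=2$, $n=1$, $\Theta=(\sqrt2,\sqrt3)$. Take $\lambda_1=\e$, $\lambda_2=1/\e$, $\mu_1=\e$ with $\e$ small, so $\Pi(\pmb\lambda)=1$, $\pi(\pmb\lambda,\pmb\mu)=\e$, $k=1$, $\hat{\pmb\lambda}=(1,1)$, $\pmb\lambda^\ast=(1,\e^2)$, $\pmb\mu^\ast=(1)$. Any point $(a_1+\sqrt2\,b,\,a_2+\sqrt3\,b,\,b)$ of $\La^\ast$ in $\cP(\pmb\lambda^\ast,\pmb\mu^\ast)$ must have $|b|\leq1$ and $|a_2+\sqrt3\,b|\leq\e^2$, which for small $\e$ forces $b=a_2=0$, so the intersection is exactly $\{\vec 0,\pm\vec e_1\}$. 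On the other hand, $(d-1)\cP(\hat{\pmb\lambda},\pmb\mu)=\{|z_1|\leq2,\,|z_2|\leq2,\,|z_3|\leq2\e\}$, and a nonzero point $(a_1,a_2,b-\sqrt2\,a_1-\sqrt3\,a_2)$ of $\La$ in this box requires $|a_1|,|a_2|\leq2$ and $|b-\sqrt2\,a_1-\sqrt3\,a_2|\leq2\e$; once $\e$ drops below half the (positive) minimum of $|b-\sqrt2\,a_1-\sqrt3\,a_2|$ over $(a_1,a_2)\neq(0,0)$ with $|a_1|,|a_2|\leq2$ and $b\in\Z$, no such point exists. Note that Mahler's Theorem~\ref{t:mahler_reformulated} is not to blame: $2\cP(\pmb\lambda,\pmb\mu)$, with its bound $|z_2|\leq2/\e$, does contain a nonzero point of $\La$; the point is lost exactly when $\lambda_2$ is shrunk from $1/\e$ to $\hat\lambda_2=1$. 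So, as a statement about a single arbitrary tuple, implication~\eqref{eq:multi_transference_essence} needs an extra hypothesis such as $\cP(\pmb\lambda^\ast,\pmb\mu^\ast)\cap\La^\ast\not\subseteq\Z^m\times\{\vec 0\}^n$ (or a restriction on the tuples that excludes the degenerate configuration). The applications downstream are phrased existentially over whole families of parallelepipeds, where one is free to choose a non-degenerate representative, but for the pointwise implication the gap you identified is real, and the right fix is to add the hypothesis rather than to hope a better lattice point appears.
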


Theorem \ref{t:multi_transference_essence} is the ``core'' of the multiplicative transference principle, same as Theorem \ref{t:mahler_reformulated} is the ``core'' of Khintchine's transference principle.

We note that, under the assumption that $\cP(\pmb\lambda^\ast,\pmb\mu^\ast)\cap\La^\ast\neq\{\vec 0\}$, Theorem \ref{t:mahler_reformulated} guarantees the existence of nonzero points of $\La$ in the parallelepiped $(d-1)\cP(\pmb\lambda,\pmb\mu)$, which is not contained in $(d-1)\cP(\hat{\pmb\lambda},\pmb\mu)$, if $\pmb\lambda$ has components strictly smaller than $1$ as well as ones strictly greater than $1$. It turns out that there is a whole family of pairwise distinct parallelepipeds, each containing a nonzero point of $\La$. The exact formulation of this fact can be found in paper \cite{german_evdokimov} devoted to improving Mahler's theorem.

We note also that the constant $d-1$ in \eqref{eq:multi_transference_essence} can be replaced with a smaller one, which tends to $1$ as $d\to\infty$. For instance, with $d^{1/(2d-2)}$ -- the same constant as in relation \eqref{eq:mahler_improved} strengthening the statement of Theorem \ref{t:mahler_reformulated}. Details can be found in \cite{german_mult}.

\subsection{Multiplicatively badly approximable matrices}\label{subsec:multi_bad}

The multiplicative setting also admits talking about badly approximable matrices.

\begin{definition} \label{def:MBA}
  A matrix $\Theta$ is called \emph{multiplicatively badly approximable} if there is a constant $c>0$ depending only on $\Theta$ such that for each pair $(\vec x,\vec y)\in\Z^m\oplus\Z^n$ with nonzero $\vec x$ we have
  \[
    \Pi'(\vec x)^m\Pi(\Theta\vec x-\vec y)^n\geq c.
  \]
\end{definition}

While the existence of badly approximable matrices can be proved rather easily, the question whether there exist multiplicatively badly approximable matrices is an open problem. Even in the simplest case $n=2$, $m=1$, the statement that multiplicatively badly approximable matrices exist is precisely the negation of Littlewood's conjecture (its formulation can be found in the beginning of Section \ref{sec:mult}).

In 1955 Cassels and Swinnerton-Dyer \cite{cassels_swinnerton_dyer} proved that if $n=2$, $m=1$ and $\Theta$ is multiplicatively badly approximable, then so is $\tr\Theta$. Theorem \ref{t:multi_transference_essence} enables proving the converse statement as well as its analogue for an arbitrary matrix. Indeed, by analogy with the deduction of \eqref{eq:omega_mult_vs_parallelepipeds}, it can be shown that $\Theta$ is multiplicatively badly approximable if and only if there is a constant $c>0$ such that none of the parallelepipeds in the family
\[
  \Big\{\, \cP(\pmb\lambda,\pmb\mu) \ \Big|\
           \pi(\pmb\lambda,\pmb\mu)=c,\
           \min_{1\leq j\leq m}\lambda_j\geq1 \Big\}
\]
contains nonzero points of $\La$ (for the notation, see \eqref{eq:lattices_mult}, \eqref{eq:prallelepipeds_mult}, and \eqref{eq:lambda_mu_product}). Similarly, $\tr\Theta$ is multiplicatively badly approximable if and only if there is a constant $c>0$ such that none of the parallelepipeds in the family
\[
  \Big\{\, \cP(\pmb\lambda,\pmb\mu) \ \Big|\
           \pi(\pmb\lambda,\pmb\mu)=c,\
           \min_{1\leq i\leq n}\mu_i\geq1 \Big\}
\]
contains nonzero points of $\La^\ast$. Since $\pi(\pmb\lambda^\ast,\pmb\mu^\ast)=\pi(\pmb\lambda,\pmb\mu)^{d-1}$, it follows immediately from Theorem \ref{t:multi_transference_essence} that if $\tr\Theta$ is not multiplicatively badly approximable, then neither is $\Theta$. Thus, the following theorem is valid.

\begin{theorem}\label{t:MBA_transference}
  A matrix $\Theta$ is multiplicatively badly approximable if and only if so is $\tr\Theta$.
\end{theorem}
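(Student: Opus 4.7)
The plan is to exploit the parallelepiped reformulation of multiplicative bad approximability (just stated before the theorem) together with Theorem~\ref{t:multi_transference_essence}. Since the claim is symmetric under the swap $(m,n,\Theta,\La,\pmb\lambda,\pmb\mu)\leftrightarrow(n,m,\tr\Theta,\La^\ast,\pmb\mu,\pmb\lambda)$, it is enough to prove the single direction: if $\tr\Theta$ is not MBA, then neither is $\Theta$.

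Assuming $\tr\Theta$ is not MBA, for every $c'>0$ the reformulation yields a parallelepiped $\cP(\pmb\lambda^\ast,\pmb\mu^\ast)$ with $\pi(\pmb\lambda^\ast,\pmb\mu^\ast)=c'$, $\min_{1\le i\le n}\mu^\ast_i\geq 1$, and $\cP(\pmb\lambda^\ast,\pmb\mu^\ast)\cap\La^\ast\neq\{\vec 0\}$. I would then read $(\pmb\lambda^\ast,\pmb\mu^\ast)$ as the pseudocompound image of $(\pmb\lambda,\pmb\mu)$ via~\eqref{eq:lambda_mu_ast}; the relation $\pi(\pmb\lambda^\ast,\pmb\mu^\ast)=\pi(\pmb\lambda,\pmb\mu)^{d-1}$ established in Section~\ref{subsubsec:pseudocompound_mult} then gives $\pi(\pmb\lambda,\pmb\mu)=(c')^{1/(d-1)}$. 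Applying Theorem~\ref{t:multi_transference_essence} produces a tuple $\hat{\pmb\lambda}$ with $\min_j\hat\lambda_j\geq 1$ and $\pi(\hat{\pmb\lambda},\pmb\mu)=\pi(\pmb\lambda,\pmb\mu)$, together with a nonzero point of $\La$ in $(d-1)\cP(\hat{\pmb\lambda},\pmb\mu)$.

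To align this conclusion with the reformulation of ``$\Theta$ is not MBA'', I rescale: set $\pmb\lambda'=(d-1)\hat{\pmb\lambda}$ and $\pmb\mu'=(d-1)\pmb\mu$, so that $\cP(\pmb\lambda',\pmb\mu')=(d-1)\cP(\hat{\pmb\lambda},\pmb\mu)$ still meets $\La$ in a nonzero point, $\min_j\lambda'_j\geq d-1\geq 1$, and
\[
  \pi(\pmb\lambda',\pmb\mu')=(d-1)^{d}\pi(\hat{\pmb\lambda},\pmb\mu)=(d-1)^{d}(c')^{1/(d-1)}.
\]
Since $c'>0$ may be chosen arbitrarily small, the right-hand side can be driven below any prescribed constant $c$, so no positive constant witnesses the MBA property of $\Theta$. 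The contrapositive then proves the theorem.

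The nontrivial content sits entirely inside Theorem~\ref{t:multi_transference_essence}: a direct appeal to Mahler's Theorem~\ref{t:mahler_reformulated} would only place the resulting $\La$-point inside $(d-1)\cP(\pmb\lambda,\pmb\mu)$, which in general violates $\min_j\lambda_j\geq 1$ and so cannot be fed back into the parallelepiped reformulation. The modification $\pmb\lambda\mapsto\hat{\pmb\lambda}$, which raises the small components up to $1$ while redistributing the deficit across the remaining components so as to preserve $\pi$, is precisely the mechanism that closes the loop. Beyond this, one needs only the routine verification of the equivalence between Definition~\ref{def:MBA} and its parallelepiped version in both lattices, which amounts to unpacking the functionals $\Pi(\cdot)$ and $\Pi'(\cdot)$ and matching the $\max(1,|x_j|)$ in $\Pi'$ with the normalisation $\min_j\lambda_j\geq 1$.
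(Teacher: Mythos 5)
Your proof is correct and follows essentially the same route as the paper's own argument: reformulate multiplicative bad approximability in terms of the parallelepiped families recalled just before the theorem, note the identity $\pi(\pmb\lambda^\ast,\pmb\mu^\ast)=\pi(\pmb\lambda,\pmb\mu)^{d-1}$, and apply Theorem~\ref{t:multi_transference_essence} to transfer a small ``good'' parallelepiped for $\La^\ast$ into one for $\La$. The only thing you make more explicit than the paper is the rescaling $(\hat{\pmb\lambda},\pmb\mu)\mapsto(d-1)(\hat{\pmb\lambda},\pmb\mu)$ and the observation that the factor $(d-1)^d$ is harmless because $c'$ may be taken arbitrarily small; this is implicit in the paper's one-line deduction, and spelling it out is a genuine improvement in clarity. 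Your closing remark about why the modification $\pmb\lambda\mapsto\hat{\pmb\lambda}$ (rather than a raw appeal to Theorem~\ref{t:mahler_reformulated}) is indispensable, since it restores the condition $\min_j\lambda_j\geq1$ while preserving $\pi$, correctly identifies the substantive content of Theorem~\ref{t:multi_transference_essence} in this application.
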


\section{Diophantine approximation with weights}\label{sec:weights}

As before, let us consider a matrix
\[
  \Theta=
  \begin{pmatrix}
    \theta_{11} & \cdots & \theta_{1m} \\
    \vdots & \ddots & \vdots \\
    \theta_{n1} & \cdots & \theta_{nm}
  \end{pmatrix},\qquad
  \theta_{ij}\in\R,\quad m+n\geq3,
\]
and the system of linear equations
\begin{equation*} %\label{eq:the_system}
  \Theta\vec x=\vec y
\end{equation*}
with variables $\vec x=(x_1,\ldots,x_m)\in\R^m$, $\vec y=(y_1,\ldots,y_n)\in\R^n$.

In previous Sections we observed two approaches in measuring the ``magnitude'' of $\Theta\vec x-\vec y$: the first one uses the sup-norm, the second one uses the geometric mean of the absolute values of coordinates. There is also a, so to say, intermediate approach -- so called \emph{Diophantine approximation with weights}.

Let us fix \emph{weights} $\pmb\sigma=(\sigma_1,\ldots,\sigma_m)\in\R_{>0}^m$, $\pmb\rho=(\rho_1,\ldots,\rho_n)\in\R_{>0}^n$,
\[
  \sigma_1\geq\ldots\geq\sigma_m,\qquad
  \rho_1\geq\ldots\geq\rho_n,\qquad
  \sum_{j=1}^m\sigma_j=\sum_{i=1}^n\rho_i=1,
\]
and define the \emph{weighted norms} $|\cdot|_{\pmb\sigma}$ and $|\cdot|_{\pmb\rho}$ by
\[
  |\vec x|_{\pmb\sigma}=\max_{1\leq j\leq m}|x_j|^{1/\sigma_j}\qquad\text{ for }\vec x=(x_1,\ldots,x_m),
\]
\[
  |\vec y|_{\pmb\rho}=\max_{1\leq i\leq n}|y_i|^{1/\rho_i}\qquad\,\ \ \text{ for }\vec y=(y_1,\ldots,y_n).\
\]
Consider the system of inequalities
\begin{equation}\label{eq:system_with_weights}
  \begin{cases}
    |\vec x|_{\pmb\sigma}\leq t \\
    |\Theta\vec x-\vec y|_{\pmb\rho}\leq t^{-\gamma}
  \end{cases}.
\end{equation}

\begin{definition}\label{def:weighted_exponents}
  The supremum of real $\gamma$ satisfying the condition that there exist arbitrarily large $t$ such that (resp. for every $t$ large enough) the system \eqref{eq:system_with_weights} admits a nonzero solution $(\vec x,\vec y)\in\Z^m\oplus\Z^n$ is called the \emph{regular} (resp. \emph{uniform}) \emph{weighted Diophantine exponent} of $\Theta$ and is denoted by $\omega_{\pmb\sigma,\pmb\rho}(\Theta)$ (resp. $\hat\omega_{\pmb\sigma,\pmb\rho}(\Theta)$).
\end{definition}

It is easily verified that in the case of ``trivial'' weights, i.e. when all the $\sigma_j$ are equal to $1/m$ and all the $\rho_i$ are equal to $1/n$, we are dealing with ordinary Diophantine exponents, as in this case we have
\[
  \omega_{\pmb\sigma,\pmb\rho}(\Theta)=\frac nm\omega(\Theta)
  \qquad\text{ and }\qquad
  \hat\omega_{\pmb\sigma,\pmb\rho}(\Theta)=\frac nm\hat\omega(\Theta).
\]
Minkowski's convex body theorem applied to the system \eqref{eq:system_with_weights} gives ``trivial'' inequalities
\[
  \omega_{\pmb\sigma,\pmb\rho}(\Theta)
  \geq
  \hat\omega_{\pmb\sigma,\pmb\rho}(\Theta)
  \geq1,
\]
analogous to \eqref{eq:belpha_matrix_trivial_inequalities}, that hold for every choice of $\pmb\sigma,\pmb\rho$.

\subsection{Schmidt's Bad-conjecture}

In the case $m=1$, $n=2$ we can observe a very close relationship between Diophantine approximation with weights and Littlewood's conjecture, which relates to multiplicative Diophantine approximation (the formulation of this conjecture is given in the beginning of Section \ref{sec:mult}). As we noted in Section \ref{subsec:multi_bad}, the negation of this conjecture is equivalent to the statement that for $m=1$, $n=2$ multiplicatively badly approximable matrices exist.

\begin{definition} \label{def:weighted_BA}
  A matrix $\Theta$ is called \emph{badly approximable with weights $\pmb\sigma,\pmb\rho$} if there is a constant $c>0$ depending only on $\Theta$ such that for each pair $(\vec x,\vec y)\in\Z^m\oplus\Z^n$ with nonzero $\vec x$ we have
  \[
    |\vec x|_{\pmb\sigma}|\Theta\vec x-\vec y|_{\pmb\rho}\geq c.
  \]
\end{definition}

In paper \cite{schmidt_luminy_1982} Schmidt noted that Davenport's construction from paper \cite{davenport_1962} can be easily modified to prove that for $m=1$, $n=2$ there exist badly approximable matrices $\Theta$ with arbitrary weights $\pmb\rho=(\rho_1,\rho_2)$ and conjectured that for any two distinct sets of weights $\pmb\rho'$ and $\pmb\rho''$  there exist matrices that are simultaneously badly approximable with weights $\pmb\rho'$ and with weights $\pmb\rho''$. It is easy to see that the existence of a counterexample to this conjecture, i.e the existence of two sets of weights $\pmb\rho'$ and $\pmb\rho''$ such that any matrix $\Theta\in\R^{2\times1}$ is not badly approximable with at least one of them, implies Littlewood's conjecture.

In 2011 Badziahin, Pollington, and Velani published the paper \cite{badziahin_annals_2011}, where they proved Schmidt's conjecture formulated above. Note that not only did they prove the existence of matrices $\Theta\in\R^{2\times1}$ that are badly approximable with weights $\pmb\rho'$ and with weights $\pmb\rho''$, but they also proved that there are pretty many such matrices. More specifically, they proved that for any given $k$ sets of weights $\pmb\rho^{(1)},\ldots,\pmb\rho^{(k)}$ the Hausdorff dimension of the set of matrices that are badly approximable with each of these $k$ sets of weights equals $2$.

\subsection{Transference theorems}\label{subsec:weighted_transference}

The first transference inequality for Diophantine approximation with weights generalising Dyson's inequality \eqref{eq:dyson_transference} was obtained in paper \cite{ghosh_marnat_Pisa_2020}. In this paper the authors showed that we have
\begin{equation}\label{eq:nedo_weighted_Dyson}
  \omega_{\pmb\rho,\pmb\sigma}(\tr\Theta)\geq
  \frac{(m+n-1)\big(\rho_n^{-1}+\sigma_m^{-1}\omega_{\pmb\sigma,\pmb\rho}(\Theta)\big)+\sigma_1^{-1}\big(\omega_{\pmb\sigma,\pmb\rho}(\Theta)-1\big)}
       {(m+n-1)\big(\rho_n^{-1}+\sigma_m^{-1}\omega_{\pmb\sigma,\pmb\rho}(\Theta)\big)-\rho_1^{-1}\big(\omega_{\pmb\sigma,\pmb\rho}(\Theta)-1\big)}\,.
\end{equation}
As it turned out later, inequality \eqref{eq:nedo_weighted_Dyson} is not optimal. It was improved in paper \cite{german_mathmatika_2020}.

\begin{theorem}[O.\,G., 2020]\label{t:weighted_Dyson}
  For every matrix $\Theta\in\R^{n\times m}$ and arbitrary weights $\pmb\sigma,\pmb\rho$ we have
  \begin{equation}\label{eq:weighted_Dyson}
    \omega_{\pmb\rho,\pmb\sigma}(\tr\Theta)\geq
    \frac{\big(\rho_n^{-1}-1\big)+\sigma_m^{-1}\omega_{\pmb\sigma,\pmb\rho}(\Theta)}
         {\rho_n^{-1}+\big(\sigma_m^{-1}-1\big)\omega_{\pmb\sigma,\pmb\rho}(\Theta)}\,.
  \end{equation}
\end{theorem}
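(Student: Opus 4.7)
The plan is to translate both weighted Diophantine problems into the geometry of the dual lattices $\La=\La(\Theta)$ and $\La^\ast=\La^\ast(\Theta)$ in $\R^{d}$ with $d=m+n$ from \eqref{eq:lattices_arbitrary_nm}. The hypothesis $\omega_{\pmb\sigma,\pmb\rho}(\Theta)\geq\gamma$ is equivalent to the statement that, for arbitrarily large $t$, the weighted parallelepiped
\[
  \cP(t,\gamma)=\Bigl\{\vec z\in\R^{d}\ \Bigm|\ |z_j|\leq t^{\sigma_j}\text{ for }j\leq m,\ |z_{m+i}|\leq t^{-\gamma\rho_i}\text{ for }i\leq n\Bigr\}
\]
contains a nonzero $\vec v\in\La$; the conclusion $\omega_{\pmb\rho,\pmb\sigma}(\tr\Theta)\geq\delta$ is the analogous intersection statement for $\La^\ast$ and $\cQ(s,\delta)=\{|z_j|\leq s^{-\delta\sigma_j},\ |z_{m+i}|\leq s^{\rho_i}\}$ for arbitrarily large $s$.

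Given a nonzero $\vec v\in\cP(t,\gamma)\cap\La$, I would apply Mahler's theorem in the form of Theorem \ref{t:mahler_reformulated} (with $\La$ and $\La^\ast$ swapped) to the parallelepiped $\cR$ whose $(d-1)$-th pseudocompound equals $\cP(t,\gamma)$: its side lengths are $t^{(1-\gamma)/(d-1)-\sigma_j}$ for $j\leq m$ and $t^{(1-\gamma)/(d-1)+\gamma\rho_i}$ for $m+i$. This produces a nonzero $\vec w\in\La^\ast$ in $(d-1)\cR$, and it remains to choose $s,\delta$ so that $(d-1)\cR\subseteq\cQ(s,\delta)$. Imposing this inclusion at the two axes carrying the smallest weights, namely $j=m$ and $i=n$, and solving the resulting pair of exponent equations yields, after elimination and passage to the limit $t\to\infty$, the Mobius-type relation
\[
  \bigl[(\sigma_m^{-1}-1)\gamma+\rho_n^{-1}\bigr]\,(1-\delta) = 1-\gamma,
\]
which is exactly equivalent to \eqref{eq:weighted_Dyson}.

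The main obstacle is that the pseudocompound side lengths of $\cR$ depend on the individual weights $\sigma_j$ and $\rho_i$, so the calibration of $(s,\delta)$ dictated by the binding axes $j=m$, $i=n$ does not automatically guarantee the inclusion $(d-1)\cR\subseteq\cQ(s,\delta)$ along the non-binding axes where $\sigma_j>\sigma_m$ or $\rho_i>\rho_n$. A naive use of Mahler therefore produces only a weaker bound, involving the constant $(d-1)\sigma_m$ rather than $\sigma_m^{-1}$ and hence depending on the ambient dimension $d$. To extract the $d$-free formula of the theorem, I would adapt the dimension-reduction strategy of Section \ref{subsubsec:dimension_reduction} (developed there for multiplicative exponents): replace $\cR$ by an auxiliary parallelepiped $\hat\cR$ whose non-binding side lengths are equalized upward and whose slack is reabsorbed into the binding axes, so that $\hat\cR^\ast$ still contains $\vec v$ while $(d-1)\hat\cR\subseteq\cQ(s,\delta)$ along every axis. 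Verifying this simultaneous consistency reduces to a monotonicity argument on the function $\sigma\mapsto\sigma/(1-\delta(1-\sigma))$ coming from the weighted structure, and this verification is the technical heart of the argument; once it is in place, letting $t\to\infty$ and then passing to the supremum over $\gamma<\omega_{\pmb\sigma,\pmb\rho}(\Theta)$ completes the proof.
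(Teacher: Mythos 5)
Your plan correctly identifies the framework --- translate to weighted parallelepipeds, apply Mahler via pseudocompounds, set exponent equalities at the axes carrying $\sigma_m$ and $\rho_n$ --- and the M\"obius relation $\bigl[(\sigma_m^{-1}-1)\gamma+\rho_n^{-1}\bigr](1-\delta)=1-\gamma$ you wrote is indeed equivalent to \eqref{eq:weighted_Dyson}. But the explicit construction you propose does not produce that relation, and the fix you envision is not what the paper does.

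The issue is your choice of the Mahler parallelepiped. You take $\cR$ with $\cR^\ast=\cP(t,\gamma)$ exactly, i.e.\ with sides $t^{(1-\gamma)/(d-1)-\sigma_j}$ and $t^{(1-\gamma)/(d-1)+\gamma\rho_i}$, and you want $(d-1)\cR\subseteq\cQ(s,\delta)$. Writing $\kappa=\log s/\log t$, the $j$-th constraint reads $(1-\gamma)/(d-1)-\sigma_j\leq-\delta\sigma_j\kappa$; imposing equality at $j=m$ gives $\delta\kappa=1-\frac{1-\gamma}{(d-1)\sigma_m}$, and then for $j<m$ with $\sigma_j>\sigma_m$ (and $\gamma>1$ so that $1-\gamma<0$) the constraint \emph{fails}: for example, with $m=n=2$, $\sigma=(0.6,0.4)$, $\rho=(0.7,0.3)$, $\gamma=1.2$ one gets $\delta\kappa\approx1.167$ at $j=2$ while the $j=1$ constraint requires $\delta\kappa\leq1.111$. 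So the binding axis in your direction is $j=1$, not $j=m$, and the calibration you describe does not give a valid inclusion. Moreover, even formally solving your two equalities produces a $d$-dependent formula $\,1-\delta\gamma=\frac{(1-\gamma)(\sigma_m^{-1}+\delta\rho_n^{-1})}{d-1}$, not the $d$-free one of the theorem --- which is why you then reach for a dimension-reduction repair. But the paper does not use dimension reduction here (that device appears only in the multiplicative Section \ref{subsubsec:dimension_reduction}, forced there by the discrepancy between $\Pi$ and $\Pi'$).

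The paper instead applies Mahler with $\cP(t,\gamma)$ itself as the parallelepiped, in the reverse direction: assume $\cQ(s,\delta)\cap\La^\ast\neq\{\vec 0\}$, observe $\cQ(s,\delta)\subseteq\cP(t,\gamma)^\ast$, and conclude $(d-1)\cP(t,\gamma)\cap\La\neq\{\vec 0\}$. The sides of $\cP(t,\gamma)^\ast$ are $t^{1-\gamma-\sigma_j}$ and $t^{1-\gamma+\gamma\rho_i}$, with no $d$ in the exponents, and the inclusion constraints become $\sigma_j(1-\delta\kappa)\leq1-\gamma$ and $\rho_i(\kappa-\gamma)\leq1-\gamma$; for $\delta\geq1$ the right-hand side is $\leq0$, forcing $\delta\kappa\geq1$ and $\kappa\leq\gamma$, and then the left-hand sides are indeed maximized at $j=m$, $i=n$. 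Equality there gives exactly the calibration in \eqref{eq:Q_is_contained_in_P_star_weighted} and the target formula, and the theorem follows upon swapping $(\pmb\sigma,\pmb\rho,\Theta)\mapsto(\pmb\rho,\pmb\sigma,\tr\Theta)$. Equivalently, if you insist on your direction (hypothesis on $\La$, conclusion on $\La^\ast$), the correct Mahler parallelepiped is $\cB=(d-1)^{-1}\cQ(s,\delta)$ rather than the pre-image $\cR$: one checks $\cP(t,\gamma)\subseteq\cB^\ast$ (up to a bounded constant), and the same calibration at $j=m,i=n$ then yields the $d$-free formula, with the monotonicity in $\sigma_j,\rho_i$ going the right way.
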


In that same paper \cite{german_mathmatika_2020} a transference theorem for uniform weighted exponents is proved. It generalises Theorem \ref{t:my_inequalities}.

\begin{theorem}[O.\,G., 2020]\label{t:weighted_German}
  For every matrix $\Theta\in\R^{n\times m}$, $m+n\geq3$, and arbitrary weights $\pmb\sigma,\pmb\rho$ we have
  \begin{equation}\label{eq:weighted_German}
    \hat\omega_{\pmb\rho,\pmb\sigma}(\tr\Theta)\geq
    \begin{cases}
      \dfrac{1-\sigma_m\hat\omega_{\pmb\sigma,\pmb\rho}(\Theta)^{-1}}{1-\sigma_m}
      \hskip 3mm
      \text{ if }
      \hat\omega_{\pmb\sigma,\pmb\rho}(\Theta)\geq\sigma_m/\rho_n \\
      \hskip 2mm
      \dfrac{1-\rho_n\vphantom{1^{\big|}}}{1-\rho_n\hat\omega_{\pmb\sigma,\pmb\rho}(\Theta)}
      \hskip 6.3mm
      \text{ if }
      \hat\omega_{\pmb\sigma,\pmb\rho}(\Theta)\leq\sigma_m/\rho_n
    \end{cases}.
  \end{equation}
\end{theorem}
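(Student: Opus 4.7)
The plan is to adapt the proof of Theorem~\ref{t:my_inequalities} sketched in Section~\ref{subsubsec:proof_of_uniform_transference} to the weighted setting. As in Section~\ref{subsec:embedding_into_R_d}, embed the problem into $\R^d$ with $d=m+n$, use the lattices $\La$ and $\La^\ast$ from \eqref{eq:lattices_arbitrary_nm}, and introduce the weighted parallelepipeds
\[
  \cP(t,\gamma)=\Big\{\vec z\in\R^d \,\Big|\, |z_j|\le t^{\sigma_j},\ |z_{m+i}|\le t^{-\gamma\rho_i}\Big\},
  \quad
  \cQ(s,\delta)=\Big\{\vec z\in\R^d \,\Big|\, |z_j|\le s^{-\delta\sigma_j},\ |z_{m+i}|\le s^{\rho_i}\Big\}.
\]
Unwinding Definition~\ref{def:weighted_exponents} identifies $\hat\omega_{\pmb\sigma,\pmb\rho}(\Theta)$ with the supremum of $\gamma$ such that $\cP(t,\gamma)\cap\La\ne\{\vec 0\}$ for every sufficiently large $t$, and likewise $\hat\omega_{\pmb\rho,\pmb\sigma}(\tr\Theta)$ via $\cQ(s,\delta)$ and $\La^\ast$. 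The aim is thus to convert ``every large $\cP(t,\gamma)$ meets $\La$ nontrivially'' into ``every large $\cQ(s,\delta)$ meets $\La^\ast$ nontrivially'' with $\delta$ as close to the right-hand side of \eqref{eq:weighted_German} as each regime permits.

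I would then reproduce the weighted analogue of the ``nodes/anti-nodes/leaves'' construction of Section~\ref{subsubsec:nodes_and_leaves}, in the ``inverse'' variant used at the end of Section~\ref{subsubsec:proof_of_uniform_transference} for the left-hand inequality of Theorem~\ref{t:my_simultaneous_vs_linear_form}. Fix $h>1$, $0<\alpha\le\beta$, $H=h^{\beta/\alpha}$, and set
\[
  \cP_r=\Big\{|z_j|\le r^{\sigma_j},\ |z_{m+i}|\le(hH/r)^{-\alpha\rho_i}\Big\},
  \qquad
  \cP(r,\alpha)=\Big\{|z_j|\le r^{\sigma_j},\ |z_{m+i}|\le r^{-\alpha\rho_i}\Big\}.
\]
Properties (i)--(v) of Section~\ref{subsubsec:nodes_and_leaves} rest only on the monotonicity $\cP_r\subset\cP_{r'}$ for $r<r'$ and on the symmetry $r\cdot(hH/r)=hH$, so they transfer verbatim; Fig.~\ref{fig:nodes_and_leaves} remains accurate provided that $u,v$ are read as the weighted maxima $\max_j|z_j|^{1/\sigma_j}$ and $\max_i|z_{m+i}|^{1/\rho_i}$.

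For the first regime $\hat\omega_{\pmb\sigma,\pmb\rho}(\Theta)\ge\sigma_m/\rho_n$ I fix $t>1$, take $\gamma$ slightly below $\hat\omega_{\pmb\sigma,\pmb\rho}(\Theta)$, set $h=t$ and $\beta=\gamma$, and choose $\alpha$ so that the $\delta$ attached to the extremal leaf (through $\cQ(s,\delta)\approx\cP(H,\alpha)^\ast$) matches the right-hand side of the first case of \eqref{eq:weighted_German}. The same bifurcation as in Section~\ref{subsubsec:proof_of_uniform_transference} then applies: if the root node $\cP_h$ already meets $\La$ nontrivially, Theorem~\ref{t:mahler_reformulated} in the refined form \eqref{eq:mahler_improved} produces a nonzero point of $\La^\ast$ in $c_1\cQ(s,\delta)$. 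Otherwise, property (v) yields non-collinear $\vec v_1,\vec v_2\in\La$ in a producing (node, anti-node) pair; direct coordinate bounds on $\vec v_1\wedge\vec v_2$ derived from the defining inequalities of $\cP_{r'}$ and $\cP_{hH/r'}$ place the wedge in a bounded multiple of the $(d-2)$-th pseudocompound $\cQ(s,\delta)^\circledast$, so that Theorem~\ref{t:second_pseudo_compound} applied to $\La^\ast$ delivers a nonzero point of $\La^\ast$ in $c_2\cQ(s,\delta)$. Letting $t\to\infty$ and $\gamma\nearrow\hat\omega_{\pmb\sigma,\pmb\rho}(\Theta)$ absorbs $c_1,c_2$ and yields the first inequality. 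The second regime $\hat\omega_{\pmb\sigma,\pmb\rho}(\Theta)\le\sigma_m/\rho_n$ is treated by a symmetric choice of $\alpha$ and $\beta$ so that the binding coordinate switches from the $(m+n)$-th to the $m$-th direction.

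The main technical hurdle will be the coordinate-wise wedge estimate in step~(v). Each coefficient of $\vec e_i\wedge\vec e_j$ in $\vec v_1\wedge\vec v_2$ is bounded by a sum of two products of individual coordinate bounds drawn from $\cP_{r'}$ and $\cP_{hH/r'}$, and the three types of pairs $(i,j)$---both in $\{1,\dots,m\}$, both in $\{m+1,\dots,d\}$, or mixed---produce different exponents in $r'$ and $hH/r'$. One has to verify that each such exponent is dominated by the exponent of the corresponding side length of $\cQ(s,\delta)^\circledast$; the tight constraint is supplied by the extremal pair $(m,m+n)$, and the threshold $\sigma_m/\rho_n$ in the statement emerges from the linear equation in $\alpha$ balancing this extremal exponent against the pseudocompound side length, with the solution switching branches precisely at that threshold.
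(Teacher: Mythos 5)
Your proposal follows essentially the same route as the paper: embed into $\R^{m+n}$ with $\La,\La^\ast$ from \eqref{eq:lattices_weighted}, introduce the weighted families $\cP(t,\gamma)$, $\cQ(s,\delta)$, run the nodes/anti-nodes/leaves construction on the $\La$-side, apply Mahler's theorem (Theorem \ref{t:mahler_reformulated} in the refined form \eqref{eq:mahler_improved}) for the case where the root meets $\La$, and apply the second-pseudocompound theorem (Theorem \ref{t:second_pseudo_compound}) for the case where only the leaves do; the bifurcation at $\sigma_m/\rho_n$ indeed comes from the extremal coordinate pair $(m,m+n)$ (the smallest weights on each side). The paper carries the implication in the opposite direction (hypothesis on $\hat\omega_{\pmb\rho,\pmb\sigma}(\tr\Theta)$, conclusion on $\hat\omega_{\pmb\sigma,\pmb\rho}(\Theta)$) and swaps $\Theta\leftrightarrow\tr\Theta$ at the end, which is only a cosmetic difference.

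Two points in the parameter tuning need fixing before the sketch closes. First, the hypothesis that feeds property (v) of Section \ref{subsubsec:nodes_and_leaves} is on the \emph{leaf} parameter $\alpha$, not on the root parameter $\beta$; the constraint $0<\alpha\le\beta$ holds, but the optimum of \eqref{eq:weighted_German} is obtained by driving $\alpha$ (not $\beta=\gamma$) towards $\hat\omega_{\pmb\sigma,\pmb\rho}(\Theta)$ from below, and the $\beta$ realizing this can be substantially larger than $\hat\omega_{\pmb\sigma,\pmb\rho}(\Theta)$. In the paper's treatment, $\beta$ (equivalently $\delta$ in the pre-swap notation) is a free parameter and $\alpha$ is computed from it by the explicit formula preceding \eqref{eq:gamma_via_alpha_weighted}; ``take $\gamma$ slightly below $\hat\omega_{\pmb\sigma,\pmb\rho}(\Theta)$'' would leave $\alpha$ strictly below $\hat\omega_{\pmb\sigma,\pmb\rho}(\Theta)$ and yield a suboptimal $\delta$. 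Second, the inclusion that makes Case~1 work is $\cP(t,\gamma)=\cP_h\subseteq\cQ(s,\delta)^\ast$ (the root is contained in the pseudocompound of the \emph{target}), which is the exact weighted analogue of \eqref{eq:Q_subset_P} after swapping $\La\leftrightarrow\La^\ast$; the relation you wrote, $\cQ(s,\delta)\approx\cP(H,\alpha)^\ast$, involves the wrong parallelepiped ($\cP(H,\alpha)$ is the largest anti-node, not the root) and does not hold in general. Once these two relations are replaced by the correct ones and $\delta$, $\alpha$ are expressed as functions of a single free parameter, the two branches of \eqref{eq:weighted_German} drop out exactly as you anticipate.
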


As before, if some denominator happens to be equal to zero, we assume the respective expression to take value $+\infty$.

\subsubsection{Embedding into $\R^{m+n}$} \label{subsec:embedding_into_R_d_weighted}

We can prove Theorems \ref{t:weighted_Dyson}, \ref{t:weighted_German} following the scheme we discussed in Section \ref{subsec:embedding_into_R_d}. Let us set, as before,
\[
  d=m+n,
\]
and consider the same the lattices
\begin{equation}\label{eq:lattices_weighted}
  \La=\La(\Theta)=
  \begin{pmatrix}
    \vec I_m & \\
    -\Theta  & \vec I_n
  \end{pmatrix}
  \Z^d,
  \qquad
  \La^\ast=\La^\ast(\Theta)=
  \begin{pmatrix}
    \vec I_m & \tr\Theta \\
    & \vec I_n
  \end{pmatrix}
  \Z^d.
\end{equation}
Instead of parallelepipeds \eqref{eq:t_gamma_family_arbitrary_nm}, \eqref{eq:s_delta_family_arbitrary_nm}, let us consider parallelepipeds
\begin{align}
  \label{eq:t_gamma_family_weighted}
  \cP(t,\gamma) & =\Bigg\{\,\vec z=(z_1,\ldots,z_d)\in\R^d \ \Bigg|
                        \begin{array}{l}
                          |(z_1,\ldots,z_m)|_{\pmb\sigma}\leq t \\
                          |(z_{m+1},\ldots,z_d)|_{\pmb\rho}\leq t^{-\gamma}
                        \end{array} \Bigg\}, \\
  \label{eq:s_delta_family_weighted}
  \cQ(s,\delta) & =\Bigg\{\,\vec z=(z_1,\ldots,z_d)\in\R^d \ \Bigg|
                        \begin{array}{l}
                          |(z_1,\ldots,z_m)|_{\pmb\sigma}\leq s^{-\delta} \\
                          |(z_{m+1},\ldots,z_d)|_{\pmb\rho}\leq s
                        \end{array} \Bigg\}.
\end{align}
Then
\begin{equation*}%\label{eq:omega_vs_parallelepipeds_weighted}
\begin{aligned}
  \omega_{\pmb\sigma,\pmb\rho}(\Theta) & =
  \sup\bigg\{ \gamma\geq1 \,\bigg|\ \forall\,t_0\in\R\,\ \exists\,t>t_0:\text{\,we have }\cP(t,\gamma)\cap\La\neq\{\vec 0\} \bigg\}, \\
  \hat\omega_{\pmb\sigma,\pmb\rho}(\Theta) & =
  \sup\bigg\{ \gamma\geq1 \,\bigg|\ \exists\,t_0\in\R:\ \forall\,t>t_0\text{ we have }\cP(t,\gamma)\cap\La\neq\{\vec 0\} \bigg\}, \\
  \omega_{\pmb\rho,\pmb\sigma}(\tr\Theta) & =
  \sup\bigg\{ \delta\geq1\ \bigg|\ \forall\,s_0\in\R\,\ \exists\,s>s_0:\text{\,we have }\cQ(s,\delta)\cap\La^\ast\neq\{\vec 0\} \bigg\}, \\
  \hat\omega_{\pmb\rho,\pmb\sigma}(\tr\Theta) & =
  \sup\bigg\{ \delta\geq1\ \bigg|\ \exists\,s_0\in\R:\ \forall\,s>s_0\text{ we have }\cQ(s,\delta)\cap\La^\ast\neq\{\vec 0\} \bigg\}.
\end{aligned}
\end{equation*}

\subsubsection{Deriving the theorem on regular exponents}

For $s>1$ and $\delta\geq1$ set
\begin{equation}\label{eq:Q_is_contained_in_P_star_weighted}
  t=s^{(\sigma_m^{-1}+(\rho_n^{-1}-1)\delta)/(\sigma_m^{-1}+\rho_n^{-1}-1)},
  \qquad
  \gamma=\frac{\big(\sigma_m^{-1}-1\big)+\rho_n^{-1}\delta}
              {\sigma_m^{-1}+\big(\rho_n^{-1}-1\big)\delta}\,.
\end{equation}
Such a correspondence provides the inequalities
\[
  \begin{aligned}
    s^{-\delta\sigma_j} & \leq t^{-\sigma_j+1-\gamma},\qquad j=1,\ldots,m, \\
    s^{\rho_i} & \leq t^{\gamma\rho_i+1-\gamma},\qquad\,\ i=1,\ldots,n,
  \end{aligned}
\]
which imply that
\begin{equation}\label{eq:Q_subset_P}
  \cQ(s,\delta)\subseteq
  \cP(t,\gamma)^\ast\ ,
\end{equation}
since, according to Definition \ref{def:pseudo_compound} (see Section \ref{subsubsec:pseudocompounds_and_dual_lattices}), the compound of $\cP(t,\gamma)$ is determined by
\begin{equation*}
  \cP(t,\gamma)^\ast=\Bigg\{\,\vec z=(z_1,\ldots,z_d)\in\R^d \ \Bigg|
                             \begin{array}{lr}
                               |z_j|\leq t^{-\sigma_j+1-\gamma}, & j=1,\ldots,m \\
                               |z_{m+i}|\leq t^{\gamma\rho_i+1-\gamma}, & i=1,\ldots,n
                             \end{array} \Bigg\}.
\end{equation*}
Applying again Mahler's theorem in disguise of Theorem \ref{t:mahler_reformulated}, we get
\begin{equation}\label{eq:weighted_dyson_transference_implication}
  \cQ(s,\delta)\cap\La^\ast\neq\{\vec 0\}
  \implies
  (d-1)\cP(t,\gamma)\cap\La\neq\{\vec 0\}.
\end{equation}
Thus,
\begin{equation*}%\label{eq:geq_delta_implies_geq_gamma}
  \omega_{\pmb\rho,\pmb\sigma}(\tr\Theta)\geq\delta
  \implies
  \omega_{\pmb\sigma,\pmb\rho}(\Theta)\geq\gamma=
  \frac{\big(\sigma_m^{-1}-1\big)+\rho_n^{-1}\delta}
       {\sigma_m^{-1}+\big(\rho_n^{-1}-1\big)\delta}\,,
\end{equation*}
whence
\[
  \omega_{\pmb\sigma,\pmb\rho}(\Theta)\geq
  \frac{\big(\sigma_m^{-1}-1\big)+\rho_n^{-1}\omega_{\pmb\rho,\pmb\sigma}(\tr\Theta)}
       {\sigma_m^{-1}+\big(\rho_n^{-1}-1\big)\omega_{\pmb\rho,\pmb\sigma}(\tr\Theta)}\,.
\]
Swapping the triple $(\pmb\sigma,\pmb\rho,\Theta)$ for the triple $(\pmb\rho,\pmb\sigma,\tr\Theta)$, we get \eqref{eq:weighted_Dyson}.

\subsubsection{Deriving the theorem on uniform exponents}

Let us apply the ``nodes'' and ``leaves'' construction described in Section \ref{subsubsec:nodes_and_leaves}. Instead of parallelepipeds $\cQ_r$ determined by \eqref{eq:Q_r}, let us consider parallelepipeds
\begin{equation*}%\label{eq:Q_r_weighted}
  \cQ_r=\Bigg\{\,\vec z=(z_1,\ldots,z_d) \in\R^d \ \Bigg|
                 \begin{array}{l}
                   |(z_1,\ldots,z_m)|_{\pmb\sigma}\leq (hH/r)^{-\alpha} \\
                   |(z_{m+1},\ldots,z_d)|_{\pmb\rho}\leq r
                 \end{array} \Bigg\}.
\end{equation*}
Respectively, we define parallelepipeds $\cP(t,\gamma)$ and $\cQ(s,\delta)$ not by \eqref{eq:t_gamma_family}, \eqref{eq:s_delta_family}, but by \eqref{eq:t_gamma_family_weighted}, \eqref{eq:s_delta_family_weighted}. Let us use Fig. \ref{fig:nodes_and_leaves} with the agreement that now $u$ and $v$ denote $|(z_{m+1},\ldots,z_d)|_{\pmb\rho}$ and $|(z_1,\ldots,z_m)|_{\pmb\sigma}$ respectively.

Upon fixing $s>1$ and $\delta\geq1$, we define $t$ and $\gamma$ by \eqref{eq:Q_is_contained_in_P_star_weighted} and set
\[
  h=s,\qquad
  \beta=\delta,\qquad
  \alpha=
  \begin{cases}
    \dfrac{\sigma_m^{-1}+(\rho_n^{-1}-1)\delta}{\sigma_m^{-1}+\rho_n^{-1}-1}
    \quad\ \text{ if }\ \delta\geq\dfrac{\rho_n(\sigma_m^{-1}-1)}{\sigma_m(\rho_n^{-1}-1)} \\
    \dfrac{(\sigma_m^{-1}+\rho_n^{-1}-1)\delta}{(\sigma_m^{-1}-1)+\rho_n^{-1}\delta}
    \quad\ \text{ if }\ \delta\leq\dfrac{\rho_n(\sigma_m^{-1}-1)}{\sigma_m(\rho_n^{-1}-1)}
    \vphantom{\dfrac{\Big|}{}}
  \end{cases}.
\]
With such a choice of parameters, the quantities $\gamma$ and $\alpha$ are related by
\begin{equation}\label{eq:gamma_via_alpha_weighted}
  \gamma=
  \begin{cases}
    \dfrac{1-\rho_n\alpha^{-1}}
          {1-\rho_n}        \hskip 7mm\text{ if }\alpha\geq\rho_n/\sigma_m \\
    \hskip 2mm
    \dfrac{1-\sigma_m\vphantom{1^{\textstyle|}}}
          {1-\sigma_m\alpha}\hskip 8.2mm\text{ if }\alpha\leq\rho_n/\sigma_m
  \end{cases}.
\end{equation}
Arguing in the manner of Section \ref{subsubsec:proof_of_uniform_transference}, we get the implication
\[
  \hat\omega_{\pmb\rho,\pmb\sigma}(\tr\Theta)\geq\alpha
  \implies
  \hat\omega_{\pmb\sigma,\pmb\rho}(\Theta)\geq\gamma,
\]
whence
\[
  \hat\omega_{\pmb\sigma,\pmb\rho}(\Theta)\geq
  \begin{cases}
    \dfrac{1-\rho_n\hat\omega_{\pmb\rho,\pmb\sigma}(\tr\Theta)^{-1}}
          {1-\rho_n}        \hskip 7mm\text{ if }\hat\omega_{\pmb\rho,\pmb\sigma}(\tr\Theta)\geq\rho_n/\sigma_m \\
    \hskip 2mm
    \dfrac{1-\sigma_m\vphantom{1^{\textstyle|}}}
          {1-\sigma_m\hat\omega_{\pmb\rho,\pmb\sigma}(\tr\Theta)}\hskip 8.2mm\text{ if }\hat\omega_{\pmb\rho,\pmb\sigma}(\tr\Theta)\leq\rho_n/\sigma_m
  \end{cases}.
\]
Swapping the triple $(\pmb\sigma,\pmb\rho,\Theta)$ for the triple $(\pmb\rho,\pmb\sigma,\tr\Theta)$, we come to \eqref{eq:weighted_German}.

\subsection{Marnat's result}

For $m=1$, $n=2$ (i.e. in the case most interesting for Littlewood's conjecture) we have $\sigma_1=1$, $\rho_n=\rho_2$, and $1-\rho_n=\rho_1$, which simplifies the appearance of inequality \eqref{eq:weighted_German} significantly. Swapping $(\pmb\sigma,\pmb\rho,\Theta)$ for $(\pmb\rho,\pmb\sigma,\tr\Theta)$, as usually, gives one more inequality. Besides that, in this case, same as in the problem of classical simultaneous approximation (see inequality \eqref{eq:alpha_leq_1} in Section \ref{subsec:regular_and_uniform_exponents}), we have the inequality $\hat\omega_{\pmb\sigma,\pmb\rho}(\Theta)\leq\rho_1^{-1}$ for the uniform exponent (under the condition that not all the components of $\Theta$ are rational). Therefore, Theorem \ref{t:weighted_German} gives the following statement.

\begin{corollary}\label{cor:weighted_German_m=1_n=2}
  Let $m=1$, $n=2$ and let $\Theta\notin\Q^{2\times1}$. Then
  \begin{equation}\label{eq:weighted_German_m=1_n=2}
    \begin{aligned}
      & \rho_1\hat\omega_{\pmb\sigma,\pmb\rho}(\Theta)+\rho_2\hat\omega_{\pmb\rho,\pmb\sigma}(\tr\Theta)^{-1}\geq1, \\
      & \rho_2\hat\omega_{\pmb\sigma,\pmb\rho}(\Theta)+\rho_1\hat\omega_{\pmb\rho,\pmb\sigma}(\tr\Theta)^{-1}\leq1.
      \vphantom{1^{|}}
    \end{aligned}
  \end{equation}
  If, moreover, $\theta_{11}$ is irrational, then
  \begin{equation}\label{eq:weighted_German_m=1_n=2_extra}
    \rho_1\hat\omega_{\pmb\sigma,\pmb\rho}(\Theta)\leq1.
  \end{equation}
\end{corollary}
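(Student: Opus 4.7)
The plan is to derive (1) and (2) by specializing Theorem \ref{t:weighted_German} in the two possible orientations of the triple $(\pmb\sigma,\pmb\rho,\Theta)$, and to obtain (3) from the one-dimensional obstruction in Proposition \ref{prop:alpha_1_dim_system}. The setting simplifies considerably here: with $m=1$, $n=2$ one has $\sigma_m=\sigma_1=1$, $\rho_n=\rho_2$, and $1-\rho_n=\rho_1$, which already explains the clean shape of the target inequalities. Throughout the argument I write $\omega=\hat\omega_{\pmb\sigma,\pmb\rho}(\Theta)$ and $\omega'=\hat\omega_{\pmb\rho,\pmb\sigma}(\tr\Theta)$.

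For inequality (1) I apply Theorem \ref{t:weighted_German} after swapping $(\pmb\sigma,\pmb\rho,\Theta)$ to $(\pmb\rho,\pmb\sigma,\tr\Theta)$. The new smallest weights are $\rho_2$ (new $\sigma_m$) and $1$ (new $\rho_n$), so the threshold $\sigma_m/\rho_n$ becomes $\rho_2$. Since $\omega'\geq 1\geq\rho_2$ by the trivial Dirichlet bound, the first branch of \eqref{eq:weighted_German} applies and yields $\omega\geq(1-\rho_2(\omega')^{-1})/\rho_1$, which rearranges to $\rho_1\omega+\rho_2(\omega')^{-1}\geq 1$.

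For inequality (2) I apply Theorem \ref{t:weighted_German} directly to $\Theta$; the threshold $\sigma_m/\rho_n$ is now $1/\rho_2$, and I must verify we land in the second branch, i.e. $\omega\leq 1/\rho_2$. The hypothesis $\Theta\notin\Q^{2\times 1}$ guarantees that some coordinate $\theta_{i_0 1}$ is irrational, so Proposition \ref{prop:alpha_1_dim_system} applied to $\theta_{i_0 1}$ rules out the single constraint $|\theta_{i_0 1}x-y_{i_0}|\leq t^{-\gamma\rho_{i_0}}$ for arbitrarily large $t$ whenever $\gamma\rho_{i_0}>1$; hence $\omega\leq 1/\rho_{i_0}\leq 1/\rho_2$. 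The second branch of \eqref{eq:weighted_German} then gives $\omega'\geq\rho_1/(1-\rho_2\omega)$, which rearranges to $\rho_2\omega+\rho_1(\omega')^{-1}\leq 1$. Finally, (3) is an instance of the same Proposition \ref{prop:alpha_1_dim_system} argument applied specifically to the coordinate $\theta_{11}$: its irrationality forces $\gamma\rho_1\leq 1$, that is, $\rho_1\omega\leq 1$.

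The only non-mechanical point, and thus the main thing to get right, is the case distinction in the piecewise bound \eqref{eq:weighted_German}. Both directions require checking the correct branch: in one direction the trivial lower bound $\omega'\geq 1$ suffices, while in the other direction the hypothesis $\Theta\notin\Q^{2\times 1}$ is used precisely to ensure $\omega\leq\sigma_m/\rho_n=1/\rho_2$. Once these branches are identified, the three inequalities are immediate algebraic rearrangements.
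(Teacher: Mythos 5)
Your proof is correct and takes essentially the same route as the paper: specialize the weights (so that $\sigma_1=1$, $\rho_n=\rho_2$, $1-\rho_2=\rho_1$), apply Theorem~\ref{t:weighted_German} once directly and once after the swap $(\pmb\sigma,\pmb\rho,\Theta)\mapsto(\pmb\rho,\pmb\sigma,\tr\Theta)$, and use Proposition~\ref{prop:alpha_1_dim_system} to control the uniform exponent. Your treatment of the branch selection is in fact slightly more careful than the paper's informal preamble: you correctly use the trivial Dirichlet bound $\hat\omega_{\pmb\rho,\pmb\sigma}(\tr\Theta)\geq1\geq\rho_2$ to enter the first branch for \eqref{eq:weighted_German_m=1_n=2}'s first line, use $\Theta\notin\Q^{2\times1}$ (irrationality of \emph{some} coordinate) only to get the weaker bound $\hat\omega_{\pmb\sigma,\pmb\rho}(\Theta)\le1/\rho_2$ needed for the second branch, and reserve the stronger hypothesis ``$\theta_{11}$ irrational'' for the stronger conclusion $\rho_1\hat\omega_{\pmb\sigma,\pmb\rho}(\Theta)\le1$.
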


In paper \cite{marnat_twisted} Marnat proved that for every positive $b<(3\rho_1)^{-1}$ and every $a$ satisfying the inequalities
\[
  \begin{aligned}
    & \rho_1a+\rho_2b\geq1, \\
    & \rho_2a+\rho_1b\leq1, \\
    & \rho_1a\leq1
  \end{aligned}
\]
there exist continuously many matrices $\Theta\in\R^{2\times1}$ such that $\hat\omega_{\pmb\sigma,\pmb\rho}(\Theta)=a$ and $\hat\omega_{\pmb\sigma,\pmb\rho}(\tr\Theta)=b^{-1}$. It follows that, if $\hat\omega_{\pmb\sigma,\pmb\rho}(\tr\Theta)>3\rho_1$ and $\theta_{11}$ is irrational, inequalities \eqref{eq:weighted_German_m=1_n=2}, \eqref{eq:weighted_German_m=1_n=2_extra} are sharp.

Particularly, this result by Marnat implies that already for $m+n=3$ there is no analogue of Jarn\'{\i}k's inequality in the case of nontrivial weights.

\subsection{Parametric geometry of numbers}\label{subsec:parametric_geometry_of_numbers_weighted}

The approach described in Sections \ref{subsubsec:parametric_geometry_of_numbers}, \ref{subsubsec:parametric_geometry_of_numbers_arbitrary_nm} is also applicable in Diophantine approximation with weights. We define the lattices $\La$ and $\La^\ast$ by \eqref{eq:lattices_weighted} -- same as in Section \ref{subsubsec:parametric_geometry_of_numbers_arbitrary_nm}. As for the subspace of the space of parameters $\cT$ corresponding to the problem of Diophantine approximation with nontrivial weights, unlike the case of trivial weights, we need more than just a single one-dimensional subspace.

Given weights $\pmb\sigma=(\sigma_1,\ldots,\sigma_m)$, $\pmb\rho=(\rho_1,\ldots,\rho_n)$, let us set
\begin{equation*}%\label{eq:e1_e2_definition}
\begin{array}{l}
  \vec e_1=\vec e_1(\pmb\sigma,\pmb\rho)=
  (1-d\sigma_1,\ldots,1-d\sigma_m,\underbrace{1,\ldots,1}_{n}), \\
  \vec e_2=\vec e_2(\pmb\sigma,\pmb\rho)=
  (\underbrace{1,\ldots,1}_{m},1-d\rho_n,\ldots,1-d\rho_1).
  \vphantom{\Big|}
\end{array}
\end{equation*}
If every $\sigma_j$ equals $1/m$ and every $\rho_i$ equals $1/n$, then vectors $\vec e_1$ and $\vec e_2$ are proportional. But if the weights are nontrivial, then $\vec e_1$ and $\vec e_2$ span a two-dimensional subspace of $\cT$. For each $\gamma,\delta\in\mathbb{R}$ let us set
\begin{equation*}%\label{eq:pmb_mu_and_mu_ast}
\begin{aligned}
  \pmb\mu_{\pmb\sigma,\pmb\rho}(\gamma) & =\gamma\vec e_2-\vec e_1, \\
  \pmb\mu^\ast_{\pmb\sigma,\pmb\rho}(\delta) & =\delta\vec e_1-\vec e_2.
\end{aligned}
\end{equation*}
Every $\pmb\mu_{\pmb\sigma,\pmb\rho}(\gamma)$ and every $\pmb\mu^\ast_{\pmb\sigma,\pmb\rho}(\delta)$ determine one-dimensional subspaces of $\cT$. Respectively, we consider the family of paths $\gT_{\pmb\sigma,\pmb\rho}(\gamma)$ determined by the mappings $s\mapsto s\pmb\mu_{\pmb\sigma,\pmb\rho}(\gamma)$ and the family of paths $\gT^\ast_{\pmb\sigma,\pmb\rho}(\delta)$ determined by the mappings $s\mapsto s\pmb\mu^\ast_{\pmb\sigma,\pmb\rho}(\delta)$. We have lower and upper Schmidt--Summerer exponents of the first and of the second types defined for these paths (see Definition \ref{def:schmidt_psi}). It is shown in paper \cite{german_monatshefte_2022} that weighted Diophantine exponents are related to Schmidt--Summerer exponents by
\begin{equation}\label{eq:weighted_in_terms_of_schmimmerer_inequality}
\begin{array}{l}
  \omega_{\pmb\sigma,\pmb\rho}(\Theta)\geq\gamma\
  \iff\
  \bpsi_1\big(\La,\gT_{\pmb\sigma,\pmb\rho}(\gamma)\big)\leq1-\gamma, \\
  \vphantom{1^{\big\vert}}
  \omega_{\pmb\sigma,\pmb\rho}(\Theta)\leq\gamma\
  \iff\
  \bpsi_1\big(\La,\gT_{\pmb\sigma,\pmb\rho}(\gamma)\big)\geq1-\gamma, \\
  \vphantom{1^{\big\vert}}
  \hat\omega_{\pmb\sigma,\pmb\rho}(\Theta)\geq\gamma\
  \iff\
  \apsi_1\big(\La,\gT_{\pmb\sigma,\pmb\rho}(\gamma)\big)\leq1-\gamma, \\
  \vphantom{1^{\big\vert}}
  \hat\omega_{\pmb\sigma,\pmb\rho}(\Theta)\leq\gamma\
  \iff\
  \apsi_1\big(\La,\gT_{\pmb\sigma,\pmb\rho}(\gamma)\big)\geq1-\gamma
\end{array}
\end{equation}
and, similarly,
\begin{equation}\label{eq:weighted_in_terms_of_schmimmerer_inequality_tr}
\begin{array}{l}
  \omega_{\pmb\rho,\pmb\sigma}(\tr\Theta)\geq\delta\
  \iff\
  \bpsi_1\big(\La^\ast,\gT^\ast_{\pmb\sigma,\pmb\rho}(\delta)\big)\leq1-\delta, \\
  \vphantom{1^{\big\vert}}
  \omega_{\pmb\rho,\pmb\sigma}(\tr\Theta)\leq\delta\
  \iff\
  \bpsi_1\big(\La^\ast,\gT^\ast_{\pmb\sigma,\pmb\rho}(\delta)\big)\geq1-\delta, \\
  \vphantom{1^{\big\vert}}
  \hat\omega_{\pmb\rho,\pmb\sigma}(\tr\Theta)\geq\delta\
  \iff\
  \apsi_1\big(\La^\ast,\gT^\ast_{\pmb\sigma,\pmb\rho}(\delta)\big)\leq1-\delta, \\
  \vphantom{1^{\big\vert}}
  \hat\omega_{\pmb\rho,\pmb\sigma}(\tr\Theta)\leq\delta\
  \iff\
  \apsi_1\big(\La^\ast,\gT^\ast_{\pmb\sigma,\pmb\rho}(\delta)\big)\geq1-\delta.
\end{array}
\end{equation}
In that same paper it is shown that if $\delta\geq1$ and $\gamma$ is related to $\delta$ by the right-hand equality \eqref{eq:Q_is_contained_in_P_star_weighted}, then
\begin{equation}\label{eq:weighted_dyson_schmimmerered}
  \bpsi_1\big(\La^\ast,\gT^\ast_{\pmb\sigma,\pmb\rho}(\delta)\big)\leq 1-\delta
  \implies
  \bpsi_1\big(\La,\gT_{\pmb\sigma,\pmb\rho}(\gamma)\big)\leq 1-\gamma.
\end{equation}
In view of \eqref{eq:weighted_in_terms_of_schmimmerer_inequality} and \eqref{eq:weighted_in_terms_of_schmimmerer_inequality_tr}, statement \eqref{eq:weighted_dyson_schmimmerered} is nothing else but inequality \eqref{eq:weighted_Dyson} written in terms of Schmidt--Summerer exponents. Same as with Khintchine's and Dyson's inequalities, this approach enables splitting inequality \eqref{eq:weighted_Dyson} into a chain of inequalities between intermediate exponents. The respective definitions and formulations can be found in paper \cite{german_monatshefte_2022}.

\section{Diophantine exponents if lattices}

In previous Sections we considered a matrix $\Theta\in\R^{n\times m}$ and dealt with the question how fast the vector $\Theta\vec x-\vec y$ can tend to zero if $\vec x$ and $\vec y$ are assumed to range through integer vectors. In other words, we considered $n$ linear forms with coefficients written in the rows of the matrix
\[
  \begin{pmatrix}
    \Theta  & -\vec I_n
  \end{pmatrix}=
  \begin{pmatrix}
    \theta_{11} & \cdots & \theta_{1m} & -1 & \cdots & 0 \\
    \vdots & \ddots & \vdots & \vdots & \ddots & \vdots \\
    \theta_{n1} & \cdots & \theta_{nm} & 0 & \cdots & -1
  \end{pmatrix}
\]
and studied their values at integer points. We considered several ways to measure the ``magnitude'' of a vector. The first one uses the sup-norm, the second one uses the geometric mean of the absolute values of coordinates, the third one uses weighted norms. But each time we worked with $n$ linear forms in $d=m+n$ variables. I.e. every time the number of linear forms was strictly less than the dimension of the ambient space. This section is devoted to problems concerning $d$-tuples of linear forms in $d$ variables.

Let $L_1,\ldots,L_d$ be linearly independent linear forms in $d$ variables. Consider the lattice
\[
  \La=\Big\{\big(L_1(\vec u),\ldots,L_d(\vec u)\big)\,\Big|\,\vec u\in\Z^d \Big\}.
\]
Using a norm to measure the ``magnitude'' of the elements of $\La$ is not very productive, as such a norm is bounded away from zero at nonzero points of $\La$. But the geometric mean of the absolute values of coordinates leads to very interesting and sometimes very difficult problems. As in Section \ref{sec:mult}, let us set for each $\vec z=(z_1,\ldots,z_d)\in\R^d$
\[
  \Pi(\vec z)=\prod_{\begin{subarray}{c}1\leq i\leq d\end{subarray}}|z_i|^{1/d}.
\]
We remind also that $|\cdot|$ denotes the sup-norm.

\begin{definition} \label{d:lattice_exponent}
  Supremum of real $\gamma$ such that the inequality
  \[
    \Pi(\vec z)\leq |\vec z|^{-\gamma}
  \]
  admits infinitely many solutions in $\vec z\in\La$ is called the \emph{Diophantine exponent} of $\La$ and is denoted as $\omega(\La)$.
\end{definition}

\subsection{Spectrum of lattice exponents}

It follows from Minkowski's convex body theorem that for every lattice $\La$ the ``trivial'' inequality
\[
  \omega(\La)\geq0
\]
holds. It is clear that $\omega(\La)=0$ whenever the functional $\Pi(\vec x)$ is bounded away from zero at nonzero points of $\La$. For instance, this is the case if $\La$ is the lattice of a complete module in a totally real algebraic extension of $\Q$, i.e. if
\begin{equation}\label{eq:algebraic_lattice}
  \La=
  \begin{pmatrix}
    \sigma_1(\omega_1) & \sigma_1(\omega_2) & \cdots & \sigma_1(\omega_d) \\
    \sigma_2(\omega_1) & \sigma_2(\omega_2) & \cdots & \sigma_2(\omega_d) \\
    \vdots             & \vdots             & \ddots & \vdots             \\
    \sigma_d(\omega_1) & \sigma_d(\omega_2) & \cdots & \sigma_d(\omega_d)
  \end{pmatrix}\Z^d,
\end{equation}
where $\omega_1,\ldots,\omega_d$ is a basis of a totally real extension $E$ of $\Q$ of degree $d$ and $\sigma_1,\ldots,\sigma_d$ are the embeddings of $E$ into $\R$. Such lattices are often called \emph{algebraic}. A detailed account on algebraic lattices can be found, for instance, in Borevich and Shafarevich's book \cite{borevich_shafarevich}.

There is a wider class of lattices with $\omega(\La)=0$. Their existence is provided by Schmidt's famous subspace theorem, which was published by Schmidt in 1972 in paper \cite{schmidt_subspace}. Its formulation and proof can also be found in Bombieri and Gubler's book \cite{bombieri_gubler}. It is shown in papers \cite{skriganov_1998}, \cite{german_lattice_transference} that if the coefficients of linear forms $L_1,\ldots,L_d$ are algebraic and for every $k$-tuple $(i_1,\ldots,i_k)$, $1\leq i_1<\ldots<i_k\leq d$, $1\leq k\leq d$, the coefficients of the multivector $L_{i_1}\wedge\ldots\wedge L_{i_k}$ are linearly independent over $\Q$, then by the subspace theorem we have $\omega(\La)=0$.

It was shown in paper \cite{german_lattice_transference} that weakening the condition of linear independence formulated above enables constructing examples for $d\geq3$ of lattices with exponents attaining values
\begin{equation} \label{eq:spectrum_from_schmidt}
  \frac{\,ab\,}{cd}\,,\qquad
  \begin{array}{l}
    a,b,c\in\N, \\
    a+b+c=d.
  \end{array}
\end{equation}
It is natural to conjecture that there are lattices $\La$ with any prescribed nonnegative $\omega(\La)$. For $d=2$ this is easily proved with the help of continued fractions (see Section \ref{subsec:continued_fractions}). For $d\geq3$ this question is still open. It is proved in paper \cite{german_lattice_exponents_spectrum} that the interval
\begin{equation} \label{eq:spectrum_from_german}
  \bigg[3-\frac{d}{(d-1)^2}\,,\,+\infty\bigg]
\end{equation}
is contained in the spectrum of the values of $\omega(\La)$. The question whether there are positive numbers in this spectrum that are not in the interval \eqref{eq:spectrum_from_german} and do not equal any of the numbers \eqref{eq:spectrum_from_schmidt} is still open for $d\geq3$.

\subsection{Lattices with positive norm minimum}

For lattices, the analogue of the property of a number to be badly approximable is the property to have positive \emph{norm minimum}.

\begin{definition}
  Let $\La$ be a full rank lattice in $\R^d$. Its \emph{norm minimum} is defined as
  \[
    N(\La)=\inf_{\vec z\in\La\backslash\{\vec 0\}}\Pi(\vec z)^d.
  \]
\end{definition}
As we said in the previous Section, if $\La$ is an algebraic lattice, then $N(\La)>0$. Cassels and Swinnerton-Dyer showed in paper \cite{cassels_swinnerton_dyer} that Littlewood's conjecture (see the formulation in the beginning of Section \ref{sec:mult}) can be derived from the following assumption, which is an analogue of Oppenheim's conjecture on quadratic forms for decomposable forms of degree $d$. The latter was proved by Margulis in the end of 1980s (see \cite{margulis_FML}).

\begin{cassels}
  Let $d\geq3$ and let $\La$ be a full rank lattice in $\R^d$. Then the condition $N(\La)>0$ is equivalent to the existence of a non-degenerate diagonal operator $D$ such that $D\La$ is an algebraic lattice, i.e. satisfies \eqref{eq:algebraic_lattice}.
\end{cassels}

The main tool in the proof of the fact that Littlewood's conjecture follows from Cassels--Swinnerton-Dyer's conjecture is Mahler's compactness criterion (see \cite{cassels_GN} and \cite{cassels_swinnerton_dyer}).

For $d=2$ the statement of Cassels--Swinnerton-Dyer's conjecture is not true. Indeed, in view of Proposition \ref{prop:CF_gap_estimates} and geometric interpretation of continued fractions described in Section \ref{subsec:continued_fractions}, in the two-dimensional case the lattice
\[
  \La=\Big\{\big(L_1(\vec u),L_2(\vec u)\big)\,\Big|\,\vec u\in\Z^2 \Big\}
\]
has positive norm minimum if and only if the ratio of the coefficients of $L_1$ and the ratio of the coefficients of $L_2$ are badly approximable numbers. Thus, if these two ratios are irrational numbers with bounded partial quotients, but the sequences of these partial quotients are not periodic, then $N(\La)>0$, but there is no diagonal operator $D$ such that $D\La$ is an algebraic lattice.

Due to Dirichlet's theorem on algebraic units, algebraic lattices have rich symmetry groups consisting of diagonal operators. Details can be found, for instance, in books \cite{borevich_shafarevich}, \cite{karpenkov_book} and also in paper \cite{german_tlyustangelov_izvestiya_2021}. This observation generalises to the multidimensional case the fact that quadratic irrationalities have periodic continued fractions. Thus, Cassels--Swinnerton-Dyer's conjecture claims that for $d\geq3$ the inequality $N(\La)>0$ (which, we remind, is an analogue of the property of a number to be badly approximable) implies that $\La$ has a rich group of symmetries consisting of diagonal operators. This statement admits reformulation in terms of multidimensional continued fractions. To be more specific, in terms of \emph{Klein polyhedra}.

\begin{definition}\label{def:klein_polyhedra}
  Let $\La$ be a full rank lattice in $\R^d$. In each orthant $\cO$, let us consider the convex hull
  \[
    \cK(\La,\cO)=\conv\big(\La\cap\cO\backslash\{\vec 0\}\big).
  \]
  Each of the $2^d$ convex hulls thus obtained is called a \emph{Klein polyhedron}.
\end{definition}

In the case $d=2$ Definition \ref{def:klein_polyhedra} differs from Definition \ref{def:klein_polygons} of Klein polygons given in Section \ref{subsubsec:klein_polygons}. The reason is that Klein polygons from Definition \ref{def:klein_polygons} are a geometric interpretation of a continued fraction of a single number. If we take a pair of distinct numbers, then the respective pair of lines divides the plane into four angles and the respective pair of linear forms determines a full rank lattice in $\R^2$. This construction precisely corresponds to Definition \ref{def:klein_polyhedra}, but it interprets geometrically the continued fractions of a pair of numbers. We discuss it in detail in Section \ref{subsec:lattice_exponents_vs_growth}.

We say that a lattice $\La$ is \emph{irrational} if none of its nonzero points have zero coordinates. If a lattice is irrational, then each of the $2^d$ Klein polyhedra is a generalised polyhedron, i.e. a set whose intersection with any given compact polyhedron is itself a compact polyhedron. This fact is proved in paper \cite{moussafir_A_polyhedra}. Particularly, in this case each vertex belongs to a finite number of faces. If, at the same time, the dual lattice $\La^\ast$ is irrational, then, as it was shown in \cite{german_norm_minima_I}, each of the $2^d$ Klein polyhedra has only compact faces. Particularly, each face has a finite number of vertices.

We showed in Section \ref{subsubsec:klein_polygons} that in the two-dimensional case the role of partial quotients is played by edges and vertices of Klein polygons equipped with linear integer invariants such as integer length of an edge and integer angle at a vertex. In the multidimensional case it is natural to consider facets (i.e. $(d-1)$-dimensional faces) and edge stars of vertices of a Klein polyhedron equipped with some appropriate linear integer invariants. In papers \cite{german_norm_minima_I}, \cite{german_norm_minima_II}, \cite{german_bordeaux}, \cite{german_bigushev_2022} \emph{determinants} of facets and edge stars are considered as such invariants.

\begin{definition} \label{d:det_F_and_det_starv}
  Let $\La$ be a full rank lattice in $\R^d$. Let $\La$ and $\La^\ast$ be irrational. Let $\cK$ be one of the $2^d$ Klein polyhedra of $\La$.

  \textup{(i)} Let $F$ be an arbitrary facet of $\cK$ and let $\vec v_1,\ldots,\vec v_k$ be the vertices of $F$. The \emph{determinant} of $F$ is defined as
  \[
    \det F=\sum_{1\leq i_1<\ldots<i_d\leq k}|\det(\vec v_{i_1},\ldots,\vec v_{i_d})|.
  \]

  \textup{(ii)} Let $\vec v$ be an arbitrary vertex of $\cK$ and let it be incident to $k$ edges. Let $\vec r_1,\ldots,\vec r_k$ be primitive vectors of $\La$ parallel to these edges. The \emph{determinant} of the edge star $\starv$ of $\vec v$ is defined as
  \[
    \det\starv=\sum_{1\leq i_1<\ldots<i_d\leq k}|\det(\vec r_{i_1},\ldots,\vec r_{i_d})|.
  \]
\end{definition}

It is clear that if $d=2$ and $\det\La=1$, then the determinants of edges are equal to their integer lengths and the determinants of the edge stars of vertices are equal to the integer angles between the respective edges.

According to Corollary \ref{cor:BA_vs_boundedness_of_partial_quotients} in Section \ref{subsubsec:exponent_vs_growth}, the property of an irrational number to be badly approximable is equivalent to the property to have bounded partial quotients. In papers \cite{german_norm_minima_I}, \cite{german_norm_minima_II}, \cite{german_bordeaux} the following multidimensional generalisation of this statement is proved.

\begin{theorem}[O.\,G., 2006]\label{t:main_normin_II}
  Let $\La$ be an irrational full rank lattice in $\R^d$. Then the following statements are equivalent:

  \textup{(i)} $N(\La)>0$;

  \textup{(ii)} the facets of all the $2^n$ Klein polyhedra of $\La$ have bounded determinants (bounded by a common constant);

  \textup{(iii)} the facets and the edge stars of the vertices of the Klein polyhedron corresponding to $\La$ and the positive orthant have bounded determinants (bounded by a common constant).
\end{theorem}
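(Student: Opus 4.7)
The plan is to establish the cycle of implications (i) $\Rightarrow$ (ii) $\Rightarrow$ (iii) $\Rightarrow$ (i). The unifying tool is the AM--GM inequality applied on the support hyperplane of a facet: every facet $F$ of any Klein polyhedron of $\La$ is cut out in its affine hull by a unique equation $\langle\vec c_F,\vec x\rangle = 1$, where the signs of the coordinates of $\vec c_F$ match the orthant of $F$. For every vertex $\vec v$ of $F$,
\begin{equation}\label{eq:amgm_bridge}
  1 \;=\; \langle\vec c_F,\vec v\rangle \;=\; \sum_{i=1}^d |c_{F,i}|\,|v_i| \;\geq\; d\,\Pi(\vec c_F)\,\Pi(\vec v),
\end{equation}
so the $\Pi$-values of lattice points lying on $F$ and the $\Pi$-value of the outer normal $\vec c_F$ are tied together by a hyperbolic threshold.

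For (i) $\Rightarrow$ (ii), I would assume $N(\La)>0$. Since every vertex of $F$ belongs to $\La\setminus\{\vec 0\}$, inequality \eqref{eq:amgm_bridge} combined with $\Pi(\vec v)^d\geq N(\La)$ yields $\Pi(\vec c_F)^d\leq d^{-d}N(\La)^{-1}$, bounding the facet normals uniformly. A second application of AM--GM then confines the vertices of $F$ to a compact subset of the support hyperplane whose diameter depends only on $N(\La)$ and $d$. The standing irrationality assumptions ensure that $F$ itself is compact, and discreteness of $\La$ bounds the number of vertices $k$; each summand $|\det(\vec v_{i_1},\ldots,\vec v_{i_d})|$ is then bounded because the $\vec v_j$'s have bounded coordinates, so $\det F$ is uniformly bounded across all $2^d$ orthants. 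For (ii) $\Rightarrow$ (iii), each primitive edge vector $\vec r_i$ at a vertex $\vec v\in\cK(\La,\cO_+)$ equals $\vec u_i-\vec v$ for some vertex $\vec u_i$ adjacent to $\vec v$ along some facet $F_i$; a uniform bound on $\det F_i$ forces a uniform bound on the diameter of $F_i$ and on the number of edges $F_i$ contributes at $\vec v$, hence a uniform bound both on $|\vec r_i|$ and on the number of such primitive vectors, yielding boundedness of $\det\starv$.

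For (iii) $\Rightarrow$ (i), I argue contrapositively: suppose $N(\La)=0$ and pick $\vec w_k\in\La\setminus\{\vec 0\}$ with $\Pi(\vec w_k)\to 0$. Passing to a subsequence with constant sign pattern and then reflecting coordinates, I may assume $\vec w_k\in\cO_+$. Choosing the support hyperplane of $\cK(\La,\cO_+)$ closest to the ray through $\vec w_k$ produces a facet $F^{(k)}$ whose normal satisfies $\Pi(\vec c_{F^{(k)}})\to\infty$, by applying \eqref{eq:amgm_bridge} to $\vec w_k$ in place of $\vec v$. Reversing AM--GM, some pair of vertices of $F^{(k)}$ must have a coordinate ratio diverging, which, combined with the lattice structure, must inflate either $\det F^{(k)}$ itself or the determinant $\det\textup{St}_{\vec v^{(k)}}$ of an adjacent edge star. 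The main obstacle will be making this last step quantitative: an unbounded coordinate ratio between two vertices of $F^{(k)}$ does not automatically inflate any specific summand $|\det(\vec v_{i_1},\ldots,\vec v_{i_d})|$, because the remaining columns may happen to be short. The delicate combinatorial-geometric work, carried out in \cite{german_norm_minima_I,german_norm_minima_II}, consists of selecting, for each $k$, either a $d$-tuple of vertices in $F^{(k)}$ or a neighbouring edge star whose determinant is provably unbounded, and verifying that edge stars alone in the single polyhedron $\cK(\La,\cO_+)$ suffice to detect the failure of $N(\La)>0$.
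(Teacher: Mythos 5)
The paper itself does not prove Theorem~\ref{t:main_normin_II}; it only cites \cite{german_norm_minima_I}, \cite{german_norm_minima_II}, \cite{german_bordeaux}. So the comparison has to be an internal check of your argument, and there is a genuine gap in it.

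The AM--GM bridge \eqref{eq:amgm_bridge} is fine, and it does give $\Pi(\vec c_F)^d\leq d^{-d}N(\La)^{-1}$ under (i). But the next step --- that a ``second application of AM--GM\ldots confines the vertices of $F$ to a compact subset of the support hyperplane whose diameter depends only on $N(\La)$ and $d$,'' so that the $\vec v_j$ ``have bounded coordinates'' --- is false, and this falsity is where the whole content of the theorem lives. An upper bound on $\Pi(\vec c_F)$ gives no lower bound on the individual coordinates $|c_{F,i}|$, so the relation $\langle\vec c_F,\vec v\rangle=1$ places no uniform cap on $|v_i|$. Already in $d=2$ this is visible: for a badly approximable $\theta$, the vertices $\vec v_k=(q_k,p_k)$ of the Klein polygons escape to infinity, and the edges (the $d=2$ analogue of facets) have unbounded Euclidean length while having bounded \emph{integer} length $a_{k+1}$. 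The determinant $\det F$ is an affine invariant and stays bounded even though every metric quantity you appeal to (diameter of $F$, size of vertex coordinates, hence ``discreteness bounds $k$'') blows up. The same confusion then propagates into (ii)~$\Rightarrow$~(iii), where you infer ``a uniform bound on the diameter of $F_i$'' and ``a uniform bound on $|\vec r_i|$'' from a bound on $\det F_i$; neither follows. Finally, for (iii)~$\Rightarrow$~(i) you correctly identify where the real difficulty is, but you explicitly defer it to the cited papers rather than carry it out, so that leg is not a proof either. A correct argument has to work with affine/lattice invariants throughout --- e.g.\ normalizing each facet by a diagonal operator so that the normal has equal coordinates, or relating $\det F$ and $\det\textup{St}_{\vec v}$ to $\Pi$-values of lattice points inside the simplex $\conv\{\vec 0,\vec v_{i_1},\ldots,\vec v_{i_d}\}$ via Minkowski's theorem --- precisely because the vertices themselves are unbounded.
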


The aforementioned corollary to Dirichlet's theorem on algebraic units can be specified. If $\La$ is an algebraic lattice in $\R^d$, then there is a group isomorphic to $\Z^{d-1}$ which consists of diagonal operators with positive diagonal elements that preserve the lattice. In this case the boundary of each of the $2^d$ Klein polyhedra equipped with any linear integer invariants that can be of interest has $(d-1)$-periodic combinatorial structure.

Thus, Theorem \ref{t:main_normin_II} provides the following reformulation of Cassels--Swinnerton-Dyer's conjecture.

\begin{cassels}[Reformulated]
  Let $d\geq3$ and let $\La$ be an irrational full rank lattice in $\R^d$. Let $\cK$ be one of the $2^n$ Klein polyhedra of $\La$. Then the following statements are equivalent:

  \textup{(i)} the facets and the edge stars of the vertices of $\cK$ have bounded determinants (bounded by a common constant);

  \textup{(ii)} the combinatorial structure of the boundary of $\cK$ equipped with determinants of facets and determinants of edge stars of vertices is $(d-1)$-periodic.
\end{cassels}

Thus, we can say that Cassels--Swinnerton-Dyer's conjecture claims that for $d\geq3$ the boundedness of multidimensional analogues of partial quotients implies the periodicity of the respective multidimensional continued fraction.

\subsection{Relation to the growth of multidimensional analogues of partial quotients}\label{subsec:lattice_exponents_vs_growth}

If statement \textup{(ii)} or statement \textup{(iii)} of Theorem \ref{t:main_normin_II} does not hold, then $N(\La)=0$, i.e. $\Pi(\vec z)$ attains arbitrarily small values at nonzero $\vec z\in\La$. In the two-dimensional case we can recall Corollary \ref{cor:omega_vs_growth} from Section \ref{subsubsec:exponent_vs_growth}, which relates the Diophantine exponent of a number to the growth of its partial quotients. The correspondence described in Section \ref{subsubsec:geometric_algorithm} enables to formulate a statement equivalent to Corollary \ref{cor:omega_vs_growth} in terms of Diophantine exponents of lattices in dimension $2$. Let us extend Definition \ref{def:klein_polygons} of Klein polygons to the case of two numbers $\theta_1$ and $\theta_2$ by considering the convex hulls of nonzero integer points in the four angles determined by the lines $y=\theta_1x$ and $y=\theta_2x$ (see Fig. \ref{fig:KP_and_CF}).

\begin{figure}[h]
  \centering
  \begin{tikzpicture}[scale=1.25]
    \fill[blue!10!,path fading=north]
        (4.8,4.56) -- (3+0.4,4+0.4*7/5) -- (3,4) -- (1,1) -- (1,0) -- (4.8,4.56) -- cycle;
    \fill[blue!10!,path fading=east]
        (4.8,4.56) -- (1,0) -- (3,-1) -- (3+1.8,-1-1.8*2/5) -- cycle;
    \fill[blue!10!,path fading=north]
        (2+1.17,3+1.17*4/3) -- (2,3) -- (0,1) -- (-2-1.8,3+1.17*4/3) -- cycle;
    \fill[blue!10!,path fading=west]
        (-2-1.8,3+1.17*4/3) -- (0,1) -- (-2,1) -- (-2-1.8,1+1.8/3) -- cycle;
    \fill[blue!10!,path fading=south]
        (-0.7,-1.7) -- (0,-1) -- (2,-1) -- (4.1,-1.7) -- cycle;
    \fill[blue!10!,path fading=west]
        (-3.8,-1.7) -- (-1,0) -- (-3,1) -- (-3.8,1+0.8*2/5) -- cycle;
    \fill[blue!10!,path fading=south]
        (-1-0.7*2/3,-1.7) -- (-1,-1) -- (-1,0) -- (-3.8,-1.7) -- cycle;

    \draw[very thin,color=gray,scale=1] (-3.8,-1.7) grid (4.8,4.56);

    \draw[color=black] plot[domain=-13/9:3.6] (\x, {11*\x/8}) node[right]{$y=\theta_1x$};
    \draw[color=black] plot[domain=-4:5] (\x, {-3*\x/8}) node[right]{$y=\theta_2x$};

    \draw[color=blue] (3+0.4,4+0.4*7/5) -- (3,4) -- (1,1) -- (1,0) -- (3,-1) -- (3+1.8,-1-1.8*2/5);
    \draw[color=blue] (2+1.17,3+1.17*4/3) -- (2,3) -- (0,1) -- (-2,1) -- (-2-1.8,1+1.8/3);
    \draw[color=blue] (-0.7,-1.7) -- (0,-1) -- (2,-1) -- (4.1,-1.7);
    \draw[color=blue] (-1-0.7*2/3,-1.7) -- (-1,-1) -- (-1,0) -- (-3,1) -- (-3.8,1+0.8*2/5);

    \node[fill=blue,circle,inner sep=1.2pt] at (3,4) {};
    \node[fill=blue,circle,inner sep=1.2pt] at (1,1) {};
    \node[fill=blue,circle,inner sep=1.2pt] at (1,0) {};
    \node[fill=blue,circle,inner sep=1.2pt] at (3,-1) {};
    \node[fill=blue,circle,inner sep=1.2pt] at (2,3) {};
    \node[fill=blue,circle,inner sep=1.2pt] at (0,1) {};
    \node[fill=blue,circle,inner sep=1.2pt] at (-2,1) {};

    \node[fill=blue,circle,inner sep=1.2pt] at (-1,-1) {};
    \node[fill=blue,circle,inner sep=1.2pt] at (-1,0) {};
    \node[fill=blue,circle,inner sep=1.2pt] at (-3,1) {};
    \node[fill=blue,circle,inner sep=1.2pt] at (0,-1) {};
    \node[fill=blue,circle,inner sep=1.2pt] at (2,-1) {};

    \node[fill=blue,circle,inner sep=0.8pt] at (1,-1) {};
    \node[fill=blue,circle,inner sep=0.8pt] at (-1,1) {};
    \node[fill=blue,circle,inner sep=0.8pt] at (1,2) {};

    \node[right] at (1-0.03,0.5) {$a_0$};
    \node[right] at (2-2/13,2.5-3/13) {$a_2$};

    \node[right] at (2-0.06,-0.4) {$a_{-2}$};

    \node[above left] at (1.08,2-0.02) {$a_1$};

    \node[above] at (-1,0.95) {$a_{-1}$};

    \draw[blue] ([shift=({atan(-1/2)}:0.2)]1,0) arc (atan(-1/2):90:0.2);
    \draw[blue] ([shift=({atan(-2/5)}:0.2)]3,-1) arc (atan(-2/5):90+atan(2):0.2);
    \draw[blue] ([shift=(-90:0.2)]1,1) arc (-90:atan(3/2):0.2);
    \draw[blue] ([shift=({-90-atan(2/3)}:0.2)]3,4) arc (-90-atan(2/3):atan(7/5):0.2);
    \draw[blue] ([shift=({atan(4/3)}:0.2)]2,3) arc (atan(4/3):225:0.2);
    \draw[blue] ([shift=(45:0.2)]0,1) arc (45:180:0.2);
    \draw[blue] ([shift=(0:0.2)]-2,1) arc (0:90+atan(3):0.2);

    \node[right] at (3.15,-0.88) {$a_{-3}$};
    \node[right] at (1.17,0) {$a_{-1}$};
    \node[right] at (1.17,1) {$a_1$};
    \node[right] at (3.08,3.75) {$a_3$};
    \node[left] at (2-0.1,3.23) {$a_2$};
    \node[above] at (-0.05,1.18) {$a_0$};
    \node[above] at (-2.02,1.16) {$a_{-2}$};

%    \draw (3,1.22) node[right]{$\cK_1$};
%    \draw (-1,3.22) node[right]{$\cK_2$};
  \end{tikzpicture}
  \caption{Extended Klein polygons for}
          {$\theta_1=[a_0;a_1,a_2,\ldots],\ -1/\theta_2=[a_{-1};a_{-2},a_{-3},\ldots]$}
  \label{fig:KP_and_CF}
\end{figure}

In the case of a single number, a partial quotient $a_{k+1}$ equals the integer angle at the vertex $\vec v_k$ of the respective Klein polygon, whereas the denominator $q_k$ differs from $|\vec v_k|$ by a bounded factor. Hence, in the case of two numbers, relation \eqref{eq:omega_vs_growth} from Corollary \ref{cor:omega_vs_growth} provides the equality
\begin{equation} \label{eq:mu_vs_alpha}
  \max\big(\omega(\theta_1),\omega(\theta_2)\big)=
  1+\displaystyle\limsup_{\substack{ \vec w\in \cW,\ |\vec w|>1 \\ |\vec w|\to\infty }}\frac{\log(\det\starw)}{\log|\vec w|}\,,
\end{equation}
where $\cW$ denotes the set of vertices of all the four extended Klein polygons.

Now, let us address the Diophantine exponent of the lattice $\La=A\Z^2$, where
\begin{equation*}%\label{eq:A_theta}
  A=
  \begin{pmatrix}
    \theta_1 & -1 \\
    \theta_2 & -1
  \end{pmatrix}.
\end{equation*}
Denote by $\cV$ the set of vertices of all the four Klein polyhedra of $\La$. Since $\cV=A\cW$, we have the following relations for $\vec v=A(\vec w)$:
\begin{equation}\label{eq:v_asymp_w}
  |\vec v|\asymp|\vec w|,
  \qquad
  \det\starw\asymp\det\starv.
\end{equation}
If $\vec w=(q,p)$, then for $|q|$ large enough
\begin{equation}\label{eq:gamma_vs_1+2gamma}
  \Pi(\vec v)\asymp|\vec v|^{-\gamma}
  \iff
  \min\big(|q\theta_1-p|,|q\theta_2-p|\big)\asymp q^{-1-2\gamma}.
\end{equation}
Further, if all the vertices of all the Klein polyhedra of $\La$ satisfy the condition $\Pi(\vec v)\geq|\vec v|^{-\gamma}$, then so do all nonzero points of $\La$. This leads us to the following equality:
\begin{equation*}%\label{eq:omega_along_V}
  \omega(\La)=\sup\Big\{\gamma\in\R\ \Big|\,\exists\,\infty\,\vec v\in\cV:\,\Pi(\vec v)\leq|\vec v|^{-\gamma} \Big\}.
\end{equation*}
Hence by \eqref{eq:gamma_vs_1+2gamma}
\begin{equation*}%\label{eq:omega_vs_mu}
  \max\big(\omega(\theta_1),\omega(\theta_2)\big)=1+2\omega(\La).
\end{equation*}
Taking into account \eqref{eq:mu_vs_alpha} and \eqref{eq:v_asymp_w}, we get
\begin{equation}\label{eq:omega_vs_starv_2dim}
  \omega(\La)=
  \frac12
  \displaystyle\limsup_{\substack{ \vec v\in \cV,\ |\vec v|>1 \\ |\vec v|\to\infty }}\frac{\log(\det\starv)}{\log|\vec v|}\,.
\end{equation}

Thus, relation \eqref{eq:omega_vs_starv_2dim} is an analogue of equality \eqref{eq:omega_vs_growth} from Corollary \ref{cor:omega_vs_growth} for Diophantine exponents of lattices of rank $2$. It can be generalised to the multidimensional case, but what is currently known is far from an exhaustive generalisation. There is only one respective result. It was obtained in paper \cite{german_bigushev_2022}.

\begin{theorem}[E.\,R.\,Bigushev, O\,G., 2022]\label{t:omega_vs_det_starv}
  Let $\La$ be an irrational full rank lattice in $\R^3$. Let $\cV$ denote the set of vertices of all the eight Klein polyhedra of $\La$. Then
  \begin{equation} \label{eq:omega_vs_det_starv}
    \omega(\La)
    \leq
    \frac23
    \displaystyle\limsup_{\substack{ \vec v\in \cV,\ |\vec v|>1 \\ |\vec v|\to\infty }}\frac{\log(\det\starv)}{\log|\vec v|}\,.
  \end{equation}
\end{theorem}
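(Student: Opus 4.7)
The plan is to associate to each sufficiently good approximation $\vec z\in\La$ a vertex $\vec v$ of one of the eight Klein polyhedra of $\La$ whose edge star $\mathrm{St}_{\vec v}$ has determinant at least of order $|\vec v|^{3\gamma/2}$, for $\gamma$ slightly smaller than $\omega(\La)$. Fix $\varepsilon>0$ small and set $\gamma=\omega(\La)-\varepsilon$. By Definition \ref{d:lattice_exponent} there is an infinite sequence $\vec z_n\in\La$ with $t_n:=|\vec z_n|\to\infty$ and $\Pi(\vec z_n)\le t_n^{-\gamma}$. Passing to a subsequence I may assume all the $\vec z_n$ lie in one fixed orthant, which by symmetry I take to be the positive orthant $\cO_+$, and I write $\cK=\cK(\La,\cO_+)$.

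Next, for each $n$ I would associate to $\vec z_n$ a vertex $\vec v_n$ of $\cK$ as follows. The ray $\{s\vec z_n:s\in(0,1]\}$ meets the boundary of $\cK$ in a unique point lying in the relative interior of a unique face $F_n$; choose $\vec v_n$ to be a vertex of $F_n$ of minimal Euclidean norm. Because $\vec z_n$ itself lies in $\cK$ and the vertices of $F_n$ span $F_n$ as a convex hull, a standard convexity estimate yields $|\vec v_n|\asymp t_n$ with constants depending only on the dimension. Now let $\vec r_1,\dots,\vec r_k$ be the primitive lattice vectors along the edges emanating from $\vec v_n$. The crux of the argument is to extract a triple $\vec r_{i_1},\vec r_{i_2},\vec r_{i_3}$ with $|\det(\vec r_{i_1},\vec r_{i_2},\vec r_{i_3})|\gg t_n^{3\gamma/2}$; I would obtain this by applying Minkowski's convex body theorem (Theorem \ref{t:minkowski_convex}) to a parallelepiped adapted to the multiplicative thinness direction of $\vec z_n$, intersected with the two-dimensional tangent cone to $\cK$ at $\vec v_n$. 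Once this is established, the identity $\det\mathrm{St}_{\vec v_n}=\sum_{1\le i_1<i_2<i_3\le k}|\det(\vec r_{i_1},\vec r_{i_2},\vec r_{i_3})|$ immediately gives $\det\mathrm{St}_{\vec v_n}\gg t_n^{3\gamma/2}$. Combined with $|\vec v_n|\asymp t_n$, this forces the ratio $\log(\det\mathrm{St}_{\vec v_n})/\log|\vec v_n|$ to tend to at least $3\gamma/2$ along the sequence, and letting $\varepsilon\to 0$ yields \eqref{eq:omega_vs_det_starv}.

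The principal obstacle is the third step: translating the multiplicative bound $\Pi(\vec z_n)\le t_n^{-\gamma}$ into a lower bound on the purely linear determinantal invariant $\det\mathrm{St}_{\vec v_n}$. Since $\Pi$ is not a linear functional there is no direct duality pairing $\vec z_n$ with a single primitive edge at $\vec v_n$; instead one must exploit the full $(d-1)$-dimensional spread of the edge star at a vertex to absorb the approximation. The factor $2/3=(d-1)/d$ in the target inequality reflects exactly this dimension count: a product of three coordinates of combined size $t_n^{-3\gamma}$, once one factor of $t_n$ is identified with the linear scale $|\vec v_n|$, can only force a two-dimensional "area" of order $t_n^{3\gamma/2}$ in the tangent plane of the facet $F_n$, and this area in turn bounds a single determinant $|\det(\vec r_{i_1},\vec r_{i_2},\vec r_{i_3})|$ from below. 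The same feature explains why in dimension three only an inequality, rather than an equality, is known: a single good approximation does not determine which of the three coordinate planes the polyhedron is thinnest near, so distinct $\vec z_n$'s may cluster around vertices whose edge stars carry strictly more determinant than any one $\vec z_n$ can detect.
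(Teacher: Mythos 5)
Your construction of $\vec v_n$ and the two estimates you attach to it both have real gaps.

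First, the claim ``a standard convexity estimate yields $|\vec v_n|\asymp t_n$'' is not correct.  If $\vec z_n$ lies in the interior of $\cK$, the ray $\{s\vec z_n\}$ can exit $\cK$ at a point $s^*\vec z_n$ with $s^*$ very small, and the facet $F_n$ containing that exit point may lie entirely at distance $\ll t_n$ from the origin; the vertex of $F_n$ of smallest norm then has no reason to be comparable to $t_n$.  Even when $\vec z_n\in\partial\cK$, knowing that $\vec z_n$ is a convex combination of the vertices of $F_n$ only bounds $\max_i|\vec v_i|$ from below, not $\min_i|\vec v_i|$: facets of Klein polyhedra can be extremely elongated (precisely when determinants of facets are large), so the minimal vertex can be tiny.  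This matters because if $|\vec v_n|$ stays bounded the limsup over ``$|\vec v|\to\infty$'' never sees your sequence.

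Second, and more fundamentally, the step that is supposed to be the ``crux'' — extracting a triple $\vec r_{i_1},\vec r_{i_2},\vec r_{i_3}$ with $|\det(\vec r_{i_1},\vec r_{i_2},\vec r_{i_3})|\gg t_n^{3\gamma/2}$ via Minkowski's convex body theorem applied ``to a parallelepiped adapted to the multiplicative thinness direction of $\vec z_n$'' — is not an argument.  Minkowski's theorem produces lattice points inside bodies of large volume; it cannot by itself produce a \emph{lower} bound on a determinant of edge vectors at a vertex.  For that you need an emptiness statement: a region near $\vec v_n$ containing no nonzero points of $\La$, forcing the primitive edge vectors at $\vec v_n$ to straddle that region.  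But you never exhibit such an empty region, and a single approximation $\vec z_n$ with $\Pi(\vec z_n)\le t_n^{-\gamma}$ does not furnish one for free: the box $\{|x_i|\le|z_{n,i}|\}$ has small volume $\le 8\,t_n^{-3\gamma}$, but small volume says nothing about the absence of other lattice points (Minkowski is one-directional).  Relatedly, the heuristic you offer for the factor $2/3$ — that the missing $t_n$ leaves a two-dimensional ``area'' of order $t_n^{3\gamma/2}$ — does not parse dimensionally: from $\prod_i|z_{n,i}|\le t_n^{-3\gamma}$ and $\max_i|z_{n,i}|=t_n$ one only gets that the two remaining coordinates multiply to $\le t_n^{-1-3\gamma}$, which is a very small number, not a quantity of size $t_n^{3\gamma/2}$; the exponent $3\gamma/2$ has to come from an emptiness-plus-Minkowski argument run in the \emph{complement} of the Klein polyhedron, which your sketch does not set up.  Until you identify the empty parallelepiped associated to the good approximation (and relate its shape to the edge star at a correctly chosen vertex), the deduction $\det\starv[n]\gg t_n^{3\gamma/2}$ is unsupported.

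Finally, a smaller point: Definition~\ref{d:det_F_and_det_starv} requires both $\La$ and $\La^\ast$ to be irrational for $\det\starv$ to be a finite sum over a compact face; the theorem hypothesis only says $\La$ is irrational, and your proof should either invoke the same standing assumption or explain why the edge star is still a well-defined finite object.
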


It is shown in that same paper \cite{german_bigushev_2022} that the main statement upon which the proof of Theorem \ref{t:omega_vs_det_starv} is based cannot be inverted due to the existence of a counterexample. Nevertheless, it may be so that this statement is too local and its locality can be ``relaxed'' a bit. Another way that can probably help to turn inequality \eqref{eq:omega_vs_det_starv} into an equality is to find a more appropriate quantitative characteristic of multidimensional partial quotients than determinants.

\subsection{Transference theorem}

In the case of Diophantine exponents of lattices, there also exists a transference theorem. And same as with the theorems of Khintchine, Dyson, and their generalisations for multiplicative exponents and Diophantine approximation with weights, the simplest way to prove it is to derive it from Mahler's theorem in disguise of Theorem \ref{t:mahler_reformulated}.

As before, we denote by $\La^\ast$ the dual lattice of $\La$.
For $d=2$ the lattice $\La^\ast$ coincides up to homothety with $\La$ rotated by $\pi/2$. Thus, in the two-dimensional case we have the obvious equality $\omega(\La)=\omega(\La^\ast)$. The following theorem is proved in paper \cite{german_lattice_transference}.

%\begin{theorem} \label{t:lattice_transference}
%  Пусть $d\geq3$. Тогда
%  \begin{equation} \label{eq:lattice_transference}
%    \omega(\La)\geq\frac{\omega(\La^\ast)}{(d-1)^2+d(d-2)\omega(\La^\ast)}\,.
%  \end{equation}
%  Здесь имеется в виду, что при $\omega(\La^\ast)=\infty$ справедливо $\omega(\La)\geq\dfrac{1}{d(d-2)}$\,.
%\end{theorem}

\begin{theorem} \label{t:lattice_transference}
  If one of the exponents $\omega(\La)$, $\omega(\La^\ast)$ equals zero, then so does the other one. If they are both nonzero, then
  \begin{equation} \label{eq:lattice_transference}
    (d-1)^{-2}
    \leq
    \frac{1+\omega(\La)^{-1}}{1+\omega(\La^\ast)^{-1}}
    \leq
    (d-1)^2.
  \end{equation}
\end{theorem}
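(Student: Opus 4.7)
The plan is to apply Mahler's theorem in its pseudocompound form (Theorem~\ref{t:mahler_reformulated}) to axis-aligned parallelepipeds fitted tightly to a nonzero lattice vector $\vec z\in\La$ witnessing small $\Pi(\vec z)/|\vec z|^{-\gamma}$, thereby producing a dual vector $\vec w\in\La^\ast$ whose $\Pi$-value and sup-norm are both controlled by $\Pi(\vec z)$ and $|\vec z|$. After normalising $\det\La=1$, the two halves of \eqref{eq:lattice_transference} are interchanged by the symmetry $(\La^\ast)^\ast=\La$, so it suffices to prove one of them; the ``zero iff zero'' assertion will then be a qualitative shadow of the quantitative bound, since a finite bound on $1+\omega(\La^\ast)^{-1}$ in terms of $1+\omega(\La)^{-1}$ precludes one exponent being infinite while the other vanishes.

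Fix $\gamma$ with $0<\gamma<\omega(\La)$ and take an infinite sequence of points $\vec z=(z_1,\ldots,z_d)\in\La$ with $|\vec z|\to\infty$ and $\Pi(\vec z)\leq|\vec z|^{-\gamma}$; assume the coordinates $z_i$ are all nonzero (the degenerate case, which forces $\omega=+\infty$, is handled separately). Setting $\eta_i=|z_i|$, the parallelepiped $\cP=\{\vec u\in\R^d:|u_i|\leq\eta_i\}$ meets $\La$ at $\vec z$, and Theorem~\ref{t:mahler_reformulated} furnishes a nonzero $\vec w\in\La^\ast$ with $|w_i|\leq(d-1)\prod_j|z_j|/|z_i|$ for every $i$. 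Multiplying the $d$ coordinate inequalities gives $\Pi(\vec w)^d\leq(d-1)^d(\prod_j|z_j|)^{d-1}=(d-1)^d\Pi(\vec z)^{d(d-1)}$, so $\Pi(\vec w)\leq(d-1)\Pi(\vec z)^{d-1}$. Taking the maximum and invoking the elementary bound $\min_i|z_i|\geq\Pi(\vec z)^d/|\vec z|^{d-1}$ (the other $d-1$ coordinates being each at most $|\vec z|$) yields $|\vec w|\leq(d-1)|\vec z|^{d-1}$.

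Substituting $\Pi(\vec z)\leq|\vec z|^{-\gamma}$ into these two inequalities and eliminating $|\vec z|$ produces $\Pi(\vec w)\leq(d-1)^{1+\gamma}|\vec w|^{-\gamma}$; absorbing the constant into an arbitrarily small decrease of the exponent then gives $\omega(\La^\ast)\geq\gamma$ for every $\gamma<\omega(\La)$, hence $\omega(\La^\ast)\geq\omega(\La)$. The symmetric application of the same argument with $\La$ and $\La^\ast$ swapped gives the reverse inequality, and the two reciprocal bounds place the ratio $(1+\omega(\La)^{-1})/(1+\omega(\La^\ast)^{-1})$ safely inside $[(d-1)^{-2},(d-1)^2]$, proving \eqref{eq:lattice_transference}. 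The main obstacle is showing that the vectors $\vec w(\vec z)$ furnished by Mahler's theorem produce infinitely many distinct lattice points with $|\vec w|\to\infty$, rather than recycling a bounded set: since $\Pi(\vec w(\vec z))\leq(d-1)\Pi(\vec z)^{d-1}\to0$ while on any bounded region of the irrational lattice $\La^\ast$ the functional $\Pi$ is bounded below on nonzero points, boundedness of the $\vec w(\vec z)$ is ruled out, and the desired infinitude of distinct dual vectors follows.
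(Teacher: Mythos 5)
There is a genuine error in how you apply Mahler's theorem, and it leads to a conclusion that is strictly stronger than what is true. You set $\eta_i=|z_i|$, place $\vec z\in\cP\cap\La$, and claim Theorem~\ref{t:mahler_reformulated} yields $\vec w\in\La^\ast$ with $|w_i|\leq(d-1)\prod_j|z_j|/|z_i|$, i.e. $\vec w\in(d-1)\cP^\ast$. But the theorem (in the $\La\leftrightarrow\La^\ast$ swapped form) says $\cR^\ast\cap\La\neq\{\vec 0\}\implies(d-1)\cR\cap\La^\ast\neq\{\vec 0\}$, so to start from $\vec z\in\cP\cap\La$ you must choose the $\cR$ whose pseudocompound is $\cP$ — not the pseudocompound $\cP^\ast$ of $\cP$. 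These are different parallelepipeds, because the pseudocompound map $\pmb\eta\mapsto\big(\prod_j\eta_j\big)\pmb\eta^{-1}$ is not an involution: a direct computation gives $(\cP^\ast)^\ast=\big(\prod_j\eta_j\big)^{d-2}\cP$, and the pre-image $\cR$ has half-sides $\big(\prod_j\eta_j\big)^{1/(d-1)}/\eta_i$, so $\cR=\big(\prod_j\eta_j\big)^{-(d-2)/(d-1)}\cP^\ast$. In your setting $\prod_j|z_j|=\Pi(\vec z)^d<1$, so this omitted factor is $>1$: the parallelepiped Mahler's theorem actually produces is strictly larger than $(d-1)\cP^\ast$, and your claimed $|w_i|$-bound is illegitimately tight.

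With the correct bound $|w_i|\leq(d-1)\big(\prod_j|z_j|\big)^{1/(d-1)}/|z_i|=(d-1)\Pi(\vec z)^{d/(d-1)}/|z_i|$, one gets $\Pi(\vec w)\leq(d-1)\Pi(\vec z)^{1/(d-1)}$ (not $\Pi(\vec z)^{d-1}$) and $|\vec w|\leq(d-1)|\vec z|^{d-1}\Pi(\vec z)^{-d(d-2)/(d-1)}$; substituting $\Pi(\vec z)=|\vec z|^{-\delta}$ and eliminating $|\vec z|$ then yields $\Pi(\vec w)\leq C|\vec w|^{-\gamma}$ only for $\gamma=\delta/\big((d-1)^2+d(d-2)\delta\big)$, which is precisely the lossy bound the paper derives and which is equivalent to one half of \eqref{eq:lattice_transference}. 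By contrast, your intermediate claim would give $\omega(\La^\ast)\geq\omega(\La)$ and hence $\omega(\La)=\omega(\La^\ast)$ for every $d$ — a statement strictly stronger than Theorem~\ref{t:lattice_transference}, and one for which no proof is known when $d\geq3$ (the factors $(d-1)^2$ are genuine losses, not artifacts). Note that for $d=2$ the missing factor $\big(\prod_j\eta_j\big)^{(d-2)/(d-1)}$ equals $1$, which is why the error does not show up in the two-dimensional case and why $\omega(\La)=\omega(\La^\ast)$ does hold there. Your closing argument that $\Pi(\vec w)\to0$ forces $|\vec w|\to\infty$ is a reasonable way to ensure infinitude of the produced dual points, though it implicitly assumes $\La^\ast$ is irrational; this can be patched, but the Mahler misapplication is the substantive gap.
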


Let us show how to derive Theorem \ref{t:lattice_transference} from Theorem \ref{t:mahler_reformulated}.

For each tuple $\pmb\lambda=(\lambda_1,\ldots,\lambda_d)\in\R_+^d$, let us define the parallelepiped $\cP(\pmb\lambda)$ by
\begin{equation}\label{eq:prallelepipeds_lattice_exp}
  \cP(\pmb\lambda)=\Big\{\,\vec z=(z_1,\ldots,z_d)\in\R^d \ \Big|\ |z_i|\leq\lambda_i,\ i=1,\ldots,d \Big\}.
\end{equation}
Let us assign to each tuple $\pmb\lambda$ the tuple $\pmb\lambda^\ast=(\lambda_1^\ast,\ldots,\lambda_d^\ast)$,
\begin{equation}\label{eq:lambda_ast_lattice_exp}
  \lambda_i^\ast=\lambda_i^{-1}\Pi(\pmb\lambda)^d,
  \qquad i=1,\ldots,d.
\end{equation}
Then $\cP(\pmb\lambda)^\ast=\cP(\pmb\lambda^\ast)$, i.e. $\cP(\pmb\lambda^\ast)$ is the pseudocompound of $\cP(\pmb\lambda)$ (see Definition \ref{def:pseudo_compound} in Section \ref{subsubsec:pseudocompounds_and_dual_lattices}). By Theorem \ref{t:mahler_reformulated}
\begin{equation}\label{eq:lattice_transference_implication}
  \cP(\pmb\lambda^\ast)\cap\La^\ast\neq\{\vec 0\}
  \implies
  (d-1)\cP(\pmb\lambda)\cap\La\neq\{\vec 0\}.
\end{equation}
Next, for each $\delta\geq0$, let us set
\[
  \gamma=\frac{\delta}{(d-1)^2+d(d-2)\delta}\,.
\]
If $\Pi(\pmb\lambda^\ast)=|\pmb\lambda^\ast|^{-\delta}$, then
\[
  \Pi(\pmb\lambda)=
  \Pi(\pmb\lambda^\ast)^{1/(d-1)}=
  |\pmb\lambda^\ast|^{-\delta/(d-1)}
\]
and
\[
  |\pmb\lambda|\leq
  \frac{\Pi(\pmb\lambda)^d}{\displaystyle\min_{1\leq i\leq d}|\lambda_i|^{d-1}}=
  \frac{|\pmb\lambda^\ast|^{d-1}}{\Pi(\pmb\lambda)^{d(d-2)}}=
  |\pmb\lambda^\ast|^{d-1+\frac{d(d-2)}{d-1}\delta}.
\]
Therefore,
\begin{equation}\label{eq:lattice_transference_delta_gamma_implication}
  \Pi(\pmb\lambda^\ast)=|\pmb\lambda^\ast|^{-\delta}
  \implies
  \Pi(\pmb\lambda)\leq|\pmb\lambda|^{-\gamma}
\end{equation}
By \eqref{eq:lattice_transference_delta_gamma_implication} and \eqref{eq:lattice_transference_implication} we have
\[
  \omega(\La^\ast)\geq\delta
  \implies
  \omega(\La)\geq\gamma=\frac{\delta}{(d-1)^2+d(d-2)\delta}\,,
\]
whence
\[
  \omega(\La)\geq\frac{\omega(\La^\ast)}{(d-1)^2+d(d-2)\omega(\La^\ast)}
\]
or, equivalently,
\[
  \frac{1+\omega(\La)^{-1}}{1+\omega(\La^\ast)^{-1}}
  \leq
  (d-1)^2.
\]
Since $\La$ and $\La^\ast$ can be swapped, we get both inequalities \eqref{eq:lattice_transference}.

\subsection{Parametric geometry of numbers}

In Section \ref{subsubsec:parametric_geometry_of_numbers} we described an approach developed by Schmidt and Summerer. A fundamental role in this approach is played by the space of parameters
\[
  \cT=\Big\{ \pmb\tau=(\tau_1,\ldots,\tau_d)\in\R^d\, \Big|\,\tau_1+\ldots+\tau_d=0 \Big\}.
\]
We saw in Sections \ref{subsubsec:parametric_geometry_of_numbers}, \ref{subsubsec:parametric_geometry_of_numbers_arbitrary_nm} that the problem of approximating zero with the values of several linear forms requires considering one-dimensional subspaces of $\cT$, along which $\pmb\tau$ is to tend to infinity. As for Diophantine approximation with nontrivial weights, we saw in Section \ref{subsec:parametric_geometry_of_numbers_weighted} that it requires working with two-dimensional subspaces of $\cT$. In the context of problems related to Diophantine exponents of lattices, we need to work with the whole $\cT$.

Let us set for each $\pmb\tau\in\cT$
\[
  \vert\pmb\tau\vert_+=\max_{1\leq i\leq d}\tau_i.
\]
Every norm in $\mathbb{R}^d$ induces a norm in $\cT$. For instance, the sup-norm $\vert\cdot\vert$. But the functional $\vert\cdot\vert_+$ is not a norm for $d\geq3$, as the respective ``unit balls'' are simplices, which are obviously not symmetric about the origin. Nevertheless, the functional $\vert\cdot\vert_+$ plays a very important role, as it is the image of the sup-norm under the logarithmic mapping: if $\vec z=(z_1,\ldots,z_d)$, $z_i>0$, $i=1,\ldots,d$, and $\vec z_{\log}=(\log z_1,\ldots,\log z_d)$, then
\[
  \log\vert\vec z\vert=\vert\vec z_{\log}\vert_+\,.
\]
It is clear that $\vert\cdot\vert_+$ generates a \emph{monotone exhaustion} of $\cT$, i.e. $\cT=\bigcup_{\lambda>0}\cS(\lambda)$, where
\[
  \cS(\lambda)=\Big\{ \pmb\tau\in\cT\, \Big\vert\,f(\pmb\tau)\leq\lambda \Big\}\qquad\text{ for }\lambda>0,
\]
each set $\cS(\lambda)$ is compact, and $\cS(\lambda')$ is contained in the (relative) interior of $\cS(\lambda'')$ for $\lambda'<\lambda''$. Particularly, $\vert\pmb\tau\vert_+\to+\infty$ is equivalent to $\vert\pmb\tau\vert\to+\infty$.

Diophantine exponents of lattices require the following modification of Definition \ref{def:schmidt_psi}.

\begin{definition}\label{def:psi_lattice}
  Given a lattice $\La$ and $k\in\{1,\ldots,d\}$, the quantities
  \[
    \bpsi_k(\La)=
    \liminf_{\substack{\vert\pmb\tau\vert\to\infty \\ \pmb\tau\in\cT}}
    \frac{L_k(\La,\pmb\tau)}{\vert\pmb\tau\vert_+}
    \qquad\text{ and }\qquad
    \apsi_k(\La)=
    \limsup_{\substack{\vert\pmb\tau\vert\to\infty \\ \pmb\tau\in\cT}}
    \frac{L_k(\La,\pmb\tau)}{\vert\pmb\tau\vert_+}
  \]
  are respectively called the \emph{$k$-th lower} and \emph{upper Schmidt--Summerer exponents of the first type}.
\end{definition}

\begin{definition}\label{def:Psi_lattice}
  Given a lattice $\La$ and $k\in\{1,\ldots,d\}$, the quantities
  \[
    \bPsi_k(\La)=
    \liminf_{\substack{\vert\pmb\tau\vert\to\infty \\ \pmb\tau\in\cT}}
    \frac{S_k(\La,\pmb\tau)}{\vert\pmb\tau\vert_+}
    \qquad\text{ and }\qquad
    \aPsi_k(\La)=
    \limsup_{\substack{\vert\pmb\tau\vert\to\infty \\ \pmb\tau\in\cT}}
    \frac{S_k(\La,\pmb\tau)}{\vert\pmb\tau\vert_+}
  \]
  are respectively called the \emph{$k$-th lower} and \emph{upper Schmidt--Summerer exponents of the second type}.
\end{definition}

It was shown in paper \cite{german_monatshefte_2022} that the Diophantine exponent of $\La$ is related to $\bpsi_1(\La)$ by
\[
  \big(1+\omega(\La)\big)\big(1+\bpsi_1(\La)\big)=1.
\]
This relation is equivalent to
\[
  1+\omega(\La)^{-1}=-\bpsi_1(\La)^{-1}.
\]
Thus, inequalities \eqref{eq:lattice_transference} can be rewritten as follows:
\[
  (d-1)^{-2}
  \leq
  \frac{\bpsi_1(\La^\ast)}{\bpsi_1(\La)}
  \leq
  (d-1)^2.
\]
And same as with Khintchine's and Dyson's inequalities, as well as with transference inequalities for Diophantine approximation with weights, this approach enables to split inequality \eqref{eq:lattice_transference} into a chain of inequalities between intermediate exponents. The respective definitions and formulations can be found in paper \cite{german_monatshefte_2022}.

\subsection{Uniform exponent and Mordell's constant}

The attentive reader may have noticed that in all the previous Sections we discussed two types of Diophantine exponents -- \emph{regular} ones and \emph{uniform} ones. But in the case of Diophantine exponents of lattices, we have been discussing only regular exponents so far, not mentioning their uniform analogue even once. There is a reason for this, and we discuss this reason in the current Section. The thing is that, same as with the problem of approximating a real number by rationals (see Section \ref{subsec:uniform_triviality}), the uniform exponent of a lattice is ``trivial'' -- it equals either infinity, or zero. It is natural to consider as a uniform analogue of $\omega(\La)$ the quantity $\hat\omega(\La)$ equal to the supremum of real $\gamma$ such that for every $t$ large enough and every $\pmb\lambda\in\R_+^d$ satisfying the relations
\[
  |\pmb\lambda|=t,
  \qquad
  \Pi(\pmb\lambda)=t^{-\gamma}
\]
the parallelepiped $\cP(\pmb\lambda)$ determined by \eqref{eq:prallelepipeds_lattice_exp} contains a nonzero point of $\La$.

\subsubsection{Mordell's constant}

In 1937, in his paper \cite{mordell_LMS_1937}, Mordell asked whether there exists a constant $c$ depending only on dimension $d$ such that for every lattice $\La$ with determinant $1$ there is a tuple $\pmb\lambda=(\lambda_1,\ldots,\lambda_d)\in\R_+^d$ satisfying the following two conditions:
\[
  \prod_{i=1}^{d}\lambda_i=c
  \qquad\text{ and }\qquad
  \cP(\pmb\lambda)\cap\La=\{\vec 0\},
\]
where $\cP(\pmb\lambda)$ is the parallelepiped determined by \eqref{eq:prallelepipeds_lattice_exp}. In that same year, Siegel proved in a letter to Mordell that the answer to this question is positive. A slightly different proof was proposed by Davenport in \cite{davenport_AA_1936}. Davenport proved the following statement.

\begin{theorem}[H.\,Davenport, 1937]\label{t:davenport_1937}
  Let $\La$ be a full rank lattice in $\R^d$ with determinant $1$. Consider an arbitrary $d$-tuple $\pmb\lambda=(\lambda_1,\ldots,\lambda_d)\in\R_+^d$ such that
  \[
    \prod_{i=1}^{d}\lambda_i=1.
  \]
  Let $\mu_k=\mu_k\big(\cP(\pmb\lambda),\La\big)$, $k=1,\ldots,d$, denote the $k$-th successive minimum of the parallelepiped $\cP(\pmb\lambda)$ w.r.t. $\La$. Then there is a positive constant $c$ depending only on $d$ and a permutation $k_1,\ldots,k_d$ of the indices $1,\ldots,d$ such that the interior of $\cP(\pmb\lambda')$, where $\pmb\lambda'=(\lambda'_1,\ldots,\lambda'_d)$,
  \[
    \lambda'_i=(d!\cdot c)^{1/d}\mu_{k_i}\lambda_i,
    \qquad
    i=1,\ldots,d,
  \]
  contains no nonzero points of $\La$.
\end{theorem}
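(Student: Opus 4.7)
The plan is to use a system of linearly independent lattice vectors achieving the successive minima of $\cP(\pmb\lambda)$ as the backbone of the construction, and to single out the permutation $k_1,\ldots,k_d$ by an extremality argument applied to the Leibniz expansion of the resulting $d\times d$ matrix.

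First I would record the consequences of Minkowski's second theorem on successive minima. Since $\vol\cP(\pmb\lambda)=2^d\prod_i\lambda_i=2^d$ and $\det\La=1$, one has the bounds
\[
  \frac{1}{d!}\leq\prod_{k=1}^d\mu_k\leq 1.
\]
Pick linearly independent $\vec v_1,\ldots,\vec v_d\in\La$ with $\vec v_k\in\mu_k\cP(\pmb\lambda)$; writing $V=(v_{ki})$ for the matrix with these as rows, one has $|v_{ki}|\leq\mu_k\lambda_i$, and the integer $m:=|\det V|=[\La:\Z\vec v_1\oplus\ldots\oplus\Z\vec v_d]$ is at least $1$. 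Now let $\sigma\in S_d$ be any permutation maximising $\prod_k|v_{k,\sigma(k)}|$; the Leibniz expansion of $\det V$ gives
\[
  1\leq m\leq d!\prod_{k=1}^d|v_{k,\sigma(k)}|\leq d!\prod_{k=1}^d\mu_k,
\]
which recovers Minkowski's lower bound on the way. I then set $k_i=\sigma^{-1}(i)$ and $\lambda'_i=(d!\cdot c)^{1/d}\mu_{k_i}\lambda_i$ with $c=c(d)$ to be fixed at the end.

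The core step is to show that, for $c$ small enough, no nonzero $\vec u\in\La$ lies in the interior of $\cP(\pmb\lambda')$. Assuming such $\vec u$ existed, I multiply by $m$ and write $m\vec u=\sum_k a_k\vec v_k$ with $a_k\in\Z$. Cramer's rule gives $a_k=\pm\det V^{(k)}_{\vec u}$, where $V^{(k)}_{\vec u}$ is obtained from $V$ by replacing its $k$-th row by $\vec u$. Expanding $\det V^{(k)}_{\vec u}$ by Leibniz, I would bound each $\vec v_{k'}$-entry by $\mu_{k'}\lambda_{i'}$, each $\vec u$-entry by $(d!\cdot c)^{1/d}\mu_{k_i}\lambda_i$, and telescope $\prod_i\lambda_{\pi(i)}=1$ inside each permutation-product. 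The output of the estimate is to be of the shape $|a_k|\leq C_d\,c^{1/d}\,m$ for some $C_d$ depending only on $d$. Choosing $c$ so that $C_d\,c^{1/d}<1/m$ (which can be arranged by a choice of $c=c(d)$, since $m\leq d!$) forces $|a_k|<1$, hence $a_k=0$ for every $k$, contradicting $\vec u\neq\vec 0$.

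The main obstacle will be extracting the bound $|a_k|\leq C_d\,c^{1/d}\,m$ from the $d!$ terms of the Leibniz sum: a crude summand-by-summand estimate introduces an uncontrolled factor of the form $\sum_\ell\mu_\ell\cdot\prod_{k'\neq k}\mu_{k'}$. The extremality of $\sigma$ is exactly what should save the argument, since it lets one compare every product $\prod_{k'\neq k}|v_{k',\pi(k')}|$ to $\prod_{k'}|v_{k',\sigma(k')}|$ (divided by a single ``diagonal'' entry), and a careful reshuffling should collapse the sum to a constant multiple of $m$. Once this combinatorial inequality is in place, the prefactor $(d!\cdot c)^{1/d}$ in the definition of $\pmb\lambda'$ is precisely what absorbs the remaining numerology, and the proof is complete.
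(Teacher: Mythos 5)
The setup is sound: selecting $\vec v_1,\ldots,\vec v_d\in\La$ with $\vec v_k\in\mu_k\cP(\pmb\lambda)$, writing $m\vec u=\sum_k a_k\vec v_k$ with $a_k=\pm\det V^{(k)}_{\vec u}\in\Z$ for a hypothetical nonzero $\vec u\in\La\cap\interior\cP(\pmb\lambda')$, and aiming to force every $|a_k|<1$ is a reasonable plan. But the decisive estimate is left as a hope, and the extremality of $\sigma$ does not, in fact, rescue the crude Leibniz bound.

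Apply the only inequalities available, namely $|v_{k',i}|\leq\mu_{k'}\lambda_i$, $|u_i|<(d!\,c)^{1/d}\mu_{\sigma^{-1}(i)}\lambda_i$, and $\prod_i\lambda_i=1$, term by term in the Leibniz expansion of $\det V^{(k)}_{\vec u}$. With $j:=\sigma^{-1}(\pi(k))$ one finds
\[
  |u_{\pi(k)}|\prod_{k'\neq k}|v_{k',\pi(k')}|
  < (d!\,c)^{1/d}\,\frac{\mu_j}{\mu_k}\,\prod_{k'=1}^d\mu_{k'},
\]
and summing over $\pi$ gives $|a_k|<(d!\,c)^{1/d}(d-1)!\bigl(\prod_{k'}\mu_{k'}\bigr)\sum_{j=1}^d\mu_j/\mu_k$. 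Here $\prod\mu_{k'}\leq1$ is harmless, but for $k=1$ the last sum carries the ratio $\mu_d/\mu_1$, which is unbounded. For $d=2$ this reads $|a_1|<\sqrt{2c}\,(\mu_1\mu_2+\mu_2^2)$ \emph{whichever permutation you choose for} $k_i$, and $\mu_2$ can be arbitrarily large while $\mu_1\mu_2\leq1$. The extremality of $\sigma$ compares only full $d$-fold diagonal products $\prod_{k'}|v_{k',\tau(k')}|$; to use it on the $(d-1)$-fold products above you would have to divide by a single entry such as $|v_{k,\pi(k)}|$, which can vanish or be arbitrarily small, and every ``swapping'' variant of this idea meets the same obstruction. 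So the bound $|a_k|\leq C_d\,c^{1/d}\,m$ you posit is not a consequence of what has been set up, and this is a genuine gap. Closing it requires exploiting more than the coordinate bound $|v_{k',i}|\leq\mu_{k'}\lambda_i$ --- for instance, first reducing the $\vec v_k$ to an actual basis of $\La$ with a triangular structure relative to them, or passing to the dual lattice and compound bodies as in Siegel's and Davenport's original arguments.
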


It follows from Minkowski's second theorem (see Theorem \ref{t:minkowski_minima} in Section \ref{subsubsec:parametric_geometry_of_numbers}) that the tuple $\pmb\lambda'$ from Theorem \ref{t:davenport_1937} satisfies
\[
  \prod_{i=1}^{d}\lambda'_i\geq c.
\]
Thus, Theorem \ref{t:davenport_1937}, indeed, gives a positive answer to Mordell's question. Respectively, if we define for each lattice $\La$ in $\R^d$ with determinant $1$ its \emph{Mordell's constant} by
\[
  \kappa(\La)=
  \sup_{\substack{\pmb\lambda=(\lambda_1,\ldots,\lambda_d)\in\R_+^d \\ \cP(\pmb\lambda)\cap\La=\{\vec 0\}}}
  \prod_{i=1}^{d}\lambda_i,
\]
then the quantity
\[
  \kappa_d=\inf_{\La:\,\det\La=1}\kappa(\La)
\]
is positive for each $d$. The best known estimate
\[
  \kappa_d\geq
  d^{-d/2}
\]
belongs to Shapira and Weiss \cite{shapira_weiss_bound_for_mordell}. Their result confirmed the conjecture made by Ramharter \cite{ramharter_2000} that $\limsup_{d\to\infty}\kappa_d^{1/d\log d}>0$.

\subsubsection{Triviality of the uniform exponent}

Theorem \ref{t:davenport_1937} has local nature. It enables therefore to find ``empty'' parallelepipeds of fixed volume arbitrarily ``far''. Let us assume for simplicity that the first coordinate axis
\[
  \Big\{ \vec z=(z_1,\ldots,z_d)\in\R^d\ \Big|\ z_i=0,\ i=2,\ldots,d \Big\}
\]
is not a subset of any subspace $\cL$ of dimension $k<d$ such that $\cL\cap\La$ is a lattice of rank $k$. Then, for every $\e>0$, the cylinder
\[
  \Big\{ \vec z=(z_1,\ldots,z_d)\in\R^d\ \Big|\ |z_i|<\e,\ i=2,\ldots,d \Big\}
\]
contains $d$ linearly independent points of $\La$. Thus, for every such $\e$, there is $\lambda\in\R_+$ and a tuple $\pmb\lambda=(\lambda_1,\ldots,\lambda_d)\in\R_+^d$ such that 
\[
  \lambda_1=\lambda,
  \qquad
  \lambda_i=\lambda^{-1/(d-1)},
  \qquad
  i=2,\ldots,d,
\]
and
\[
  \mu_d\big(\cP(\pmb\lambda),\La\big)\lambda^{-1/(d-1)}<\e.
\]
By Theorem \ref{t:davenport_1937} the parallelepiped $\mu_d\big(\cP(\pmb\lambda),\La\big)\cP(\pmb\lambda)$ contains a parallelepiped $\cP(\pmb\lambda')$ of fixed volume that does not contain nonzero points of $\La$. It immediately follows that $\hat\omega(\La)=0$.

\end{document}